\documentclass[oneside]{amsart}

\usepackage[latin9]{inputenc}
\usepackage{xcolor}
\usepackage{amstext}
\usepackage{amsthm}
\usepackage{amssymb}
\PassOptionsToPackage{normalem}{ulem}
\usepackage{ulem}
\usepackage[unicode=true,pdfusetitle,
 bookmarks=true,bookmarksnumbered=false,bookmarksopen=false,
 breaklinks=false,pdfborder={0 0 1},backref=false,colorlinks=false]
 {hyperref}

\makeatletter

\providecommand{\tabularnewline}{\\}

\numberwithin{equation}{section}
\numberwithin{figure}{section}

\@ifundefined{date}{}{\date{}}
\usepackage{amsbsy}
\usepackage{amstext}
\usepackage{tikz}
\usepackage{multirow}

\usepackage{enumerate}
\usepackage{amsfonts}
\usepackage{graphicx}
\usetikzlibrary{arrows}
\usepackage[all]{xy}

\newtheorem{theorem}{Theorem}[section]
\newtheorem{proposition}[theorem]{Proposition}\newtheorem{lemma}[theorem]{Lemma}\newtheorem{corollary}[theorem]{Corollary}\newtheorem{example}{Example}

\newtheorem{definition}[theorem]{Definition}

\newtheorem{remark}[theorem]{Remark}

\newcommand{\To}{\Rightarrow} 
\newcommand{\ot}{\leftarrow}
\newcommand{\oT}{\Leftarrow}

\newcommand{\D}{\mathrm{D}}
\newcommand{\X}{\mathrm{X}}
\begin{document}
\title[BDL with strict implication and weak difference]{Bounded distributive lattices with strict implication and weak difference}
\author{Sergio Celani \and Agust\'{i}n Nagy \and William Zuluaga Botero}
\date{}

\begin{abstract}
In this paper we introduce the class of weak Heyting Brouwer algebras (WHB-algebras, for short).  We extend the well known duality between distributive lattices and Priestley spaces, in order to exhibit a relational Priestley-like duality for WHB-algebras. Finally, as an application of the duality, we build the tense extension of a WHB-algebra and we employ it as a tool for proving structural properties of the variety such as the finite model property, the amalgamation property, the congruence extension property and the Maehara interpolation property.
\end{abstract}

\maketitle

\section{Introduction}

Bounded lattices with additional operators occur often as algebraic models of Non-Classical Logics. This is the case of Boolean algebras which are the algebraic semantics of classical logic, Heyting algebras which model intuitionistic logic and Heyting Brouwer algebras which model bi-intuitionistic logic \cite{Wolter}. In all these cases, the operations $\vee$ and $\wedge$ are interpreted as the logical disjunction and conjunction, and the additional operations are usually interpretations of other logical connectives. Following this line and restricting to the case of logics in the language of intuitionistic logic, in \cite{CJ2} \emph{weak Heyting algebras} are introduced as a class of algebras that model the subintuitionistic logic $wK_{\sigma}$ \cite{CJ1}. They were defined as algebras \( (A,\wedge,\vee,\to,0,1) \) such that \( (A,\wedge,\vee,0,1) \) is a bounded distributive lattice and the map $ \to $, which is called the \emph{strict implication}, satisfies the following conditions for each \( a,b,c \in A \): 
\begin{itemize}
    \item[(1)] \( a \to a = 1,\)
    \item[(2)] \( a \to (b \wedge c) = (a \to b) \wedge (a \to  c), \)
    \item[(3)] \( (a \vee b) \to c = (a \to c) \wedge (b \to c), \)
    \item[(4)] \( (a \to b) \wedge (b \to c) \leq a \to c. \)
\end{itemize}
\noindent
Its follows from the definition that the class of all weak Heyting algebras forms a variety which is a generalization of several varieties of algebras like Heyting algebras and subresiduated lattices for instance.

In the same spirit, one may define the class of \emph{weak difference algebras} as algebras whose members are algebras \( (A,\wedge,\vee,\leftarrow,0,1) \) where $(A,\wedge,\vee,0,1)$ is a bounded distributive lattice and the map $\ot$, which is called \emph{weak difference}, satisfies the following conditions for every \( a,b,c \in A \):
\begin{itemize}
\item[(1)] \( a \leftarrow a = 0, \)
\item[(2)] \( (a \vee b) \leftarrow c = (a \leftarrow c) \vee (b \leftarrow c), \)
\item[(3)] \( a \leftarrow (b \wedge c) = (a \leftarrow b) \vee (a \leftarrow c), \)
\item[(4)] \( a \leftarrow c \leq (a \leftarrow b) \vee (b \leftarrow c). \)
\end{itemize} 
It follows from the definition that the class of all weak difference algebras forms a variety which provide us of a generalization of co-Heyting algebras \footnote{An algebra $(A,\wedge,\vee,\ot,0,1)$ is said to be a co-Heyting algebra provided that $(A,\wedge,\vee,0,1)$ is a bounded distributive lattice and for all $a,b,c \in A$ it holds the following residuation law: $a \ot b \leq c$ if and only if $a \leq b \vee c$}. \\

\vspace{1pt}

Heyting algebras and co-Heyting algebras are combined in \cite{Wolter} in order to study the algebraic semantics of intutionistic logics endowed with a binary conective $\ot$ which is called the co-implication. Thus, Heyting-Brouwer algebras are defined as algebras $(A,\wedge,\vee,\to,\ot,0,1)$ where $(A,\wedge,\vee,\to,0,1)$ is a Heyting algebra and $(A,\wedge,\vee,\ot,0,1)$ is a co-Heyting algebra. In order to give a suitable generalization of Heyting Brouwer algebras in this paper we shall study the variety of \emph{double weak Heyting algebra} (DWH-algebra, for short), whose members are algebras \( (\mathbf{A},\rightarrow,\leftarrow) \) such that:
\begin{itemize}
\item[(1)] \( (\mathbf{A},\rightarrow) \) is a weak Heyting algebra.
\item[(2)] \( (\mathbf{A},\leftarrow) \) is a weak difference algebra.
\end{itemize}
In particular, we shall study the subvariety of weak Heyting Brouwer algebras (WHB-algebras for short) whose members are double weak Heyting algebras that in addition the following conditions holds for each $a,b,c \in A$:
\begin{itemize}
\item[(3)] \( a \wedge ((a \to b) \rightarrow 0) \leq b, \)
\item[(4)] \( a \leq b \vee (1 \to (a \rightarrow b)). \)
\end{itemize}

Canonical examples of weak Heyting Brouwer algebras are Boolean algebras and Heyting Brouwer algebras. Taking into account the connection between Heyting Brouwer algebras and $S_{4}$ tense algebras given in \cite{Wolter} and inspired in the the study of WH-algebras we start the study of DWH-algebras and WHB-algebras providing a suitable generalization of dual Heyting algebras and Heyting Brouwer algebras respectively. The contribution of this paper is addressed to present a relational Priestley-like duality between the category of WHB-algebras and the category of WHB-spaces. Moreover we use the Priestley duality in order to  study the lattice of congruence of any WHB-algebras. Besides we shall study several properties of WHB-algebras by means of the tense companion of any WHB-algebra. This paper is organized as follows: in Section \ref{S2} we introduce the basic definitions and results that we will use along this paper. In Section \ref{S3} we shall study a relational representation for the variety of weak difference algebras and some subvarieties. In Section \ref{S4} we study the variety of WHB-algebras and some of its subvarieties. In Section \ref{S5} we develop a Priestley style duality for the category whose objects WHB-algebras and the arrows are algebra homomorphism. Finally in Section \ref{S6} we use the Priestley duality in order to obtain the tense companion of any WHB-algebra.

\section{Preliminaries} \label{S2}

In this section, we recall some basic facts and results that will be useful throughout this paper. We start this section by giving some elementary definitions. Then we move on to recall results that involve the variety of weak Heyting algebras and some of its subvarieties presented in \cite{CJ2}. \\

\vspace{1pt}

Let $X$ be a set we write $\mathcal{P}(X)$ to denote the power set of $X$. A partially ordered set, or poset, is a pair $(X,\leq)$ where $X$ is s non empty set and $\leq$ is a reflexive and transitive relation. Let $(X,\leq)$ be a poset. We say that \( U \) is an \emph{upset} if \( x \in U \) and \( x \leq y \) then \( y \in U \). Dually, we say that \( U \) is a \emph{downset} if \( x \in U \) and \( y \leq x \) then \( y \in U \). We write \( \mathrm{Up}(X) \) and \( \mathrm{Do}(X) \) to denote the set of all upsets and downsets of \( X \) respectively. If \( U \subseteq X \) we define the upset generated by \( U \) as follows:
\[ [U) = \{ x \in X \colon u \leq x \text{ for some } u \in U  \}. \]
Besides, we define the downset generated by \( U \) as follows:
\[ (U] = \{ x \in X \colon x \leq u \text{ for some } u \in U \}. \]
If $U = \{x\}$ then we write $[x)$ and $(x]$ in place of $[\{x\})$ and $(\{x\}]$ respectively. Let \( X \) be a set and \( R \subseteq X \times X \) a binary relation defined on \( X \). If \( U \subseteq X \) we write 
\[ R(U) = \{ R(x) \colon x \in U  \} \hspace{0.2cm} \text{ and } \hspace{0.2cm} R^{-1}(U)=\{ x \in X \colon R(x) \cap U \neq \emptyset \}. \]
If \( U = \{ x \} \) then we write \( R(x) \) and \( R^{-1}(x) \) instead of \( R(\{x\}) \) and \( R^{-1}(\{x\})\) respectively.\\

\vspace{0.2cm}

Let $\mathbf{L}$ be a bounded distributive lattice. A set $F \subseteq L$ is called {\it{filter}} if $1 \in F$, $F$ is increasing, and if $a,b \in F$, then $a \wedge b \in F$.  A proper filter $P$ is {\it{prime}} if for every $a,b \in A$, $a \vee b \in P$ implies $a \in P$ or $b \in P$. We write $X(\bf{L})$ the set of all prime filters of $\bf{L}$. Let $\sigma_{\bf{L}} \colon L \to \mathcal{P}(X(\bf{L}))$ be the map defined by $\sigma_{\bf{L}}(a)=\{ P \in X({\bf{L}}) \colon a \in P\}$. Such a map is called the \emph{Stone map} and the family $\sigma_{\bf{L}}[L]=\{ \sigma_{\bf{L}}(a) \colon a \in A\}$ is closed under unions, intersections, and contains $\emptyset$ and $L$, i.e., it is a bounded distributive lattice. Moreover, $\sigma_{\bf{L}}$ establishes an isomorphism between ${\bf{L}}$ and $\sigma_{\bf{L}}[L]$.

A {\it{Priestley space}} is a triple $( X, \leq, \tau )$ where $( X, \leq )$ is a poset and $( X, \tau )$ is a compact totally order-disconnected topological space. A morphism between Priestley spaces is a continuous and monotone function between them. If $( X, \leq, \tau )$ is a Priestley space, then the family of all clopen increasing sets is denoted by $D(X)$, and it is well known that $D(X)$ endowed with the union, the intersection, $X$ and $\emptyset$ is a bounded distributive lattice. The Priestley space of a bounded distributive lattice ${\bf{L}}$ is the triple $( X({\bf{L}}), \subseteq, \tau_{\bf{L}} )$, where $\tau_{\bf{L}}$ is the topology generated by taking as a subbase the family $\{ \sigma_{\bf{L}}(a) \colon a \in A \} \cup \{ \sigma_{\bf{L}}(a)^{c} \colon a \in A \}$, where $\sigma_{\bf{L}}(a)^{c} = X({\bf{L}}) - \sigma_{\bf{A}}(a)$. Therefore, ${\bf{L}}$ and $D(X({\bf{L}}))$ are isomorphic. If $( X, \leq, \tau )$ is a Priestley space, then the map $\epsilon_{X} \colon X \to X(D(X))$ defined by $\epsilon_{X}(x) = \{ U \in D(X) \colon x \in U \}$, for every $x \in X$, is a homeomorphism and an order-isomorphism. On the other hand, if $Y$ is a closed set of $X({\bf{L}})$, then the relation 
\begin{equation} \label{congruence-closed}
\theta(Y)=\{(a,b)\in L \times L \colon \sigma_{\textbf{L}}(a)\cap Y = \sigma_{\textbf{L}}(b) \cap Y \}
\end{equation}
is a congruence of ${\bf{L}}$ and the map $Y\mapsto \theta(Y)$ establishes an anti-isomorphism between the lattice of closed subsets of $X({\bf{L}})$ and the lattice of congruences of $\textbf{L}$. 

If $h \colon \mathbf{L} \to \mathbf{M}$ is a lattice homomorphism, then the map $X(h): X({\bf{M}}) \to X({\bf{L}})$ defined by $X(h)(P)=h^{-1}[P]$, for each $P \in X({\bf{M}})$, is a continuous and monotone function. Conversely, if $( X, \leq_{X}, \tau_{X} )$ and $( Y, \leq_{Y}, \tau_{Y} )$ are Priestley spaces and $f \colon X \to Y$ is a continuous and monotone function, then the map $D(f): D(Y) \to D(X)$ defined by $D(f)(U)=f^{-1}[U]$, for each $D(Y)$, is a homomorphism between bounded distributive lattices. Furthermore, there is a duality between the algebraic category of bounded distributive lattices with homomorphisms and the category of Priestley spaces with continuous and monotone functions (\cite{CLP,Priestley}).
\\

Let $\mathcal{K}$ be a class of algebras of a given type. We write $\mathbb{I}(\mathcal{K})$, $\mathbb{S}(\mathcal{K})$, $\mathbb{H}(\mathcal{K})$ and $\mathbb{P}(\mathcal{K})$  for the classes of isomorphic images, subalgebras, homomorphic images and products of elements of $\mathcal{K}$. We recall that due to the well known Birkhofff's Theorem, the variety generated by $\mathcal{K}$, $\mathbb{V}(\mathcal{K})$ has the form $\mathbb{H}\mathbb{S}\mathbb{P}(\mathcal{K})$. 

\subsection{Weak Heyting algebras and subvarieties} \label{S2.1}

In this section, we recall some basic results that involve weak Heyting algebras and some of its subvarieties like Heyting algebras, subresiduated lattices and basic algebras \cite{CJ2,Corsi,EH,Visser}. 

\begin{definition} \label{def WH algebra} An algebra \( \mathbf{A} = (A,\wedge,\vee,\to,0,1) \) of type $(2,2,2,0,0)$ is said to be a weak Heyting algebra, WH-algebra for short, if \( (A,\wedge,\vee,0,1) \) is a bounded distributive lattice and the following conditions are satisfied for each \( a,b,c \in A \):
\begin{enumerate}[\normalfont 1.]
    \item \( a \to a = 1\).
    \item \( a \to (b \wedge c) = (a \to b) \wedge (a \to  c) \).
    \item \( (a \vee b) \to c = (a \to c) \wedge (b \to c) \).
    \item \( (a \to b) \wedge (b \to c) \leq a \to c \).
\end{enumerate}
\end{definition}

It follows from Definition \ref{def WH algebra} that the class of all WH-algebras is a variety and we write \( \mathcal{WH} \) to denote it. Several subvarieties of \( \mathcal{WH} \) appear in the literature as algebraic counterpart of sublogics of the intutionistic logics. For instance the variety \( \mathcal{H} \) of Heyting algebras is a subvariety of \( \mathcal{WH} \). Moreover, let $\bf{A}$ be a WH-algebra and consider the following conditions for each $a,b,c \in A$:
\begin{itemize}
    \item[R] \( \colon a \wedge (a \to b) \leq b \)
    \item[T] \( \colon a \to b \leq c \to (a \to b)\)
    \item[B] \( \colon a \leq 1 \to a \)
\end{itemize}

We say that \( \mathbf{A} \) is a RWH-algebra if in addition the condition (R) is satisfied for each $a,b \in A$. We say that a WH-algebra $\bf{A}$ is a TWH-algebra if the condition (T) is satisfied for each $a,b,c \in A$. Subresiduated lattice were introduced in \cite{EH} as a pair \( (A,D) \) where \( A \) is a bounded distributive lattice and \( D \) is a bounded sublattice such that for each \( a,b \in A \) there is \( z \in D \) with the following property: for each \( d \in D \) \( a \wedge d \leq b \) if and only if \( d \leq z \). The element \( z \) is denoted by \( z = a \to b \). Thus, subresiduated lattices can be regarded as an algebra \( (A,\wedge,\vee,\to,0,1) \) where \( D = \{ a \in A \colon 1 \to a = a \} \). It was proved that \( \bf{A} \) is a subresiduated lattice if and only if \( \bf{A} \) is a RWH-algebra that in addition satisfies the condition (T) for each $a,b,c \in A$. WH-algebras that in addition satisfies the condition (B) for each $a \in A$ are called basic algebras. A basic algebra $\bf{A}$ that in addition satisfies the condition (R) for each $a,b \in A$ is a Heyting algebra.
We write \( \mathcal{RWH} \), \( \mathcal{TWH}\), \( \mathcal{SRL} \), \(\mathcal{B} \) and \( \mathcal{H} \) to denote the varieties of RWH-algebras, TWH-algebras, subresiduated lattices, basic algebras and Heyting algebras respectively. The order between these varieties of WH-algebras is represeted by the lattice of subvarieties given in Figure \ref{Fig: Lattice of Subvarieties}.

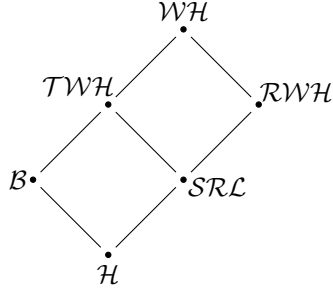
\begin{figure}[h]
\begin{tikzpicture}
\draw (-0.1,0.1)--(-0.90,0.9);
\draw (0.1,0.1)--(0.90,0.9);
\draw (-0.90,1.1)--(-0.1,1.90);
\draw (0.90, 1.1)--(0.1,1.90);
\draw (1.1, 1.1)--(1.9, 1.9);
\draw (1.90, 2.1)--(1.1,2.90);
\draw (0.1,2.1)--(0.9, 2.9);
\filldraw (0,0) circle (1pt);
\filldraw (1,1) circle (1pt);
\filldraw (-1,1) circle (1pt);
\filldraw (0,2) circle (1pt);
\filldraw (1,3) circle (1pt);
\filldraw (2,2) circle (1pt);
\draw (0,-0.25) node {$\mathcal{H}$};
\draw (1.45,0.9) node {$\mathcal{SRL}$};
\draw (2.5,2.17) node {$\mathcal{RWH}$};
\draw (-0.40,2.25) node {$\mathcal{TWH}$};
\draw (1,3.25) node {$\mathcal{WH}$};
\draw (-1.2,1.0) node {$\mathcal{B}$};
\end{tikzpicture}
\caption{Lattice of subvarieties} \label{Fig: Lattice of Subvarieties}
\end{figure}

Let \( \mathbf{A} \) be a WH-algebra. We write \( \mathrm{Fi}(\bf{A}) \) to denote the set of all filters of \( \bf{A} \) and \( X(\bf{A}) \) to denote the set of all prime filters of \( \bf{A} \). \\

\vspace{1pt}

Let \( \bf{A} \) be a WH-algebra. We shall consider the relation $R_{{\bf{A}}} \subseteq X({\bf{A}}) \times {\rm{Fi}}({\bf{A}})$  given in \cite[Section 3.1]{CJ2} which is defined as follows:
\begin{equation} \label{relacion WH}
    (P,Q) \in R_{{\bf{A}}} \text{ if and only if } (\forall a,b \in A)[a \to b \in P, a \in Q \Rightarrow b \in Q]
\end{equation}

\begin{remark} Some simple computations shows that $\bf{A}$ is a Heyting algebra if and only if $R_{{\bf{A}}}$ is equal to $\subseteq$.
\end{remark}

Let $\bf{A}$ be a weak Heyting algebra and $F\in \mathrm{Fi}(\bf{A})$. Let us consider the closure operator (see \cite[Proposition 3.4]{CJ2}) $D_{F}\colon\mathcal{P}(A)\rightarrow\mathcal{P}(A)$ defined as follows:
\[
D_{F}(X)=\left\{ b\in A:\exists Y\subseteq X\text{ finite such that } \bigwedge Y\rightarrow b \in F\right\} ,
\]
where $\bigwedge Y$ is the infimum of $Y$. If $Y = \emptyset$, we define $\bigwedge Y=1$.
Note that for every $X\subseteq A$, $D_{F}(X)$ is the least filter $\mathbf{A}$ such that $\left(F,H\right)\in R_{\mathbf{A}}$.
Besides for every $F,G \in \mathrm{Fi}(\mathbf{A})$  we have that
\[
(F,G)\in R_{\mathbf{A}} \text{ if and only if } D_{F}(G) = G.
\]

The following is a generalization of the Prime Filter Theorem \cite[Lemma
3.7]{CJ2}.

\begin{proposition} \label{TFP}
Let $\bf{A}$ be a weak Heyting algebra, let $F$ be a filter and $I$ an ideal of $\bf{A}$ and let $X\subseteq A$.
If $D_{F}(X)\cap I=\emptyset$, then there is a prime filter $P$
such that $D_{F}(X)\subseteq P$, $(F,P)\in R_{\bf{A}}$ and $P\cap I=\emptyset$.
\end{proposition}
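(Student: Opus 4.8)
The plan is a Zorn's lemma argument over the poset of filters witnessing the three desired properties, followed by a primality check that relies on the distributivity of the underlying lattice together with the basic monotonicity behaviour of $\to$.

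First I would let $\mathcal{F}$ be the collection of all filters $G$ of $\mathbf{A}$ such that $D_{F}(X)\subseteq G$, $D_{F}(G)=G$ (equivalently $(F,G)\in R_{\mathbf{A}}$), and $G\cap I=\emptyset$, ordered by inclusion. This is nonempty, since $D_{F}(X)$ itself lies in it: it is a filter, it is $D_{F}$-closed because $D_{F}$ is a closure operator, and $D_{F}(X)\cap I=\emptyset$ by hypothesis. Next I would check that $\mathcal{F}$ is closed under unions of nonempty chains. If $\{G_{j}\}_{j}$ is such a chain, then $G=\bigcup_{j}G_{j}$ is a filter, contains $D_{F}(X)$, and avoids $I$; the only point needing care is $D_{F}(G)=G$. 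The inclusion $G\subseteq D_{F}(G)$ is automatic, and conversely if $b\in D_{F}(G)$ there is a \emph{finite} $Y\subseteq G$ with $\bigwedge Y\to b\in F$; finiteness of $Y$ together with the chain condition forces $Y\subseteq G_{j}$ for some $j$, hence $b\in D_{F}(G_{j})=G_{j}\subseteq G$. By Zorn's lemma $\mathcal{F}$ has a maximal element $P$, and $P$ already satisfies $D_{F}(X)\subseteq P$, $(F,P)\in R_{\mathbf{A}}$ and $P\cap I=\emptyset$.

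It remains to prove $P$ prime. Suppose $a\vee b\in P$ while $a\notin P$ and $b\notin P$. Then $D_{F}(P\cup\{a\})$ and $D_{F}(P\cup\{b\})$ are filters, are $D_{F}$-closed, contain $D_{F}(X)$, and strictly contain $P$, so maximality of $P$ forces each to meet $I$: choose $u\in D_{F}(P\cup\{a\})\cap I$ and $v\in D_{F}(P\cup\{b\})\cap I$. Unravelling the definition of $D_{F}$ and using that $P$ is a filter (with the convention $\bigwedge\emptyset=1$), there are $p,q\in P$ with $(p\wedge a)\to u\in F$ and $(q\wedge b)\to v\in F$. Put $r=p\wedge q\in P$. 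Since $\to$ is antitone in the first coordinate and monotone in the second (both immediate from the defining equations of a WH-algebra) and $F$ is an upset, we get $(r\wedge a)\to(u\vee v)\in F$ and $(r\wedge b)\to(u\vee v)\in F$. Axiom 3 and distributivity then give
\[
\big(r\wedge(a\vee b)\big)\to(u\vee v)=\big((r\wedge a)\to(u\vee v)\big)\wedge\big((r\wedge b)\to(u\vee v)\big)\in F,
\]
and since $r\wedge(a\vee b)\in P$ this yields $u\vee v\in D_{F}(P)=P$. But $u\vee v\in I$, contradicting $P\cap I=\emptyset$. Hence $P$ is prime.

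I expect the main obstacle to be exactly this last step: organizing the finite meets that witness $u\in D_{F}(P\cup\{a\})$ and $v\in D_{F}(P\cup\{b\})$ into a single witness for $u\vee v\in D_{F}(P)$, which is where distributivity of $\mathbf{L}$ and axioms 2 and 3 for $\to$ are genuinely used. The Zorn scaffolding and the chain-closure verification are routine once one exploits that the sets $Y$ appearing in the definition of $D_{F}$ are finite.
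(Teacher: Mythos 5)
Your proof is correct, and it follows essentially the same route the paper takes: the paper itself only cites this statement from \cite[Lemma 3.7]{CJ2}, but its proof of the dual result (Theorem \ref{DTPF}) uses exactly your template --- Zorn's lemma on the family of $D_F$-closed filters containing $D_F(X)$ and avoiding $I$, followed by the maximality argument in which the two witnesses $u,v\in I$ are combined into $u\vee v\in D_F(P)\cap I$ via the meet/join axioms for $\to$ and distributivity. Your chain-closure check and the explicit unravelling of the finite witnesses are carried out correctly.
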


As a direct application of Proposition \ref{TFP} we obtain the following results:

\begin{corollary} \label{TPF Corolario 2}
Let $\bf{A}$ be a weak Heyting
algebra. Let $P\in X(\bf{A})$ and $a,b \in A$. Then $a\rightarrow b\notin P$
if and only if there exists $Q\in X(\mathbf{A})$ such that $(P,Q)\in R_{\bf{A}}$, $a\in Q$
and $b\notin Q$.
\end{corollary}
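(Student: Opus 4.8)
The plan is to obtain both directions as immediate consequences of Proposition \ref{TFP} together with the definition \eqref{relacion WH} of the relation $R_{\mathbf{A}}$.

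The implication from right to left is the routine one: if $Q \in X(\mathbf{A})$ satisfies $(P,Q) \in R_{\mathbf{A}}$, $a \in Q$ and $b \notin Q$, then $a \to b \in P$ would, by the very definition of $R_{\mathbf{A}}$ and the fact that $a \in Q$, force $b \in Q$; so $a \to b \notin P$.

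For the converse, I would assume $a \to b \notin P$ and apply Proposition \ref{TFP} with $F = P$, with $X = \{a\}$, and with $I = (b]$, the principal ideal generated by $b$. The one point that needs checking is the hypothesis $D_{P}(\{a\}) \cap (b] = \emptyset$. Since $D_{P}(\{a\})$ is a filter, hence an upset, this is equivalent to $b \notin D_{P}(\{a\})$. Unravelling the definition of the closure operator $D_{P}$, the only finite subsets of $\{a\}$ are $\emptyset$ and $\{a\}$, with $\bigwedge \emptyset = 1$ and $\bigwedge\{a\} = a$, so $b \in D_{P}(\{a\})$ amounts to $1 \to b \in P$ or $a \to b \in P$. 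But axiom 3 of Definition \ref{def WH algebra} applied to $a \vee 1 = 1$ gives $1 \to b = (a \to b) \wedge (1 \to b) \leq a \to b$, so the first alternative is subsumed by the second. Hence $b \in D_{P}(\{a\})$ if and only if $a \to b \in P$, and the assumption $a \to b \notin P$ yields $b \notin D_{P}(\{a\})$, i.e.\ $D_{P}(\{a\}) \cap (b] = \emptyset$.

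Proposition \ref{TFP} then furnishes a prime filter $Q$ with $D_{P}(\{a\}) \subseteq Q$, $(P,Q) \in R_{\mathbf{A}}$ and $Q \cap (b] = \emptyset$. Since $a \to a = 1 \in P$ we have $a \in D_{P}(\{a\}) \subseteq Q$, while $b \in (b]$ gives $b \notin Q$; thus $Q$ is as desired. I expect the only step requiring any real thought to be the reduction $D_{P}(\{a\}) \cap (b] = \emptyset$ to $a \to b \notin P$, which rests on the inequality $1 \to b \leq a \to b$; everything else is just unwinding the definitions.
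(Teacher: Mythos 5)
Your proof is correct and follows exactly the route the paper intends: the corollary is stated there as a ``direct application'' of Proposition \ref{TFP}, and your choice of $F=P$, $X=\{a\}$, $I=(b]$, together with the verification that $b\in D_{P}(\{a\})$ iff $a\to b\in P$ (via $1\to b\leq a\to b$), is precisely the missing instantiation. The right-to-left direction from the definition of $R_{\mathbf{A}}$ is likewise the standard one.
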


Now we recall some results of representation of weak Heyting algebras and some of its subvarieties. 

\begin{definition}  A relational structure $\mathcal{F} = \left(X,\leq,R\right)$ is said to be a \emph{WH-frame} provided that the following conditions are satisfied:
\begin{enumerate}[\normalfont 1.]
\item $(X,\leq)$ is a partial ordered set. 
\item $R$ is a binary relation such that $\leq \, \circ \, R \, \subseteq R$.
\end{enumerate} 
\end{definition}

Let $\bf{A}$ be a WH-algebra. Some simple computations shows that 
\[ \mathcal{F}(\mathbf{A})=\left(X(\mathbf{A}),\subseteq,R_{\mathbf{A}}\right) \]
is a WH-frame, where $\subseteq$ is the inclusion and $R_{\mathbf{A}}$
is the relation defined in (\ref{relacion WH}). The WH-frame \( \mathcal{F}(\bf{A}) \) is called the \emph{frame associated to} \( \bf{A} \).\\

\vspace{1pt}

Let \( \mathcal{F} \) be a WH-frame. A direct computation shows that the algebra
\[ \mathcal{A}(\mathcal{F}) = (\mathrm{Up}(X),\cap,\cup,\To_{R},\emptyset,X), \]
is a WH-algebra which is called the \emph{algebra associated to} $\mathcal{F}$, where the map \( \To_{R} \) is defined as follows 
\[ U \To_{R} V = \{ x \in X \colon R(x) \cap U \subseteq V \}. \]

Taking into account Corollary \ref{TPF Corolario 2} it follows the following representation Theorem of \cite[Theorem 3.14]{CJ1}.

\begin{theorem}\label{Teo representacion WH}
Every WH-algebra \( \bf{A} \) is isomorphic to some subalgebra of $\mathcal{A}(\mathcal{F}(\bf{A}))$.    
\end{theorem}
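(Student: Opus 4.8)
The plan is to show that the Stone map $\sigma_{\mathbf{A}}\colon A \to \mathcal{P}(X(\mathbf{A}))$, $\sigma_{\mathbf{A}}(a)=\{P\in X(\mathbf{A})\colon a\in P\}$, embeds $\mathbf{A}$ into $\mathcal{A}(\mathcal{F}(\mathbf{A}))$; its image is then the desired subalgebra. From the duality for bounded distributive lattices recalled in Section \ref{S2} we already know that $\sigma_{\mathbf{A}}$ is injective and preserves $\wedge$, $\vee$, $0$ and $1$, and each $\sigma_{\mathbf{A}}(a)$ is an upset of $(X(\mathbf{A}),\subseteq)$ (if $a\in P$ and $P\subseteq Q$ then $a\in Q$), so $\sigma_{\mathbf{A}}$ does land in $\mathrm{Up}(X(\mathbf{A}))$. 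Hence the only thing left is to check that $\sigma_{\mathbf{A}}$ commutes with the implications, i.e. that for all $a,b\in A$
\[
\sigma_{\mathbf{A}}(a\to b)=\sigma_{\mathbf{A}}(a)\To_{R_{\mathbf{A}}}\sigma_{\mathbf{A}}(b).
\]

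For the inclusion $\subseteq$, take $P$ with $a\to b\in P$ and let $Q\in R_{\mathbf{A}}(P)$ with $a\in Q$; by the definition (\ref{relacion WH}) of $R_{\mathbf{A}}$ we get $b\in Q$, so $R_{\mathbf{A}}(P)\cap\sigma_{\mathbf{A}}(a)\subseteq\sigma_{\mathbf{A}}(b)$, that is, $P\in\sigma_{\mathbf{A}}(a)\To_{R_{\mathbf{A}}}\sigma_{\mathbf{A}}(b)$. For the converse inclusion I would argue contrapositively: if $a\to b\notin P$, then Corollary \ref{TPF Corolario 2} supplies a prime filter $Q$ with $(P,Q)\in R_{\mathbf{A}}$, $a\in Q$ and $b\notin Q$; such a $Q$ witnesses $R_{\mathbf{A}}(P)\cap\sigma_{\mathbf{A}}(a)\not\subseteq\sigma_{\mathbf{A}}(b)$, whence $P\notin\sigma_{\mathbf{A}}(a)\To_{R_{\mathbf{A}}}\sigma_{\mathbf{A}}(b)$.

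Combining these, $\sigma_{\mathbf{A}}$ is an injective homomorphism from $\mathbf{A}$ into the WH-algebra $\mathcal{A}(\mathcal{F}(\mathbf{A}))$, and therefore $\mathbf{A}$ is isomorphic to the subalgebra $\sigma_{\mathbf{A}}[A]$ of $\mathcal{A}(\mathcal{F}(\mathbf{A}))$. The only substantial ingredient is the converse inclusion above, whose whole weight rests on Corollary \ref{TPF Corolario 2} (and hence, ultimately, on the Prime Filter Theorem of Proposition \ref{TFP}); the rest is bookkeeping ensuring that the lattice part of the duality is undisturbed. A minor point worth recording is that in the frame $\mathcal{F}(\mathbf{A})$ the relation $R_{\mathbf{A}}$ is to be read as its restriction to $X(\mathbf{A})\times X(\mathbf{A})$, and that this restriction still satisfies $\subseteq\,\circ\,R_{\mathbf{A}}\subseteq R_{\mathbf{A}}$, so that $\mathcal{F}(\mathbf{A})$ is a WH-frame and $\To_{R_{\mathbf{A}}}$ is well defined.
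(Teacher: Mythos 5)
Your proof is correct and follows exactly the route the paper intends: the paper states this theorem as a recalled result (citing \cite[Theorem 3.14]{CJ1}) derived ``taking into account Corollary \ref{TPF Corolario 2}'', and its explicit proof of the dual statement, Theorem \ref{representacion WD}, is precisely your argument with $\ot$ in place of $\to$ --- the Stone map is a lattice embedding, and the compatibility with the implication follows from the definition of $R_{\mathbf{A}}$ in one direction and from Corollary \ref{TPF Corolario 2} in the other. Your closing remark about restricting $R_{\mathbf{A}}$ to $X(\mathbf{A})\times X(\mathbf{A})$ is a sensible piece of bookkeeping that the paper leaves implicit.
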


Representation theorem for WH-algebras can be specialized for the subvarieties mentioned above. The following results can be found in \cite[Section 4]{CJ2}.

\begin{lemma} \label{aux1} Let $\bf{A}$ be a WH-algebra and $\mathcal{F}(\bf{A})$ its canonical frame. Then
\begin{enumerate}[\normalfont (1)]
    \item The condition R is valid on $\bf{A}$ if and only if $R_{\bf{A}}$ is reflexive.
    \item The condition T is valid on $\bf{A}$ if and only if $R_{\bf{A}}$ is transitive.
    \item The condition B is valid on $\bf{A}$ if and only if for each $P,Q \in X(\bf{A})$ we have that $(P,Q) \in R_{\bf{A}}$ implies that $P \subseteq Q$ 
\end{enumerate}
\end{lemma}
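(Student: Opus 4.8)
The plan is to prove all three equivalences by a uniform two-sided argument. For each \textbf{only if} direction I would argue directly, exploiting that every prime filter $P$ is in particular a filter, hence an upset containing $1$ and closed under finite meets. For each \textbf{if} direction I would argue by contraposition, producing the prime filters that witness a failure of the relevant first-order property on $\mathcal{F}(\mathbf{A})$ by means of the Prime Filter Theorem for bounded distributive lattices (recalled in Section \ref{S2}) together with Corollary \ref{TPF Corolario 2}, which is precisely the tool that converts a statement of the form $a \to b \notin P$ into the existence of a suitable $R_{\mathbf{A}}$-successor of $P$ realizing $a$ and omitting $b$.

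Concretely, for (1): if condition R holds and $P \in X(\mathbf{A})$, then from $a \to b \in P$ and $a \in P$ we get $a \wedge (a \to b) \in P$, hence $b \in P$ by R, so $(P,P) \in R_{\mathbf{A}}$. Conversely, if R fails there are $a,b$ with $a \wedge (a \to b) \not\leq b$; separating by a prime filter $P$ with $a \wedge (a \to b) \in P$ and $b \notin P$ contradicts $(P,P) \in R_{\mathbf{A}}$. For (3): if condition B holds and $(P,Q) \in R_{\mathbf{A}}$, then $a \in P$ gives $1 \to a \in P$ by B, and since $1 \in Q$ we obtain $a \in Q$, so $P \subseteq Q$; conversely, if B fails at some $a$, pick a prime $P$ with $a \in P$ and $1 \to a \notin P$, apply Corollary \ref{TPF Corolario 2} to get $Q$ with $(P,Q) \in R_{\mathbf{A}}$ and $a \notin Q$, and note this contradicts $P \subseteq Q$.

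The slightly more involved case is (2). For the \textbf{only if} direction, given $(P,Q),(Q,S) \in R_{\mathbf{A}}$, $a \to b \in P$ and $a \in S$, I would instantiate condition T at $c = 1$ to get $1 \to (a \to b) \in P$, then use $1 \in Q$ together with $(P,Q) \in R_{\mathbf{A}}$ to deduce $a \to b \in Q$, and finally $a \in S$ with $(Q,S) \in R_{\mathbf{A}}$ to conclude $b \in S$; hence $(P,S) \in R_{\mathbf{A}}$. For the \textbf{if} direction, if T fails at some $a,b,c$, take a prime $P$ with $a \to b \in P$ and $c \to (a \to b) \notin P$; Corollary \ref{TPF Corolario 2} yields $Q$ with $(P,Q) \in R_{\mathbf{A}}$, $c \in Q$ and $a \to b \notin Q$, and a second application of the same corollary yields $S$ with $(Q,S) \in R_{\mathbf{A}}$, $a \in S$ and $b \notin S$; transitivity of $R_{\mathbf{A}}$ then gives $(P,S) \in R_{\mathbf{A}}$, contradicting $a \to b \in P$ together with $a \in S$ and $b \notin S$.

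I do not anticipate a genuine obstacle: these are the WH-analogues of classical Sahlqvist-style frame correspondences, and essentially all the content lies in applying Corollary \ref{TPF Corolario 2} (iterated, in the transitivity case) to build the required chains of prime filters. The only points needing mild care are remembering that $1$ lies in every prime filter, so that the instances $c=1$ of conditions T and B are legitimately available in the forward directions, and choosing the correct witnesses when invoking the corollary in the backward directions.
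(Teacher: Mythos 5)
Your proof is correct. The paper itself does not prove Lemma \ref{aux1} (it is recalled from \cite[Section 4]{CJ2}), but your argument is exactly the standard one and coincides, point for point, with the paper's own proof of the order-dual statement (Lemma \ref{ecuacion y condicion WD}): direct verification in the forward directions using that prime filters are meet-closed upsets containing $1$ (with the instantiation $c=1$ in case T, dual to the paper's use of $0 \notin Q$ in case $\mathrm{T}^{*}$), and contraposition via Corollary \ref{TPF Corolario 2} (applied twice for transitivity) in the backward directions.
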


\begin{lemma} \label{aux2} Let $\mathcal{F}=(X,\leq,R)$ be a WH-frame and $\mathcal{A}(\mathcal{F})$ it algebra associated. Then
\begin{enumerate}[\normalfont (1)]
    \item The relation $R$ is reflexive if and only if the condition $\rm{R}$ is valid on $\mathcal{A}(\mathcal{F})$
    \item The relation $R$ is transitive if and only if the condition $\rm{T}$ is valid on $\mathcal{A}(\mathcal{F})$
    \item For each $x,y \in X$, $(x,y) \in S$ implies $x \leq y$ if and only if the condition $\rm{B}$ is valid on $\mathcal{A}(\mathcal{F})$ 
\end{enumerate}
\end{lemma}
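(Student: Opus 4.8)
The plan is to prove the three biconditionals one at a time, and in each case the implication ``frame condition $\Rightarrow$ algebraic condition'' will be a direct unravelling of the definition $U \To_R V = \{x \in X \colon R(x) \cap U \subseteq V\}$ together with the interpretation of the connectives in $\mathcal{A}(\mathcal{F})$: the inequality $a\wedge(a\to b)\le b$ becomes $U\cap(U\To_R V)\subseteq V$; the identity $a\to b\le c\to(a\to b)$ becomes $U\To_R V\subseteq W\To_R(U\To_R V)$; and $a\le 1\to a$ becomes $U\subseteq X\To_R U=\{x\in X\colon R(x)\subseteq U\}$. (As in Lemma~\ref{aux1}(3), the relation written ``$S$'' in item~(3) is $R$.) For (1)$\Rightarrow$: if $x\in U\cap(U\To_R V)$ then, by reflexivity, $x\in R(x)\cap U\subseteq V$. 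For (2)$\Rightarrow$: take $x\in U\To_R V$ and an arbitrary $y\in R(x)\cap W$; to see $y\in U\To_R V$ pick $z\in R(y)\cap U$, and since $(x,y),(y,z)\in R$ transitivity gives $(x,z)\in R$, hence $z\in R(x)\cap U\subseteq V$; thus $R(x)\cap W\subseteq U\To_R V$, i.e.\ $x\in W\To_R(U\To_R V)$. For (3)$\Rightarrow$: if $x\in U$ and $y\in R(x)$ then $x\le y$, and $U$ being an upset gives $y\in U$, so $R(x)\subseteq U$, i.e.\ $x\in X\To_R U$.

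For the converse implications I would argue contrapositively, building counterexamples out of principal upsets. The guiding observation is that, by antisymmetry of $\le$, the only element of $[x)$ that is $\le x$ is $x$ itself; hence the upset $[\,R(x)\cap[x)\,)$ generated by $R(x)\cap[x)$ contains $x$ if and only if $x\in R(x)$, and more generally $[\,R(x)\cap[z)\,)$ contains $z$ if and only if $z\in R(x)$. Concretely, for the converse of (1): assuming $x\notin R(x)$, set $U=[x)$ and $V=[\,R(x)\cap[x)\,)$; then $R(x)\cap U\subseteq V$ gives $x\in U\cap(U\To_R V)$, while $x\notin V$ by the observation above, contradicting condition R. For the converse of (3): assuming $(x,y)\in R$, take $U=[x)$; condition B forces $R(x)\subseteq[x)$, whence $y\in[x)$, that is $x\le y$.

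The main work is the converse of (2). Assume $(x,y)\in R$, $(y,z)\in R$ but $(x,z)\notin R$, and put $U=[z)$, $V=[\,R(x)\cap[z)\,)$ and $W=[y)$. Since $z\notin R(x)$, the observation gives $z\notin V$. On the one hand $R(x)\cap U\subseteq V$, so $x\in U\To_R V$; on the other hand $z\in R(y)\cap U$ with $z\notin V$ shows $y\notin U\To_R V$, while $y\in R(x)\cap W$, so $R(x)\cap W\not\subseteq U\To_R V$, i.e.\ $x\notin W\To_R(U\To_R V)$. Hence $U\To_R V\not\subseteq W\To_R(U\To_R V)$, contradicting condition T. I expect this last construction --- choosing the three upsets $U,V,W$ so that a single point $x$ separates the two sides of the T-inequality --- to be the only delicate step; the remaining arguments are routine computations with the definition of $\To_R$ and the fact that $\mathcal{A}(\mathcal{F})$ is a WH-algebra.
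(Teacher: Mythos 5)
Your proof is correct; the paper itself does not prove Lemma \ref{aux2} (it is recalled from \cite[Section 4]{CJ2}), but your argument is essentially the same as the paper's own proof of the dual statement for WD-frames (Lemma \ref{lema condicion ecuacion WD}): a direct unravelling of $\To_R$ for the ``frame condition implies equation'' directions, and counterexamples built from principal upsets for the converses. Your specific choice $V=[\,R(x)\cap[z)\,)$ differs cosmetically from the paper's $(z]^{c}$-style witnesses, but both rest on the same antisymmetry observation and the constructions are interchangeable.
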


Taking into account Theorem \ref{Teo representacion WH}, Lemma \ref{aux1}  and Lemma \ref{aux2} we have the following result: 

\begin{theorem} \label{Teo representacion subvariedades} If $\bf{A} \in \{\mathcal{RWH},\mathcal{TWH},\mathcal{SRL},\mathcal{B},\mathcal{H} \}$ then $\bf{A}$ is isomorphic to some subalgebra of $\mathcal{A}(\mathcal{F}({\bf{A}}))$.  
\end{theorem}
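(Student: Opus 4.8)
The plan is to bootstrap from the general representation Theorem \ref{Teo representacion WH} and refine it so that the codomain $\mathcal{A}(\mathcal{F}(\mathbf{A}))$ stays inside whichever of the five subvarieties contains $\mathbf{A}$. The embedding part is already supplied by Theorem \ref{Teo representacion WH}: the Stone map $\sigma_{\mathbf{A}}\colon a\mapsto\{P\in X(\mathbf{A}):a\in P\}$ is an injective WH-homomorphism of $\mathbf{A}$ into $\mathcal{A}(\mathcal{F}(\mathbf{A}))$, so $\sigma_{\mathbf{A}}[A]$ is a WH-subalgebra of $\mathcal{A}(\mathcal{F}(\mathbf{A}))$ isomorphic to $\mathbf{A}$. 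Hence it suffices to prove that $\mathcal{A}(\mathcal{F}(\mathbf{A}))$ itself lies in the same subvariety, for then so does its subalgebra $\sigma_{\mathbf{A}}[A]$, varieties being closed under subalgebras.

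For this I would first observe that each of the five subvarieties is cut out of $\mathcal{WH}$ by a subset of the conditions R, T, B: namely R for $\mathcal{RWH}$, T for $\mathcal{TWH}$, R and T for $\mathcal{SRL}$, B for $\mathcal{B}$, and B and R for $\mathcal{H}$. Now fix $\mathbf{A}$ in one of these varieties. For each condition in its defining subset, the validity of that condition on $\mathbf{A}$ yields, via Lemma \ref{aux1}, the matching structural property of the accessibility relation of the canonical frame $\mathcal{F}(\mathbf{A})=(X(\mathbf{A}),\subseteq,R_{\mathbf{A}})$: reflexivity of $R_{\mathbf{A}}$ from R, transitivity from T, and the implication $(P,Q)\in R_{\mathbf{A}}\Rightarrow P\subseteq Q$ from B (here $R_{\mathbf{A}}$ is tacitly restricted to pairs of prime filters, which is the relation $\mathcal{F}(\mathbf{A})$ actually carries as a WH-frame). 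Feeding that property into Lemma \ref{aux2}, applied to the WH-frame $\mathcal{F}(\mathbf{A})$, returns the validity of the same condition on $\mathcal{A}(\mathcal{F}(\mathbf{A}))$. Running this through every condition in the defining subset shows $\mathcal{A}(\mathcal{F}(\mathbf{A}))$ satisfies all of them, hence belongs to the subvariety containing $\mathbf{A}$; this finishes the proof.

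The whole argument is a dovetailing of Theorem \ref{Teo representacion WH}, Lemma \ref{aux1} and Lemma \ref{aux2}, so no genuinely new calculation is needed; the only point demanding care is bookkeeping. One must keep the axiomatisations of $\mathcal{RWH},\mathcal{TWH},\mathcal{SRL},\mathcal{B},\mathcal{H}$ over $\mathcal{WH}$ expressed by conditions among R, T, B whose frame correspondences are exactly those recorded in Lemmas \ref{aux1} and \ref{aux2}, so that secondary validities (for example, that $\mathcal{B}$ and $\mathcal{H}$ also satisfy T) need not be transferred by hand, being automatic from the conditions already handled; and one must make sure that the relation fed to Lemma \ref{aux2} is precisely the restriction of $R_{\mathbf{A}}$ to $X(\mathbf{A})\times X(\mathbf{A})$ carried by $\mathcal{F}(\mathbf{A})$, matching the way $R_{\mathbf{A}}$ is turned into a relation on prime filters in Lemma \ref{aux1}.
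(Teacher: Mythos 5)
Your proposal is correct and matches the paper's own (one-line) argument exactly: the paper derives this theorem precisely by combining Theorem \ref{Teo representacion WH} with Lemmas \ref{aux1} and \ref{aux2}, transferring each defining condition among R, T, B from $\mathbf{A}$ to the canonical frame and back to $\mathcal{A}(\mathcal{F}(\mathbf{A}))$, so that the Stone embedding lands in a member of the same subvariety. Your additional bookkeeping remarks (restricting $R_{\mathbf{A}}$ to prime filters, and closure of varieties under subalgebras) only make explicit what the paper leaves tacit.
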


\section{Weak difference algebras} \label{S3}

In this section we shall introduce the variety of weak difference algebras. This variety of algebras is connected with the variety of co-Heyting algebras in the same sense that weak Heyting algebras are linked with Heyting algebras, namely the variety of weak difference algebras provide us of a generalization of co-Heyting algebras. Also, we shall prove a representation theorem for the variety of weak difference algebras by certain relational structures.

\begin{definition} \label{def WD-algebra} An algebra \( {\bf{A}} = \left(A,\wedge,\vee,\ot,0,1 \right) \) of type \( (2,2,2,0,0) \) is said to be a \emph{weak diference algebra}, WD-algebra for short, provided that \( (A,\wedge,\vee,0,1) \) is a bounded distributive lattice and the following conditions are satisfied for each \( a,b,c \in A \):
\begin{enumerate}[\normalfont 1.]
\item \( a \ot a = 0 \)
\item \( (a \vee b) \ot c = (a \ot c) \vee (b \ot c) \)
\item \( a \ot (b \wedge c) = (a \ot b) \vee (a \ot c) \)
\item \( a \ot c \leq (a \ot b) \vee (b \ot c) \)
\end{enumerate} 
\end{definition}

It follows from Definition \ref{def WD-algebra} that that the class of all WD-algebras forms a variety and we write $\mathcal{WD}$ to denote it. In the same sense that the map $\to$ is called strict implication for any WH-algebra $\bf{A}$ the map $\ot$ is called the weak difference for every WD-algebra $\bf{A}$.

\begin{lemma} \label{monotonia de la diferencia} Let \( A \) be a WD-algebra and \( a,b,c \in A \). If \( a \leq b \) then
\begin{enumerate}[\normalfont (1)]
\item \( a \ot c \leq b \ot c \).
\item \( c \ot b \leq c \ot a \).
\end{enumerate}
\end{lemma}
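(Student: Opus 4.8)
The plan is to derive both monotonicity statements directly from the equational axioms of Definition \ref{def WD-algebra}, exploiting the fact that the lattice order can be expressed either by $a \vee b = b$ or by $a \wedge b = a$, and then reading off the consequence through the appropriate distributivity axiom for $\ot$.

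First I would prove (1). Assume $a \leq b$, which means $a \vee b = b$. Then apply axiom (2) with the roles $(a \vee b) \ot c = (a \ot c) \vee (b \ot c)$; the left side equals $b \ot c$, so we get $b \ot c = (a \ot c) \vee (b \ot c)$, which is precisely $a \ot c \leq b \ot c$. This is a one-line computation once the translation between $\leq$ and $\vee$ is in place.

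Next I would prove (2) in the dual manner. Assume $a \leq b$, now written as $a \wedge b = a$. Apply axiom (3) in the form $c \ot (a \wedge b) = (c \ot a) \vee (c \ot b)$; the left side is $c \ot a$, so $c \ot a = (c \ot a) \vee (c \ot b)$, which gives $c \ot b \leq c \ot a$. Again this is immediate from the axiom.

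Since both arguments are short substitutions into the defining identities, I do not anticipate any genuine obstacle; the only thing to be careful about is using the correct characterization of $\leq$ in each case (join-form for the first coordinate, meet-form for the second), matching the shape of axioms (2) and (3) respectively. No appeal to the remaining axioms (1) and (4), nor to distributivity of the underlying lattice, is needed.
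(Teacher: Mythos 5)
Your proof is correct and follows essentially the same route as the paper: part (1) is obtained by rewriting $a\leq b$ as $a\vee b=b$ and substituting into axiom (2) of Definition \ref{def WD-algebra}, and part (2) is the analogous substitution into axiom (3) using $a\wedge b=a$, which is exactly the ``similar argument'' the paper leaves implicit.
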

\begin{proof}
\( (1) \) Suppose that \( a \leq b \), so we have that \( b = b \vee a \) which implies that
\[ (a \vee b) \ot c = (a \ot c) \vee (b \ot c). \] 
Thus \( a \ot c \leq b \ot c \). A similar argument can be used in order to prove \( (2) \).  
\end{proof}

\begin{example} Let ${\bf{A}} = \left( \{0,a,1\},\wedge, \vee,\ot,0,1 \} \right)$ be the three element chain with $0 < a < 1$ and the binary map $\ot$ defined by $x \ot y = 0$ for all $x,y \in A$. Some simple computations shows that $\bf{A}$ is a WD-algebra. 
\end{example}

\begin{example} Recall that an algebra \( {\bf{A}} = (A,\wedge,\vee,\ot,0,1) \) is said to be a co-Heyting algebra (or dual Heyting algebra) provided that \( (A,\wedge,\vee,0,1) \) is a bounded distributive lattice and for every \( a,b,c \in A \) the dual residuation law it holds:
\[ a \ot b \leq c \text{ if and only if } a \leq b \vee c. \]
Co-Heyting algebras are examples of WD-algebras.
\end{example}

It is clear that the conditions of Definition \ref{def WD-algebra} are the dual of the conditions of Definition \ref{def WH algebra}. In particular, we shall consider the dual equations of the equations R,T and B respectively: 
\begin{itemize}
    \item[R$^{*} \colon $] \( a \leq b \vee (a \ot b)\)
    \item[T$^{*} \colon $] \(  (a \ot b) \ot c \leq (a \ot b) \)
    \item[B$^{*} \colon $] \( a \ot 0 \leq a \)
\end{itemize}
Taking into account the equations R\(^{*}\), T\(^{*}\) and B\(^{*}\) it can be defined the dual subvarieties of \( \mathcal{THW} \), \( \mathcal{RWH} \), \( \mathcal{SRL} \), \( \mathcal{B} \) and \( \mathcal{H} \) as follows:

\begin{definition} \label{def subvariedades WD} A WD-algebra $\bf{A}$ is said to be a \emph{RWD-algebra} provided that the condition $\rm{R}^{*}$ is valid on $A$. We say that a WD-algebra $\bf{A}$ is a \emph{TWD-algebra} provided that the condition $\rm{T}^{*}$ is valid on $A$. A \emph{basic WD-algebra} is a WD-algebra that in addition satisfies the condition $\rm{B}^{*}$. We say that a WD-algebra ${\bf{A}}$ is a \emph{co-subresiduated lattice} or \emph{dual subresiduated lattice} whenever the conditions $\rm{R}^{*}$ and $\rm{T}^{*}$ are valid on $\bf{A}$. A basic WD-algebra $\bf{A}$ is said to be a co-Heyting algebra if in addition the condition $\rm{R}^{*}$ is valid on $\bf{A}$.
\end{definition}

We write $\mathcal{TWD}$, $\mathcal{RWD}$, $\mathcal{SRL^{*}}$, $\mathcal{B^{*}}$ and $\mathcal{H^{*}}$ to denote the variety of TWD-algebras, RWD-algebras, dual subresiduated lattices, basic WD-algebras and co-Heyting algebras respectively. \\

\vspace{0.2cm}

Recall that a Kripke frame is a structure \( (X,\leq) \) where \( \leq \) is a reflexive and transitive relation. In \cite{Wolter} it is shown that the algebra
  \[ \mathcal{A}(\mathcal{F}) = (\mathrm{Up}(X),\cap,\cup,\Leftarrow,\emptyset,X), \]
is a co-Heyting algebra, where the map \( \Leftarrow \) is defined as follows:
\[ U \oT V = \{ x \in X \colon (x] \cap (U \setminus V) \neq \emptyset \} \] 
Following this line, in what follows we shall introduce certain relational structures, called WD-frames, which allow us obtain examples of WD-algebras. \\

\vspace{0.2cm}

\begin{definition} A relational structure \(  \mathcal{F} = (X,\leq,S) \) is said to be a \emph{WD-frame} provided that:
\begin{enumerate}[\normalfont 1.]
\item \( (X,\leq) \) is a Kripke frame. 
\item \( \leq^{-1} \circ \, S \, \subseteq S \).
\end{enumerate}  
\end{definition}

Let \( \mathcal{F} \) be a WD-frame. We shall consider the algebra
 \[ \mathcal{A}(\mathcal{F}) = (\mathrm{Up}(X),\cap,\cup,\Leftarrow_{S},\emptyset,X), \] 
 where the binary map $ \oT_{S} $ is defined as follows
\[ U \oT_{S} V = \{ x \in X \colon S(x) \cap (U \setminus V) \neq \emptyset \}. \] 
Some simple computations shows that $\mathcal{A}(\mathcal{F})$ is a WD-algebra and we call it the \emph{WD-algebra of the WD-frame} $\mathcal{F}$. \\

Let \( \bf{A} \) be a WD-algebra and \( (P,Q) \in   X(\bf{A}) \times {\rm{Fi}}(\bf{A}) \). We define the binary relation \( S_{\bf{A}} \) as follows: 
\begin{equation} \label{def relacion S}
(P,Q) \in S_{\bf{A}} \text{ if and only if } (\forall a,b \in A)[a \in Q,b \notin Q \Rightarrow a \ot b \in P] 
\end{equation}

\begin{remark} Note that if $\bf{A}$ is a co-Heyting algebra then the relation \( S_{\bf{A}} \) collapse with the relation \( \subseteq^{-1} \). In fact, suppose that \( \bf{A} \) is a co-Heyting algebra then the following conditions are satisfied for all \( a,b \in A \):
\begin{enumerate}
\item[$\rm{R^{*}}$] \( a \leq b \vee (a \ot b) \) 
\item[$\rm{B^{*}}$] \( a \ot 0 \leq a \)
\end{enumerate} 
Now let \( P,Q \in X(\bf{A}) \) with \( Q \subseteq P \) we shall prove that \( (P,Q) \in S_{\bf{A}} \). Suppose that \( a \in Q \) and \( b \notin Q \). By $\rm{R^{*}}$ we have that \( b \vee (a \ot b) \in Q \) which implies that \( b \in Q \) or \( a \ot b \in Q \). Since \( b \notin Q \) it follows that \( a \ot b \in Q \subseteq P \), so \( a \ot b \in P \).

Conversely suppose that \( (P,Q) \in S_{\bf{A}} \) we shall prove that \( Q \subseteq P \). Suppose that \( a \in Q \) and let \( 0 \notin Q \). By assumption we have that \(  a \ot 0 \in P \) which implies that \( a \in P \).  
\end{remark}

Let \( \bf{A} \) be a WD-algebra and \( P \in X(\bf{A}) \). We define the closure operator \( F_{P} \colon \mathcal{P}(A) \to  \mathcal{P}(A) \) as follows
\[ F_{P}(X) = \left\{ a \in A \colon \exists Y \subseteq_{f} X \colon \bigvee Y \ot a \notin P \right\} \]
where \( \bigvee Y = 0 \) if \( Y = \emptyset \). Some simple computations shows that the operator \( F_{p} \) is a closure operator. Moreover the relation \( S_{\bf{A}} \) can be characterizated by the operator \( F_{P} \). 

\begin{proposition} \label{prop aux 1}
 Let \( \bf{A} \) be a WD-algebra, \( P,Q \in X(\bf{A}) \) and \( X \subseteq A \). Then 
\begin{enumerate}[\normalfont (1)]
\item \( F_{P}(X) \in Fi(\bf{A}) \)
\item \( \left(P,F_{P}(X) \right) \in S_{\bf{A}} \)
\item \( (P,Q) \in S_{\bf{A}} \) if and only if \( F_{P}(Q) = Q \).
\end{enumerate}
\end{proposition}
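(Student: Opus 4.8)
The plan is to mirror, via the order‑duality relating the axioms of Definition~\ref{def WD-algebra} to those of Definition~\ref{def WH algebra}, the proof of the corresponding properties of the closure operator $D_{F}$ of a weak Heyting algebra developed in \cite{CJ2} (cf.\ the paragraph preceding Proposition~\ref{TFP}). The tools needed are just axioms~(2)--(4) of Definition~\ref{def WD-algebra}, Lemma~\ref{monotonia de la diferencia} (so that $\ot$ is order‑preserving in its first coordinate and order‑reversing in its second), and the standard observation that, for $P\in X(\mathbf{A})$, the complement $A\setminus P$ is a prime ideal, i.e.\ a down‑set that is closed under finite joins; primeness of $P$ enters exactly through this last property. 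Throughout, for a finite $Y$ and $a\in A$ I write $w_{Y}(a)$ for the element $\bigwedge Y\ot a$ whose exclusion from $P$ is what, by definition, places $a$ in $F_{P}(X)$ through $Y$ (with $\bigwedge\emptyset=1$).

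\emph{Part (1).} I would verify the three filter clauses. Since $w_{\{a\}}(a)=a\ot a=0\notin P$ by axiom~(1), we get $X\subseteq F_{P}(X)$, and $w_{\emptyset}(1)=1\ot 1=0\notin P$ gives $1\in F_{P}(X)$. Upward closure: if $w_{Y}(a)\notin P$ and $a\leq a'$, then $w_{Y}(a')\leq w_{Y}(a)$ by order‑reversal of $\ot$ in its second argument, so $w_{Y}(a')\notin P$ because $A\setminus P$ is a down‑set. The only clause needing work is closure under binary meet: given a witness $Y$ for $a$ and $Z$ for $b$, put $W=Y\cup Z$; then $w_{W}(a\wedge b)=w_{W}(a)\vee w_{W}(b)$ by axiom~(3); since $\bigwedge W\leq\bigwedge Y$ and $\bigwedge W\leq\bigwedge Z$, order‑preservation of $\ot$ in its first argument gives $w_{W}(a)\leq w_{Y}(a)\notin P$ and $w_{W}(b)\leq w_{Z}(b)\notin P$, so both lie outside $P$ (down‑set); and since $A\setminus P$ is closed under joins, $w_{W}(a\wedge b)\notin P$. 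Hence $a\wedge b\in F_{P}(X)$.

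\emph{Parts (2) and (3).} For~(2), unwinding~(\ref{def relacion S}) it suffices to show that $a\in F_{P}(X)$ and $a\ot b\notin P$ imply $b\in F_{P}(X)$. Choosing a witness $Y$ for $a$, axiom~(4) gives $w_{Y}(b)\leq w_{Y}(a)\vee(a\ot b)$; both summands are outside $P$, so their join is (closure under joins), hence so is $w_{Y}(b)$ (down‑set), and $b\in F_{P}(X)$. For~(3): the right‑to‑left implication is immediate from~(2), since $F_{P}(Q)=Q$ yields $(P,Q)=(P,F_{P}(Q))\in S_{\mathbf{A}}$. Conversely, assume $(P,Q)\in S_{\mathbf{A}}$; extensivity of $F_{P}$ gives $Q\subseteq F_{P}(Q)$, and for the reverse inclusion let $b\in F_{P}(Q)$ with witness $Y\subseteq Q$: then $\bigwedge Y\in Q$ since $Q$ is a filter, so if $b\notin Q$ the defining implication~(\ref{def relacion S}) applied to $\bigwedge Y\in Q$ and $b\notin Q$ would force $w_{Y}(b)=\bigwedge Y\ot b\in P$, a contradiction; hence $b\in Q$ and $F_{P}(Q)=Q$.

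The main obstacle is the meet‑closure clause of Part~(1): it is the only place where primeness of $P$ is genuinely used (via closure of $A\setminus P$ under joins), and it is the one spot where one must check that the directions of monotonicity of $\ot$ line up with the distribution law~(3). Everything else, including Parts~(2) and~(3), is a direct and routine dualization of the weak Heyting case.
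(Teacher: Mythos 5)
Your argument is correct and follows the same route as the paper's proof: upward closure via Lemma \ref{monotonia de la diferencia}, meet-closure via axiom (3) of Definition \ref{def WD-algebra} together with primeness of $P$, part (2) via axiom (4), and part (3) by combining extensivity with the defining implication of $S_{\mathbf{A}}$. The one point that deserves attention is that you have silently replaced the paper's definition of $F_{P}$ by a ``corrected'' version: the paper writes $F_{P}(X)=\{a\colon \exists Y\subseteq_{f}X,\ \bigvee Y\ot a\notin P\}$ with $\bigvee\emptyset=0$, whereas you work with $\bigwedge Y\ot a\notin P$ and $\bigwedge\emptyset=1$. Taken literally, the paper's definition degenerates: since $0\ot a\leq a\ot a=0$ by Lemma \ref{monotonia de la diferencia}, the empty witness set already places every $a$ in $F_{P}(X)$, so $F_{P}(X)=A$ and item (3) would fail for every prime (hence proper) $Q$. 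Your reading is the one that correctly dualizes $D_{F}$ and under which the paper's own steps make sense; for instance, in part (3) one needs the witness element ($\bigvee Y$, resp.\ $\bigwedge Y$) to belong to the filter $Q$, which holds for $\bigwedge Y$ even when $Y=\emptyset$ but fails for $\bigvee\emptyset=0$. Correspondingly, where the paper merges the two witness sets as $W=Y\cap Z$ in the meet-closure step, you take $W=Y\cup Z$; each choice is the right one for its respective convention, and yours avoids the case $W=\emptyset$ that trivializes the paper's version. Apart from this definitional issue, the only genuine (and harmless) divergence is in part (3)($\Leftarrow$), where you simply invoke part (2) applied to $F_{P}(Q)=Q$, while the paper re-runs the axiom-(4) computation; your shortcut is cleaner and equally valid.
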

\begin{proof}
\( (1) \) Let \( a \in F_{P}(X) \) and suppose that \( a \leq b \) we shall prove that \( b \in F_{P}(X) \). Since \( a \in F_{P}(X) \) there is a finite subset \( Y \subseteq X \) for which \( \bigvee Y \ot a \notin P \). Taking into account that \( a \leq b \) and by Lemma \ref{monotonia de la diferencia} we have that \( \bigvee Y \ot b \notin P \), so \( b \in F_{P}(X) \) and \( F_{P}(X) \) is an upset. Moreover if \( a,b \in F_{P}(X) \) then there are \( Y,Z \subseteq X \) finite such that \( \bigvee Y \ot a \notin P \) and \( \bigvee Z \ot b \notin P \). Consider \( W = Y \cap Z \) then we have that \( \bigvee W \ot a \notin P \) and \( \bigvee W \ot b \notin P \), hence 
\[ \left( \bigvee W \ot a \right) \vee \left(\bigvee W \ot b\right) = \bigvee W \ot (a \wedge b) \notin P \]
Thus, \( a \wedge b \in F_{P}(X) \) and \( F_{P}(X) \) is a filter.\\
\vspace{1pt}

\( (2) \) Suppose that \( a \in F_{P}(X) \) and \( b \notin F_{P}(X) \), we shall prove that \( a \ot b \in P \). By assumption \( \bigvee Y \ot a \notin P \) for some finite \( Y \subseteq X \) and \( \bigvee Y \ot b \in P \). Note that 
\[ \bigvee Y \ot b \leq \left( \bigvee Y \ot a \right) \vee (a \ot b) \]
and \( \bigvee Y \ot b  \in P \) then \( \bigvee Y \ot a \in P \) or \( a \ot b \in P \). Taking into account that \( \bigvee Y \ot a  \notin P \) it follows that \( a \ot b \in P \).\\
\vspace{1pt}

\( 3 \Rightarrow) \) It will be enough to prove that \( F_{P}(Q) \subseteq Q \). Let \( a \in F_{P}(Q) \) then there is \( q \in Q \) such that \( q \ot a \notin P \). Since \( (P,Q) \in S_{A} \) and \( q \in Q \) we have that \( a \in Q \). 

\( \Leftarrow) \) Suppose that \( a \in Q \) and \( b \notin Q \) we shall prove that \( a \ot b \in P \). Since \( Q = F_{P}(Q) \) then there is \( z \in Q \) such that \( z \ot a \notin P \) and \( z \ot b \in P \). Then we have that 
\[ z \ot b \leq (z \ot a) \vee (a \ot b), \]
so \( z \ot a \in P \) or \( a \ot b \in P \). Taking into account that \( z \ot a \notin P \) we have that \( a \ot b \in P \) as we desired.  \end{proof}

The following result is the dual of the Proposition \ref{TFP}

\begin{theorem} \label{DTPF} Let \( \bf{A} \) be a WD-algebra, \( P \in X(\bf{A}) \), \( I \in Id(\bf{A}) \) and \( X \subseteq A \) such that \( F_{P}(X) \cap I = \emptyset \). Then there is \( Q \in X(\bf{A}) \) such that \( (P,Q) \in S_{\bf{A}} \), \( X \subseteq Q \) and \( Q \cap I = \emptyset \).
\end{theorem}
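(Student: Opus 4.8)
The plan is to prove this by following the pattern of the proof of Proposition~\ref{TFP} (of which it is the order dual), constructing $Q$ by a Zorn's Lemma argument. Consider the family $\mathcal{Q}$ of all filters $G$ of $\mathbf{A}$ such that $F_P(G)=G$ (equivalently, by Proposition~\ref{prop aux 1}(3), $(P,G)\in S_{\mathbf{A}}$), $X\subseteq G$ and $G\cap I=\emptyset$. This family is non-empty: by Proposition~\ref{prop aux 1}(1) $F_P(X)$ is a filter, it is $F_P$-closed because $F_P$ is idempotent, it contains $X$ (for $a\in X$ take $Y=\{a\}$ and use $a\ot a=0\notin P$, since $P$ is proper), and $F_P(X)\cap I=\emptyset$ by hypothesis; so $F_P(X)\in\mathcal{Q}$. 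The union of a chain in $\mathcal{Q}$ is again in $\mathcal{Q}$: a union of a chain of filters is a filter, it plainly still contains $X$ and avoids $I$, and it is $F_P$-closed because any finite set $Y$ witnessing membership of an element in $F_P(\bigcup_\alpha G_\alpha)$ already lies in some $G_\alpha=F_P(G_\alpha)$. Hence Zorn's Lemma provides a maximal element $Q\in\mathcal{Q}$.

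By construction $X\subseteq Q$, $Q\cap I=\emptyset$ and $F_P(Q)=Q$, so $(P,Q)\in S_{\mathbf{A}}$ by Proposition~\ref{prop aux 1}(3); moreover $0\in I$ forces $0\notin Q$, so $Q$ is proper. Thus everything reduces to showing that $Q$ is a prime filter. Suppose, towards a contradiction, that $a\vee b\in Q$ while $a\notin Q$ and $b\notin Q$. Then $F_P(Q\cup\{a\})$ is an $F_P$-closed filter (Proposition~\ref{prop aux 1}(1) together with idempotency) containing $X$ and strictly containing $Q$ (it contains $a$), so by maximality of $Q$ it meets $I$; choose $i_1\in F_P(Q\cup\{a\})\cap I$, witnessed by a finite $Y_1\subseteq Q\cup\{a\}$ with $\bigvee Y_1\ot i_1\notin P$. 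If $Y_1\subseteq Q$ this would give $i_1\in F_P(Q)=Q$, contradicting $Q\cap I=\emptyset$; hence $a\in Y_1$, and writing $q_1=\bigvee(Y_1\setminus\{a\})$ we get $\bigvee Y_1=q_1\vee a$. If $q_1\in Q$, then $q_1\vee a\in Q$ (up-closure of $Q$), so again $(q_1\vee a)\ot i_1\notin P$ yields $i_1\in F_P(Q)=Q$, a contradiction; therefore $Y_1=\{a\}$ and $a\ot i_1\notin P$. By the symmetric argument on the $b$-side we obtain $i_2\in I$ with $b\ot i_2\notin P$. Now set $i=i_1\vee i_2\in I$: by Lemma~\ref{monotonia de la diferencia}(2) we have $a\ot i\leq a\ot i_1$ and $b\ot i\leq b\ot i_2$, so $a\ot i\notin P$ and $b\ot i\notin P$; since $P$ is prime, $(a\ot i)\vee(b\ot i)\notin P$, and by Definition~\ref{def WD-algebra}(2) this element equals $(a\vee b)\ot i$. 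As $a\vee b\in Q$, the witness $Y=\{a\vee b\}$ then shows $i\in F_P(Q)=Q$, contradicting $Q\cap I=\emptyset$. Hence $Q$ is prime, i.e.\ $Q\in X(\mathbf{A})$, which completes the proof.

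The crux, and the step I expect to be the main obstacle, is the primality argument: one cannot simply take $Q$ to be a maximal filter disjoint from $I$, since such a $Q$ need not satisfy $(P,Q)\in S_{\mathbf{A}}$. The fix is to maximize inside the smaller class of $F_P$-closed filters, so that $F_P(Q)=Q$ comes for free, and then to route the usual prime-filter manipulation through the weak-difference axioms, exploiting additivity of $\ot$ in the first argument, its antitonicity in the second, and primality of $P$; the small bookkeeping nuisance is the degenerate case $Y_1=\{a\}$, handled separately above.
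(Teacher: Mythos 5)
Your proof is correct and follows essentially the same route as the paper: Zorn's Lemma applied to the family of $F_{P}$-closed filters containing $X$ and disjoint from $I$ (exactly the paper's family, via Proposition~\ref{prop aux 1}(3)), with non-emptiness witnessed by $F_{P}(X)$, followed by a primality argument that combines additivity of $\ot$ in the first coordinate, antitonicity in the second, and primality of $P$. The only cosmetic difference is in the primality step: you reduce the witnesses to the singletons $\{a\}$ and $\{b\}$ by a small case analysis (which could be shortened by just noting $a\ot i_{1}\leq(q_{1}\vee a)\ot i_{1}\notin P$), whereas the paper keeps the auxiliary elements $q_{1},q_{2}\in Q$ and joins $(q_{1}\vee a)\vee(q_{2}\vee b)\in Q$ at the end; both work.
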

\begin{proof}
Let us consider the family 
\[ \mathcal{F} = \{ G \in \mathrm{Fi}({\bf{A}}) \colon (P,G) \in S_{{\bf{A}}}, X \subseteq G \text{ and } G \cap I = \emptyset \}. \]
Notice that \( F_{P}(X) \in \mathcal{F} \), so \( \mathcal{F} \neq \emptyset \). Moreover, \( \mathcal{F} \) is inductive, so by Zorn Lemma there is a maximal element \( Q  \in \mathcal{F} \). Let us see that \( Q \) is prime. In fact, suppose that \( a \vee b \in Q \) and \( a,b \notin Q \). Consider the filters 
\[ Q_{a}= F_{P}(Q,a) = \{ x \in A \colon (q \vee a) \ot x \notin P \text{ for some } q \in Q \} \]
and
\[ Q_{b}= F_{P}(Q,b) = \{ x \in A \colon (q \vee b) \ot x \notin P \text{ for some } q \in Q \} \]
By the maximality of \( Q \) and by (2) of Proposition \ref{prop aux 1} we have that \( Q_{a} \cap I \neq \emptyset \) and \( Q_{b} \cap I \neq \emptyset \), so there is \( z_{1},z_{2} \in I \) and \( q_{1},q_{2} \in Q \) such that \( (a \vee q_{1}) \ot z_{1} \notin P \) and \( (b \vee q_{2}) \ot z_{2} \notin P \). Taking into account Lemma \ref{monotonia de la diferencia} it follows that 
\[ (q_{1} \vee a) \vee (q_{2} \vee b) \ot (z_{1} \vee z_{2}) \notin P, \]
which implies that \( z_{1} \vee z_{2} \in F_{P}(Q) \cap I \). Since \( (P,Q) \in S_{{\bf{A}}} \) it follows that \( F_{P}(Q) = Q \), so we have that \( Q \cap I \neq \emptyset \) which is a contradiction. Thus \( Q \) is prime, \( (P,Q) \in S_{{\bf{A}}} \), \( X \subseteq Q \) and \( Q \cap I = \emptyset \).
\end{proof}

\begin{corollary} \label{corolario DTFP} Let \( \bf{A} \) be a WD-algebra, \( P \in X(\bf{A}) \) and \( a,b \in \bf{A} \). Then \( a \ot b \in P \) if and only if there is \( Q \in X(\bf{A}) \) such that \( (P,Q) \in S_{\bf{A}} \), \( a \in Q \) and \( b \notin Q \).
\end{corollary}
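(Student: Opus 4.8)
The statement is the exact order-dual of Corollary \ref{TPF Corolario 2}, so the plan is to argue along the same lines, using Theorem \ref{DTPF} in the role played there by Proposition \ref{TFP}. The implication from right to left is immediate from definition (\ref{def relacion S}): if $Q\in X(\mathbf{A})$ satisfies $(P,Q)\in S_{\mathbf{A}}$, $a\in Q$ and $b\notin Q$, then specialising the universally quantified pair in (\ref{def relacion S}) to $a,b$ yields at once $a\ot b\in P$.

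For the converse, assume $a\ot b\in P$. I would invoke Theorem \ref{DTPF} with the given $P$, with $X=\{a\}$, and with $I=(b]$ the principal ideal generated by $b$ (a legitimate input, since $(b]$ is an ideal). Everything then reduces to checking the hypothesis $F_{P}(\{a\})\cap (b]=\emptyset$. Unwinding the definition of the closure operator $F_{P}$ one sees that $F_{P}(\{a\})=\{c\in A: a\ot c\notin P\}$: the only relevant finite subsets of $\{a\}$ give, via Lemma \ref{monotonia de la diferencia}(1) and the inequality $a\leq 1$, an inner term that lies above $a\ot c$, so it contributes nothing beyond the condition $a\ot c\notin P$. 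Now let $c\leq b$; by Lemma \ref{monotonia de la diferencia}(2) we have $a\ot b\leq a\ot c$, and since $a\ot b\in P$ and $P$ is an upset it follows that $a\ot c\in P$, i.e. $c\notin F_{P}(\{a\})$. Hence $F_{P}(\{a\})\cap (b]=\emptyset$, and Theorem \ref{DTPF} produces $Q\in X(\mathbf{A})$ with $(P,Q)\in S_{\mathbf{A}}$, $\{a\}\subseteq Q$ and $Q\cap(b]=\emptyset$; that is, $a\in Q$ and (as $b\in(b]$) $b\notin Q$, which is what we wanted.

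I do not foresee a genuine obstacle here: the argument is a routine application of Theorem \ref{DTPF}, and the single computation involved — the identification of $F_{P}(\{a\})$ and the emptiness of $F_{P}(\{a\})\cap (b]$ — rests only on the two monotonicity clauses of Lemma \ref{monotonia de la diferencia} and on a prime filter being a proper upset (so that Proposition \ref{prop aux 1} applies and $0\notin P$). One should also note that the extreme case $b=1$ is automatically harmless: from $a\wedge 1=a$ and $a\ot a=0$ one gets $a\ot 1=(a\ot a)\vee(a\ot 1)=a\ot(a\wedge 1)=a\ot a=0\notin P$, so the hypothesis $a\ot b\in P$ cannot hold there, in accordance with the fact that $1$ lies in every filter.
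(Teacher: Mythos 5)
Your proof is correct and takes essentially the same route as the paper's: the right-to-left direction is read off from the definition of $S_{\mathbf{A}}$, and for the converse the paper likewise applies Theorem \ref{DTPF} with $I=(b]$, verifying the disjointness hypothesis by the same two monotonicity steps from Lemma \ref{monotonia de la diferencia} (the paper takes $X=[a)$ instead of $X=\{a\}$, an inessential difference since the prime filter $Q$ produced is upward closed). The only imprecision is your aside dismissing the empty subset of $\{a\}$, which tacitly reads $\bigvee\emptyset$ as $1$ rather than the paper's stated convention $\bigvee\emptyset=0$; under the literal convention the clause $\bigvee\emptyset\ot c=0\ot c=0\notin P$ would make $F_{P}$ trivially all of $A$, a defect of the paper's definition that its own proof also silently ignores, so this does not affect the substance of your argument.
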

\begin{proof}
Suppose that \( a \ot b \in P \) we shall prove that 
\[ F_{P}([a)) \cap (b] \neq \emptyset \]
In fact, suppose that there is \( x \in F_{P}([a)) \cap (b] \), so there is \( z \in [a) \) such that \( z \ot x \notin P \) and \( x \leq b \). Since \( a \leq z \) it follows that \( a \ot x \notin P \). Besides, since \( x \leq b \) we have that \( a \ot b \notin P \) which is absurd. Taking into account Theorem \ref{DTPF} there is \( Q \in X(\bf{A}) \) such that \( (P,Q) \in S_{\bf{A}} \), \( a \in Q \) and \( b \notin Q \).

The converse is immediate.
\end{proof}

\vspace{1pt}

\begin{lemma} \label{marco asociado al algebra} Let \( \bf{A} \) be a WD-algebra. Then the structure \( \mathcal{F}({\bf{A}})= \left(X({\bf{A}}),\subseteq,S_{{\bf{A}}} \right) \) is a WD-frame.
\end{lemma}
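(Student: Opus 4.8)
The plan is to verify directly the two defining clauses of a WD-frame for the triple $\mathcal{F}(\mathbf{A}) = (X(\mathbf{A}),\subseteq,S_{\mathbf{A}})$, where $S_{\mathbf{A}}$ is read as its restriction to $X(\mathbf{A})\times X(\mathbf{A})$ (which makes sense since $X(\mathbf{A})\subseteq \mathrm{Fi}(\mathbf{A})$). First I would dispose of the easy clause: $(X(\mathbf{A}),\subseteq)$ is a Kripke frame because set inclusion is reflexive and transitive on any family of sets, in particular on the set of prime filters of $\mathbf{A}$.

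The substantive clause is $\subseteq^{-1}\circ\, S_{\mathbf{A}}\subseteq S_{\mathbf{A}}$. Unwinding the composition, this amounts to: for all $P,P',Q\in X(\mathbf{A})$, if $P'\subseteq P$ and $(P',Q)\in S_{\mathbf{A}}$, then $(P,Q)\in S_{\mathbf{A}}$. I would prove this straight from the definition (\ref{def relacion S}): given $a,b\in A$ with $a\in Q$ and $b\notin Q$, the hypothesis $(P',Q)\in S_{\mathbf{A}}$ yields $a\ot b\in P'$, and then $P'\subseteq P$ gives $a\ot b\in P$; hence $(P,Q)\in S_{\mathbf{A}}$. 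In short, $S_{\mathbf{A}}$ is upward closed in its first coordinate simply because membership in a prime filter is preserved under passing to a larger prime filter. An equivalent route, if one prefers, is to invoke Proposition \ref{prop aux 1}(3) together with the observation that $F_{P}(Q)\subseteq F_{P'}(Q)$ whenever $P'\subseteq P$ (and that $F_{P}$ is inflationary), which again forces $F_{P}(Q)=Q$.

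There is essentially no obstacle to overcome here; the one point that requires care is the bookkeeping about which direction the inclusion runs inside $\subseteq^{-1}\circ\, S_{\mathbf{A}}$ — that is, it is the \emph{smaller} prime filter that is assumed $S_{\mathbf{A}}$-related to $Q$ and the \emph{larger} one whose relatedness must be deduced. Once that is pinned down, the verification is a one-line use of monotonicity of filter membership, exactly dual to the corresponding fact for the relation $R_{\mathbf{A}}$ of a WH-algebra.
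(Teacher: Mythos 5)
Your proposal is correct and follows essentially the same argument as the paper: the paper also notes that $(X(\mathbf{A}),\subseteq)$ is a Kripke frame and then verifies $\subseteq^{-1}\circ\, S_{\mathbf{A}}\subseteq S_{\mathbf{A}}$ by taking $Z\subseteq P$ with $(Z,Q)\in S_{\mathbf{A}}$ and observing that $a\in Q$, $b\notin Q$ give $a\ot b\in Z\subseteq P$. Your reading of the direction of the composition matches the paper's, and the alternative route via Proposition \ref{prop aux 1}(3) is a valid but unnecessary variant.
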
 
\begin{proof}
It is clear that \( (X(\bf{A}),\subseteq) \) is a Kripke frame. Let \( (P,Q) \in \,\ \subseteq^{-1} \circ \, S_{\bf{A}} \), so there is \( Z \in X(\bf{A}) \) such that \( Z \subseteq P \) and \( (Z,Q) \in S_{\bf{A}} \). Let us see that \( (P,Q) \in S_{\bf{A}} \). Let \( a \in Q \) and \( b \notin Q \) then by assumption we have that \( a \ot b \in Z \subseteq P \). Thus \( a \ot b \in P \) and \( (P,Q) \in S_{\bf{A}} \).
\end{proof}

Let $\bf{A}$ be a WD-algebra. The structure $\mathcal{F}(\bf{A})=(X(\bf{A}),\subseteq,S_{\bf{A}})$ is called \emph{the WD-frame} of $\bf{A}$, where the relation $S_{\bf{A}}$ is the relation defined in \ref{def relacion S} 

\begin{theorem} \label{representacion WD} Every WD-algebra \( \bf{A} \) is isomorphic to some subalgebra of \( \mathcal{A}(\mathcal{F}(\bf{A})) \).
\end{theorem}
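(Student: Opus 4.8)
The plan is to show that the Stone map $\sigma_{\mathbf{A}}\colon A\to \mathrm{Up}(X(\mathbf{A}))$, $\sigma_{\mathbf{A}}(a)=\{P\in X(\mathbf{A})\colon a\in P\}$, is an embedding of WD-algebras onto a subalgebra of $\mathcal{A}(\mathcal{F}(\mathbf{A}))$. First I would recall from the Preliminaries that $\sigma_{\mathbf{A}}$ takes values in $\mathrm{Up}(X(\mathbf{A}))$ and is an injective bounded-lattice homomorphism onto $\sigma_{\mathbf{A}}[A]$; this part carries over verbatim since it only uses the underlying bounded distributive lattice. By Lemma~\ref{marco asociado al algebra} the structure $\mathcal{F}(\mathbf{A})=(X(\mathbf{A}),\subseteq,S_{\mathbf{A}})$ is a WD-frame, hence $\mathcal{A}(\mathcal{F}(\mathbf{A}))=(\mathrm{Up}(X(\mathbf{A})),\cap,\cup,\oT_{S_{\mathbf{A}}},\emptyset,X(\mathbf{A}))$ is a WD-algebra, so it makes sense to ask whether $\sigma_{\mathbf{A}}$ preserves $\ot$.

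The core step is the identity
\[
\sigma_{\mathbf{A}}(a\ot b)=\sigma_{\mathbf{A}}(a)\oT_{S_{\mathbf{A}}}\sigma_{\mathbf{A}}(b)
\]
for all $a,b\in A$. Unwinding the definition of $\oT_{S_{\mathbf{A}}}$, a prime filter $P$ lies in the right-hand side exactly when $S_{\mathbf{A}}(P)\cap(\sigma_{\mathbf{A}}(a)\setminus\sigma_{\mathbf{A}}(b))\neq\emptyset$, i.e. exactly when there is $Q\in X(\mathbf{A})$ with $(P,Q)\in S_{\mathbf{A}}$, $a\in Q$ and $b\notin Q$. By Corollary~\ref{corolario DTFP} this is equivalent to $a\ot b\in P$, i.e. to $P\in\sigma_{\mathbf{A}}(a\ot b)$. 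Thus the two upsets coincide.

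Putting these together, $\sigma_{\mathbf{A}}$ is an injective homomorphism of WD-algebras from $\mathbf{A}$ into $\mathcal{A}(\mathcal{F}(\mathbf{A}))$, so $\mathbf{A}$ is isomorphic to the subalgebra $\sigma_{\mathbf{A}}[A]$ of $\mathcal{A}(\mathcal{F}(\mathbf{A}))$, which is the claim. I do not expect a genuine obstacle here: the nontrivial content has already been isolated in Theorem~\ref{DTPF} and its Corollary~\ref{corolario DTFP} (the dual Prime Filter Theorem), and the remainder is the routine bookkeeping of checking that $\sigma_{\mathbf{A}}$ respects the new operation; the only point requiring a little care is making sure the set-theoretic translation of $\oT_{S_{\mathbf{A}}}$ matches the existential statement in Corollary~\ref{corolario DTFP} on the nose, in particular the roles of $U\setminus V$ versus ``$a\in Q$ and $b\notin Q$''.
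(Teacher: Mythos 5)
Your proposal is correct and coincides with the paper's own argument: both use the Stone map $\sigma_{\mathbf{A}}$, invoke the known fact that it is an injective bounded-lattice homomorphism, and reduce the preservation of $\ot$ to Corollary~\ref{corolario DTFP}. The only difference is that you spell out the unwinding of $\oT_{S_{\mathbf{A}}}$ explicitly, which the paper leaves implicit.
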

\begin{proof}
Let \( \sigma_{\bf{A}} \) be the Stone map. It is known that \( \sigma_{\bf{A}} \) is a homomorphism of bounded distributive lattices. Taking into account Corollary \ref{corolario DTFP} it follows that \( \sigma_{\bf{A}}(a \ot b) = \sigma_{\bf{A}}(a) \oT_{S_{\bf{A}}} \sigma_{\bf{A}}(b) \). Thus, \( \bf{A} \) is isomorphic with the subalgebra \( \sigma_{\bf{A}}[A] \). 
\end{proof}

In what follows we shall study the representation for the subvarieties of $\mathcal{WD}$ given in Definition \ref{def subvariedades WD}.

\begin{lemma} \label{ecuacion y condicion WD} Let $\bf{A}$ be a WD-algebra and $\mathcal{F}(\bf{A})$ its canonical WD-frame. Then
\begin{enumerate}[\normalfont (1)]
    \item The condition $\mathrm{R}^{*}$ is valid on $\bf{A}$ if and only if the relation $S_{\bf{A}}$ is reflexive.
    \item The equation $\mathrm{T}^{*}$ is valid on $\bf{A}$ if and only if the relation $S_{\bf{A}}$ is transitive
    \item The equation $\mathrm{B}^{*}$ is valid on $\bf{A}$ if and only if for each $P,Q \in X(\bf{A})$ we have that $ (P,Q) \in  S_{\bf{A}}$ imples $Q \subseteq P$.
\end{enumerate}
\end{lemma}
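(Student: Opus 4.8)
The plan is to treat Lemma \ref{ecuacion y condicion WD} as the exact order-dual of Lemma \ref{aux1}, proving each of the three biconditionals by splitting it into its two implications. In every item the direction ``equation $\Rightarrow$ relational property'' is a direct unravelling of the definition (\ref{def relacion S}) of $S_{\mathbf{A}}$, using that a prime filter is proper and hence never contains $0$; the converse direction ``relational property $\Rightarrow$ equation'' goes by contradiction: a failure of the relevant inequality is turned, via the prime filter theorem for bounded distributive lattices, into a prime filter $P$ witnessing that failure, and then Corollary \ref{corolario DTFP} is invoked to manufacture the $S_{\mathbf{A}}$-successor(s) of $P$ that produce the contradiction. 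Throughout one uses that $\sigma_{\mathbf{A}}$ is a lattice embedding, so $\mathrm{R}^{*}$, $\mathrm{T}^{*}$ and $\mathrm{B}^{*}$ may be checked on prime filters.

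For (1): if $\mathrm{R}^{*}$ holds and $P\in X(\mathbf{A})$, then to see $(P,P)\in S_{\mathbf{A}}$ take $a\in P$, $b\notin P$; since $a\leq b\vee(a\ot b)$ and $P$ is an upset we get $b\vee(a\ot b)\in P$, so primeness and $b\notin P$ give $a\ot b\in P$. Conversely, if $S_{\mathbf{A}}$ is reflexive and $a\nleq b\vee(a\ot b)$, choose a prime filter $P$ with $a\in P$ and $b\vee(a\ot b)\notin P$, so both $b\notin P$ and $a\ot b\notin P$; reflexivity yields $(P,P)\in S_{\mathbf{A}}$, whence $a\ot b\in P$, a contradiction. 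Item (3) is handled in the same way with $0$ in the role of $b$: if $\mathrm{B}^{*}$ holds and $(P,Q)\in S_{\mathbf{A}}$ then, for $a\in Q$, we have $0\notin Q$, so $a\ot 0\in P$ and $a\ot 0\leq a$ give $a\in P$; for the converse, if $a\ot 0\nleq a$ pick a prime filter $P$ with $a\ot 0\in P$ and $a\notin P$, apply Corollary \ref{corolario DTFP} to $a\ot 0\in P$ to obtain $Q$ with $(P,Q)\in S_{\mathbf{A}}$, $a\in Q$ and $0\notin Q$, and conclude $Q\subseteq P$, so $a\in P$, a contradiction.

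For (2): assume $\mathrm{T}^{*}$ and $(P,Q),(Q,R)\in S_{\mathbf{A}}$; to show $(P,R)\in S_{\mathbf{A}}$ take $a\in R$, $b\notin R$. From $(Q,R)\in S_{\mathbf{A}}$ we obtain $a\ot b\in Q$, and since $0\notin Q$, $(P,Q)\in S_{\mathbf{A}}$ gives $(a\ot b)\ot 0\in P$; instantiating $\mathrm{T}^{*}$ at $c=0$ yields $(a\ot b)\ot 0\leq a\ot b$, so $a\ot b\in P$, as needed. Conversely, assume $S_{\mathbf{A}}$ transitive and suppose $(a\ot b)\ot c\nleq a\ot b$; pick a prime filter $P$ with $(a\ot b)\ot c\in P$ and $a\ot b\notin P$. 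A first application of Corollary \ref{corolario DTFP} (to $(a\ot b)\ot c\in P$) produces $Q\in X(\mathbf{A})$ with $(P,Q)\in S_{\mathbf{A}}$, $a\ot b\in Q$ and $c\notin Q$; a second application (to $a\ot b\in Q$) produces $R\in X(\mathbf{A})$ with $(Q,R)\in S_{\mathbf{A}}$, $a\in R$ and $b\notin R$. Transitivity gives $(P,R)\in S_{\mathbf{A}}$, hence $a\ot b\in P$, contradicting the choice of $P$.

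I expect the only genuine subtlety to be the bookkeeping: choosing the right substitutions — exploiting that $0$ lies outside every prime filter in order to feed the defining clause of $S_{\mathbf{A}}$, and instantiating $\mathrm{T}^{*}$ at $c=0$ — together with the two nested invocations of Corollary \ref{corolario DTFP} in the transitivity argument. Everything else is the routine mirror image of the weak Heyting case, so no new idea is needed beyond the dual Prime Filter Theorem already established in Theorem \ref{DTPF} and Corollary \ref{corolario DTFP}.
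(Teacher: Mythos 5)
Your proof is correct and follows essentially the same route as the paper's: the forward directions unwind the definition of $S_{\mathbf{A}}$ using $0\notin Q$ and the instantiation of $\mathrm{T}^{*}$ at $c=0$, and the converses use the prime filter theorem together with one (or two nested) applications of Corollary \ref{corolario DTFP}, merely phrased by contradiction instead of contraposition. In fact, in the forward direction of item (2) your bookkeeping (taking $a\in R$, $b\notin R$ and chaining through $Q$) is the correct reading of the paper's argument, whose printed text inadvertently swaps the roles of $P$ and $Z$.
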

\begin{proof}
$1 \Rightarrow)$ Suppose that the equation $\mathrm{R}^{*}$ is valid on $\bf{A}$ and let $P \in X(\bf{A})$ and $a \in P$, $b \notin P$. By assumption, $a \leq b \vee (a \ot b) \in P$ so we have that $b \vee (a \ot b) \in P$. Since $b \notin P$ it follows that $a \ot b \in P$. Thus, $S_{\bf{A}}$ is reflexive.

$\Leftarrow)$ Suppose that $S_{\bf{A}}$ is reflexive we shall prove that the condition $\mathrm{R}^{*}$ is valid on $\bf{A}$. By contraposition, suppose that $\mathrm{R}^{*}$ is not valid on $\bf{A}$ then there are $a,b \in A$ such that $ a \nleq b \vee (a \ot b)$. Then there is $P \in X(\bf{A})$ such that $ a \in P$ and $b \vee (a \ot b) \notin P$, so $b \notin P$ and $a \ot b \notin P$ which implies that $(P,P) \notin S_{\bf{A}}$. Thus, $S_{\bf{A}}$ is not reflexive. \\

\vspace{1pt}

$2 \Rightarrow$ Suppose that the equation $\mathrm{T}^{*}$ is valid on $\bf{A}$ and let $P,Q,Z \in X(\bf{A})$ such that $(P,Q) \in S_{\bf{A}}$ and $(Q,Z) \in S_{\bf{A}}$. We shall prove that $(P,Z) \in S_{\bf{A}}$. Let $a \in P$ and $b \notin P$. Since $(P,Q) \in S_{\bf{A}}$ we have that $a \ot b \in Q$. Since $0 \notin Q$ and $(Q,Z) \in S_{\bf{A}}$ we have that $(a \ot b) \ot 0 \in Z$. Taking into account that the equation $\mathrm{T}^{*}$ is valid on $\bf{A}$ we have that $a \ot b \in Z$ which was our aim.

$\Leftarrow)$ By contraposition suppose that the equation  $\mathrm{T}^{*}$ is not valid on $\bf{A}$ then there are $a,b,c \in A$ such that $(a \ot b) \ot c \neq a \ot b$. Then there is $P \in X(\bf{A})$ such that $(a \ot b) \ot c \in P $ and $a \ot b \notin P$. By Corollary \ref{corolario DTFP} it follows that there is $Q \in X(\bf{A})$ such that $(P,Q) \in S_{\bf{A}}$, $a \ot b \in Q$ and $c \notin Q$. Again, by Corollary \ref{corolario DTFP} there is $Z \in X(\bf{A})$ such that $(Q,Z) \in S_{\bf{A}}$, $a \in Z$ and $b \notin Z$. Thus, $(P,Q) \in S_{\bf{A}}$, $(Q,Z) \in S_{\bf{A}}$ and $(P,Z) \notin S_{\bf{A}}$ which implies that $S_{\bf{A}}$ is not transitive.\\

\vspace{1pt}

$3 \Rightarrow)$ Suppose that the condition $a \ot 0 \leq a$ is valid for each $a \in A$ and let $P,Q \in S_{\bf{A}}$. Suppose that $a \in Q$. Since $0 \notin Q$ we have that $a \ot 0 \in P$. Taking into account that $a \ot 0 \leq a$ it follows that $a \in P$ which was our aim.

$\Leftarrow)$ By contraposition we assume that the condition $\mathrm{B}^{*}$ is not valid on $\bf{A}$ that is there is $a \in A$ such that $a \ot 0 \nleq a$. By prime filter theorem there is $P \in X(\bf{A})$ such that $a \ot 0 \in P$ and $a \notin P$. By Corollary \ref{corolario DTFP} there is $Q \in X(\bf{A})$ such that $(P,Q) \in S_{\bf{A}}$, $a \in Q$ and $0 \notin Q$. Thus, $(P,Q) \in S_{\bf{A}}$, $a \in Q$ and $a \notin P$. 
\end{proof}

\begin{lemma} \label{lema condicion ecuacion WD} Let $\mathcal{F}=(X,\leq,S)$ be a WD-frame. Then
\begin{enumerate}[\normalfont (1)]
\item The relation $S$ is reflexive if and only if the equation $\mathrm{R}^{*}$ is valid on $\mathcal{A}(\mathcal{F})$
\item The relation $S$ is transitive if and only if the equation $\mathrm{T}^{*}$ is valid on $\mathcal{A}(\mathcal{F})$
\item For every $x,y \in X$, $(x,y) \in S$ implies that $y \leq x$ if and only if the equation $\mathrm{B}^{*}$ is valid on $\mathcal{A}(\mathcal{F})$.
\end{enumerate}
\end{lemma}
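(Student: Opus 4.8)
The plan is to handle the three clauses uniformly: unfold the definition of $\oT_{S}$ so that each equation interpreted in $\mathcal{A}(\mathcal{F})$ becomes a first-order property of $(X,\leq,S)$, and then verify the two implications by hand. These statements are the ``frame-side'' mates of the clauses of Lemma~\ref{ecuacion y condicion WD}, and the whole lemma is the order-dual of Lemma~\ref{aux2}. Throughout we use that $\mathcal{A}(\mathcal{F})$ is already known to be a WD-algebra (so $U\oT_{S}V\in\mathrm{Up}(X)$ for $U,V\in\mathrm{Up}(X)$) together with the defining equivalence $x\in U\oT_{S}V\iff S(x)\cap(U\setminus V)\neq\emptyset$; note in particular that $U\oT_{S}\emptyset=S^{-1}(U)$. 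Spelling out the equations: $\mathrm{R}^{*}$ holds on $\mathcal{A}(\mathcal{F})$ iff $S(x)\cap(U\setminus V)\neq\emptyset$ for all $U,V\in\mathrm{Up}(X)$ and all $x\in U\setminus V$; $\mathrm{T}^{*}$ holds iff $(U\oT_{S}V)\oT_{S}W\subseteq U\oT_{S}V$ for all $U,V,W\in\mathrm{Up}(X)$; and $\mathrm{B}^{*}$ holds iff $S^{-1}(U)\subseteq U$ for every $U\in\mathrm{Up}(X)$, i.e.\ $S(x)\cap U\neq\emptyset$ implies $x\in U$.

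The implications from the relational condition to the equation follow by exhibiting witnesses. If $S$ is reflexive and $x\in U\setminus V$, then $x\in S(x)\cap(U\setminus V)$, so $\mathrm{R}^{*}$ holds. If $S$ is transitive and $x\in(U\oT_{S}V)\oT_{S}W$, pick $y\in S(x)$ with $y\in U\oT_{S}V$ and then $z\in S(y)$ with $z\in U\setminus V$; transitivity yields $z\in S(x)$, whence $x\in U\oT_{S}V$ and $\mathrm{T}^{*}$ holds. If $(x,y)\in S$ always implies $y\leq x$, and $y\in S(x)\cap U$ with $U$ an upset, then $y\leq x$ and $y\in U$ force $x\in U$, so $\mathrm{B}^{*}$ holds.

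For the converses I would argue by contraposition, building a counterexample from principal upsets $[t)=\{s\in X : t\leq s\}$. (1) If $(x_{0},x_{0})\notin S$, take $U=[x_{0})$ and $V=[x_{0})\setminus\{x_{0}\}$: then $V\in\mathrm{Up}(X)$, $x_{0}\in U\setminus V=\{x_{0}\}$ and $S(x_{0})\cap(U\setminus V)=\emptyset$, so $\mathrm{R}^{*}$ fails. (2) If $(x,y),(y,z)\in S$ but $(x,z)\notin S$, take $U=[z)$, $V=\bigl[\,S(x)\cap[z)\,\bigr)$ and $W=\emptyset$: from $(x,z)\notin S$ one gets $z\notin V$, hence $z\in(U\setminus V)\cap S(y)$ and so $y\in U\oT_{S}V$; then $y\in S(x)\setminus W$ gives $x\in(U\oT_{S}V)\oT_{S}W$, while each $w\in S(x)\cap[z)$ already lies in $V$, so $S(x)\cap(U\setminus V)=\emptyset$ and $x\notin U\oT_{S}V$; hence $\mathrm{T}^{*}$ fails. (3) If $(x,y)\in S$ but $y\not\leq x$, take $U=[y)$: then $y\in S(x)\cap U$ but $x\notin U$, so $\mathrm{B}^{*}$ fails.

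The forward directions and item (3) are immediate; the step demanding some care is the converse of (2), where three upsets must be chosen at once and it must be checked simultaneously that $x$ enters $(U\oT_{S}V)\oT_{S}W$ but not $U\oT_{S}V$. A small shared subtlety in the converses of (1) and (2) is that verifying $[x_{0})\setminus\{x_{0}\}$ and $\bigl[\,S(x)\cap[z)\,\bigr)$ have the stated properties relies on $[t)\cap(t]=\{t\}$, i.e.\ antisymmetry of $\leq$, which holds in the canonical frames $\mathcal{F}(\mathbf{A})=(X(\mathbf{A}),\subseteq,S_{\mathbf{A}})$ to which the lemma is applied; incidentally the frame axiom $\leq^{-1}\circ S\subseteq S$ is not needed in any of these arguments (it serves only to ensure $\mathcal{A}(\mathcal{F})$ is well defined). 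Alternatively the statement can be read off directly from the order-dual of Lemma~\ref{aux2}.
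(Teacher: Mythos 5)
Your proof is correct and follows essentially the same route as the paper's: the forward implications are verified directly from the definition of $\oT_{S}$, and the converses are obtained by contraposition using explicit counterexample upsets. The only differences are in the choice of witnesses (the paper uses $V=(x]^{c}$ in (1), $V=(z]^{c}$ and $W=(x]^{c}$ in (2), and $U=(x]^{c}$ in (3), where you use $[x_{0})\setminus\{x_{0}\}$, the upset generated by $S(x)\cap[z)$ together with $W=\emptyset$, and $[y)$, respectively); all of these work and, just like the paper's own witnesses, rely on antisymmetry of $\leq$ — a dependence you correctly flag.
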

\begin{proof}
 $1 \Rightarrow)$ Suppose that $S$ is reflexive and let $U,V \in \mathrm{Up}(X)$ we shall prove that $U \subseteq V \cup (U \oT_{S} V)$. Let $x \in U$ and suppose that $x \notin V$. Since $S$ is reflexive we have that $S(x) \cap \left( U \setminus V \right) \neq \emptyset$ which implies that $x \in \left( U \oT_{S} V \right)$ which was our aim.

 $\Leftarrow)$ Suppose that $S$ is not reflexive we shall prove that the equation $\mathrm{R}^{*}$ is not valid on $\mathcal{A}(\mathcal{F})$. Since $S$ is not reflexive then there is $x \in X$ such that $(x,x) \notin S$. Consider the up-sets $U=[x)$ and $V=(x]^{c}$. We shall prove that $U \not \subseteq V \cup \left(U \oT_{S} V \right)$. Indeed, it is clear that $x \in U$. Also, since $x \leq x$ we have that $x \notin V$. Now we shall prove that $x \notin  \left( U \oT_{S} V \right)$. Suppose that $ a \in S(x) \cap (U \setminus V)$ then $x \leq a$ and $a \leq x$, so $a = x$ and $(x,a) \in S$ which is a contradiction. Thus, $x \in U$ and $x \notin V \cup (U \oT_{S} V)$ which implies that $\mathrm{R}^{*}$ is not valid on $\mathcal{A}(\mathcal{F})$.

$2 \Rightarrow)$ Suppose that the relation $S$ is transitive and let $U,V$ and $W \in \mathrm{Up}(X)$, we shall prove that $ (U \oT_{S} V) \oT_{S} W \subseteq U \oT_{S} V $. In fact, let $x \in (U \oT_{S} V) \oT_{S} W$ then we have that there is $y \in X$ such that $(x,y) \in S$, $y \in U \oT_{S} V$ and $y \notin W$. Since $y \in U \oT_{S} V$ there is $z \in X$ such that $(y,z) \in S$. $z \in U$ and $z \notin V$. By assumption we have that $(x,z) \in S$, $z \in U$ and $z \notin V$ which implies that $x \in U \oT_{S} V$ which was our aim. 

$\Leftarrow)$ By contraposition suppose that the relation $S$ is not transitive. Then there are $x,y,z \in S$ such that $(x,y) \in S$, $(y,z) \in S$ and $(x,z) \notin S$. Let us consider the upsets $U=[z)$, $V=(z]^{c}$ and $W=(x]^{c}$. We shall prove that $x \in (U \oT_{S} V) \oT_{S} W$ and $x \notin U \oT_{S} V$. Indeed, since $y \in S(x)$, $z \in S(y)$ and $z \in U \setminus V$ it follows that $ y \in U \oT_{S} V$ and $S(x) \cap (U \oT_{S} V) \neq \emptyset$. Also, it is clear that $x \notin W$, so $ x \in (U \oT_{S} V) \oT_{S} W$. However, $x \notin U \oT_{S} V$. In fact, if $a \in S(x) \cap (U \setminus V)$ we have that  $(x,a) \in S$ and $z = a$ which is absurd. Thus, $ x \notin U \oT_{S} V$ and the equation $\mathrm{T}^{*}$ is not valid on $\mathcal{A}(\mathcal{F})$.\\

\vspace{1pt}

$3 \Rightarrow)$ Suppose that $(x,y) \in S$ implies $y \leq x$ for every $x,y \in X$ and let $U \in \mathrm{Up}(X)$. We shall prove that $U \oT_{S} \emptyset \subseteq U$. In fact, if $x \in U \oT_{S} \emptyset$ then $S(x) \cap U \neq \emptyset$, so there is $y \in X$ such that $(x,y) \in S$ and $y \in U$. By our assumption we have that $y \leq x$. Taking into account that $U$ is an up-set we have that $x \in U$ which was our aim.

$\Leftarrow) $ Suppose that $(x,y) \in S$ and $y \nleq x$. We shall prove that the condition $\mathrm{B}^{*}$ is not valid on $\mathcal{A}(\mathcal{F})$ that is $U \oT_{S} \emptyset \not \subseteq U$ for some $U \in \mathrm{Up}(X)$. Consider the up-set $U = (x]^{c}$. Note that $y \in S(x) \cap U$ so $x \in U \oT_{S} \emptyset$, but $x \notin U$ so the condition $\mathrm{B}^{*}$ is not valid on $\mathcal{A}(\mathcal{F})$. 
\end{proof}

Taking into account lemmas \ref{lema condicion ecuacion WD}, \ref{ecuacion y condicion WD} and Theorem \ref{representacion WD} we have the following result. 

\begin{corollary} \label{Teo representacion subvariedades WD} If $\bf{A} \in \{ \mathcal{RWD},\mathcal{TWD},\mathcal{SRL}^{*},\mathcal{B}^{*},\mathcal{H}^{*} \}$ then $\bf{A}$ is isomorphic to some subalgebra of $\mathcal{A}(\mathcal{F}(\bf{A}))$.
\end{corollary}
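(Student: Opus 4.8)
The plan is to derive this as a direct corollary of the general representation Theorem~\ref{representacion WD} together with the two correspondence lemmas, Lemma~\ref{ecuacion y condicion WD} and Lemma~\ref{lema condicion ecuacion WD}. Fix $\bf{A}$ in one of the five subvarieties and consider its canonical WD-frame $\mathcal{F}(\bf{A}) = (X(\bf{A}),\subseteq,S_{\bf{A}})$, which is indeed a WD-frame by Lemma~\ref{marco asociado al algebra}. The idea is a round trip: each of the equations $\mathrm{R}^{*}$, $\mathrm{T}^{*}$, $\mathrm{B}^{*}$ that is assumed to hold in $\bf{A}$ is translated by Lemma~\ref{ecuacion y condicion WD} into the corresponding property of $S_{\bf{A}}$ (reflexivity, transitivity, or ``$(P,Q)\in S_{\bf{A}}$ implies $Q\subseteq P$''), and this relational property is then translated back by Lemma~\ref{lema condicion ecuacion WD} into the validity of the same equation in the frame algebra $\mathcal{A}(\mathcal{F}(\bf{A}))$.

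Carrying this out case by case: if $\bf{A}\in\mathcal{RWD}$ apply clause~(1) of both lemmas; if $\bf{A}\in\mathcal{TWD}$ apply clause~(2); if $\bf{A}\in\mathcal{B}^{*}$ apply clause~(3); if $\bf{A}\in\mathcal{SRL}^{*}$ apply clauses~(1) and~(2); and if $\bf{A}\in\mathcal{H}^{*}$ apply clauses~(1) and~(3) (recall from Definition~\ref{def subvariedades WD} that a co-Heyting algebra is a basic WD-algebra validating $\mathrm{R}^{*}$). In every case one concludes that $\mathcal{A}(\mathcal{F}(\bf{A}))$ lies in the same subvariety as $\bf{A}$. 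Now Theorem~\ref{representacion WD} provides the embedding: the Stone map $\sigma_{\bf{A}}$ identifies $\bf{A}$ with the subalgebra $\sigma_{\bf{A}}[A]$ of $\mathcal{A}(\mathcal{F}(\bf{A}))$, and since subvarieties are closed under subalgebras this subalgebra belongs to the relevant subvariety; this is exactly the asserted specialized representation.

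There is no substantive obstacle here --- the result is essentially a bookkeeping consequence of work already done. The only points requiring minor care are matching the correct clauses of the two lemmas for the compound subvarieties $\mathcal{SRL}^{*}$ and $\mathcal{H}^{*}$, and observing that both lemmas are genuinely needed: Lemma~\ref{ecuacion y condicion WD} alone yields only a property of $S_{\bf{A}}$, and it is Lemma~\ref{lema condicion ecuacion WD} that guarantees the target algebra $\mathcal{A}(\mathcal{F}(\bf{A}))$ does not leave the subvariety, so that ``subalgebra of $\mathcal{A}(\mathcal{F}(\bf{A}))$'' is a representation internal to that subvariety.
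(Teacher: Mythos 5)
Your proposal is correct and follows exactly the route the paper takes: the paper justifies this corollary precisely by citing Theorem~\ref{representacion WD} together with Lemmas~\ref{ecuacion y condicion WD} and~\ref{lema condicion ecuacion WD}, i.e.\ the same ``equation in $\mathbf{A}$ $\Rightarrow$ property of $S_{\mathbf{A}}$ $\Rightarrow$ equation in $\mathcal{A}(\mathcal{F}(\mathbf{A}))$'' round trip followed by the Stone-map embedding. Your case matching for the compound subvarieties $\mathcal{SRL}^{*}$ and $\mathcal{H}^{*}$ is also consistent with Definition~\ref{def subvariedades WD}.
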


\section{The variety of weak Heyting Brouwer algebras} \label{S4}

Recall that an algebra ${\bf{A}} = \left(A,\wedge,\vee,\to,\ot,0,1 \right)$ is a Heyting Brouwer algebra provided that $(A,\wedge,\vee,\to,0,1)$ is a Heyting algebra and $(A,\wedge,\vee,\ot,0,1)$ is a co-Heyting algebra. In this section we shall study the variety of algebras obtained by consider bounded distributive lattices with strict implication and weak difference which will be called double weak Heyting algebras algebras (DWH-algebras for short). In particular, we will consider a certain subvariety of the variety of DWH-algebras which is a suitable generalization of Heyting brouwer algebras and it has a strong connection with modal tense algebras.   

\begin{definition} \label{Definicion SWD} An algebra \( {\bf{A}} = (A,\wedge,\vee,\to,\ot,0,1) \) is said to be a \emph{Double weak  Heyting algebra}, DWH-algebra for short, provided that:
\begin{enumerate}[\normalfont 1.]
\item \( (A,\wedge,\vee,\to,0,1) \) is a weak Heyting algebra.
\item \( (A,\wedge,\vee,\ot,0,1) \) is a weak difference algebra.
\end{enumerate}
\end{definition}  

Taking into account Definition \ref{def WH algebra} and \ref{def WD-algebra} it follows that the class of all DWH-algebras is a variety and we write \( \mathcal{DWH} \) to denote the variety of DHW-algebras. Note that the variety $\mathcal{HB}$ is a subvariety of $\mathcal{DWH}$. Moreover the following example shows that the variety $\mathcal{HB}$ is a proper subvariety of $\mathcal{DWH}$.

\begin{example} \label{ex1} Let ${\bf{A}} = \left( \{0,a,1\}, \wedge,\vee,\to,\ot,0,1 \right)$ be the three elements chain with $0 < a < 1$ and the maps $ \to $ and $\ot$ defined as follows: for each $x,y \in A$, $x \to y = 1$ and $x \ot y = 0$. Some simple computations shows that $\bf{A}$ is a DWH-algebra. Besides, since $ 1 \to a = 1 \nleq a$ it follows that ${\bf{A}} \notin \mathcal{HB}$
\end{example} 

\vspace{1pt}

In what follows we shall consider the following equations in the signature of DWH-algebras:
\begin{itemize}
\item[E1:] \( a \wedge ((a \to b) \ot 0) \leq b \)
\item[E2:] \( a \leq b \vee (1 \to (a \ot b)) \)
\end{itemize}

\begin{remark} \label{observacion sobre E1 y E2} Note that the equations \( \rm{E1} \) and \( \rm{E2} \) are valid on any HB-algebra $\bf{A}$. In fact suppose that \( \bf{A} \) is a HB-algebra. Taking into account the residuation law and the dual residuation law we have that 
\begin{eqnarray*}
 a \wedge ((a \to b) \ot 0) \leq b & \text{ if and only if } & (a \to b) \vee 0 \leq a \to b \\
                                   & \text{ if and only if } & a \to b \leq a \to b. 
\end{eqnarray*}
Also note that
\begin{eqnarray*}
a \leq b \vee (1 \to (a \to b)) & \text{ if and only if } & a \ot b \leq 1 \to (a \ot b) \\
                                & \text{ if and only if } & a \ot b \leq a \ot b 
\end{eqnarray*} 
\end{remark}

\begin{definition} A DWH-algebra $\bf{A}$ is said to be a \emph{weak Heyting Browver-algebra}, or WHB-algebra for short, if in addition the conditions $\rm{E1}$ and $\rm{E2}$ are satisfied.
\end{definition}

We write \( \mathcal{WHB} \) to denote the variety of WHB-algebras. It is immediate that the variety $\mathcal{WHB}$ is a subvariety of $\mathcal{DWH}$. Moreover the following example shows that $\mathcal{WHB}$ is a proper subvariety of $\mathcal{DWH}$. 

\begin{example} \label{ex2} Let $A$ be the bounded distributive lattice of four elements 

\begin{center}
\begin{figure}[h]
\begin{tikzpicture}
\draw (-0.1,0.1)--(-0.90,0.9);
\draw (0.1,0.1)--(0.90,0.9);
\draw (-0.90,1.1)--(-0.1,1.90);
\draw (0.90, 1.1)--(0.1,1.90);
\filldraw (0,0) circle (1pt);
\filldraw (1,1) circle (1pt);
\filldraw (-1,1) circle (1pt);
\filldraw (0,2) circle (1pt);
\draw (0,-0.25) node {$0$};
\draw (1.25,1.0) node {$b$};
\draw (0,2.25) node {$1$};
\draw (-1.2,1.0) node {$a$};
\end{tikzpicture}
\end{figure}
\end{center}

Also, consider the following maps:
\\

\vspace{1pt}

\begin{minipage}{0.48 \textwidth}
\begin{center}
\begin{tabular}{l|llll}
$\to$  & $0$  & $a$  & $b$  & $1$ \tabularnewline
\hline
$0$  & $1$  & $1$  & $1$  & $1$ \tabularnewline
$a$  & $1$  & $1$  & $1$  & $1$ \tabularnewline
$b$  & $b$  & $b$  & $1$  & $1$ \tabularnewline
$1$  & $b$  & $b$  & $1$  & $1$ \tabularnewline
\end{tabular}
\end{center}
\end{minipage}
\begin{minipage}{0.48 \textwidth}
\begin{center}
\begin{tabular}{l|llll}
$\ot$  & $0$  & $a$  & $b$  & $1$ \tabularnewline
\hline
$0$  & $0$  & $0$  & $0$  & $0$ \tabularnewline
$a$  & $a$  & $0$  & $a$  & $0$ \tabularnewline
$b$  & $0$  & $0$  & $0$  & $0$ \tabularnewline
$1$  & $a$  & $0$  & $a$  & $0$ \tabularnewline
\end{tabular}
\end{center}
\end{minipage}

\vspace{1pt}

Some simple computations shows that $\bf{A} = \left(A,\to,\ot\right)$ is a DWH-algebra. However $\rm{E1}$ nor $\rm{E2}$ are valid on $\mathbf{A}$. In fact, note that $a \wedge ((a \to b) \ot 0) = a$ and $a \nleq b$, so $\rm{E1}$ is not valid. Besides note that $b \vee (1 \to (a \ot b)) = b$ and $a \nleq b$, so $\rm{E2}$ is not valid on $\bf{A}$.
\end{example}

Taking into account examples \ref{ex1} and \ref{ex2} it follows that 
\[ {\mathcal{HB}} \subset {\mathcal{WHB}} \subset {\mathcal{DWH}}. \]
In what follows we shall study the representation of WHB-algebras by certain Kripke frames.

\begin{definition} A \emph{WHB-frame} is a structure \( (X,\leq,R,S) \) such that:
\begin{enumerate}[\normalfont 1.]
\item \( (X,\leq,R) \) is a WH-frame,
\item \( (X,\leq,S) \) is a WD-frame and 
\item \( S = R^{-1} \)
\end{enumerate}
\end{definition}

\begin{theorem} \label{PSW-algebra asociada al frame} If \( \mathcal{F} = (X,\leq,R,S) \) is a WHB-frame then the algebra 
\[ \mathcal{A}(\mathcal{F})= \left( \mathrm{Up}(X),\cap,\cup,\Rightarrow_{R},\oT_{S},\emptyset,X\right) \]
is a WHB-algebra.
\end{theorem}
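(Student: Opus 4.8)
The plan is to verify that $\mathcal{A}(\mathcal{F})$ satisfies each of the defining conditions of a DWH-algebra together with the two extra equations E1 and E2. The first two requirements are essentially already available: since $(X,\leq,R)$ is a WH-frame, the paper's earlier remarks guarantee that $(\mathrm{Up}(X),\cap,\cup,\To_R,\emptyset,X)$ is a WH-algebra, and since $(X,\leq,S)$ is a WD-frame, $(\mathrm{Up}(X),\cap,\cup,\oT_S,\emptyset,X)$ is a WD-algebra. It is worth checking that $\mathrm{Up}(X)$ is genuinely closed under both $\To_R$ and $\oT_S$ (this uses $\leq\circ R\subseteq R$ in the first case and $\leq^{-1}\circ S\subseteq S$ in the second), but this too is part of the cited facts. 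Hence $\mathcal{A}(\mathcal{F})$ is a DWH-algebra, and all that genuinely requires work is E1 and E2.

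For E1 I would unfold the definitions on a fixed $x\in X$. One has $U\To_R V=\{y:R(y)\cap U\subseteq V\}$ and $W\oT_S\emptyset=\{y:S(y)\cap W\neq\emptyset\}=\{y:R^{-1}(y)\cap W\neq\emptyset\}$ since $S=R^{-1}$. So $x\in U\cap((U\To_R V)\oT_S\emptyset)$ means $x\in U$ and there exists $z$ with $(z,x)\in R$ (equivalently $x\in S(z)$, using $S=R^{-1}$ carefully in the right direction) and $z\in U\To_R V$, i.e. $R(z)\cap U\subseteq V$. Since $(z,x)\in R$ and $x\in U$, we get $x\in R(z)\cap U\subseteq V$, so $x\in V$. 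This gives $U\cap((U\To_R V)\oT_S\emptyset)\subseteq V$, which is exactly E1 read in $\mathcal{A}(\mathcal{F})$ (with $0=\emptyset$).

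For E2 the argument is dual. Fix $x\in X$ and suppose $x\in U$ but $x\notin V$. We must show $x\in 1\To_R(U\oT_S V)$, i.e. $R(x)\cap X\subseteq U\oT_S V$, i.e. for every $y$ with $(x,y)\in R$ we have $y\in U\oT_S V$, i.e. $S(y)\cap(U\setminus V)\neq\emptyset$. Now $(x,y)\in R$ gives $(y,x)\in R^{-1}=S$, so $x\in S(y)$; and $x\in U\setminus V$ by hypothesis; hence $S(y)\cap(U\setminus V)\neq\emptyset$, as needed. Therefore $U\subseteq V\cup(X\To_R(U\oT_S V))$, which is E2 in $\mathcal{A}(\mathcal{F})$. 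Both verifications are short once the notation is managed; the only mild obstacle is bookkeeping with the identification $S=R^{-1}$ — keeping straight which of "$(x,y)\in R$" and "$(y,x)\in R$" one is entitled to — and making sure the frame axioms $\leq\circ R\subseteq R$, $\leq^{-1}\circ S\subseteq S$ are invoked where upward/downward closure of the relevant sets is needed. I would present the proof as: (i) cite that clauses 1–2 of the WH/WD-algebra definitions hold because $\mathcal{F}$ restricts to a WH-frame and a WD-frame; (ii) do the two displayed inclusions above for E1 and E2.
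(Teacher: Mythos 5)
Your proof is correct and follows essentially the same route as the paper's: the DWH-algebra part is delegated to the earlier sections, and E1 and E2 are verified pointwise by unwinding $\To_R$ and $\oT_S$ and using $S=R^{-1}$. (One cosmetic slip: in the E1 unfolding, membership $x\in (U\To_R V)\oT_S\emptyset$ gives $z\in S(x)$, not $x\in S(z)$; the operative claim $(z,x)\in R$ that you actually use is the correct one, so nothing breaks.)
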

\begin{proof}
Taking into account results of Section \ref{S2} and Section \ref{S3} it follows that \( \mathcal{A}(\mathcal{F}) \) is a DWH-algebra. We only need to prove that the equations \( \rm{E1} \) and \( \rm{E2} \) are satisfied on $\mathcal{A}(\mathcal{F})$. Let \( U,V \in \mathrm{Up}(X) \) we will prove that \( U \cap ((U \To_{R} V) \oT_{S} \emptyset) \subseteq V \) and \( U \subseteq V \cup (X \To_{R}(U \oT_{S} V)) \).

In fact, suppose that \( x \in U \cap ((U \To_{R} V) \oT_{S} \emptyset) \) then \( x \in U \) and we have that \( S(x) \cap (U \To_{R} V) \neq \emptyset \), so there is \( y \in S(x) \) and \( R(y) \cap U \subseteq V \). Since \( S = R^{-1} \) we have that \( (y,x) \in R \) and \( x \in U \) which implies that \( x \in V \). Thus the equation \( \rm{E1} \) is satisfied. 

Now, let \( x \in U \) and suppose that \( x \notin V \) we shall prove that \( x \in X \To_{R}(U \oT_{S} V) \) i.e, \( R(x) \subseteq U \oT_{S} V \). Let \( y \in R(x) \) then \( (x,y) \in R \) which implies that \( (y,x) \in S \) and \( x \in U \setminus V \) so we have that \( S(y) \cap (U \setminus V) \neq \emptyset \) and \( y \in U \oT_{S} V \) as we claim. Thus the equation \( \rm{E2} \) is satisfied too. 
\end{proof}

Let \( \bf{A} \) be a WHB-algebra. We shall consider the structure
\[ \mathcal{F}({\bf{A}})=(X({\bf{A}}),\subseteq,R_{\bf{A}},S_{\bf{A}}). \]

\begin{lemma} \label{ecuaciones y relaciones} Let \( \bf{A} \) be a WHB-algebra. Then 
\begin{enumerate}[\normalfont (1)]
\item \(  \rm{E1} \) is valid on \( \bf{A} \) if and only if \( S_{\bf{A}} \subseteq R_{\bf{A}}^{-1} \).
\item \( \rm{E2} \) is valid on \( \bf{A} \) if and only if \( R_{\bf{A}} \subseteq S_{\bf{A}}^{-1} \).
\end{enumerate}
\end{lemma}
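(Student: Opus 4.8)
The plan is to prove each biconditional by relating the equation, evaluated in the canonical representation $\sigma_{\mathbf{A}}$, to a containment of the relations $S_{\mathbf{A}}$ and $R_{\mathbf{A}}$ via the characterizations of these relations through prime filters (Corollary~\ref{TPF Corolario 2}, Corollary~\ref{corolario DTFP}). Since $\sigma_{\mathbf{A}}$ is an embedding of $\mathbf{A}$ into $\mathcal{A}(\mathcal{F}(\mathbf{A}))$ commuting with $\to$ and $\ot$, the equation $\mathrm{E1}$ (resp. $\mathrm{E2}$) holds in $\mathbf{A}$ iff it holds on the image, and — because the image is a subalgebra closed under the operations and $\emptyset$, $X(\mathbf{A})$ — this in turn is controlled by the behaviour of $\To_{R_{\mathbf{A}}}$ and $\oT_{S_{\mathbf{A}}}$ on clopen upsets. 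The two halves will mirror the proof of Theorem~\ref{PSW-algebra asociada al frame}, which already shows that the relational condition $S=R^{-1}$ forces $\mathrm{E1}$ and $\mathrm{E2}$; here we need only the one-sided containments and their converses.

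For part (1): assume $\mathrm{E1}$ holds on $\mathbf{A}$ and take $(P,Q)\in S_{\mathbf{A}}$; I want $(Q,P)\in R_{\mathbf{A}}$, i.e. whenever $a\to b\in Q$ and $a\in P$ then $b\in P$. Suppose $a\in P$, $a\to b\in Q$, but $b\notin P$. From $a\to b\in Q$ and $0\notin Q$ together with $(P,Q)\in S_{\mathbf{A}}$, the definition of $S_{\mathbf{A}}$ in (\ref{def relacion S}) gives $(a\to b)\ot 0\in P$; combined with $a\in P$ this yields $a\wedge((a\to b)\ot 0)\in P$, and $\mathrm{E1}$ forces $b\in P$, a contradiction. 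Conversely, suppose $S_{\mathbf{A}}\subseteq R_{\mathbf{A}}^{-1}$ but $\mathrm{E1}$ fails: there are $a,b$ with $a\wedge((a\to b)\ot 0)\nleq b$, so a prime filter $P$ contains $a$ and $(a\to b)\ot 0$ but not $b$. By Corollary~\ref{corolario DTFP} applied to $(a\to b)\ot 0\in P$ there is $Q\in X(\mathbf{A})$ with $(P,Q)\in S_{\mathbf{A}}$, $a\to b\in Q$ and $0\notin Q$; by hypothesis $(Q,P)\in R_{\mathbf{A}}$, and then $a\to b\in Q$, $a\in P$ give $b\in P$, contradiction.

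For part (2): assume $\mathrm{E2}$ and take $(P,Q)\in R_{\mathbf{A}}$; I want $(Q,P)\in S_{\mathbf{A}}$, i.e. $a\in P$, $b\notin P$ imply $a\ot b\in Q$. Suppose $a\in P$, $b\notin P$, but $a\ot b\notin Q$. Since $1\in P$ and $a\to 1=1$ is irrelevant; rather, from $(P,Q)\in R_{\mathbf{A}}$ and $a\ot b\notin Q$, Corollary~\ref{TPF Corolario 2} (contrapositive) gives $1\to(a\ot b)\notin P$ — indeed if $1\to(a\ot b)\in P$ then since $1\in P$ and $(P,Q)\in R_{\mathbf{A}}$ we would get $a\ot b\in Q$. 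Now $\mathrm{E2}$ says $a\leq b\vee(1\to(a\ot b))$, and $a\in P$ prime forces $b\in P$ or $1\to(a\ot b)\in P$, both contradictions. Conversely, if $R_{\mathbf{A}}\subseteq S_{\mathbf{A}}^{-1}$ but $\mathrm{E2}$ fails, pick $P$ with $a\in P$, $b\notin P$, $1\to(a\ot b)\notin P$; by Corollary~\ref{TPF Corolario 2} there is $Q$ with $(P,Q)\in R_{\mathbf{A}}$, $1\in Q$ and $a\ot b\notin Q$; by hypothesis $(Q,P)\in S_{\mathbf{A}}$, and then $a\in P$, $b\notin P$ give $a\ot b\in Q$, contradiction.

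The routine parts are the distributive-lattice manipulations and the bookkeeping with the two closure-operator-based prime filter theorems. The main obstacle I anticipate is getting the direction of the relations and their inverses consistently right: $S_{\mathbf{A}}$ and $R_{\mathbf{A}}$ are both subsets of $X(\mathbf{A})\times\mathrm{Fi}(\mathbf{A})$ with asymmetric definitions, so one must be careful that the witness $Q$ produced by Corollary~\ref{corolario DTFP} or Corollary~\ref{TPF Corolario 2} is a \emph{prime} filter and sits on the correct side of the relation; the element $0$ in $\mathrm{E1}$ and $1$ in $\mathrm{E2}$ are exactly the "dummy" arguments that let one pivot between the strict-implication side and the weak-difference side, exactly as in the frame-level argument of Theorem~\ref{PSW-algebra asociada al frame}.
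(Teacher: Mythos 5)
Your argument is correct and is essentially the paper's own proof: both directions of each part are handled by the same prime-filter manipulations, using the definitions of $R_{\bf{A}}$ and $S_{\bf{A}}$ with the dummy arguments $0$ and $1$, and invoking Corollary \ref{corolario DTFP} (resp.\ Corollary \ref{TPF Corolario 2}) to produce the witness $Q$ in the converse directions. The opening paragraph's appeal to the representation $\sigma_{\bf{A}}$ is unnecessary scaffolding, but the actual argument that follows is self-contained and matches the paper.
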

\begin{proof}
(1) \( \Rightarrow \) Suppose that the equation \( \rm{E1} \) is valid on \( \bf{A} \) and let \( (P,Q) \in S_{\bf{A}} \) we shall prove that \( (Q,P) \in R_{\bf{A}} \). Let $a,b \in A$ such that \( a \to b \in Q \) and \( a \in P \). Since \( a \to b \in Q \), \( 0 \notin Q \) and \( (P,Q) \in S_{\bf{A}} \) we have that \( (a \to b) \ot 0 \in P \). Taking into account that \( a \in P \) we have that \( a \wedge ((a \to b) \ot 0) \in P \). Since \( \rm{E1} \) is valid on \( \bf{A} \) we have that \( b \in P \), which implies that \( (Q,P) \in R_{\bf{A}} \).

$\Leftarrow)$ By contraposition, suppose that the equation \( \rm{E1} \) is not valid on \( \bf{A} \). Then there is \( a,b \in A \) such that \( a \wedge ((a \to b) \ot 0) \nleq b \), so there is \( P \in X(\bf{A}) \) such that \( a \in P \), \( (a \to b) \ot 0 \in P \) and \( b \notin P \). Since \( (a \to b) \ot 0 \in P \), by Corollary \ref{corolario DTFP} there is \( Q \in X(\bf{A}) \) such that \( (P,Q) \in S_{\bf{A}} \) and \( a \to b \in Q \). Then, for every \( Z \in X(\bf{A}) \) such that \( (Q,Z) \in R_{\bf{A}} \), if \( a \in Z \) implies \( b \in Z \). By assumption, we have that \( (P,Q) \in S_{\bf{A}} \subseteq R_{\bf{A}}^{-1} \), so we have that \( (Q,P) \in S_{\bf{A}} \) and \( a \in P \) then we have that \( b \in P \) which is a contradiction. Thus, the equation \( \rm{E1} \) is valid on \( \bf{A} \).\\

\vspace{1pt}

(2) $\Rightarrow)$ Suppose that the equation \( \rm{E2} \) is valid on \( \bf{A} \) and let \( (P,Q) \in R_{\bf{A}} \). We shall prove that \( (Q,P) \in S_{\bf{A}} \). In fact, let $a,b \in A$ such that \( a \in P \) and \( b \notin P \) we need to prove that \( a \ot b \in Q \). Since \( a \in P \) we have that \( b \vee 1 \to (a \ot b) \in P \). Since \( P \) is prime and \( b \notin P \) we have that \( 1 \to (a \ot b) \in P \). Taking into account that \( (P,Q) \in R_{\bf{A}} \), \( 1 \to (a \ot b) \in P \) and \( 1 \in Q \) we have that \( a \ot b \in Q \) which was our aim. 

$\Leftarrow)$ By contraposition, suppose that the equation \( \rm{E2} \) is not valid on \( \bf{A} \), so there are \( a,b \in A \) such that \( a \nleq b \vee (1 \to (a \ot b)) \). Then there is \( P \in X(\bf{A}) \) such that \( a \in P \), \( b \notin P \) and \( 1 \to (a \ot b) \notin P \). Since \( 1 \to (a \ot b) \notin P \) there is \( Q \in X(\bf{A}) \) such that \( (P,Q) \in R_{\bf{A}} \) and \( a \ot b \notin Q \). By Corollary \ref{corolario DTFP} we have that for every \( Z \in X(\bf{A}) \), if \( (Q,Z) \in S_{\bf{A}} \) and \( a \in Z \) then \( b \in Z \). Taking into account that \( (P,Q) \in R_{{\bf{A}}} \subseteq S_{\bf{A}}^{-1} \) we have that \( (Q,P) \in S_{\bf{A}} \) and \( a \in P \), so \( b \in P \) which is a contradiction. Thus, the equation \( \rm{E2} \) is valid on \( \bf{A} \).     
\end{proof}

Taking into account lemmas \ref{aux1}, \ref{ecuacion y condicion WD} and \ref{ecuaciones y relaciones} we have the following result

\begin{corollary} \label{subvariedades de WHB} If $\bf{A}$ is a WHB-algebra then 
\begin{enumerate}[\normalfont 1.]
\item The condition $\rm{R}$ is valid on $\bf{A}$ if and only if the condition $\rm{R}^{*}$ is valid on $\bf{A}$
\item The condition $\rm{T}$ is valid on $\bf{A}$ if and only if the condition $\rm{T}^{*}$ is valid on $\bf{A}$
\item The condition $\rm{B}$ is valid on $\bf{A}$ if and only if the condition $\rm{B}^{*}$ is valid on $\bf{A}$
\end{enumerate}
\end{corollary}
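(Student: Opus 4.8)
The plan is to reduce the whole statement to the single structural fact that, on a WHB-algebra, the two relations of the frame $\mathcal{F}(\mathbf{A})=(X(\mathbf{A}),\subseteq,R_{\mathbf{A}},S_{\mathbf{A}})$ are mutually converse. Since $\mathbf{A}$ is a WHB-algebra, both $\mathrm{E1}$ and $\mathrm{E2}$ hold on $\mathbf{A}$, so Lemma~\ref{ecuaciones y relaciones} yields $S_{\mathbf{A}}\subseteq R_{\mathbf{A}}^{-1}$ and $R_{\mathbf{A}}\subseteq S_{\mathbf{A}}^{-1}$. The second inclusion is equivalent to $R_{\mathbf{A}}^{-1}\subseteq S_{\mathbf{A}}$, and combining it with the first we obtain
\[ S_{\mathbf{A}}=R_{\mathbf{A}}^{-1}\quad\text{(equivalently }R_{\mathbf{A}}=S_{\mathbf{A}}^{-1}\text{).} \]

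For items (1) and (2) I would then invoke Lemma~\ref{aux1} together with Lemma~\ref{ecuacion y condicion WD}. By Lemma~\ref{aux1}(1), $\mathrm{R}$ is valid on $\mathbf{A}$ iff $R_{\mathbf{A}}$ is reflexive; by Lemma~\ref{ecuacion y condicion WD}(1), $\mathrm{R}^{*}$ is valid on $\mathbf{A}$ iff $S_{\mathbf{A}}$ is reflexive. Since a binary relation is reflexive exactly when its converse is, and $S_{\mathbf{A}}=R_{\mathbf{A}}^{-1}$, the two conditions coincide. The argument for (2) is verbatim the same, replacing ``reflexive'' by ``transitive'' (a relation is transitive iff its converse is) and using part (2) of each of the two lemmas.

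For item (3), Lemma~\ref{aux1}(3) says that $\mathrm{B}$ is valid on $\mathbf{A}$ iff for all $P,Q\in X(\mathbf{A})$, $(P,Q)\in R_{\mathbf{A}}$ implies $P\subseteq Q$. Substituting $R_{\mathbf{A}}=S_{\mathbf{A}}^{-1}$ this reads: for all $P,Q$, $(Q,P)\in S_{\mathbf{A}}$ implies $P\subseteq Q$; after renaming the bound variables this is precisely ``for all $P,Q$, $(P,Q)\in S_{\mathbf{A}}$ implies $Q\subseteq P$'', which by Lemma~\ref{ecuacion y condicion WD}(3) is exactly the condition equivalent to $\mathrm{B}^{*}$ being valid on $\mathbf{A}$.

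There is essentially no hard step here: the only point requiring attention is the derivation of $S_{\mathbf{A}}=R_{\mathbf{A}}^{-1}$ from $\mathrm{E1}$ and $\mathrm{E2}$, which is already packaged in Lemma~\ref{ecuaciones y relaciones}, together with the trivial but necessary observation that reflexivity, transitivity, and the ``$(P,Q)\in{\cdot}\Rightarrow P\subseteq Q$'' type of condition all transfer correctly when one passes to the converse relation, so that the frame characterizations of $\mathrm{R},\mathrm{T},\mathrm{B}$ turn into those of $\mathrm{R}^{*},\mathrm{T}^{*},\mathrm{B}^{*}$.
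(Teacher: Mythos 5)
Your proof is correct and follows exactly the route the paper intends: the paper itself derives the corollary by citing Lemmas \ref{aux1}, \ref{ecuacion y condicion WD} and \ref{ecuaciones y relaciones}, and your argument simply spells out the details (obtaining $S_{\mathbf{A}}=R_{\mathbf{A}}^{-1}$ from E1 and E2 and then transferring reflexivity, transitivity, and the inclusion condition across the converse). No gaps.
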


Taking into account Lemma \ref{ecuaciones y relaciones} it follows that if \( \bf{A} \) is a WHB-algebra then the structure \( \mathcal{F}({\bf{A}})=(X({\bf{A}}),\subseteq,R_{{\bf{A}}},S_{{\bf{A}}}) \) is a WHB-frame. Moreover we have the following result:

\begin{theorem} \label{representacion WHB} Every WHB-algebra \( \bf{A} \) is isomorphic to some subalgebra of the algebra \( \mathcal{A}(\mathcal{F}(\bf{A})) \).
\end{theorem}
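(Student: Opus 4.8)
The plan is to imitate the representation proofs for $\mathcal{WH}$ (Theorem~\ref{Teo representacion WH}) and $\mathcal{WD}$ (Theorem~\ref{representacion WD}): show that the Stone map $\sigma_{\bf{A}}\colon A \to \mathrm{Up}(X(\bf{A}))$ is an injective homomorphism onto a subalgebra of $\mathcal{A}(\mathcal{F}(\bf{A}))$. Since $\sigma_{\bf{A}}$ is already known to be a lattice embedding, and since Lemma~\ref{ecuaciones y relaciones} guarantees that $\mathcal{F}(\bf{A}) = (X(\bf{A}),\subseteq,R_{\bf{A}},S_{\bf{A}})$ is genuinely a WHB-frame (so $\mathcal{A}(\mathcal{F}(\bf{A}))$ is a WHB-algebra by Theorem~\ref{PSW-algebra asociada al frame}), the whole content is to verify that $\sigma_{\bf{A}}$ commutes with the two residual-type operations.

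First I would recall that the compatibility with the lattice operations and the bounds is standard Stone/Priestley duality, already invoked in the proofs of the two earlier representation theorems. Then I would check the $\to$ part: $\sigma_{\bf{A}}(a \to b) = \sigma_{\bf{A}}(a) \To_{R_{\bf{A}}} \sigma_{\bf{A}}(b)$. This is exactly the identity underlying Theorem~\ref{Teo representacion WH}, and it follows from Corollary~\ref{TPF Corolario 2}: $P \in \sigma_{\bf{A}}(a \to b)$ iff $a \to b \in P$ iff there is no $Q$ with $(P,Q) \in R_{\bf{A}}$, $a \in Q$, $b \notin Q$, iff $R_{\bf{A}}(P) \cap \sigma_{\bf{A}}(a) \subseteq \sigma_{\bf{A}}(b)$, iff $P \in \sigma_{\bf{A}}(a) \To_{R_{\bf{A}}} \sigma_{\bf{A}}(b)$. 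Dually, the $\ot$ part $\sigma_{\bf{A}}(a \ot b) = \sigma_{\bf{A}}(a) \oT_{S_{\bf{A}}} \sigma_{\bf{A}}(b)$ is precisely the identity established inside the proof of Theorem~\ref{representacion WD}, and it follows from Corollary~\ref{corolario DTFP}: $P \in \sigma_{\bf{A}}(a \ot b)$ iff $a \ot b \in P$ iff there is $Q$ with $(P,Q) \in S_{\bf{A}}$, $a \in Q$, $b \notin Q$, iff $S_{\bf{A}}(P) \cap (\sigma_{\bf{A}}(a)\setminus\sigma_{\bf{A}}(b)) \neq \emptyset$, iff $P \in \sigma_{\bf{A}}(a) \oT_{S_{\bf{A}}} \sigma_{\bf{A}}(b)$.

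Putting these together, $\sigma_{\bf{A}}$ is a homomorphism of DWH-algebras; since it is injective (Stone map on the underlying lattice) and its image is closed under all the operations of $\mathcal{A}(\mathcal{F}(\bf{A}))$, it identifies $\bf{A}$ with the subalgebra $\sigma_{\bf{A}}[A] \leq \mathcal{A}(\mathcal{F}(\bf{A}))$. I should also remark explicitly that $\mathcal{A}(\mathcal{F}(\bf{A}))$ is a WHB-algebra (not merely a DWH-algebra): by Lemma~\ref{ecuaciones y relaciones}, validity of $\rm{E1}$ and $\rm{E2}$ on $\bf{A}$ forces $S_{\bf{A}} \subseteq R_{\bf{A}}^{-1}$ and $R_{\bf{A}} \subseteq S_{\bf{A}}^{-1}$, hence $S_{\bf{A}} = R_{\bf{A}}^{-1}$, so $\mathcal{F}(\bf{A})$ satisfies condition~3 of the definition of a WHB-frame and Theorem~\ref{PSW-algebra asociada al frame} applies.

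There is essentially no serious obstacle here, since both halves of the key computation have already been carried out in the $\mathcal{WH}$ and $\mathcal{WD}$ representation theorems and the present theorem is just their combination. The only point requiring a little care is making sure the two relations $R_{\bf{A}}$ and $S_{\bf{A}}$ coincide up to inversion, i.e. that the frame we obtain is a WHB-frame rather than a mere DWH-frame; this is handled by the equivalences in Lemma~\ref{ecuaciones y relaciones}, which turn the two WHB-axioms $\rm{E1}$ and $\rm{E2}$ into exactly the inclusions whose conjunction gives $S_{\bf{A}} = R_{\bf{A}}^{-1}$.
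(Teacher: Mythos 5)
Your proposal is correct and matches the paper's intended argument: the paper states this theorem without a written proof precisely because, as you note, it follows by combining the Stone-map computations of Theorems \ref{Teo representacion WH} and \ref{representacion WD} (via Corollaries \ref{TPF Corolario 2} and \ref{corolario DTFP}) with Lemma \ref{ecuaciones y relaciones}, which yields $S_{\bf{A}} = R_{\bf{A}}^{-1}$ and hence that $\mathcal{F}({\bf{A}})$ is a WHB-frame so that Theorem \ref{PSW-algebra asociada al frame} applies. Your explicit attention to the point that the frame must be a WHB-frame rather than merely a DWH-frame is exactly the content the paper flags in the sentence immediately preceding the theorem.
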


\section{Priestley Duality for WHB-algebras} \label{S5}

The aim of this section is to study a Priestley style duality for the category \( \mathsf{WHB} \) whose objects are WHB-algebras and arrows are algebra homomorphism. We start this section by introducing the category \( \mathsf{WHBS} \) whose objects are certain Priestley spaces endowed with two binary relations and the arrows are continuous order-preserving maps such that satisfy some additional conditions.   

\begin{definition}\label{def WHB-space} A structure \( (X,\tau,\leq,,R,S) \) is said to be a \emph{WHB-space} provided that:
\begin{enumerate}[\normalfont 1.]
\item \( (X,\tau,\leq) \) is a Priestley space.
\item \( (X,\leq,R,S) \) is a WHB-frame.
\item \( S(x) \) and \( R(x) \) are \( \tau \)-closed for every \( x \in X \).
\item For each \( U \) clopen the set \( R^{-1}(U) \) and \( S^{-1}(U) \) are clopen.
\end{enumerate} 
A map \( f \colon (X_{1},\tau_{1},\leq_{1},R_{1},S_{1}) \to (X_{2},\tau_{2},\leq_{2},R_{2},S_{2}) \) is said to be a \emph{WHBS-morphism} provided that
\begin{enumerate}[\normalfont 1.]
\item \( f \) is a continuous order-preserving map 
\item If \( (x,y) \in \mathcal{R}_{1} \) then \( (f(x),f(y)) \in \mathcal{R}_{2} \) for \( \mathcal{R} \in \{R,S \} \).
\item If \( (f(x),z) \in \mathcal{R}_{2} \) then there is \( y \in \mathcal{R}_{1}(x) \) such that \( f(y)=z \), for \( \mathcal{R} \in \{R,S\} \).  
\end{enumerate}
\end{definition}

We write \( \mathsf{WHBS} \) to denote the category whose objects are the WHB-spaces and the arrows are the WHB-morphism with the usual composition of functions.\\

\vspace{1pt}

Let \( (X,\leq,\tau,R,S) \) be a WHB-space. In particular, $(X,\tau,\leq)$ is a Priestley space and we write \( D(X) \) to denote the the family of all clopen upsets of \( (X,\tau,\leq) \). We shall consider the algebra 
\[ \D(X) = \left( D(X),\cap,\cup,\Rightarrow_{R},\oT_{S},\emptyset,X \right). \]

\begin{lemma} \label{Objetos PSWS-PSW} If \( (X,\leq,\tau,R,S) \) is a WHB-space then \( \D(X) \) is a WHB-algebra. 
\end{lemma}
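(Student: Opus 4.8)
The plan is to show that $\D(X)$ is a WHB-algebra by verifying the two clauses of the definition of a WHB-algebra: first that $\D(X)$ is a DWH-algebra (i.e. $(D(X),\cap,\cup,\Rightarrow_R,\emptyset,X)$ is a WH-algebra and $(D(X),\cap,\cup,\oT_S,\emptyset,X)$ is a WD-algebra), and second that the equations $\rm{E1}$ and $\rm{E2}$ hold. The essential preliminary observation, which I would state first, is that $D(X)$ is closed under the operations $\Rightarrow_R$ and $\oT_S$. Since $(X,\tau,\leq)$ is a Priestley space, $D(X)$ is a bounded distributive lattice under $\cap,\cup,\emptyset,X$; what must be checked is that for clopen upsets $U,V$ the sets $U \Rightarrow_R V$ and $U \oT_S V$ are again clopen upsets. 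That they are upsets follows from the frame conditions $\leq \circ R \subseteq R$ and $\leq^{-1}\circ S \subseteq S$ (exactly as in the paragraphs defining $\mathcal{A}(\mathcal{F})$ for WH- and WD-frames in Sections \ref{S2} and \ref{S3}). For clopenness I would argue: $U \Rightarrow_R V = \{x : R(x)\cap U \subseteq V\} = X \setminus \{x : R(x)\cap(U\setminus V)\neq\emptyset\} = X \setminus R^{-1}(U\setminus V)$, and since $U\setminus V = U \cap V^c$ is clopen, condition 4 of Definition \ref{def WHB-space} gives that $R^{-1}(U\setminus V)$ is clopen, hence so is its complement; dually $U \oT_S V = S^{-1}(U\setminus V)$ is clopen by the same condition.

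Once closure under the operations is established, the bulk of the verification is essentially free. The algebra $\D(X)$ is a subalgebra of $\mathcal{A}((X,\leq,R,S))$ (the full upset algebra of the underlying WHB-frame) restricted to clopen upsets, and Theorem \ref{PSW-algebra asociada al frame} already tells us that $\mathcal{A}((X,\leq,R,S))$ is a WHB-algebra. So the WH-algebra identities (1)--(4) of Definition \ref{def WH algebra}, the WD-algebra identities (1)--(4) of Definition \ref{def WD-algebra}, and the equations $\rm{E1}$, $\rm{E2}$ all hold in $\D(X)$ because they hold in the larger algebra and $D(X)$ is closed under the relevant operations and contains $\emptyset$ and $X$. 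I would phrase this as: "By Theorem \ref{PSW-algebra asociada al frame}, $(\mathrm{Up}(X),\cap,\cup,\Rightarrow_R,\oT_S,\emptyset,X)$ is a WHB-algebra; since $D(X)$ is a bounded sublattice of $\mathrm{Up}(X)$ closed under $\Rightarrow_R$ and $\oT_S$ by the remarks above, $\D(X)$ is a subalgebra, and hence is itself a WHB-algebra."

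The main obstacle, and the only place where anything genuinely needs to be checked, is the closure of $D(X)$ under $\Rightarrow_R$ and $\oT_S$ — specifically the clopenness part, which is exactly what conditions 3 and 4 of the definition of a WHB-space are designed to guarantee. (Condition 3, that $R(x)$ and $S(x)$ are closed, is not strictly needed for this lemma but will be used elsewhere in the duality; I would not invoke it here.) Everything else is inherited from Theorem \ref{PSW-algebra asociada al frame}. I expect the write-up to be short: one displayed line identifying $U \Rightarrow_R V$ with $(R^{-1}(U \setminus V))^c$ and $U \oT_S V$ with $S^{-1}(U \setminus V)$, a sentence invoking condition 4 for clopenness and the frame conditions for the upset property, and then a one-sentence appeal to Theorem \ref{PSW-algebra asociada al frame} to conclude that the subalgebra $\D(X)$ satisfies all the WHB-algebra axioms.
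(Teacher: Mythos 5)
Your proposal is correct and follows essentially the same route as the paper's proof: the paper likewise reduces everything to showing $U \Rightarrow_R V = X \setminus R^{-1}(U\setminus V)$ and $U \oT_S V = S^{-1}(U\setminus V)$ are clopen upsets (clopenness from condition 4 of Definition \ref{def WHB-space}, the upset property from the frame conditions), and then obtains the identities, including E1 and E2, from Theorem \ref{PSW-algebra asociada al frame} together with the fact that $\D(X)$ is a subalgebra of $\mathrm{Up}(X)$. Your side remark that condition 3 of the definition is not needed here is also accurate.
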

\begin{proof}
Taking into account the Priestley duality for bounded distributive lattice it follows that \( (D(X),\cap,\cup,\emptyset, X) \) is a bounded distributive lattice. It will be enough prove that \( U \Rightarrow_{R} V \) and \( U \oT_{S} V \in \D(X) \) for all \( U, V \in \D(X) \). Note that \( U \Rightarrow_{R} V = X \setminus R^{-1}(U \setminus V) \) and \( U \oT_{S} V = S^{-1}(U \setminus V) \). Since \( U \setminus V \) is clopen it follows that \( U \Rightarrow_{R} V \) and \( U \oT_{S} V \) are clopen too. Besides since \( (X,\leq,R,S) \) is a WHB-frame then we have that \( U \Rightarrow_{R} V \) and \( U \oT_{S} V \) are upsets. Thus \( U \Rightarrow_{R} V \) and \( U \oT_{S} V \in \D(X) \). 

The fact that the identities E1 and E2 are valid in \( \D(X) \) follows from Theorem \ref{PSW-algebra asociada al frame} and from the fact that \( \D(X) \) is a subalgebra of \( \mathrm{Up}(X) \). Thus, the algebra \( \D(X) \) is a WHB algebra. 
\end{proof}

\begin{lemma} \label{flechas PSWS - PSW} If \( f \colon (X_{1},\tau_{1},\leq_{1},R_{1},S_{1}) \to (X_{2},\tau_{2},\leq_{2},R_{2},S_{2}) \) is an arrow in the category \( \mathsf{WHBS} \) then the map \( \D(f) \colon \D(X_{2}) \to \D(X_{1}) \) defined by \( \D(f)(U)=f^{-1}(U) \) is an arrow in $\mathsf{WHB}$. 
\end{lemma}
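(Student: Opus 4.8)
The plan is to verify that $\D(f)$ is a homomorphism of WHB-algebras, that is, to check it preserves the lattice structure together with the two new operations $\To_{R}$ and $\oT_{S}$. First I would recall that, by the Priestley duality for bounded distributive lattices, $\D(f)(U)=f^{-1}(U)$ is well-defined (since $f$ is continuous and order-preserving, $f^{-1}(U)$ is a clopen upset of $X_{1}$ whenever $U$ is one of $X_{2}$) and is a bounded lattice homomorphism, because preimage commutes with finite unions and intersections and sends $\emptyset$ to $\emptyset$ and $X_{2}$ to $X_{1}$. So the only content is the compatibility with $\To_{R}$ and $\oT_{S}$.

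Next I would prove $f^{-1}(U\To_{R_{2}}V)=f^{-1}(U)\To_{R_{1}}f^{-1}(V)$ for all $U,V\in D(X_{2})$, and dually $f^{-1}(U\oT_{S_{2}}V)=f^{-1}(U)\oT_{S_{1}}f^{-1}(V)$. For the implication: given $x\in X_{1}$, we have $x\in f^{-1}(U\To_{R_{2}}V)$ iff $R_{2}(f(x))\cap U\subseteq V$, while $x\in f^{-1}(U)\To_{R_{1}}f^{-1}(V)$ iff $R_{1}(x)\cap f^{-1}(U)\subseteq f^{-1}(V)$. The forward containment uses condition~2 of a $\mathsf{WHBS}$-morphism (if $(x,y)\in R_{1}$ then $(f(x),f(y))\in R_{2}$): if $y\in R_{1}(x)\cap f^{-1}(U)$ then $f(y)\in R_{2}(f(x))\cap U\subseteq V$, so $y\in f^{-1}(V)$. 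The reverse containment uses condition~3 (the back-and-forth / lifting property): given $z\in R_{2}(f(x))\cap U$, pick $y\in R_{1}(x)$ with $f(y)=z$; then $y\in f^{-1}(U)$, hence $y\in f^{-1}(V)$, so $z=f(y)\in V$. The argument for $\oT_{S}$ is the exact analogue, working with $S$ instead of $R$ and using the description $U\oT_{S}V=\{x:S(x)\cap(U\setminus V)\neq\emptyset\}$ together with conditions~2 and~3 for $\mathcal{R}=S$.

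Finally I would record that these three computations — lattice homomorphism plus preservation of $\To_{R}$ and $\oT_{S}$ — together say precisely that $\D(f)\colon\D(X_{2})\to\D(X_{1})$ is a WHB-algebra homomorphism, i.e.\ an arrow in $\mathsf{WHB}$, which is what the lemma asserts. (Functoriality, $\D(\mathrm{id})=\mathrm{id}$ and $\D(g\circ f)=\D(f)\circ\D(g)$, follows from the corresponding facts for preimages, and could be remarked here or deferred to the construction of the duality functor.) I do not expect any serious obstacle: the only mild subtlety is being careful with the set-difference $U\setminus V$ in the $\oT_{S}$ case and noting that $f^{-1}$ commutes with it, and making sure each of the two containments invokes the correct one of the two morphism conditions (the ``$p$-morphism forward'' condition for one inclusion, the ``lifting'' condition for the other). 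Everything else is a routine unwinding of definitions.
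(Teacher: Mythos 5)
Your proposal is correct and follows essentially the same route as the paper: reduce the lattice part to Priestley duality, then verify preservation of $\To_{R}$ and $\oT_{S}$ pointwise, using the forward condition of a WHBS-morphism for one inclusion and the lifting condition for the other. The only difference is cosmetic: the paper handles the $\To_{R}$ case by citing the earlier duality for WH-algebras and only writes out the $\oT_{S}$ computation, whereas you carry out both computations explicitly (and, incidentally, you attribute the two morphism conditions to the correct clause numbers, which the paper's own proof mislabels).
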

\begin{proof}
By results of Priestley duality for bounded distributive lattices we have that \( \D(f) \) is a homomorphism of bounded distributive lattices. Also it follows from \cite[Theorem 4.14]{CJ1} that \( \D(f)(U \Rightarrow_{R_{2}} V) = \D(f)(U) \Rightarrow_{R_{1}} \D(f)(V) \). Hence we only show \( \D(f)(U \oT_{S_{2}} V) = \D(f)(U) \oT_{S_{1}} \D(f)(V) \). 

Let \( x \in \D(f)(U \oT_{S_{2}} V) \) we shall prove that \( S_{1}(x) \cap f^{-1}(U \setminus V) \neq \emptyset \). In fact, if \( x \in \D(f)(U \oT_{S_{2}} V) \) then \( f(x) \in U \oT_{S_{2}} V \), so \( S_{2}(f(x)) \cap (U \setminus V) \neq \emptyset \). Then there is \( z \in S_{2}(f(x)) \) and \( z \in U \setminus V \). By property (2) of WHBS-morphisms there is \( y \in S_{1}(x) \) such that \( f(y) = z \). Thus \( y \in S_{1}(x) \cap f^{-1}(U \setminus V) \), so we conclude that \( x \in \D(f)(U) \oT_{S_{1}} \D(f)(V) \). Conversely, suppose that \( x \in \D(f)(U) \oT_{S_{1}} \D(f)(V) \) we shall prove that \( x \in \D(f)(U \oT_{S_{2}} V) \). In fact, if \( x \in \D(f)(U) \oT_{S_{1}} \D(f)(V) \) then there is \( y \in S_{1}(x) \cap f^{-1}(U \setminus V) \). By property (1) of WHBS-morphism we have that \( f(y) \in S_{2}(f(x)) \cap (U \setminus V) \), therefore we have that \( x \in \D(f)(U \oT_{S_{2}} V) \). Thus, we have that \(  \D(f)(U \oT_{S_{2}} V) = \D(f)(U) \oT_{S_{1}} \D(f)(V) \) and \( \D(f) \) is a homomorphism of WHB-algebras as we claim.
\end{proof}

Some simple computations show that we have defined a contravariant functor \( \D \colon \mathsf{WHBS} \to \mathsf{WHB} \) whose action on objects is described in Lemma \ref{Objetos PSWS-PSW} and its action on arrows is described in Lemma \ref{flechas PSWS - PSW}. In order to obtain a duality between \( \mathsf{WHB} \) and \( \mathsf{WHBS} \), in what follows we shall introduce a contravariant functor \( \mathrm{X} \colon \mathsf{WHB} \to \mathsf{WHBS} \) and natural isomorphism \( \sigma \colon \mathrm{I}_{\mathsf{WHB}} \to (\D \circ \mathrm{X}) \) and \( \epsilon \colon \mathrm{I}_{\mathsf{WHBS}} \to (\mathrm{X}\circ \D) \).\\

\vspace{1pt}

Let \( \bf{A} \) be a WHB-algebra. We consider the structure 
\[ \mathrm{X}({\bf{A}})=(X({\bf{A}}),\tau_{{\bf{A}}},\subseteq,R_{{\bf{A}}},S_{{\bf{A}}}), \]
where \( (X({\bf{A}}),\subseteq,\tau_{{\bf{A}}}) \) is the Priestley space dual to the bounded distributive lattice reduct of $\bf{A}$.

\begin{lemma} \label{Objetos PSW-PSWS} If \( \bf{A} \) be a WHB-algebra then the structure \( \X(\bf{A}) \) is a WHBS-space.
\end{lemma}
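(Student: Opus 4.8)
The plan is to verify each of the four defining conditions of a WHB-space for the structure $\X(\mathbf{A}) = (X(\mathbf{A}), \tau_{\mathbf{A}}, \subseteq, R_{\mathbf{A}}, S_{\mathbf{A}})$ in turn. Condition (1) is free: $(X(\mathbf{A}), \subseteq, \tau_{\mathbf{A}})$ is by construction the Priestley space dual to the bounded distributive lattice reduct of $\mathbf{A}$, so this is just classical Priestley duality. Condition (2) --- that $(X(\mathbf{A}), \subseteq, R_{\mathbf{A}}, S_{\mathbf{A}})$ is a WHB-frame --- has already been established: $(X(\mathbf{A}), \subseteq, R_{\mathbf{A}})$ is a WH-frame by the computations recalled after the definition of WH-frame, $(X(\mathbf{A}), \subseteq, S_{\mathbf{A}})$ is a WD-frame by Lemma~\ref{marco asociado al algebra}, and the compatibility $S_{\mathbf{A}} = R_{\mathbf{A}}^{-1}$ follows from Lemma~\ref{ecuaciones y relaciones} together with the hypothesis that $\mathbf{A}$ satisfies $\mathrm{E1}$ and $\mathrm{E2}$ (this last point is exactly the remark made just before the statement of the present lemma).

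So the real content is conditions (3) and (4): that $R_{\mathbf{A}}(P)$ and $S_{\mathbf{A}}(P)$ are $\tau_{\mathbf{A}}$-closed for every prime filter $P$, and that $R_{\mathbf{A}}^{-1}(U)$ and $S_{\mathbf{A}}^{-1}(U)$ are clopen for every clopen $U \in D(X(\mathbf{A}))$. For the clopen-preimage part I would argue via the Stone map: a basic clopen upset has the form $\sigma_{\mathbf{A}}(b)$, and I expect $R_{\mathbf{A}}^{-1}(\sigma_{\mathbf{A}}(b))$ to be expressible in terms of $\sigma_{\mathbf{A}}$ of an arrow-term. Concretely, using Corollary~\ref{TPF Corolario 2} one shows $P \in R_{\mathbf{A}}^{-1}(\sigma_{\mathbf{A}}(b))$ iff there is $Q \in X(\mathbf{A})$ with $(P,Q) \in R_{\mathbf{A}}$ and $b \in Q$; I would relate this to membership of certain elements $a \to b$ in $P$. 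The cleanest route, given that $R_{\mathbf{A}} = S_{\mathbf{A}}^{-1}$, is to prove the statement for $S_{\mathbf{A}}^{-1}$ using Corollary~\ref{corolario DTFP}: $a \ot b \in P$ iff $\exists Q$ with $(P,Q) \in S_{\mathbf{A}}$, $a \in Q$, $b \notin Q$; this gives $S_{\mathbf{A}}^{-1}(\sigma_{\mathbf{A}}(a) \setminus \sigma_{\mathbf{A}}(b)) = \sigma_{\mathbf{A}}(a \ot b)$, and since every clopen $U$ is a finite union of sets $\sigma_{\mathbf{A}}(a) \setminus \sigma_{\mathbf{A}}(b)$ and $S^{-1}$ commutes with unions, $S_{\mathbf{A}}^{-1}(U)$ is clopen; the case of $R_{\mathbf{A}}^{-1}$ is symmetric via $a \to b$ and Corollary~\ref{TPF Corolario 2}. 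For the closedness of $R_{\mathbf{A}}(P)$ and $S_{\mathbf{A}}(P)$, I would write $S_{\mathbf{A}}(P) = \bigcap \{\, \sigma_{\mathbf{A}}(b) : b \notin P,\ a \in ? \,\}$ --- more precisely, $Q \in S_{\mathbf{A}}(P)$ iff for all $a,b$ with $a \in Q$, $b \notin Q$ we have $a \ot b \in P$, equivalently (contrapositive) $Q$ avoids a family of ``bad'' clopen sets; one shows $S_{\mathbf{A}}(P)$ is the intersection over all pairs $(a,b)$ with $a \ot b \notin P$ of the clopen sets $X(\mathbf{A}) \setminus (\sigma_{\mathbf{A}}(a) \cap \sigma_{\mathbf{A}}(b)^c) = \sigma_{\mathbf{A}}(b) \cup \sigma_{\mathbf{A}}(a)^c$, hence an intersection of clopens and therefore closed.

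The main obstacle I anticipate is getting the index set for the ``closed'' part exactly right: one must check that the displayed intersection of clopens really equals $S_{\mathbf{A}}(P)$ (and similarly $R_{\mathbf{A}}(P)$), i.e. that a prime filter $Q$ lying in every listed clopen set actually satisfies the defining implication of $S_{\mathbf{A}}$ --- the forward inclusion is immediate from the definition, and the reverse is a direct unwinding: if $Q$ is in each $\sigma_{\mathbf{A}}(b) \cup \sigma_{\mathbf{A}}(a)^{c}$ for every $(a,b)$ with $a \ot b \notin P$, then $a \in Q$ and $b \notin Q$ force $a \ot b \in P$. Once these set-theoretic identities are in place the proof is essentially a bookkeeping exercise. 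I would therefore organize the write-up as: (i) cite Priestley duality for (1); (ii) cite the earlier frame results plus Lemma~\ref{ecuaciones y relaciones} for (2); (iii) prove the two clopen-preimage identities $R_{\mathbf{A}}^{-1}(\sigma_{\mathbf{A}}(a)\setminus\sigma_{\mathbf{A}}(b)) = X(\mathbf{A})\setminus \sigma_{\mathbf{A}}(a\to b)$ and $S_{\mathbf{A}}^{-1}(\sigma_{\mathbf{A}}(a)\setminus\sigma_{\mathbf{A}}(b)) = \sigma_{\mathbf{A}}(a\ot b)$ via Corollaries~\ref{TPF Corolario 2} and~\ref{corolario DTFP}, then extend to arbitrary clopens; and (iv) exhibit $R_{\mathbf{A}}(P)$ and $S_{\mathbf{A}}(P)$ as intersections of clopen sets. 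This completes the verification that $\X(\mathbf{A})$ is a WHB-space.
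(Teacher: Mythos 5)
Your proposal is correct and follows essentially the same route as the paper: Priestley duality for condition (1), the earlier frame lemmas together with Lemma \ref{ecuaciones y relaciones} for condition (2), the identities $S_{\mathbf{A}}^{-1}(\sigma_{\mathbf{A}}(a)\setminus\sigma_{\mathbf{A}}(b))=\sigma_{\mathbf{A}}(a\ot b)$ and $X(\mathbf{A})\setminus R_{\mathbf{A}}^{-1}(\sigma_{\mathbf{A}}(a)\setminus\sigma_{\mathbf{A}}(b))=\sigma_{\mathbf{A}}(a\to b)$ for condition (4), and exhibiting $S_{\mathbf{A}}(P)$ as an intersection of clopens (equivalently, the paper's argument that its complement is open) for condition (3). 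Your explicit remark that a general clopen is a finite union of differences $\sigma_{\mathbf{A}}(a)\setminus\sigma_{\mathbf{A}}(b)$ and that $S_{\mathbf{A}}^{-1}$ commutes with unions is in fact slightly more careful than the paper's write-up.
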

\begin{proof}
Let \( \bf{A} \) be a WHB-algebra. It follows from the Priestley duality for bounded distributive lattices that \( (X({\bf{A}}),\tau_{{\bf{A}}},\subseteq) \) is a Priestley space. Also by Lemma \ref{marco asociado al algebra} we have that the structure \( (X({\bf{A}}),\subseteq,R_{{\bf{A}}},S_{{\bf{A}}}) \) is a WHB-frame.

Now let us see that the set \( R_{{\bf{A}}}(P) \) and \( S_{{\bf{A}}}(P) \) are \( \tau_{{\bf{A}}} \)-closed for every \(  P \in X({\bf{A}}) \). The proof of \( R_{{\bf{A}}}(P) \) is closed is given in \cite[Theorem 4.12]{CJ1}, so we only show that \( S_{{\bf{A}}} \) is closed. In fact, suppose that \( (P,Q) \notin S_{{\bf{A}}} \) then there are \( a,b \in A \) such that \( a \in Q \), \( b \notin Q \) and \( a \ot b \notin P \), so we have that \( Q \in \sigma_{{\bf{A}}}(a) \setminus \sigma_{{\bf{A}}}(b) \). Note that \( S_{{\bf{A}}}(P) \cap (\sigma_{{\bf{A}}}(a) \setminus \sigma_{{\bf{A}}}(b)) = \emptyset \). In the contrary case, there is \( Z \in S_{{\bf{A}}}(P) \) with \( a \in Z \) and \( b \notin Z \), so we have that \( a \ot b \in P \) which is a contradiction. Thus, \( S_{{\bf{A}}} \) is a \( \tau_{{\bf{A}}} \)-closed set.

Finally we shall see that for each \( W \) clopen set we have that \( R_{{\bf{A}}}^{-1}(W) \) and \( S_{{\bf{A}}}^{-1}(W) \) are clopen sets. Indeed, suppose that \( W \) is \( \tau_{{\bf{A}}} \)-clopen then there is \( a,b \in A \) such that \( W = \sigma_{{\bf{A}}}(a) \setminus \sigma_{{\bf{A}}}(b) \). Note that \( X({\bf{A}}) \setminus R_{{\bf{A}}}^{-1}(\sigma_{{\bf{A}}}(a) \setminus \sigma_{{\bf{A}}}(b)) = \sigma_{{\bf{A}}}(a) \Rightarrow_{R_{{\bf{A}}}} \sigma_{{\bf{A}}}(b)  =  \sigma_{{\bf{A}}}(a \to b) \) and \( S_{{\bf{A}}}^{-1}(\sigma_{{\bf{A}}}(a) \setminus \sigma_{{\bf{A}}}(b)) = \sigma_{{\bf{A}}}(a) \oT_{S_{{\bf{A}}}} \sigma_{{\bf{A}}}(b) = \sigma_{{\bf{A}}}(a \ot b) \) which are a clopen. Thus, we have that the structure \( (X({\bf{A}}),\tau_{{\bf{A}}},\subseteq,R_{{\bf{A}}},S_{{\bf{A}}}) \) is a WHB-space. 
\end{proof}

\begin{lemma} \label{flechas PSW - PSWS} If \( f \colon {\bf{A}}_1 \to {\bf{A}}_2 \) is a homomorphism of WHB-algebras then the map \( X(f) \colon X({\bf{A}}_2) \to X({\bf{A}}_1) \) defined by \( X(f)(P)=f^{-1}(P) \) is a morphism of WHB-spaces.
\end{lemma}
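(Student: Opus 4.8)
The statement to prove is that for a homomorphism $f\colon {\bf A}_1 \to {\bf A}_2$ of WHB-algebras, the dual map $X(f)\colon X({\bf A}_2)\to X({\bf A}_1)$, $X(f)(P)=f^{-1}(P)$, is a morphism of WHB-spaces, i.e.\ it satisfies the three conditions in Definition~\ref{def WHB-space}. The plan is to invoke known pieces and then check the new ones symmetrically. First, since $f$ is in particular a homomorphism of bounded distributive lattices, the classical Priestley duality already gives that $X(f)$ is well defined (the preimage of a prime filter under a lattice homomorphism is a prime filter), continuous, and order-preserving; this disposes of condition~(1). For the back-and-forth conditions (2) and (3) we must handle the two relations $R$ and $S$. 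The case of $R$ (the strict-implication relation $R_{\bf A}$) is exactly the content of the WH-duality and is already recorded in the literature: by the argument of \cite[Theorem 4.14]{CJ1} (the same one cited in Lemma~\ref{flechas PSWS - PSW}) the map $X(f)$ is compatible with $R_{{\bf A}_1}$ and $R_{{\bf A}_2}$ in both directions. So the real work is to prove the analogous two statements for $S_{\bf A}$.

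For the forward condition with $S$: assume $(P,Q)\in S_{{\bf A}_2}$; we must show $(f^{-1}(P),f^{-1}(Q))\in S_{{\bf A}_1}$. Take $a,b\in A_1$ with $a\in f^{-1}(Q)$ and $b\notin f^{-1}(Q)$, so $f(a)\in Q$ and $f(b)\notin Q$; by the hypothesis on $(P,Q)$ and the definition of $S_{{\bf A}_2}$ in (\ref{def relacion S}) we get $f(a)\ot f(b)\in P$, and since $f$ preserves $\ot$ this reads $f(a\ot b)\in P$, i.e.\ $a\ot b\in f^{-1}(P)$, which is exactly what the definition of $S_{{\bf A}_1}$ demands. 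For the back condition with $S$: assume $\bigl(X(f)(P),Z\bigr)\in S_{{\bf A}_1}$ for some $Z\in X({\bf A}_1)$; we must produce $Q\in X({\bf A}_2)$ with $(P,Q)\in S_{{\bf A}_2}$ and $f^{-1}(Q)=Z$. The standard device is to consider the filter $F=F_P\bigl(f[Z]\bigr)$ generated (in the closure-operator sense) by the image $f[Z]$ together with the point $P$, and the ideal $I$ generated by $f[A_1\setminus Z]$ (the ideal of elements below some $f(c)$ with $c\notin Z$); one shows $F\cap I=\emptyset$ using that $\bigl(f^{-1}(P),Z\bigr)\in S_{{\bf A}_1}$ together with the fact that $f$ preserves $\ot$, $\vee$, $\wedge$, and then applies Theorem~\ref{DTPF} to extract a prime $Q$ with $(P,Q)\in S_{{\bf A}_2}$, $f[Z]\subseteq Q$ and $Q\cap I=\emptyset$. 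The first inclusion gives $Z\subseteq f^{-1}(Q)$ and disjointness from $I$ gives the reverse inclusion $f^{-1}(Q)\subseteq Z$, so $f^{-1}(Q)=Z$ as required.

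The main obstacle is precisely the verification that $F\cap I=\emptyset$ in the back condition for $S$: one has to unwind the definition of the closure operator $F_P$, i.e.\ show that if $\bigvee f[Y]\ot x\notin P$ for some finite $Y\subseteq Z$ and $x\leq f(c)$ with $c\notin Z$, a contradiction with $\bigl(f^{-1}(P),Z\bigr)\in S_{{\bf A}_1}$ ensues; this uses monotonicity of $\ot$ (Lemma~\ref{monotonia de la diferencia}), the homomorphism property $f(\bigvee Y)=\bigvee f[Y]$ and $f(y\ot c)=f(y)\ot f(c)$, and the characterization of $S_{{\bf A}_1}$ via $F_{f^{-1}(P)}$ in Proposition~\ref{prop aux 1}(3). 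Once this disjointness is in hand, the rest is the routine application of Theorem~\ref{DTPF} and bookkeeping of preimages; everything else (conditions for $R$, continuity, monotonicity) is either classical Priestley duality or the already-cited WH-duality result, so the proof reduces to writing the $S$-analogue of the $R$-argument, which is entirely symmetric to the computations already performed in Lemma~\ref{flechas PSWS - PSW} and the proof of Lemma~\ref{ecuaciones y relaciones}.
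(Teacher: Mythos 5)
Your proposal follows essentially the same route as the paper's proof: condition (1) and both clauses for $R$ are delegated to Priestley/WH duality, the forward clause for $S$ is verified directly from the definition of $S_{\mathbf{A}_2}$ and preservation of $\ot$, and the back clause is obtained by showing $F_{P}(f[Z])$ is disjoint from the ideal generated by $f[A_1\setminus Z]$ and then invoking Theorem \ref{DTPF}. The disjointness computation you sketch (monotonicity of $\ot$, preservation of joins and of $\ot$, and the hypothesis $(f^{-1}(P),Z)\in S_{\mathbf{A}_1}$) is exactly the one the paper carries out, so the plan is correct.
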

\begin{proof}
Let \( f \colon {\bf{A}}_1 \to {\bf{A}}_2 \) be a homomorphism of WHB-algebras. By the Priestley style duality develop for WH-algebras it follows that \( X(f) \) is a continuous and order preserving map. Besides the conditions (2) and (3) of definition of WHBS-morphism are valid for \( R_{{\bf{A}}_{1}} \) and \( R_{{\bf{A}}_{2}} \). So it will be enough prove the conditions (2) and (3) of the definition of WHBS-morphism for \( S_{{\bf{A}}_{1}} \) and \( S_{{\bf{A}}_{2}} \). 

Let \( P,Q \in X({\bf{A}}_2) \) with \( (P,Q) \in S_{{\bf{A}}_{2}} \) we shall see that \( (f^{-1}(P),f^{-1}(Q)) \in S_{{\bf{A}}_{1}} \). In fact, let \( a \in f^{-1}(Q) \) and \( b \notin f^{-1}(Q) \), so we have that \( f(a) \in Q \) and \( f(b) \notin Q \). Since \( (P,Q) \in S_{{\bf{A}}_{2}} \) we have that \( f(a) \ot f(b) \in P \), so \( f(a \ot b) \in P \) which implies that \( a \ot b \in f^{-1}(P) \) and \( (f^{-1}(P),f^{-1}(Q)) \in S_{{\bf{A}}_{1}} \).

Now, suppose that \( (f^{-1}(P),Z) \in S_{{\bf{A}}_{1}} \) we shall prove that there is \( Q \in X({\bf{A}}_{2}) \) such that \( (P,Q) \in S_{{\bf{A}}_{2}} \) and \( f^{-1}(Q) = Z \). In order to prove it, we prove that 
\[ F_{P}(f(Z)) \cap (f(Z)^{c}] = \emptyset \]
In fact, suppose that \( x \in F_{P}(f(Z)) \cap (f(Z)^{c}] \) then there is \( z \in Z \) and \( u \in Z^{c} \) such that \( f(z) \ot x \notin P \) and \( x \leq f(u) \), so we have that \( f(z) \ot f(u) \notin P \) and \( z \ot u \notin f^{-1}(P) \). Since \( (f^{-1}(P),Z) \in S_{{\bf{A}}_{1}} \) we have that \( z \ot u \in f^{-1}(P) \) which is a contradiction. Thus, by Theorem \ref{DTPF} there is \( Q \in X({\bf{A}}_{2}) \) such that \( (P,Q) \in S_{{\bf{A}}_{2}} \) and \( f^{-1}(Q) = Z \). \end{proof}

Straightfoward computations show that we have defined a contravariant functor \( \X \colon \mathsf{WHB} \to \mathsf{WHBS} \) whose action in objects and arrows is described in lemmas \ref{Objetos PSW-PSWS} and \ref{flechas PSW - PSWS}.\\

\vspace{1pt}

Let \( \sigma \colon \mathrm{I}_{\mathsf{WHB}} \to \D \circ \X \) be the natural transformations such that for every \( \mathbf{A} \) object of \( \mathsf{WHB} \) the arrow \( \sigma_{{\bf{A}}} \) is the Stone map and \( \epsilon \colon \mathrm{I}_{\mathsf{WHBS}} \to \X \circ \D \) the natural transformation such that for every \( (X,\tau,\leq,R,S) \) object of \( \mathsf{WHBS} \) the arrow \( \epsilon_{X} \) is defined by \( \epsilon_{X}(x) = \{ U \in \D(X) \colon x \in U \} \). It follows from Theorem \ref{representacion WHB} that the map \( \sigma_{{\bf{A}}} \) is an arrow in \( \mathsf{WHB} \). Moreover the map \( \sigma_{\bf{A}} \) is a isomorphism between \( \bf{A} \) and \( \D(\X({\bf{A}})) \). Our next goal is to show that the map $\epsilon_{X}$ is an arrow of the category $\mathsf{WHBS}$.

\begin{lemma} \label{aux 3} If \( (X,\tau,\leq,R,S) \) be an object in \( \mathsf{WHBS} \) then the map \( \epsilon_{X} \) is an arrow in \( \mathsf{WHBS} \).  
\end{lemma}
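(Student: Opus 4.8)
The plan is to verify that $\epsilon_X$ satisfies the three clauses of the definition of a $\mathsf{WHBS}$-morphism. By the Priestley duality for bounded distributive lattices recalled in Section \ref{S2}, the map $\epsilon_X \colon X \to X(D(X))$ is a homeomorphism and an order-isomorphism; in particular it is continuous and order-preserving, which is clause (1). It then remains to deal with clauses (2) and (3) for the two relations $R$ and $S$.

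The key point is that $\epsilon_X$ identifies the relations $R$ and $S$ on $X$ with the relations $R_{\D(X)}$ and $S_{\D(X)}$ on $X(D(X))$, in the sense that for all $x,y \in X$,
\[ (x,y)\in R \iff (\epsilon_X(x),\epsilon_X(y))\in R_{\D(X)}, \qquad (x,y)\in S \iff (\epsilon_X(x),\epsilon_X(y))\in S_{\D(X)}. \]
Granting these equivalences, clause (2) for $\mathcal{R}\in\{R,S\}$ is exactly the left-to-right implication, and clause (3) follows from the surjectivity of $\epsilon_X$: if $(\epsilon_X(x),W)\in\mathcal{R}_{\D(X)}$, choose $y\in X$ with $\epsilon_X(y)=W$, and the right-to-left implication gives $y\in\mathcal{R}(x)$ with $\epsilon_X(y)=W$. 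The equivalence for $R$ is part of the Priestley-style duality for WH-algebras of \cite{CJ1}, so I would only need to establish the one for $S$; the two arguments are formally dual.

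To prove the $S$-equivalence, unwind the definition (\ref{def relacion S}) of $S_{\D(X)}$ for the algebra $\D(X)$, whose weak difference is $\oT_S$: $(\epsilon_X(x),\epsilon_X(y))\in S_{\D(X)}$ holds precisely when, for every pair $U,V\in D(X)$ with $y\in U\setminus V$, one has $x\in U\oT_S V$, i.e. $S(x)\cap(U\setminus V)\neq\emptyset$. If $(x,y)\in S$, then whenever $y\in U\setminus V$ the point $y$ itself lies in $S(x)\cap(U\setminus V)$, giving the forward implication. For the converse I would argue by contraposition: if $(x,y)\notin S$, then $y\notin S(x)$; since $S(x)$ is $\tau$-closed (clause (3) of the definition of a WHB-space) and the sets $U\setminus V$ with $U,V\in D(X)$ form a basis of the Priestley topology, there are $U,V\in D(X)$ with $y\in U\setminus V\subseteq X\setminus S(x)$, and this pair witnesses $(\epsilon_X(x),\epsilon_X(y))\notin S_{\D(X)}$.

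The main obstacle is this last separation step: it is the only place where the topological axioms of a WHB-space are used, and one must check carefully that the separating basic clopen set $U\setminus V$ can be chosen to contain $y$ while avoiding the closed set $S(x)$ — this is where closedness of $S(x)$ together with total order-disconnectedness of the Priestley space enters. Everything else is routine, reducing either to the bounded-distributive-lattice duality or to the already-proved facts about $R$ from \cite{CJ1}.
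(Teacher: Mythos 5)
Your proposal is correct and follows essentially the same route as the paper: both reduce the claim to the two equivalences $(x,y)\in\mathcal{R}$ iff $(\epsilon_X(x),\epsilon_X(y))\in\mathcal{R}_{\D(X)}$ for $\mathcal{R}\in\{R,S\}$, cite the WH-duality of \cite{CJ1} for the $R$ case, prove the forward $S$-implication by observing $y\in S(x)\cap(U\setminus V)$, and prove the converse by separating $y$ from the closed set $S(x)$ with a basic clopen $U\setminus V$. Your treatment is in fact slightly more complete, since you spell out how the equivalences together with surjectivity of $\epsilon_X$ yield clauses (2) and (3) of the morphism definition, a step the paper leaves implicit.
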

\begin{proof}
Let \( (X,\tau,\leq,S) \) be an object in \( \mathsf{WHBS} \). In order to prove that \( \epsilon_{X} \) is an arrow in \( \mathsf{WHBS} \) we shall prove that \( (x,y) \in R \) if and only if \( (\epsilon_{X}(x),\epsilon_{X}(y)) \in R_{\D(X)} \) and \( (x,y) \in S \) if and only if \( (\epsilon_{X}(x),\epsilon_{X}(y)) \in S_{\D(X)} \). The fact that $(x.y) \in R$ if and only if $\left( \epsilon_{X}(x),\epsilon_{X}(y) \right) \in R_{\D(X)}$ it follows from \cite[Proposition 4.11]{CJ1}, so we only show that $(x,y) \in S$ if and only if $\left( \epsilon_{X}(x),\epsilon_{X}(y) \right)$.

Now suppose that \( (x,y) \in S \) we shall prove that \( (\epsilon_{X}(x),\epsilon_{X}(y)) \in S_{\D(X)} \). Let \( U \in \epsilon_{X}(y) \) and \( V \notin \epsilon_{X}(y) \), so \( y \in U \setminus V \) and \( (x,y) \in S \) which implies that \( S(x) \cap (U \setminus V) \neq \emptyset \) and \( x \in U \oT_{S} V \) i.e, \( U \oT_{S} V \in \epsilon(x) \). Conversely, suppose that \( (x,y) \notin S \). Since \( S(x) \) is \( \tau \)-closed there are \( U,V \in D(X) \) such that \( S(x) \cap U \setminus V = \emptyset \) and \( y \in U \setminus V \). Thus, \( U \in \epsilon_{X}(y) \), \( V \notin \epsilon_{X}(y) \) and \( U \oT_{S} V \notin \epsilon_{X}(x) \), which implies that \( (\epsilon_{X}(x),\epsilon_{X}(y)) \notin S_{\D(X)} \). 
\end{proof}

It follows from Priestley duality for bounded distributive lattices and Lemma \ref{aux 3} that the map \( \epsilon_{X} \colon X \to \X(\D(X)) \) is an isomorphism in the category $\mathsf{WHBS}$ for any $X$ object in \( \mathsf{WHBS} \). Thus we have the following statement.

\begin{theorem}\label{Duality WHB} The category \( \mathsf{WHB} \) is dually equivalent with the category \( \mathsf{WHBS} \) with natural isomorphism given by the maps \( \sigma \) and \( \epsilon \) respectively.
\end{theorem}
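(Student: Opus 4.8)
The plan is to assemble the duality from the three contravariant functors and two natural transformations that have already been constructed, using the classical Priestley duality for bounded distributive lattices as the backbone. Concretely, I would first observe that all the hard work is done: Lemmas \ref{Objetos PSWS-PSW} and \ref{flechas PSWS - PSW} give the functor $\D\colon\mathsf{WHBS}\to\mathsf{WHB}$, Lemmas \ref{Objetos PSW-PSWS} and \ref{flechas PSW - PSWS} give the functor $\X\colon\mathsf{WHB}\to\mathsf{WHBS}$, Theorem \ref{representacion WHB} shows that each component $\sigma_{\mathbf A}$ of the Stone map is an isomorphism of WHB-algebras, and Lemma \ref{aux 3} together with the bounded-distributive-lattice Priestley duality shows that each component $\epsilon_X$ is an isomorphism of WHB-spaces. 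What remains is purely formal bookkeeping, and I would organize it in the following order.

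First I would verify that $\X$ and $\D$ are genuinely functorial: composition and identities are preserved because on underlying sets both functors act by taking preimages, $\X(f)(P)=f^{-1}(P)$ and $\D(f)(U)=f^{-1}(U)$, and preimage is contravariantly functorial; the extra relational and topological structure is preserved by Lemmas \ref{flechas PSW - PSWS} and \ref{flechas PSWS - PSW}, so no new verification is needed there. Second, I would check naturality of $\sigma$ and $\epsilon$. For $\sigma$, given a WHB-homomorphism $f\colon\mathbf A_1\to\mathbf A_2$, naturality amounts to the identity $\D(\X(f))\circ\sigma_{\mathbf A_1}=\sigma_{\mathbf A_2}\circ f$, i.e.\ $(\X(f))^{-1}(\sigma_{\mathbf A_1}(a))=\sigma_{\mathbf A_2}(f(a))$ for every $a\in A_1$, which unwinds to the tautology $f(a)\in P\iff a\in f^{-1}(P)$; this is exactly the naturality square already known from Priestley duality for the lattice reducts, and the additional operations play no role since the square is an identity of underlying maps. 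The naturality of $\epsilon$ with respect to a WHBS-morphism is checked the same way, again inherited from the lattice-level Priestley duality.

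Third, having functors $\X$, $\D$ and natural transformations $\sigma\colon\mathrm I_{\mathsf{WHB}}\to\D\circ\X$ and $\epsilon\colon\mathrm I_{\mathsf{WHBS}}\to\X\circ\D$ whose components are isomorphisms, I would conclude that $\sigma$ and $\epsilon$ are natural isomorphisms, and hence that $(\X,\D,\sigma,\epsilon)$ exhibits a dual equivalence $\mathsf{WHB}\simeq\mathsf{WHBS}^{\mathrm{op}}$. If one wishes, one can additionally record the triangle identities, but they too are inherited from the underlying Priestley adjunction and impose no new condition on the relations $R,S$. The main obstacle in this argument is not in this final theorem at all — it has been discharged earlier, in Lemmas \ref{Objetos PSW-PSWS}, \ref{flechas PSW - PSWS} and \ref{aux 3}, where one must show that $R_{\mathbf A}(P)$ and $S_{\mathbf A}(P)$ are closed, that $R_{\mathbf A}^{-1}$ and $S_{\mathbf A}^{-1}$ preserve clopens, and (the most delicate point) that the back-condition (3) for WHBS-morphisms holds for $S$, which relies essentially on the dual prime filter existence Theorem \ref{DTPF}. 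Given those, the proof of Theorem \ref{Duality WHB} is a two-line citation of the preceding lemmas plus the remark that the natural transformations $\sigma$ and $\epsilon$ have isomorphism components, so every object is recovered up to isomorphism by the round trips $\D\circ\X$ and $\X\circ\D$.
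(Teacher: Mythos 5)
Your proposal is correct and matches the paper's own treatment: the paper likewise obtains the theorem as an immediate assembly of the previously established lemmas, citing Theorem \ref{representacion WHB} for the fact that each $\sigma_{\mathbf{A}}$ is an isomorphism and Lemma \ref{aux 3} together with Priestley duality for the fact that each $\epsilon_{X}$ is an isomorphism, with the functoriality and naturality checks left as routine computations exactly as you describe. Your identification of where the genuine work lies (the closedness of $R_{\mathbf{A}}(P)$ and $S_{\mathbf{A}}(P)$, preservation of clopens, and the back-condition for $S$ via Theorem \ref{DTPF}) is also accurate.
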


\subsection{The dual of congruences} \label{S5.1}

In this subsection we will use the Priestley duality for $\mathsf{WHB}$ in order to characterice the lattice $\mathrm{Con}(\mathbf{A})$ for every WHB-algebra $\mathbf{A}$. We start this section by giving some definitions that will be necessary.

\begin{definition} Let $(X,\tau,\leq,R,S)$ be a WHB-space and $Y \subseteq X$. We say that:
\begin{enumerate}[\normalfont 1.]
    \item $Y$ is closed by the relation $\mathcal{R}$ if $x \in Y$ implies $\mathcal{R}(x) \subseteq Y$, for $\mathcal{R} \in \{R,S\}$
    \item $Y$ is doubly closed if it closed is by the relations $R$ and $S$. 
\end{enumerate}
\end{definition}
We write $C_{R}(X)$ to denote the lattice of closed sets and $R$-closed. Similarly we write $C_{S}(X)$ and $DC(X)$ to denote the lattice  of closed and $S$ closed sets and doubly closed sets respectively. 

\begin{definition} Let $\mathbf{A}$ be a WHB-algebra, $\theta \in \mathrm{Con}(\mathbf{A})$ and $P \in X(\mathbf{A})$. We say that $\theta$ is compatible with $P$ whenever $(a,b) \in \theta$ and $a \in P$ implies that $b \in P$
\end{definition}

It is known that if $\mathbf{L}$ is a bounded distributive lattice then there is a lattice isomorphism between the lattice of congruences $\mathrm{Con}(\mathbf{L})$ and the lattice $C(X(\mathbf{L}))$ whose elements are closed subsets of its dual Priestley space $(X(\mathbf{L}),\tau_{\mathbf{L}},\subseteq)$. Moreover this isomorphism is given by the maps $\Theta \colon C(X(\mathbf{L})) \to \mathrm{Con}(\mathbf{L})$ defined by 
\[\Theta(Y)= \{ (a,b) \in L \times L \colon \sigma_{\mathbf{L}} \cap Y = \sigma_{L}(b) \cap Y \}, \] 
and its inverse map $\Theta^{-1} \colon \mathrm{Con}(\mathbf{L}) \to C(X({\bf{L}}))$ defined by 
\[ \Theta^{-1}(\theta) = \{ P \in X(\mathbf{L}) \colon \theta \text{ is compatible  with } P \} \]

In \cite[Theorem 6.1]{CJ2} it was proved that for every WH-algebra $\mathbf{A}$ the maps $\Theta$ and $\Theta^{-1}$ give an isomorphism between the lattice $\mathrm{Con}(\mathbf{A})$ and the lattice $C_{R_{{\bf{A}}}}(X({\bf{A}}))$. So, it follows the following result

\begin{theorem} \label{iso congruencias y cerrados} Let $\mathbf{A}$ be a WHB-algebra and $(X(\mathbf{A}),\tau_{{\bf{A}}},\subseteq,R_{{\bf{A}}},S_{{\bf{A}}})$ its dual WHB-space. Then the maps $\Theta$ and $\Theta^{-1}$ give a lattice isomorphism between the lattice $\mathrm{Con}(\mathbf{A})$ and the lattice $DC(X({\bf{A}}))$.
\end{theorem}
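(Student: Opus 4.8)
The plan is to reduce the statement to two facts already in hand: the Priestley correspondence $\Theta \leftrightarrow \Theta^{-1}$ between congruences of a bounded distributive lattice and closed subsets of its dual space, and its refinement \cite[Theorem 6.1]{CJ2} for weak Heyting algebras. Since a congruence of the WHB-algebra $\mathbf{A}$ is exactly a congruence $\theta$ of the lattice reduct $(A,\wedge,\vee,0,1)$ that is compatible with both $\to$ and $\ot$, we have $\mathrm{Con}(\mathbf{A}) = \mathrm{Con}(A,\wedge,\vee,\to,0,1)\cap\mathrm{Con}(A,\wedge,\vee,\ot,0,1)$ computed inside $\mathrm{Con}(A,\wedge,\vee,0,1)$. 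By \cite[Theorem 6.1]{CJ2}, a lattice congruence $\theta$ is $\to$-compatible precisely when $\Theta^{-1}(\theta)$ is $R_{\mathbf{A}}$-closed. So the heart of the matter is the dual statement for $\ot$; once it is proved, intersecting the two descriptions gives that $\theta \in \mathrm{Con}(\mathbf{A})$ iff $\Theta^{-1}(\theta)$ is closed, $R_{\mathbf{A}}$-closed and $S_{\mathbf{A}}$-closed, i.e. iff $\Theta^{-1}(\theta) \in DC(X(\mathbf{A}))$, and the bijections $\Theta,\Theta^{-1}$ restrict to mutually inverse order isomorphisms, hence to a lattice isomorphism between $\mathrm{Con}(\mathbf{A})$ and $DC(X(\mathbf{A}))$.

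Thus the single lemma to establish is: a lattice congruence $\theta$ of $\mathbf{A}$ is $\ot$-compatible if and only if $Y := \Theta^{-1}(\theta) = \{P \in X(\mathbf{A}) : \theta \text{ is compatible with } P\}$ is $S_{\mathbf{A}}$-closed. For the forward direction, assume $\theta$ is $\ot$-compatible, fix $P \in Y$ and $(P,Q)\in S_{\mathbf{A}}$, and show $\theta$ is compatible with $Q$: if $(a,b)\in\theta$, $a\in Q$ but $b\notin Q$, then $a\ot b\in P$ by the definition of $S_{\mathbf{A}}$, while $(a\ot b,\,b\ot b)\in\theta$ and $b\ot b = 0$ by axiom (1) of Definition \ref{def WD-algebra}; compatibility of $\theta$ with $P$ together with $a\ot b\in P$ then forces $0\in P$, contradicting properness. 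For the converse, assume $Y$ is $S_{\mathbf{A}}$-closed and take $(a,b),(c,d)\in\theta$; for $P\in Y$, Corollary \ref{corolario DTFP} rewrites ``$a\ot c\in P$'' as ``there is $Q$ with $(P,Q)\in S_{\mathbf{A}}$, $a\in Q$, $c\notin Q$'', and $S_{\mathbf{A}}$-closedness of $Y$ puts every such $Q$ in $Y$, where $a\in Q\Leftrightarrow b\in Q$ and $c\in Q\Leftrightarrow d\in Q$; so this condition is equivalent to ``$b\ot d\in P$'', giving $\sigma_{\mathbf{A}}(a\ot c)\cap Y = \sigma_{\mathbf{A}}(b\ot d)\cap Y$, that is $(a\ot c,\,b\ot d)\in\theta$.

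Finally, assembling the pieces: $\Theta$ and $\Theta^{-1}$ are mutually inverse order isomorphisms between $\mathrm{Con}(A,\wedge,\vee,0,1)$ and $C(X(\mathbf{A}))$; by \cite[Theorem 6.1]{CJ2} they restrict to $\mathrm{Con}(A,\wedge,\vee,\to,0,1)$ and $C_{R_{\mathbf{A}}}(X(\mathbf{A}))$, and by the lemma above they restrict to $\mathrm{Con}(A,\wedge,\vee,\ot,0,1)$ and $C_{S_{\mathbf{A}}}(X(\mathbf{A}))$; intersecting, they restrict to mutually inverse isomorphisms between $\mathrm{Con}(\mathbf{A})$ and $C_{R_{\mathbf{A}}}(X(\mathbf{A}))\cap C_{S_{\mathbf{A}}}(X(\mathbf{A})) = DC(X(\mathbf{A}))$. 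The only genuine work is the $\ot$-lemma, and I expect it to present no real obstacle: it is the mirror image of the weak Heyting argument of \cite[Theorem 6.1]{CJ2}, obtained by replacing Corollary \ref{TPF Corolario 2} with Corollary \ref{corolario DTFP} and the identity $a\to a = 1$ with $a\ot a = 0$, the whole of Section \ref{S3} having been set up precisely to make this dualization routine.
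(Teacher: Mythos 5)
Your proposal is correct and follows essentially the same route as the paper: reduce to the $\ot$/$S_{\mathbf{A}}$ half via \cite[Theorem 6.1]{CJ2}, prove that a congruence yields an $S_{\mathbf{A}}$-closed set using $a\ot a=0$ and compatibility with a prime filter, and prove the converse using Corollary \ref{corolario DTFP} exactly as the paper does. Your packaging of the two implications as a single ``$\ot$-compatible iff $S_{\mathbf{A}}$-closed'' lemma is only a cosmetic reorganization of the paper's two-part verification.
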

\begin{proof}
Taking into account \cite[Theorem 6.1]{CJ2} we only need to show that if $Y \in C_{S_{\mathbf{A}}}(X(\mathbf{A}))$ then for each $(a,b), (c,d) \in  \Theta(Y)$ we have that $(a \ot c, b \ot d) \in \Theta(Y)$ and if $\theta \in \mathrm{Con}(\mathbf{A})$ then $Y(\theta)$ is closed by the relation $S_{\mathbf{A}}$. 

Suppose that $Y \in C_{S_{\mathbf{A}}}(X(\mathbf{A}))$ and let $(a,b),(c,d) \in \Theta(Y)$ we shall prove that $(a \ot c,b \ot d) \in \Theta(Y)$. In fact, let $P \in \sigma_{\mathbf{A}}(a \ot c) \cap Y$ then by Corollary \ref{corolario DTFP} there is $Q \in X(\mathbf{A})$ such that $(P,Q) \in S_{\mathbf{A}}$, $a \in Q$ and $c \notin Q$. Since $Y$ is closed by the relation $S_{\mathbf{A}}$ we have that $Q \in Y \cap \sigma_{\mathbf{A}}(a) = Y \cap \sigma_{\mathbf{A}}(b)$. Also notice that $d \notin Q$. Indeed, if $d \in Q$ we have that $Q \in Y \cap \sigma_{\mathbf{A}}(d) = Y \cap \sigma_{\mathbf{A}}(c)$, so $c \in Q$ which is absurd. Thus, $(P,Q) \in S_{\mathbf{A}}$, $b \in Q$ and $d \notin Q$ which implies that $P \in Y \cap \sigma_{\mathbf{A}}(b \ot d)$. The proof of the remaining inclusion is analogue. Thus, $(a \ot c,b \ot d) \in \Theta(Y)$ which was our aim. 

Conversely, suppose that $\theta \in \mathrm{Con}(\mathbf{A})$. We shall prove that $\Theta^{-1}(\theta)$ is closed by the relation $S_{\mathbf{A}}$. In fact, let $P \in \Theta^{-1}(\theta)$ and $Q \in X(\mathbf{A})$ such that $(P,Q) \in S_{\mathbf{A}}$. We shall prove that $\theta$ is compatible with $Q$. Suppose the contrary case. Then there is $(a,b) \in \theta$ such that $a \in Q$ and $b \notin Q$. Taking into account that $(P,Q) \in S_{\mathbf{A}}$ we have that $a \ot b \in P$. Also, since $(a,b) \in \theta$ we have that $(a \ot b,0) \in \theta$ and $0 \notin P$, so by the compatibility of $P$ we have that $a \ot b \notin P$ which is a contradiction. Thus, $\theta$ is compatible with $Q$.
\end{proof}

\section{The free tense extension of a strict weak algebra} \label{S6}

We start this section by recalling that given posets $P$ and $Q$, a pair of functions $f:P\rightarrow Q$ and $g:Q\rightarrow P$ is said to be an \emph{adjoint pair} (or Galois connection in \cite{DP2002}) if the following condition holds for every $p\in P$ and $q\in Q$:
\begin{equation}
\begin{array}{ccc}
f(p)\leq q & \Leftrightarrow & p\leq g(q).
\end{array}
\end{equation}
In such a case we say that $f$ is \emph{left adjoint} to $g$ or equivalently, that $g$ is \emph{right adjoint} to $f$. In the literature this situation is usually denoted by $f \dashv g$. For a deeper treatment on adjoint pairs between posets the reader can see Chapter 7 of \cite{DP2002}. A \emph{tense Boolean algebra} (tense algebra, for short), is a structure $(\mathbf{B}, G, H)$ such that $\mathbf{B}$ is a Boolean algebra and $G$ and $H$ are unary operators on $B$ such that $P\dashv G$ and $F\dashv H$, with $P(x):=\neg H(\neg x)$  and $F(x):=\neg G(\neg x)$. It is well known that tense algebras form a variety denoted by $\mathcal{TBA}$. An equational presentation of these algebras can be found in \cite{K1998}. Tense algebras where introduced as an algebraic semantic for Lemon's tense logic (see \cite{B1984} and all the references therein). We stress that every tense algebra can be regarded as a WHB-algebra, by defining $x\rightarrow y:=G(\neg x\vee y)$ and $x\leftarrow y:=P(x\wedge \neg y)$. Moreover, if we present tense algebras as structures $(\mathbf{B},\rightarrow,\leftarrow)$, then every homomorphism of tense algebras is in fact a homomorphism of WHB-algebras. So, if $\mathsf{TBA}$ denotes the category of tense Boolean algebras and homomorphisms, then it follows that there is a faithful functor $M:\mathsf{TBA}\rightarrow \mathsf{WHB}$. Furthermore, we claim that $M$ restricts to an isomorphism of categories when considering the full subcategory $\mathsf{WHB}^{\ast}$ of $\mathsf{WHB}$ whose objects are those WHB-algebras whose lattice reduct is Boolean. Observe that the inverse of $M$ sends an object $(\mathbf{B}, \rightarrow, \leftarrow)$ of $\mathsf{WHB}^{\ast}$ into the tense algebra $(\mathbf{B},G, H)$, where $H(x)= \neg (\neg x\leftarrow 0)$ and $G(x)=1\rightarrow x$.  
\\

In this section we will prove that there exists a functor $T:\mathsf{WHB}\rightarrow \mathsf{TBA}$ which is left adjoint to $M$. Such a functor will be build by employing Theorem \ref{Duality WHB} and the well known free Boolean extension of a bounded distributive lattice. If $\mathbf{A}$ is a WHB-algebra, then we will refer to $T(\mathbf{A})$ as the \emph{free tense extension} of $\mathbf{A}$. We also will show that the free tense extension can be very useful for studying structural properties of subvarieties of WHB-algebras in terms of tense algebras. 
\\

Now we recall some facts about the free Boolean extension of a bounded distributive lattice. Let $\mathbf{L}$ be a bounded distributive lattice and let $\sigma_{\mathbf{L}}: {\bf{L}} \rightarrow \mathcal{P}(X({\bf{L}}))$ be the Stone map. Consider the set 
\[\mathcal{B}(L)=\{\bigcup_{i\leq n}(\sigma_{\mathbf{L}}(a_{i}) \backslash\sigma_{{\mathbf{L}}}(b_{i}))\colon a_{i},b_{i} \in L, n\in \mathbb{N}\}.\]
It is well known that the structure $\mathcal{B}(\mathbf{L})=(\mathcal{B}(L),\cup,\cap, ^{c},X(L), \emptyset)$ is a Boolean algebra. If $h:\mathbf{L}\rightarrow \mathbf{M}$ is a lattice homomorphism, then from the commutativity of the diagram below
\begin{displaymath}
\xymatrix{
\mathbf{L} \ar[r]^-{\sigma_{\mathbf{L}}} \ar[d]_-{h} & \mathcal{P}(X(\mathbf{L})) \ar[d]^-{X(h)^{-1}}
\\
\mathbf{M} \ar[r]_-{\sigma_{\mathbf{M}}} & \mathcal{P}(X(\mathbf{M}))
}
\end{displaymath}

we get that the map $\mathcal{B}(h)= X(h)^{-1}$ is a well defined lattice homomorphism from $\mathcal{B}(\mathbf{L})$ to $\mathcal{B}(\mathbf{M})$. Then, the assignments $\mathbf{L}\mapsto \mathcal{B}(\mathbf{L})$ and $h\mapsto \mathcal{B}(h)= X(h)^{-1}$ determine a functor $\mathcal{B}$ from the category of distributive bounded lattices $\mathsf{DLat}$ to the category of Boolean algebras $\mathsf{Boole}$ which is a left adjoint to the forgetful functor $U:\mathsf{Boole} \rightarrow \mathsf{DLat}$. The Boolean algebra  $\mathcal{B}(\mathbf{L})$ is known as the \emph{free Boolean extension} of $\mathbf{L}$ and such an algebra coincides with the Boolean algebra of clopen upsets of the Priestley dual space of $\mathbf{L}$. We conclude by stressing that the employment of the terms ``free'' and ``extension'' are justified by the fact that the unit of the adjunction $\sigma_{\mathbf{L}}: \mathbf{L} \rightarrow U\mathcal{B}(\mathbf{L})$ is a lattice embedding.
\\

Let \( (X,\tau,\leq,R,S) \) be a WHB-space and let $B(X)$ be the set of clopen subsets of $X$. From Definition \ref{def WHB-space} (3), the maps $P$ and $F$, defined by $P(U)=S^{-1}(U)$ and $F(U)=R^{-1}(U)$ form a pair of well defined operators on $B(X)$. The latter immediately implies that the maps $G$ and $H$ defined by $G(U)=X\backslash F(X\backslash U)$ and $H(U)=X\backslash P(X\backslash U)$ are also well defined operators on $B(X)$.

Let $\mathbf{A}$ be a WHB-algebra. In what follows, we denote by $\mathcal{B}({\bf{A}})$ the set of clopen subsets of the dual WHB-space of $\mathbf{A}$. Notice that the elements of $\mathcal{B}({\bf{A}})$ coincide with the elements of the free Boolean extension of the lattice reduct of $\mathbf{A}$. Moreover, from Theorem \ref{Objetos PSW-PSWS}, it is also the case that $G_{{\bf{A}}}(U)= X({\bf{A}}) \Rightarrow_{R_{{\bf{A}}}}U$, $P_{{\bf{A}}}(U)=U\Leftarrow_{S_{{\bf{A}}}}\emptyset$, $H_{{\bf{A}}}(U)=X({\bf{A}})\backslash ((X({\bf{A}})\backslash U) \Leftarrow_{S_{{\bf{A}}}} \emptyset)$ and $F_{{\bf{A}}}(U)=X({\bf{A}})\backslash (X({\bf{A}})\Rightarrow_{R_{{\bf{A}}}} (X({\bf{A}})\backslash U))$, for every $U\in \mathcal{B}({\bf{A}})$.

\begin{lemma}\label{B(A) es tense algebra} 
Let $\mathbf{A}$ be a WHB-algebra. Then, the structure \[T(\mathbf{A})=(\mathcal{B}({\bf{A}}), \cup, \cap, ^{c},G_{{\bf{A}}},H_{{\bf{A}}},\emptyset, X({\bf{A}})),\] is a tense algebra.
\end{lemma}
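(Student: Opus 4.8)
The strategy is to verify the two adjunctions that define a tense algebra, the Boolean structure being already in place. First, I would recall that $\mathcal{B}(\mathbf{A})$ is, by construction, the free Boolean extension of the bounded distributive lattice reduct of $\mathbf{A}$, so $(\mathcal{B}(\mathbf{A}),\cup,\cap,{}^{c},\emptyset,X(\mathbf{A}))$ is a Boolean algebra; concretely it is the Boolean algebra of clopen subsets of the dual WHB-space $\X(\mathbf{A})$ furnished by Lemma \ref{Objetos PSW-PSWS}. By the defining conditions of a WHB-space (Definition \ref{def WHB-space}), the maps $U\mapsto R_{\mathbf{A}}^{-1}(U)$ and $U\mapsto S_{\mathbf{A}}^{-1}(U)$ carry clopen sets to clopen sets, so $F_{\mathbf{A}}\colon U\mapsto R_{\mathbf{A}}^{-1}(U)$ and $P_{\mathbf{A}}\colon U\mapsto S_{\mathbf{A}}^{-1}(U)$ are unary operators on $\mathcal{B}(\mathbf{A})$, and therefore so are their De Morgan duals $G_{\mathbf{A}}(U)=X(\mathbf{A})\setminus F_{\mathbf{A}}(X(\mathbf{A})\setminus U)$ and $H_{\mathbf{A}}(U)=X(\mathbf{A})\setminus P_{\mathbf{A}}(X(\mathbf{A})\setminus U)$. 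A one-line check gives back $\neg G_{\mathbf{A}}(\neg U)=F_{\mathbf{A}}(U)$ and $\neg H_{\mathbf{A}}(\neg U)=P_{\mathbf{A}}(U)$, so the derived operators ``$F$'' and ``$P$'' obtained from $G_{\mathbf{A}},H_{\mathbf{A}}$ as in the definition of a tense algebra are exactly $F_{\mathbf{A}}$ and $P_{\mathbf{A}}$; thus it remains only to prove the adjunctions $P_{\mathbf{A}}\dashv G_{\mathbf{A}}$ and $F_{\mathbf{A}}\dashv H_{\mathbf{A}}$ on $\mathcal{B}(\mathbf{A})$ (with respect to set-inclusion).

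The crucial ingredient is the frame identity $S_{\mathbf{A}}=R_{\mathbf{A}}^{-1}$. Since $\mathbf{A}$ satisfies $\mathrm{E1}$ and $\mathrm{E2}$, Lemma \ref{ecuaciones y relaciones} yields $S_{\mathbf{A}}\subseteq R_{\mathbf{A}}^{-1}$ and $R_{\mathbf{A}}\subseteq S_{\mathbf{A}}^{-1}$; the second inclusion is the same as $R_{\mathbf{A}}^{-1}\subseteq S_{\mathbf{A}}$, and hence $S_{\mathbf{A}}=R_{\mathbf{A}}^{-1}$ (equivalently, $\mathcal{F}(\mathbf{A})=(X(\mathbf{A}),\subseteq,R_{\mathbf{A}},S_{\mathbf{A}})$ is a WHB-frame). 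Granting this, the adjunctions are instances of the standard correspondence between the forward-looking operator of a relation and the backward-looking operator of its converse. Indeed, for $U,V\in\mathcal{B}(\mathbf{A})$, unwinding the definitions, $P_{\mathbf{A}}(U)\subseteq V$ asserts that $z\in V$ whenever $y\in U$ and $(z,y)\in S_{\mathbf{A}}$, while $U\subseteq G_{\mathbf{A}}(V)=\{x : R_{\mathbf{A}}(x)\subseteq V\}$ asserts that $z\in V$ whenever $y\in U$ and $(y,z)\in R_{\mathbf{A}}$; since $(z,y)\in S_{\mathbf{A}}$ iff $(y,z)\in R_{\mathbf{A}}$, these are the same statement, so $P_{\mathbf{A}}(U)\subseteq V$ iff $U\subseteq G_{\mathbf{A}}(V)$. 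The mirror-image computation, again using $S_{\mathbf{A}}=R_{\mathbf{A}}^{-1}$, shows $F_{\mathbf{A}}(U)\subseteq V$ iff $U\subseteq H_{\mathbf{A}}(V)=\{x : S_{\mathbf{A}}(x)\subseteq V\}$, i.e. $F_{\mathbf{A}}\dashv H_{\mathbf{A}}$.

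Combining the two previous paragraphs, $T(\mathbf{A})$ is a Boolean algebra equipped with unary operators $G_{\mathbf{A}},H_{\mathbf{A}}$ such that $P\dashv G_{\mathbf{A}}$ and $F\dashv H_{\mathbf{A}}$, where $P$ and $F$ are the De Morgan duals of $H_{\mathbf{A}}$ and $G_{\mathbf{A}}$; this is precisely the definition of a tense algebra, so the lemma follows. No genuine difficulty is anticipated: once the frame identity $S_{\mathbf{A}}=R_{\mathbf{A}}^{-1}$ is invoked, the argument collapses to a purely set-theoretic adjunction, and the only point requiring attention is keeping track of the directions of $R_{\mathbf{A}}$, $S_{\mathbf{A}}$ and their converses so that the two sides of each equivalence are correctly aligned.
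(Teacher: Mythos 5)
Your proposal is correct and follows essentially the same route as the paper: both reduce everything to the frame identity $S_{\mathbf{A}}=R_{\mathbf{A}}^{-1}$ obtained from Lemma \ref{ecuaciones y relaciones} and then carry out a routine set-theoretic verification on $\mathcal{B}(\mathbf{A})$. The only (harmless) difference is that you check the adjunction biconditionals $P_{\mathbf{A}}(U)\subseteq V \Leftrightarrow U\subseteq G_{\mathbf{A}}(V)$ and $F_{\mathbf{A}}(U)\subseteq V \Leftrightarrow U\subseteq H_{\mathbf{A}}(V)$ directly, whereas the paper verifies the equivalent unit/counit inclusions $F_{\mathbf{A}}H_{\mathbf{A}}(U)\subseteq U\subseteq H_{\mathbf{A}}F_{\mathbf{A}}(U)$ and $P_{\mathbf{A}}G_{\mathbf{A}}(U)\subseteq U\subseteq G_{\mathbf{A}}P_{\mathbf{A}}(U)$.
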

\begin{proof}
In order to prove the statement we need to check that the following conditions hold, for every $U\in \mathcal{B}(A)$:
\begin{enumerate}
\item $F_{{\bf{A}}}H_{{\bf{A}}}(U)\subseteq U\subseteq H_{{\bf{A}}}F_{{\bf{A}}}(U)$,
\item $P_{{\bf{A}}}G_{{\bf{A}}}(U)\subseteq U\subseteq G_{{\bf{A}}}P_{{\bf{A}}}(U)$.
\end{enumerate}

We only prove (1) because the proof of (2) is similar. We start by checking the inclusion $F_{{\bf{A}}}H_{{\bf{A}}}(U)\subseteq U$. So, let $P\in X({\bf{A}})$ and suppose that $P\in F_{\bf{A}}(H_{{\bf{A}}}(U))$, then there exists $Q\in X(\bf{A})$ such that  $(P,Q)\in R_{\bf{A}}$ and $S_{{\bf{A}}}(Q)\subseteq U$. From Lemma \ref{ecuaciones y relaciones}, $(Q,P)\in S_{{\bf{A}}}$ thus $P\in U$, as claimed. Now we prove $U\subseteq H_{{\bf{A}}}F_{{\bf{A}}}(U)$. To do so, suppose that there exists $P\in X({\bf{A}})$ such that $P\in U$ but $P\notin H_{{\bf{A}}}F_{{\bf{A}}}(U)$, so we have that $P \in \left( X({\bf{A}}) \setminus F_{{\bf{A}}}(U) \right) \oT_{S_{{\bf{A}}}} \emptyset$. Then, there exists $Q\in X({\bf{A}})$ such that $(P,Q)\in S_{{\bf{A}}}$ and $Q \in X({\bf{A}}) \To_{R_{{\bf{A}}}} X({\bf{A}}) \setminus U$ which implies that $R_{{\bf{A}}}(Q) \subseteq X({\bf{A}}) \setminus U$. Therefore, from the latter and Lemma \ref{ecuaciones y relaciones} we get $(Q,P)\in R_{{\bf{A}}}$, so $P \in X({\bf{A}}) \setminus U$ which is absurd. This completes the proof.  

\end{proof}

\begin{remark}\label{B(A) no es temporal S4}
Let $\bf{A}$ be a WHB-algebra and $\mathcal{F}({\bf{A}})= (X({\bf{A}}),\subseteq,R_{{\bf{A}}},S_{{\bf{A}}})$ its frame associated. By lemmas \ref{aux1} and \ref{aux2} we have that  the conditions $\rm{R}$ and $\rm{T}$ are satisfied on $\bf{A}$ if and only if $R_{\bf{A}}$ is reflexive and transitive. Moreover, $R_{\bf{A}}$ is reflexive and transitive if and only if the equations $\rm{R}$ and $\rm{T}$ are valid on $\mathcal{A}(\mathcal{F}({\bf{A}}))$. In particular, we have that the operator $G_{\bf{A}}$ satisfy the following conditions:
\begin{enumerate}[\normalfont (1)]
    \item $G_{{\bf{A}}}(U) \subseteq U$
    \item $G_{{\bf{A}}}(U) \subseteq G_{{\bf{A}}}(G_{{\bf{A}}}(U))$
\end{enumerate}
Dually, taking into account lemmas \ref{ecuacion y condicion WD} and \ref{lema condicion ecuacion WD} we have that the conditions ${\rm{R}}^{*}$ and ${\rm{T}}^{*}$ are valid on $\bf{A}$ if and only if $S_{{\bf{A}}}$ is reflexive and transitive. Moreover, we have that $S_{{\bf{A}}}$ is reflexive and transitive if and only if the equation ${\rm{R}}^{*}$ and ${\rm{T}}^{*}$ are valid on $\mathcal{A}(\mathcal{F}(\mathbf{A}))$. In particular, it follows that the operator $H_{{\bf{A}}}$ satisfies the following equations:
\begin{enumerate}[\normalfont (1)]
    \item $H_{{\bf{A}}}(U) \subseteq U$
    \item $H_{{\bf{A}}}(U) \subseteq H_{{\bf{A}}}(H_{{\bf{A}}}(U))$
\end{enumerate}
In particular, if ${\bf{A}}$ is a HB-algebra then the structure $T(\mathbf{A})$ defined in Lemma \ref{B(A) es tense algebra} is a $\mathbf{S}_{4}$ Tense algebra. This result is also given in \cite{Wolter}.
\end{remark}

Let $h:\mathbf{A}\rightarrow \mathbf{B}$ be a homomorphism of WHB-algebras. We recall that from Lemma \ref{flechas PSW - PSWS}, the map $\mathbf{X}(h):\mathbf{X}(\mathbf{B})\rightarrow \mathbf{X}(\mathbf{A})$ is a morphism of WHB-spaces. Moreover, because of Theorem \ref{Duality WHB}, for every $a\in A$ we have $\mathbf{X}(h)^{-1}(\sigma_{\mathbf{A}}(a))=\sigma_{\mathbf{B}}(h(a))$. Observe that these facts imply that the map $T(h):T(\mathbf{A})\rightarrow T(\mathbf{B})$, defined by 
\[T(h)(\bigcup_{i\leq n}(\sigma_{\mathbf{A}}(a_i)\backslash \sigma_{\mathbf{A}}(b_i))=\bigcup_{i\leq n}(\sigma_{\mathbf{B}}(h(a_i))\backslash \sigma_{\mathbf{B}}(h(b_i))\]  
is a well defined homomorphism of tense algebras. It is no hard to see that the assignments $\mathbf{A}\mapsto T(\mathbf{A})$, $h\mapsto T(h)$, determine a functor $T:\mathsf{WHB}\rightarrow \mathsf{TBA}$.
\\

Let $F:\mathsf{C}\rightarrow \mathsf{D}$ and $G:\mathsf{D}\rightarrow \mathsf{C}$ be functors. We recall that $F$ is \emph{left adjoint to} $G$ if and only if, for every object $A$ in $\mathsf{C}$ and $B$ in $\mathsf{D}$, there is a bijection between the arrows $F(A)\rightarrow B$ in $\mathsf{D}$ and the arrows $A\rightarrow G(B)$ in $\mathsf{C}$. It is well known (Theorem 4.2 of \cite{MCL2013}) that this is equivalent to the existence of a natural transformation $\eta:Id_{\mathsf{C}} \Rightarrow GF$ which is universal for every object $A$ in $\mathsf{C}$. I.e. for every arrow $f: A\rightarrow G(B)$ in $\mathsf{C}$, there exists a unique arrow $h:F(A)\rightarrow B$ in $\mathsf{D}$ such that $f=G(h)\eta_{A}$. In such a case, the natural transformation $\eta$ is usually called the \emph{unit} of the adjunction. Notice that when considering posets as categories, the notions of adjoint pair and adjoint functors coincide.

\begin{theorem}\label{la extension temporal}
The functor $T$ is left adjoint to $M$. Moreover, $T$ preserves monomorphisms, it is faithful and $M$ is full and faithful.
\end{theorem}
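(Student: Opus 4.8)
The plan is to produce the unit of the adjunction $T\dashv M$ explicitly and verify its universal property, then read off the extra properties. For a WHB-algebra $\mathbf{A}$, the natural candidate for the unit component $\eta_{\mathbf{A}}\colon \mathbf{A}\to MT(\mathbf{A})$ is the Stone map $\sigma_{\mathbf{A}}\colon A\to \mathcal{B}(\mathbf{A})$, $a\mapsto \sigma_{\mathbf{A}}(a)$. First I would check this is a WHB-homomorphism: it is a lattice embedding into the free Boolean extension (a standard fact recalled in the excerpt), and the identities $\sigma_{\mathbf{A}}(a\to b)=\sigma_{\mathbf{A}}(a)\Rightarrow_{R_{\mathbf{A}}}\sigma_{\mathbf{A}}(b)=G_{\mathbf{A}}(\neg\sigma_{\mathbf{A}}(a)\cup\sigma_{\mathbf{A}}(b))$ and $\sigma_{\mathbf{A}}(a\leftarrow b)=\sigma_{\mathbf{A}}(a)\Leftarrow_{S_{\mathbf{A}}}\sigma_{\mathbf{A}}(b)=P_{\mathbf{A}}(\sigma_{\mathbf{A}}(a)\cap\neg\sigma_{\mathbf{A}}(b))$ follow from Corollary \ref{TPF Corolario 2}, Corollary \ref{corolario DTFP} and the description of $G_{\mathbf{A}},H_{\mathbf{A}},P_{\mathbf{A}},F_{\mathbf{A}}$ given just before Lemma \ref{B(A) es tense algebra}. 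Recalling that $M$ sends a tense algebra $(\mathbf{B},G,H)$ to the WHB-algebra with $x\to y=G(\neg x\vee y)$ and $x\leftarrow y=P(x\wedge\neg y)$, this says precisely that $\sigma_{\mathbf{A}}$ is an arrow $\mathbf{A}\to M(T(\mathbf{A}))$ in $\mathsf{WHB}$. Naturality in $\mathbf{A}$ is immediate from $T(h)\circ\sigma_{\mathbf{A}}=\sigma_{\mathbf{B}}\circ h$, which holds because $\mathbf{X}(h)^{-1}(\sigma_{\mathbf{A}}(a))=\sigma_{\mathbf{B}}(h(a))$ (Theorem \ref{Duality WHB}).

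Next I would verify the universal property. Let $\mathbf{C}=(\mathbf{B},G,H)$ be a tense algebra and let $f\colon \mathbf{A}\to M(\mathbf{C})$ be a WHB-homomorphism. Since $\mathcal{B}(\mathbf{A})$ is the free Boolean extension of the lattice reduct of $\mathbf{A}$ and $M(\mathbf{C})$ has Boolean reduct $\mathbf{B}$, the underlying lattice homomorphism of $f$ extends uniquely to a Boolean homomorphism $\bar{f}\colon \mathcal{B}(\mathbf{A})\to \mathbf{B}$ with $\bar{f}\circ\sigma_{\mathbf{A}}=f$; concretely $\bar{f}(\bigcup_{i\le n}(\sigma_{\mathbf{A}}(a_i)\setminus\sigma_{\mathbf{A}}(b_i)))=\bigvee_{i\le n}(f(a_i)\wedge\neg f(b_i))$. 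It remains to check $\bar{f}$ commutes with $G$ and $H$; by De Morgan duality it suffices to treat $G$. Every element of $\mathcal{B}(\mathbf{A})$ is a finite union $\bigcup_i(\sigma_{\mathbf{A}}(a_i)\setminus\sigma_{\mathbf{A}}(b_i))$, so using that $G$ and $G_{\mathbf{A}}$ distribute over the relevant meets/joins and the identity $G_{\mathbf{A}}(\neg\sigma_{\mathbf{A}}(a)\cup\sigma_{\mathbf{A}}(b))=\sigma_{\mathbf{A}}(a\to b)$, one reduces the check to the single generators, where $\bar{f}(G_{\mathbf{A}}(\neg\sigma_{\mathbf{A}}(a)\cup\sigma_{\mathbf{A}}(b)))=\bar{f}(\sigma_{\mathbf{A}}(a\to b))=f(a\to b)=G(\neg f(a)\vee f(b))=G(\bar{f}(\neg\sigma_{\mathbf{A}}(a)\cup\sigma_{\mathbf{A}}(b)))$, the middle equality being that $f$ is a WHB-morphism into $M(\mathbf{C})$. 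Uniqueness of the lifting $h=\bar{f}$ is forced already at the level of Boolean homomorphisms extending $f$. This gives the bijection between arrows $T(\mathbf{A})\to\mathbf{C}$ in $\mathsf{TBA}$ and arrows $\mathbf{A}\to M(\mathbf{C})$ in $\mathsf{WHB}$, i.e. $T\dashv M$ with unit $\sigma$.

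Finally, the three extra assertions. That $M$ is full and faithful is exactly the claim made in the text that $M$ restricts to an isomorphism of categories onto $\mathsf{WHB}^{\ast}$: a tense-algebra homomorphism is the same thing as a WHB-homomorphism of the associated algebras with Boolean reduct, since $G,H$ are term-definable from $\to,\leftarrow$ (namely $G(x)=1\to x$, $H(x)=\neg(\neg x\leftarrow 0)$) and conversely $\to,\leftarrow$ from $G,H,\neg$; so $M$ is injective on hom-sets and every WHB-morphism between objects in the image of $M$ is $M(k)$ for a unique tense morphism $k$. That $T$ preserves monomorphisms: if $h\colon\mathbf{A}\to\mathbf{B}$ is injective, then $\mathbf{X}(h)$ is a surjection of Priestley spaces, hence $T(h)=\mathbf{X}(h)^{-1}$ is injective on clopens, so $T(h)$ is a monomorphism. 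That $T$ is faithful: if $T(h)=T(h')$, then precomposing with the embedding $\sigma_{\mathbf{A}}$ and using $T(h)\circ\sigma_{\mathbf{A}}=\sigma_{\mathbf{B}}\circ h$ together with injectivity of $\sigma_{\mathbf{B}}$ yields $h=h'$. The only genuinely delicate point is the verification that the Boolean extension $\bar f$ of $f$ commutes with $G$ and $H$; everything else is bookkeeping with the dualities already established. The main obstacle is therefore organizing that verification so that the reduction from arbitrary clopens to the generating differences $\sigma_{\mathbf{A}}(a)\setminus\sigma_{\mathbf{A}}(b)$ is clean — in particular making sure $G_{\mathbf{A}}$ interacts correctly with finite unions, which is where the WHB identities E1, E2 (via Lemma \ref{ecuaciones y relaciones}) and the normality of $G_{\mathbf{A}},H_{\mathbf{A}}$ established in Lemma \ref{B(A) es tense algebra} are used.
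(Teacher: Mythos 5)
Your proposal is correct and shares the paper's overall skeleton (the unit is the Stone map $\sigma_{\mathbf{A}}$, and the extra assertions about monomorphisms, faithfulness of $T$ and fullness of $M$ are handled essentially as in the paper, via Theorem \ref{Duality WHB}), but you verify the universal property by a genuinely different method. The paper's proof is purely categorical: given $h\colon\mathbf{A}\to M(\mathbf{B})$ it applies the already-constructed functor $T$, observes that $\sigma_{\mathbf{B}}$ is an isomorphism because the lattice reduct of a tense algebra is Boolean, and sets $\psi=\sigma_{\mathbf{B}}^{-1}MT(h)$; the fact that $\psi$ is a morphism of tense algebras is inherited from the earlier construction of $T$ on arrows (Lemma \ref{flechas PSW - PSWS} and the identification $\mathsf{WHB}^{\ast}\cong\mathsf{TBA}$), so no computation with $G$ and $H$ occurs inside the proof. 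You instead build the extension $\bar f$ directly from the universal property of the free Boolean extension and check compatibility with the tense operators by hand on generators. Your route is more self-contained, at the cost of the normal-form bookkeeping you flag at the end; the paper's route is shorter but leans on the machinery already set up. One small repair to your argument: $H$ is not the De Morgan dual of $G$ (that is $F$), so commuting with $G$ does not formally yield commuting with $H$. You must run the parallel computation for $P$ (equivalently $H$), using that $P_{\mathbf{A}}$ preserves finite unions together with the disjunctive normal form $\bigcup_i\left(\sigma_{\mathbf{A}}(a_i)\setminus\sigma_{\mathbf{A}}(b_i)\right)$ and the identity $P_{\mathbf{A}}(\sigma_{\mathbf{A}}(a)\setminus\sigma_{\mathbf{A}}(b))=\sigma_{\mathbf{A}}(a\leftarrow b)$, just as you use conjunctive normal form and meet-preservation of $G_{\mathbf{A}}$ for $G$. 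This is routine and does not affect correctness.
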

\begin{proof}
It is no hard to see that that the collection of maps $\sigma_{\mathbf{A}}: \mathbf{A}\rightarrow MT(\mathbf{A})$, with $\mathbf{A}$ object of $\mathsf{WHB}$, induce a natural transformation $\sigma:Id_{\mathsf{WHB}} \Rightarrow MT$. Now we prove that $\sigma_{\mathbf{A}}$ is universal for every WHB-algebra $\mathbf{A}$. To do so, let $\mathbf{B}$ be a tense algebra and $h:\mathbf{A}\rightarrow \mathbf{B}$ be a homomorphism of WHB-algebras. Then, from Theorem \ref{Duality WHB} it follows that $MT(h): MT(\mathbf{A})\rightarrow MT(\mathbf{B})$ is a homomorphism of WHB-algebras making the following diagram 
\begin{displaymath}
\xymatrix{
\mathbf{A} \ar[r]^-{\sigma_{\mathbf{A}}} \ar[d]_-{h} & MT(\mathbf{A}) \ar[d]^-{MT(h)}
\\
\mathbf{B} \ar[r]_-{\sigma_{\mathbf{B}}} & MT(\mathbf{B})
}
\end{displaymath}
commutes. Since $\sigma_{\mathbf{B}}$ is an isomorphism, we can consider $\psi=\sigma_{\mathbf{B}}^{-1}MT(h)$. From the commutativity of the diagram above it follows that $\psi$ is the only homomorphism of WHB-algebras such that $h=\psi \sigma_{\mathbf{A}}$. Thus $\sigma_{\mathbf{A}}$ is universal, as claimed. On the other hand, if $h:\mathbf{A}\rightarrow \mathbf{B}$ is mono in $\mathsf{WHB}$, then from Theorem \ref{Duality WHB} $X(h):X(\mathbf{B})\rightarrow X(\mathbf{A})$ is an epimorphism of WHB-spaces, so $T(h)=X(h)^{-1}$ is mono in $\mathsf{TBA}$. We also stress that the faithfulness of $T$ also follows from Theorem \ref{Duality WHB}. Finally, $M$ is full and faithful because the counit of the adjunction is an isomorphism in $\mathsf{TBA}$.
\end{proof}

From Theorem \ref{la extension temporal} and the isomorphism between $\mathsf{WHB}^{\ast}$ and $\mathsf{TBA}$, we get the following.

\begin{corollary}\label{subcategory of WHB}
$\mathsf{WHB}^{\ast}$ is a reflective subcategory of $\mathsf{WHB}$.
\end{corollary}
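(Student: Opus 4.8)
The plan is to deduce Corollary \ref{subcategory of WHB} directly from Theorem \ref{la extension temporal} together with the isomorphism of categories $M\colon \mathsf{TBA}\to \mathsf{WHB}^{\ast}$ described in the preamble to this section. First I would recall the general fact that a full subcategory $\mathsf{C}'\subseteq \mathsf{C}$ is \emph{reflective} precisely when the inclusion functor $\iota\colon \mathsf{C}'\hookrightarrow \mathsf{C}$ admits a left adjoint; so the task reduces to exhibiting a left adjoint to the inclusion $\mathsf{WHB}^{\ast}\hookrightarrow \mathsf{WHB}$.

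Next I would assemble the functors. By Theorem \ref{la extension temporal} we have $T\dashv M$ with $T\colon \mathsf{WHB}\to\mathsf{TBA}$ and $M\colon \mathsf{TBA}\to\mathsf{WHB}$. The claim made just before Lemma \ref{B(A) es tense algebra} is that $M$ corestricts to an \emph{isomorphism} of categories $\overline{M}\colon \mathsf{TBA}\xrightarrow{\ \sim\ }\mathsf{WHB}^{\ast}$ (its inverse sends $(\mathbf{B},\rightarrow,\leftarrow)$ to $(\mathbf{B},G,H)$ with $G(x)=1\rightarrow x$, $H(x)=\neg(\neg x\leftarrow 0)$, which is well defined precisely because the lattice reduct is Boolean). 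Composing, the inclusion $\iota\colon \mathsf{WHB}^{\ast}\hookrightarrow\mathsf{WHB}$ factors as $\iota = M\circ \overline{M}^{-1}$. Since $\overline{M}^{-1}$ is an isomorphism of categories it is in particular a left (and right) adjoint to $\overline{M}$, and adjunctions compose: the functor $\overline{M}\circ T\colon \mathsf{WHB}\to\mathsf{WHB}^{\ast}$ is left adjoint to $M\circ\overline{M}^{-1}=\iota$. Hence $\iota$ has a left adjoint, i.e. $\mathsf{WHB}^{\ast}$ is reflective in $\mathsf{WHB}$, with reflector $\overline{M}\circ T$ and unit the same family $\sigma_{\mathbf{A}}\colon \mathbf{A}\to MT(\mathbf{A})=\iota\,\overline{M}\,T(\mathbf{A})$ already shown to be universal in Theorem \ref{la extension temporal}.

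I would also record, for the reader, the concrete description: the reflection of a WHB-algebra $\mathbf{A}$ into $\mathsf{WHB}^{\ast}$ is the WHB-algebra whose underlying Boolean algebra is $\mathcal{B}(\mathbf{A})$, the free Boolean extension of the lattice reduct of $\mathbf{A}$, equipped with the strict implication and weak difference induced by $G_{\mathbf{A}}$ and $H_{\mathbf{A}}$; and $\sigma_{\mathbf{A}}$ is the Stone embedding. This makes transparent that ``free tense extension'' and ``Boolean reflection'' are two faces of the same construction, once one passes through $\overline{M}$.

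The only genuine point requiring care — and the step I expect to be the main obstacle — is the claim that $M$ restricts to an isomorphism of categories onto $\mathsf{WHB}^{\ast}$, rather than a mere equivalence; this is asserted but not proved in the running text. To justify it I would verify that for a WHB-algebra $\mathbf{A}$ with Boolean lattice reduct, the operations satisfy $x\rightarrow y = G(\neg x\vee y)$ and $x\leftarrow y = P(x\wedge\neg y)$ where $G,H$ are reconstructed as above, i.e. that the assignments on objects are mutually inverse \emph{on the nose}; functoriality and bijectivity on hom-sets then follow since both $M$ and its candidate inverse act as the identity on underlying maps. Granting the facts stated in the excerpt, however, this is bookkeeping, and the corollary follows formally from ``reflective $\Leftrightarrow$ inclusion has a left adjoint'' plus composition of adjunctions.
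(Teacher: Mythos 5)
Your argument is correct and is exactly the route the paper intends: the corollary is stated as an immediate consequence of the adjunction $T\dashv M$ of Theorem \ref{la extension temporal} together with the asserted isomorphism between $\mathsf{TBA}$ and $\mathsf{WHB}^{\ast}$, and your composition of adjunctions (with unit $\sigma_{\mathbf{A}}$) is the standard way to make that deduction explicit. Your added remark that the isomorphism claim itself deserves verification is fair, but it does not change the substance of the proof.
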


It is well know that given a bounded distributive lattice $\mathbf{L}$, when applying the Priestley's duality functor $X$ to the unit $\sigma_{\mathbf{L}}: \mathbf{L} \rightarrow U\mathcal{B}(\mathbf{L})$ we get that $X(\sigma_{\mathbf{L}})$ becomes an isomorphism between the Stone spaces $(X(\mathbf{L}), \tau_{\mathbf{L}})$ and $X(U\mathcal{B}(\mathbf{L}))$ (or equivalently, the Priestley spaces $(X(\mathbf{L}), =, \tau_{\mathbf{L}})$ and $X(U\mathcal{B}(\mathbf{L}))$). Notice that this situation in some sense repeats when considering the categories $\mathsf{WHB}$ and $\mathsf{TBA}$ via the free tense extension of a WHB-algebra. In order to explain how, let $\mathbf{A}$ be a WHB-algebra and let $\sigma_{\mathbf{A}}:\mathbf{A}\rightarrow MT(\mathbf{A})$ be the unit of the adjunction obtained in Theorem \ref{la extension temporal}. Then, from Theorem \ref{Duality WHB}, the map $X(\sigma_{\mathbf{A}}): X(MT(\mathbf{A}))\rightarrow X(\mathbf{A})$ is a morphism of WHB-spaces. Observe that the same arguments employed for case of bounded distributive lattices work for proving that $X(\sigma_{\mathbf{A}})$ is in fact an isomorphism between the WHB-spaces $(X({\bf{A}}),\tau_{\mathbf{A}},=,R_{{\bf{A}}},S_{{\bf{A}}})$ and $X(MT(\mathbf{A}))$. Hence, we have shown:

\begin{lemma}\label{free tense extension gives the same space}
For every WHB-algebra $\mathbf{A}$, the WHB-spaces $(X({\bf{A}}),\tau_{\mathbf{A}},=,R_{{\bf{A}}},S_{{\bf{A}}})$ and $X(MT(\mathbf{A}))$ are isomorphic.
\end{lemma}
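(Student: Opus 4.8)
The plan is to unwind the two functors and observe that the underlying Priestley (Stone) spaces already match, so only the two relations need attention. First I would recall that, by construction, $MT(\mathbf{A})$ has as its lattice reduct the free Boolean extension $\mathcal{B}(\mathbf{A})$ of the lattice reduct of $\mathbf{A}$, and that $\sigma_{\mathbf{A}}\colon \mathbf{A}\to MT(\mathbf{A})$ is exactly the unit $\sigma_{\mathbf{A}}\colon L\to U\mathcal{B}(\mathbf{L})$ of the free-Boolean-extension adjunction composed with the (identity-on-objects) inclusion coming from $M$. Hence, applying the Priestley functor $X$ and invoking the classical fact about free Boolean extensions, $X(\sigma_{\mathbf{A}})\colon X(MT(\mathbf{A}))\to X(\mathbf{A})$ is a homeomorphism and an order-isomorphism between the Stone space $(X(MT(\mathbf{A})),\tau,=)$ and the Priestley space $(X(\mathbf{A}),\tau_{\mathbf{A}},=)$ equipped with the \emph{discrete} order (the order on $X(MT(\mathbf{A}))$ is equality since $MT(\mathbf{A})$ is Boolean, and every prime filter of $\mathbf{A}$ extends uniquely to an ultrafilter of $\mathcal{B}(\mathbf{A})$). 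This is the content already used for bounded distributive lattices, and I would simply cite it.

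Next I would verify that the bijection $\phi := X(\sigma_{\mathbf{A}})$ transports the relations $R_{MT(\mathbf{A})}$ and $S_{MT(\mathbf{A})}$ to $R_{\mathbf{A}}$ and $S_{\mathbf{A}}$ respectively. For this, observe that since $\sigma_{\mathbf{A}}$ is a WHB-algebra homomorphism, Lemma \ref{flechas PSW - PSWS} already guarantees that $X(\sigma_{\mathbf{A}})$ is a WHBS-morphism, so $\phi$ preserves both relations forward, and the back-condition of WHBS-morphisms gives the reverse inclusions on images. Concretely: if $(\mathfrak{p},\mathfrak{q})\in R_{MT(\mathbf{A})}$ then $(\phi(\mathfrak{p}),\phi(\mathfrak{q}))\in R_{\mathbf{A}}$; and conversely, given $(P,Q)\in R_{\mathbf{A}}$, writing $P=\phi(\mathfrak{p})$ for the ultrafilter $\mathfrak{p}$ corresponding to $P$, the back-condition produces some $\mathfrak{q}\in R_{MT(\mathbf{A})}(\mathfrak{p})$ with $\phi(\mathfrak{q})=Q$; since $\phi$ is a bijection, $\mathfrak{q}$ is uniquely determined, so $R_{MT(\mathbf{A})}=\phi^{-1}\circ R_{\mathbf{A}}\circ \phi$. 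The same argument applies verbatim to $S$. Therefore $\phi$ is a bijective WHBS-morphism whose inverse is also a WHBS-morphism, i.e.\ an isomorphism of WHB-spaces.

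Alternatively, and perhaps more transparently, one can argue directly from the definitions of $R_{MT(\mathbf{A})}$ and $S_{MT(\mathbf{A})}$ in terms of $\Rightarrow_{R}$ and $\Leftarrow_{S}$ together with the description of $G_{\mathbf{A}}, H_{\mathbf{A}}$ recorded just before Lemma \ref{B(A) es tense algebra}: for ultrafilters $\mathfrak{p},\mathfrak{q}$ of $\mathcal{B}(\mathbf{A})$, one checks $(\mathfrak{p},\mathfrak{q})\in R_{MT(\mathbf{A})}$ iff for all clopen $U$ with $G_{\mathbf{A}}(U)\in\mathfrak{p}$ we have $U\in\mathfrak{q}$, and since every clopen $U$ is a finite union of ``differences'' $\sigma_{\mathbf{A}}(a)\setminus\sigma_{\mathbf{A}}(b)$, this reduces to the condition defining $R_{\mathbf{A}}$ on the corresponding prime filters. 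I would pick whichever of these two routes is shortest given what the paper has already set up; the functorial route (via Lemma \ref{flechas PSW - PSWS}) is essentially free.

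The main obstacle I anticipate is purely bookkeeping: one must be careful that the relation $R_{MT(\mathbf{A})}$ is defined on \emph{prime filters} of the Boolean algebra $\mathcal{B}(\mathbf{A})$ — which are ultrafilters — and match these bijectively with the prime filters of $\mathbf{A}$ in a way compatible with $\sigma_{\mathbf{A}}$; and one must confirm that the back-and-forth conditions of a WHBS-morphism, combined with bijectivity of the underlying map, genuinely upgrade ``morphism'' to ``isomorphism'' in $\mathsf{WHBS}$ (i.e.\ that the set-theoretic inverse is again a WHBS-morphism). Neither point is deep, but both must be stated cleanly. No new idea beyond Theorem \ref{Duality WHB}, Lemma \ref{flechas PSW - PSWS}, and the classical free-Boolean-extension fact is needed.
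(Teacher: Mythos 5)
Your proposal is correct and follows essentially the same route as the paper: the paper's argument (given in the paragraph preceding the lemma) likewise combines the classical fact that $X(\sigma_{\mathbf{L}})$ is an isomorphism of Stone spaces for the free Boolean extension with the fact, from Theorem \ref{Duality WHB} and Lemma \ref{flechas PSW - PSWS}, that $X(\sigma_{\mathbf{A}})$ is a morphism of WHB-spaces. You merely make explicit the bookkeeping the paper leaves implicit, namely that a bijective WHBS-morphism (whose inverse is automatically continuous on compact Hausdorff spaces) transports both relations and is therefore an isomorphism in $\mathsf{WHBS}$.
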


\begin{theorem}\label{Con(A) iso Con(T(A))}
For every WHB-algebra $\mathbf{A}$, there is an isomorphism between the lattices $\mathsf{Con}_{WHB}(\mathbf{A})$ and $\mathsf{Con}_{TBA}(T(\mathbf{A}))$.
\end{theorem}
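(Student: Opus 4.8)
The plan is to transport both congruence lattices to lattices of doubly closed subsets of the dual WHB-space, via Theorem \ref{iso congruencias y cerrados}, and then to identify these using Lemma \ref{free tense extension gives the same space}.

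First I would record that $\mathsf{Con}_{TBA}(T(\mathbf{A}))$ and $\mathsf{Con}_{WHB}(MT(\mathbf{A}))$ are literally the same lattice. The algebras $T(\mathbf{A})$ and $MT(\mathbf{A})$ share the underlying set $\mathcal{B}(\mathbf{A})$, and every tense congruence of $T(\mathbf{A})$ is compatible with $\to$ and $\leftarrow$ since, under $M$, the latter are term operations in $\{\vee,\wedge,\neg,G_{\mathbf{A}},H_{\mathbf{A}}\}$. Conversely, if $\theta$ is a WHB-congruence of $MT(\mathbf{A})$, then the lattice reduct of $MT(\mathbf{A})/\theta$ is a homomorphic image of a Boolean lattice, hence Boolean; as complements are unique in a Boolean lattice, $[\neg a]_{\theta}$ is the complement of $[a]_{\theta}$, so $\theta$ is automatically compatible with complementation, and therefore also with $G_{\mathbf{A}}(x)=1\rightarrow x$ and $H_{\mathbf{A}}(x)=\neg(\neg x\leftarrow 0)$. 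Thus $\mathsf{Con}_{TBA}(T(\mathbf{A}))=\mathsf{Con}_{WHB}(MT(\mathbf{A}))$.

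Next I would apply Theorem \ref{iso congruencias y cerrados} to the WHB-algebra $MT(\mathbf{A})$, obtaining a lattice isomorphism $\mathsf{Con}_{WHB}(MT(\mathbf{A}))\cong DC(X(MT(\mathbf{A})))$, and to $\mathbf{A}$ itself, obtaining $\mathsf{Con}_{WHB}(\mathbf{A})\cong DC(X(\mathbf{A}))$. It remains to see that $DC(X(MT(\mathbf{A})))\cong DC(X(\mathbf{A}))$. Here the key observation is that the notion of a doubly closed set depends only on the topology and the two relations, not on the partial order of the WHB-space. By Lemma \ref{free tense extension gives the same space} there is an isomorphism of WHB-spaces between $X(MT(\mathbf{A}))$ and $(X(\mathbf{A}),\tau_{\mathbf{A}},=,R_{\mathbf{A}},S_{\mathbf{A}})$; such an isomorphism $f$ is a homeomorphism satisfying $(x,y)\in R\iff(f(x),f(y))\in R$ and likewise for $S$, so $Y\mapsto f[Y]$ is a lattice isomorphism between the corresponding lattices of doubly closed sets. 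Finally, $(X(\mathbf{A}),\tau_{\mathbf{A}},=,R_{\mathbf{A}},S_{\mathbf{A}})$ has the same underlying topological space and the same relations $R_{\mathbf{A}},S_{\mathbf{A}}$ as the dual WHB-space $(X(\mathbf{A}),\tau_{\mathbf{A}},\subseteq,R_{\mathbf{A}},S_{\mathbf{A}})$, so they determine the same lattice $DC(X(\mathbf{A}))$. Chaining the isomorphisms,
\[
\mathsf{Con}_{TBA}(T(\mathbf{A}))=\mathsf{Con}_{WHB}(MT(\mathbf{A}))\cong DC(X(MT(\mathbf{A})))\cong DC(X(\mathbf{A}))\cong \mathsf{Con}_{WHB}(\mathbf{A}),
\]
as desired.

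The step requiring the most care is the one in the second paragraph, namely verifying that tense congruences of $T(\mathbf{A})$ and WHB-congruences of $MT(\mathbf{A})$ coincide; once the automatic compatibility of a WHB-congruence on a Boolean-reduct algebra with complementation is established, the rest is a routine transport of structure along Theorem \ref{Duality WHB}, Theorem \ref{iso congruencias y cerrados} and Lemma \ref{free tense extension gives the same space}.
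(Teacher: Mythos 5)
Your proof is correct and follows essentially the same route as the paper: both transport $\mathsf{Con}_{\mathcal{WHB}}(\mathbf{A})$ and $\mathsf{Con}_{\mathcal{TBA}}(T(\mathbf{A}))$ to lattices of doubly closed sets via Theorem \ref{iso congruencias y cerrados} and identify them through Lemma \ref{free tense extension gives the same space}. The only difference is that you spell out the identification of $\mathsf{Con}_{\mathcal{TBA}}(T(\mathbf{A}))$ with $\mathsf{Con}_{\mathcal{WHB}}(MT(\mathbf{A}))$ (automatic compatibility with complementation, hence with $G$ and $H$), a step the paper dispatches with the one-line remark that a Boolean algebra and its lattice reduct have the same congruences.
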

\begin{proof}
Suppose that $\mathbf{A}$ is a WHB-algebra. Note that the WHB-spaces $X(\mathbf{A})$ and $(X({\bf{A}}),\tau_{\mathbf{A}},=,R_{{\bf{A}}},S_{{\bf{A}}})$ has the same set of doubly closed sets. Therefore we get the following chain of isomorphisms:
\begin{displaymath}
\begin{array}{ccll}
\mathsf{Con}_{\mathcal{WHB}}(\mathbf{A}) & \cong & DC(X(\mathbf{A}))^{op} & \text{by Theorem}\; \ref{iso congruencias y cerrados},
\\
 & \cong &  DC((X(\mathbf{A}),\tau_{\mathbf{A}},=,R_{A},S_{A})))^{op} & 
\\
 & \cong &  DC(X(MT(\mathbf{A})))^{op} & \text{by Lemma}\; \ref{free tense extension gives the same space}, 
\\
 & \cong &  \mathsf{Con}_{\mathcal{WHB}}(MT(\mathbf{A}))  & \text{by Theorem}\; \ref{iso congruencias y cerrados},
\\
 & \cong &  \mathsf{Con}_{\mathcal{TBA}}(T(\mathbf{A})).  & 
\end{array}
\end{displaymath}
The last isomorphism comes from the fact that a Boolean algebra and its lattice reduct have the same congruences. This concludes the proof.
\end{proof}

Let $\mathbf{A}$ be a WHB-algebra. In what follows, we denote by $\Phi$ the isomorphism of Theorem \ref{Con(A) iso Con(T(A))}.

\begin{theorem}
The variety $\mathcal{WHB}$, is arithmetical. 
\end{theorem}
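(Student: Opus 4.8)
The plan is to show that $\mathcal{WHB}$ has a majority term, or better, a Pixley term, by transporting the known arithmeticity of $\mathcal{TBA}$ along the free tense extension. Recall that a variety is arithmetical if and only if it is both congruence distributive and congruence permutable, equivalently if it has a Pixley term $p(x,y,z)$ satisfying $p(x,y,x)=p(x,y,y)=p(y,y,x)=x$. Since $\mathcal{TBA}$ is term-equivalent to a variety of Boolean algebras with operators, it is arithmetical; let $p_{\mathrm{TBA}}$ be a Pixley term for $\mathcal{TBA}$, written in the tense signature $(\vee,\wedge,{}^{c},G,H)$. Using the term-definability $G(x)=1\to x$ and $H(x)=\neg(\neg x\leftarrow 0)$ together with $\neg x = x \to 0$ or $\neg x = 1 \leftarrow x$ available inside any tense algebra, we can rewrite $p_{\mathrm{TBA}}$ as a term $p$ in the WHB-signature $(\wedge,\vee,\to,\leftarrow,0,1)$. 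The claim is then that this $p$ is a Pixley term for all of $\mathcal{WHB}$.

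First I would verify that the Pixley identities for $p$ hold in every WHB-algebra. Here is where Theorem~\ref{la extension temporal} does the work: for a WHB-algebra $\mathbf{A}$, the unit $\sigma_{\mathbf{A}}\colon \mathbf{A}\to MT(\mathbf{A})$ is an embedding of WHB-algebras (it is the Stone map, which is injective by Theorem~\ref{representacion WHB}). The identities $p(x,y,x)=x$, $p(x,y,y)=x$, $p(y,y,x)=x$ are universally quantified equations in the WHB-signature; to check that they hold in $\mathbf{A}$ it suffices to check that they hold in $MT(\mathbf{A})$, since $\sigma_{\mathbf{A}}$ is an injective homomorphism that commutes with $\to$ and $\leftarrow$ (hence with the derived $G$, $H$ and the term $p$). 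But $MT(\mathbf{A})$ is the WHB-algebra underlying the tense algebra $T(\mathbf{A})$, and there the translations of $G,H,\neg$ back to the genuine tense operations are valid identities of $\mathcal{TBA}$; thus $p$ computed in $MT(\mathbf{A})$ agrees with $p_{\mathrm{TBA}}$ computed in $T(\mathbf{A})$, which satisfies the Pixley identities because $T(\mathbf{A})\in\mathcal{TBA}$. Since $\mathbf{A}$ was arbitrary, $p$ is a Pixley term for $\mathcal{WHB}$, and therefore $\mathcal{WHB}$ is arithmetical.

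The one point requiring care — and the main obstacle — is the faithful back-and-forth translation of terms between the two signatures: one must check that the definitions $x\to y = G(\neg x\vee y)$, $x\leftarrow y = P(x\wedge\neg y)$ (on the tense side) and $G(x)=1\to x$, $H(x)=\neg(\neg x\leftarrow 0)$ (on the WHB$^{\ast}$ side) are mutually inverse as term operations, so that substituting them into $p_{\mathrm{TBA}}$ yields a term whose value in $MT(\mathbf{A})$ genuinely coincides with the value of $p_{\mathrm{TBA}}$ in $T(\mathbf{A})$; this is exactly the content of the isomorphism of categories $\mathsf{WHB}^{\ast}\cong\mathsf{TBA}$ stated at the start of Section~\ref{S6}, so it is already available. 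Alternatively, one can bypass explicit term juggling: congruence distributivity and congruence permutability are preserved under the constructions used, and Theorem~\ref{Con(A) iso Con(T(A))} gives $\mathsf{Con}_{WHB}(\mathbf{A})\cong\mathsf{Con}_{TBA}(T(\mathbf{A}))$ as lattices, so each congruence lattice of a WHB-algebra is distributive; for permutability one argues that $\mathsf{Con}_{WHB}(\mathbf{A})\hookrightarrow \mathsf{Con}_{TBA}(T(\mathbf{A}))$ respects the commutator/composition of congruences because $\sigma_{\mathbf{A}}$ is an embedding, forcing $\theta\circ\varphi=\varphi\circ\theta$ in $\mathbf{A}$ from the corresponding identity in $T(\mathbf{A})$. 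Either route yields arithmeticity; I would present the Pixley-term route as the cleaner one and remark on the congruence-lattice route as an immediate consequence of Theorem~\ref{Con(A) iso Con(T(A))}.
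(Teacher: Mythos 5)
Your preferred route---translating a Pixley term of $\mathcal{TBA}$ into the WHB-signature and pulling the Pixley identities back along the embedding $\sigma_{\mathbf{A}}\colon \mathbf{A}\to MT(\mathbf{A})$---breaks down at the translation step, and the failure is essential rather than cosmetic. Every Pixley term for $\mathcal{TBA}$ must involve the Boolean complement (a term in $\wedge,\vee,0,1,G,H$ alone is monotone in each variable, and a monotone $p$ cannot satisfy both $p(x,x,y)=y$ and $p(x,y,y)=x$ already in the two-element algebra), and $\neg$ is \emph{not} definable by a term of the signature $(\wedge,\vee,\to,\leftarrow,0,1)$. The substitutions you propose are false even inside a tense algebra: with $x\to y:=G(\neg x\vee y)$ and $x\leftarrow y:=P(x\wedge\neg y)$ one computes $x\to 0=G(\neg x)$ and $1\leftarrow x=P(\neg x)$, neither of which equals $\neg x$ unless $G$ (resp.\ $P$) acts trivially. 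More structurally, no WHB-term $t(x)$ can satisfy $x\wedge t(x)=0$ and $x\vee t(x)=1$ throughout $\mathcal{WHB}$, because the lattice reducts of WHB-algebras need not be Boolean; correspondingly, $\sigma_{\mathbf{A}}[A]$ is a WHB-subalgebra of $MT(\mathbf{A})$ that is in general \emph{not} closed under the complement of $\mathcal{B}(A)$ (the complement of a clopen upset is a clopen downset). Hence the object you call $p$ is not a WHB-term, and the Pixley identities of $T(\mathbf{A})$ cannot be restricted to $\mathbf{A}$ in the way you describe. (If $\mathcal{WHB}$ is arithmetical it does possess \emph{some} Pixley term by Pixley's theorem, but your construction does not produce one.)

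Your fallback route is, in substance, the paper's actual proof: distributivity of $\mathsf{Con}_{\mathcal{WHB}}(\mathbf{A})$ is read off from the lattice isomorphism $\Phi\colon\mathsf{Con}_{\mathcal{WHB}}(\mathbf{A})\cong\mathsf{Con}_{\mathcal{TBA}}(T(\mathbf{A}))$ of Theorem \ref{Con(A) iso Con(T(A))} together with congruence-distributivity of $\mathcal{TBA}$, and permutability from the claim that $\Phi$ carries relational composition to relational composition, $\Phi(\theta_{1}\circ\theta_{2})=\Phi(\theta_{1})\circ\Phi(\theta_{2})$. But that last claim is the real content, and your stated justification---``because $\sigma_{\mathbf{A}}$ is an embedding''---is not a reason: congruence permutability is not inherited by subalgebras, and the nontrivial direction is that a chain $\sigma_{\mathbf{A}}(a)\mathrel{\Phi(\theta_{1})}W\mathrel{\Phi(\theta_{2})}\sigma_{\mathbf{A}}(b)$ through an arbitrary clopen $W\in\mathcal{B}(A)$ can be replaced by one passing through some $\sigma_{\mathbf{A}}(c)$ with $c\in A$. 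Establishing this requires the explicit description of $\Phi$ via doubly closed sets (Theorem \ref{iso congruencias y cerrados} together with Lemma \ref{free tense extension gives the same space}), not merely injectivity of the unit. So the gap in your main route is not repaired by the alternative as you have stated it.
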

\begin{proof}
Let $\mathbf{A}$ be a WHB-algebra. Since, $\mathcal{TBA}$ is congruence-distributive (\cite{K1998}), the lattice $\mathsf{Con}_{\mathcal{TBA}}(T(\mathbf{A}))$ is distributive, thus from Theorem \ref{Con(A) iso Con(T(A))}, we get that $\mathsf{Con}_{\mathcal{WHB}}(\mathbf{A})$ is distributive. Finally, let $\theta_{1}, \theta_{2}\in \mathsf{Con}_{\mathcal{WHB}}(\mathbf{A})$. Observe that again by Theorem \ref{Con(A) iso Con(T(A))}, it is no hard to see that $\Phi(\theta_{1}\circ \theta_{2})=\Phi(\theta_{1})\circ \Phi(\theta_{2})$. Therefore, by the assumption on $\mathsf{Con}_{\mathcal{TBA}}(T(\mathbf{A}))$, we get $\Phi(\theta_{1}\circ \theta_{2})=\Phi(\theta_{2}\circ \theta_{1})$, so $\theta_{1}\circ \theta_{2}=\theta_{2}\circ \theta_{1}$, as desired.
\end{proof}

\begin{corollary}\label{lema EDPC}
Let $\mathbf{A}$ be a WHB-algebra and $\mathbf{B}$ be a tense algebra. Then, the following holds:
\begin{enumerate}[\normalfont (1)]
\item For every $a,b\in A$, $\Phi(\mathsf{Cg}^{\mathbf{A}}(a,b))=\mathsf{Cg}^{T(\mathbf{A})}(\sigma_{\mathbf{A}}(a),\sigma_{\mathbf{A}}(b))$.
\item For every $p,q\in B$, $\mathsf{Cg}^{\mathbf{B}}(p,q)=\mathsf{Cg}^{M(\mathbf{B})}(p,q)$.
\end{enumerate}
\end{corollary}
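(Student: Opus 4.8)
The plan is to reduce both items to a concrete description of the congruence lattices involved; once the lattices are identified, the statements about principal congruences are purely formal, since $\mathsf{Cg}^{\mathbf{C}}(c,d)$ is by definition the least congruence of $\mathbf{C}$ containing $(c,d)$. For item~(1), the step I would carry out first is to show that the inverse of $\Phi$ is ``restriction along the unit'', i.e. that for every $\Psi\in\mathsf{Con}_{TBA}(T(\mathbf{A}))$
\[ \Phi^{-1}(\Psi)=\sigma_{\mathbf{A}}^{-1}[\Psi]:=\{(a,b)\in A\times A:(\sigma_{\mathbf{A}}(a),\sigma_{\mathbf{A}}(b))\in\Psi\}. \]
The right-hand side is a WHB-congruence of $\mathbf{A}$, since $\sigma_{\mathbf{A}}\colon\mathbf{A}\to MT(\mathbf{A})$ is a homomorphism of WHB-algebras (Theorem~\ref{la extension temporal}) and $\Psi$ is a congruence of $MT(\mathbf{A})$. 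As $\Phi$ is a bijection, it is enough to verify $\sigma_{\mathbf{A}}^{-1}[\Phi(\theta)]=\theta$ for every $\theta\in\mathsf{Con}_{WHB}(\mathbf{A})$, and I would do this by unwinding the chain of isomorphisms defining $\Phi$ in the proof of Theorem~\ref{Con(A) iso Con(T(A))}: with $Y_{\theta}:=\Theta^{-1}(\theta)\subseteq X(\mathbf{A})$, one has $\Phi(\theta)=\Theta(Y)$ computed in $MT(\mathbf{A})$, where $Y=X(\sigma_{\mathbf{A}})^{-1}[Y_{\theta}]$ is the transport of $Y_{\theta}$ along the isomorphism $X(\sigma_{\mathbf{A}})$ of Lemma~\ref{free tense extension gives the same space}. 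By naturality of the Stone map, $\sigma_{MT(\mathbf{A})}(\sigma_{\mathbf{A}}(a))=X(\sigma_{\mathbf{A}})^{-1}[\sigma_{\mathbf{A}}(a)]$, so the defining condition of $\Theta(Y)$ applied to the pair $(\sigma_{\mathbf{A}}(a),\sigma_{\mathbf{A}}(b))$, namely $\sigma_{MT(\mathbf{A})}(\sigma_{\mathbf{A}}(a))\cap Y=\sigma_{MT(\mathbf{A})}(\sigma_{\mathbf{A}}(b))\cap Y$, pulls back to $\sigma_{\mathbf{A}}(a)\cap Y_{\theta}=\sigma_{\mathbf{A}}(b)\cap Y_{\theta}$, i.e. to $(a,b)\in\Theta(\Theta^{-1}(\theta))=\theta$.

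Granting this, I would finish item~(1) in two symmetric moves. First, since $\sigma_{\mathbf{A}}^{-1}[\Phi(\mathsf{Cg}^{\mathbf{A}}(a,b))]=\mathsf{Cg}^{\mathbf{A}}(a,b)\ni(a,b)$, the congruence $\Phi(\mathsf{Cg}^{\mathbf{A}}(a,b))$ of $T(\mathbf{A})$ contains the pair $(\sigma_{\mathbf{A}}(a),\sigma_{\mathbf{A}}(b))$, hence contains $\mathsf{Cg}^{T(\mathbf{A})}(\sigma_{\mathbf{A}}(a),\sigma_{\mathbf{A}}(b))$. Second, $\sigma_{\mathbf{A}}^{-1}[\mathsf{Cg}^{T(\mathbf{A})}(\sigma_{\mathbf{A}}(a),\sigma_{\mathbf{A}}(b))]$ is a WHB-congruence of $\mathbf{A}$ containing $(a,b)$, hence contains $\mathsf{Cg}^{\mathbf{A}}(a,b)$; applying the monotone bijection $\Phi$ and using $\Phi\circ\sigma_{\mathbf{A}}^{-1}[-]=\mathrm{id}$ on $\mathsf{Con}_{TBA}(T(\mathbf{A}))$ yields $\mathsf{Cg}^{T(\mathbf{A})}(\sigma_{\mathbf{A}}(a),\sigma_{\mathbf{A}}(b))\supseteq\Phi(\mathsf{Cg}^{\mathbf{A}}(a,b))$. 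The two inclusions give the desired equality.

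For item~(2), the plan is to show that $\mathbf{B}$ and $M(\mathbf{B})$ have the same congruences. One inclusion is immediate: the WHB-operations of $M(\mathbf{B})$, given by $x\to y=G(\neg x\vee y)$ and $x\ot y=P(x\wedge\neg y)$, are term operations of $\mathbf{B}$, so $\mathsf{Con}_{TBA}(\mathbf{B})\subseteq\mathsf{Con}_{WHB}(M(\mathbf{B}))$. Conversely, a congruence $\theta$ of $M(\mathbf{B})$ is a congruence of the bounded-lattice reduct of $M(\mathbf{B})$, which is the lattice reduct of the Boolean algebra $\mathbf{B}$; since a Boolean algebra and its lattice reduct have the same congruences (the fact used at the end of the proof of Theorem~\ref{Con(A) iso Con(T(A))}), $\theta$ is compatible with $\neg$. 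As $G(x)=1\to x$ and $P(x)=x\ot 0$ are term operations of $M(\mathbf{B})$, $\theta$ is also compatible with $G$ and $P$, hence with $H(x)=\neg P(\neg x)$ and $F(x)=\neg G(\neg x)$; thus $\theta\in\mathsf{Con}_{TBA}(\mathbf{B})$. Therefore $\mathsf{Con}_{TBA}(\mathbf{B})=\mathsf{Con}_{WHB}(M(\mathbf{B}))$, and since principal congruences are the least members of these lattices containing the prescribed pair, $\mathsf{Cg}^{\mathbf{B}}(p,q)=\mathsf{Cg}^{M(\mathbf{B})}(p,q)$ for all $p,q\in B$.

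The step I expect to be the main obstacle is the identity $\Phi^{-1}=\sigma_{\mathbf{A}}^{-1}[-]$ in item~(1): proving it cleanly requires chasing the four isomorphisms that compose $\Phi$ and keeping careful track of how the Stone map of $MT(\mathbf{A})$ restricts along $X(\sigma_{\mathbf{A}})$ to the Stone map of $\mathbf{A}$, and how Lemma~\ref{free tense extension gives the same space} identifies the relevant doubly closed subsets. The remaining verifications are routine.
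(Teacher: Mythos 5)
Your proposal is correct. For item (1) you follow essentially the paper's route: the paper writes $\mathsf{Cg}^{\mathbf{A}}(a,b)$ as the intersection of all congruences containing $(a,b)$, pushes the intersection through the lattice isomorphism $\Phi$, and silently replaces the condition $(a,b)\in\theta$ by $(\sigma_{\mathbf{A}}(a),\sigma_{\mathbf{A}}(b))\in\Phi(\theta)$. That replacement is exactly your identity $\Phi^{-1}(\Psi)=\sigma_{\mathbf{A}}^{-1}[\Psi]$, which the paper leaves implicit and you correctly isolate as the real content; your verification of it by transporting $\Theta^{-1}(\theta)$ along $X(\sigma_{\mathbf{A}})$ and using $\sigma_{MT(\mathbf{A})}(\sigma_{\mathbf{A}}(a))=X(\sigma_{\mathbf{A}})^{-1}[\sigma_{\mathbf{A}}(a)]$ is sound, and your two-inclusion finish is an equivalent packaging of the paper's intersection computation. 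For item (2) you genuinely diverge: the paper applies item (1) to $M(\mathbf{B})$ and then transports $\mathsf{Cg}^{TM(\mathbf{B})}$ back to $\mathbf{B}$ via the counit isomorphism $\varepsilon_{\mathbf{B}}$ and the induced map $\kappa$ on congruence lattices, whereas you prove directly that $\mathsf{Con}_{\mathcal{TBA}}(\mathbf{B})=\mathsf{Con}_{\mathcal{WHB}}(M(\mathbf{B}))$ as sets, using that $x\to y=G(\neg x\vee y)$ and $x\ot y=P(x\wedge\neg y)$ are tense terms in one direction, and in the other that a lattice congruence of a Boolean algebra is automatically compatible with $\neg$, so compatibility with $G(x)=1\to x$ and $P(x)=x\ot 0$ yields compatibility with $H$ and $F$. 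Your argument is more elementary (no appeal to the adjunction, the counit, or item (1)) and gives the slightly stronger statement that the two congruence lattices coincide elementwise, from which the claim about principal congruences is immediate; the paper's version buys uniformity, since it reuses the $\Phi$-machinery already set up. Both are valid.
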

\begin{proof}
(1) We start by recalling that \[\mathsf{Cg}^{\mathbf{A}}(a,b)=\bigcap \{\theta \in \mathsf{Con}_{\mathcal{WHB}}(\mathbf{A})\colon (a,b)\in \theta\}.\]
Thus, from Theorem \ref{Con(A) iso Con(T(A))} we get that:
\begin{displaymath}
\begin{array}{rcl}
\Phi(\mathsf{Cg}^{\mathbf{A}}(a,b)) & = & \bigcap \{\Phi(\theta) \in \mathsf{Con}_{\mathcal{TBA}}(T(\mathbf{A}))\colon (a,b)\in \theta\}
\\
 & = & \bigcap \{\Phi(\theta) \in \mathsf{Con}_{\mathcal{TBA}}(T(\mathbf{A}))\colon (\sigma_{\mathbf{A}}(a),\sigma_{\mathbf{A}}(b))\in \Phi(\theta)\}
\\ 
 & = & \mathsf{Cg}^{T(\mathbf{A})}(\sigma_{\mathbf{A}}(a),\sigma_{\mathbf{A}}(b)).
\end{array}
\end{displaymath}
(2) We recall that the counit of the adjunction $\varepsilon_{\mathbf{B}}:TM(\mathbf{B})\rightarrow \mathbf{B}$ of Theorem \ref{la extension temporal} is an isomorphism for every $\mathbf{B}\in \mathcal{TBA}$. So, from general reasons, the latter implies that the map $\kappa:\mathsf{Con}_{\mathcal{TBA}}(TM(\mathbf{B}))\rightarrow \mathsf{Con}_{\mathcal{TBA}}(\mathbf{B})$ defined as 
\[\kappa(\theta) = \{(\varepsilon_{\mathbf{B}}(a),\varepsilon_{\mathbf{B}}(b))\colon (a,b)\in \theta\}  \]
is also an isomorphism. Therefore, if $s,t\in B$, by Theorem \ref{Con(A) iso Con(T(A))}, (1) and the application of $\kappa$ on $\mathsf{Cg}^{TM(\mathbf{B})}(\sigma_{M(\mathbf{B})}(p),\sigma_{M(\mathbf{B})}(q))$, it is the case that:
\begin{displaymath}
\begin{array}{rcl}
(s,t)\in \mathsf{Cg}^{M(\mathbf{B})}(p,q) & \Longleftrightarrow & (\sigma_{M(\mathbf{B})}(s),\sigma_{M(\mathbf{B})}(t))\in \mathsf{Cg}^{TM(\mathbf{B})}(\sigma_{M(\mathbf{B})}(p),\sigma_{M(\mathbf{B})}(q))
\\
& \Longleftrightarrow & (s,t)\in \mathsf{Cg}^{\mathbf{B}}(p,q),
\end{array}
\end{displaymath}
as desired.
\end{proof}

Let $\mathbf{B}$ be a tense algebra. A subset $F$ of $B$ is said to be a \emph{tense filter}, provided that it is a lattice filter and if for every $x\in F$, $G(x)$ and $H(x)$ belong to $F$. In Fact 1.11.  of \cite{K1998} it was proved that the lattice of tense filters of $\mathbf{A}$ is isomorphic to $\mathsf{Con}_{\mathcal{TBA}}(\mathbf{B})$. Thus, as an immediate application of Theorem \ref{Con(A) iso Con(T(A))} we obtain the following: 

\begin{corollary}
Let $\mathbf{A}$ be a WHB-algebra. Then, there is an isomorphism between $\mathsf{Con}_{\mathcal{WHB}}(\mathbf{A})$ and the set of tense filters of $T(\mathbf{A})$.
\end{corollary}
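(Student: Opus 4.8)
The plan is to combine the two isomorphisms that are already available: the one from Theorem \ref{Con(A) iso Con(T(A))}, which identifies $\mathsf{Con}_{\mathcal{WHB}}(\mathbf{A})$ with $\mathsf{Con}_{\mathcal{TBA}}(T(\mathbf{A}))$, and the one from Fact 1.11 of \cite{K1998}, which identifies $\mathsf{Con}_{\mathcal{TBA}}(\mathbf{B})$ with the lattice of tense filters of $\mathbf{B}$, specialized to $\mathbf{B} = T(\mathbf{A})$. First I would invoke Theorem \ref{Con(A) iso Con(T(A))} to obtain a lattice isomorphism $\Phi \colon \mathsf{Con}_{\mathcal{WHB}}(\mathbf{A}) \to \mathsf{Con}_{\mathcal{TBA}}(T(\mathbf{A}))$, since $T(\mathbf{A})$ is a tense algebra by Lemma \ref{B(A) es tense algebra}. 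Then I would appeal to the cited result of \cite{K1998} applied to the tense algebra $T(\mathbf{A})$, which supplies a lattice isomorphism between $\mathsf{Con}_{\mathcal{TBA}}(T(\mathbf{A}))$ and the lattice of tense filters of $T(\mathbf{A})$. Composing these two isomorphisms yields the desired isomorphism between $\mathsf{Con}_{\mathcal{WHB}}(\mathbf{A})$ and the lattice of tense filters of $T(\mathbf{A})$.

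The key steps, in order, are: (i) observe that $T(\mathbf{A})$ is an object of $\mathcal{TBA}$ so that both ingredients apply; (ii) produce the isomorphism $\Phi$ of Theorem \ref{Con(A) iso Con(T(A))}; (iii) produce the tense-filter/congruence isomorphism for $T(\mathbf{A})$ from \cite{K1998}; (iv) compose. No new computation is needed — the statement is phrased as an immediate application, and the substantive work was done in establishing Theorem \ref{Con(A) iso Con(T(A))}.

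I do not expect a genuine obstacle here; the only point requiring a word of care is that the isomorphism of Theorem \ref{Con(A) iso Con(T(A))} is order-preserving (it is a chain of lattice isomorphisms, with an $\mathrm{op}$ appearing twice and cancelling), so that composing it with the lattice isomorphism to tense filters indeed gives a lattice isomorphism rather than a mere bijection. If one wished to be fully explicit, the composite sends a congruence $\theta$ of $\mathbf{A}$ to the tense filter $\{\, p \in \mathcal{B}(\mathbf{A}) : (p, X(\mathbf{A})) \in \Phi(\theta) \,\}$, but for the statement as given it suffices to cite the two results and compose.

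\begin{proof}
By Lemma \ref{B(A) es tense algebra}, $T(\mathbf{A})$ is a tense algebra. By Theorem \ref{Con(A) iso Con(T(A))}, the lattices $\mathsf{Con}_{\mathcal{WHB}}(\mathbf{A})$ and $\mathsf{Con}_{\mathcal{TBA}}(T(\mathbf{A}))$ are isomorphic. On the other hand, by Fact 1.11 of \cite{K1998} applied to the tense algebra $T(\mathbf{A})$, the lattice $\mathsf{Con}_{\mathcal{TBA}}(T(\mathbf{A}))$ is isomorphic to the lattice of tense filters of $T(\mathbf{A})$. Composing these two lattice isomorphisms yields an isomorphism between $\mathsf{Con}_{\mathcal{WHB}}(\mathbf{A})$ and the lattice of tense filters of $T(\mathbf{A})$.
\end{proof}
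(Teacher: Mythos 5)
Your proposal is correct and follows exactly the route the paper takes: the corollary is presented there as an immediate consequence of composing the isomorphism $\mathsf{Con}_{\mathcal{WHB}}(\mathbf{A}) \cong \mathsf{Con}_{\mathcal{TBA}}(T(\mathbf{A}))$ from Theorem \ref{Con(A) iso Con(T(A))} with the congruence/tense-filter isomorphism of Fact 1.11 of \cite{K1998}. Your added remark about order-preservation and the explicit description of the composite is a harmless (and slightly more careful) elaboration of the same argument.
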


Let $X$ be a set of variables. If $\mathcal{L}$ is a propositional language we write $\mathbf{Fm}_{\mathcal{L}}(X)$ for the absolutely free algebra of $\mathcal{L}$-formulas (terms in the language $\mathcal{L}$).  If $\mathcal{V}$ is a variety, we write $\mathbf{F}_{\mathcal{V}}(X)$ for the $\mathcal{V}$-free algebra over $X$. Further, if $\mathcal{L}$ is the language of $\mathcal{V}$ and $\varphi$ is an $\mathcal{L}$-formula, we also denote by $\varphi$ its image under the natural map $\mathbf{Fm}_{\mathcal{L}}(X)  \rightarrow \mathbf{F}_{\mathcal{V}}(X)$ from the term algebra $\mathbf{Fm}_{\mathcal{L}}(X)$ over $X$ onto $\mathbf{F}_{\mathcal{V}}(X)$. We also denote by $\mathsf{V}$ the category associated to the variety.

If $l:X\rightarrow Y$ is a function, it is well known that there is a unique homomorphism $\mathbf{F}_{\mathcal{V}}(l):\mathbf{F}_{\mathcal{V}}(X)\rightarrow \mathbf{F}_{\mathcal{V}}(Y)$ that extends the map $\alpha(x)=l(x)$. If $\vec{x}\in X^{n}$ and $\varphi(\vec{x})$ is an $\mathcal{L}$-formula with variables in $\vec{x}$, then it can be proved (by induction on $\mathcal{L}$-formulas) that $\mathbf{F}_{\mathcal{V}}(l)(\varphi(\vec{x}))=\varphi(l(\vec{x}))$, where $l(\vec{x})$ denotes the $n$-tuple $(l(x_1),...,l(x_n))\in Y^{n}$. Thus the assignments $X\mapsto \mathbf{F}_{\mathcal{V}}(X)$ and $l\mapsto \mathbf{F}_{\mathcal{V}}(l)$ determine a functor $\mathbf{F}_{\mathcal{V}}$ from $\mathcal{V}$ to the category of sets and functions $\mathsf{Set}$, which is left adjoint to the forgetful functor $U:\mathsf{V}\rightarrow \mathsf{Set}$. 
  

Now, we consider the following diagram: 

\begin{displaymath}
\xymatrix{
\mathsf{WHB} \ar[rr]^-{T} & & \mathsf{TBA} 
\\  
 & \mathsf{Set}  \ar[ul]^-{\mathbf{F}_{\mathcal{WHB}}} \ar[ur]_-{\mathbf{F}_{\mathcal{TBA}}} &
}
\end{displaymath}

Let $\mathbf{A}$ be a WHB-algebra and consider its free tense extension $T(\mathbf{A})$. Notice that from Corollary \ref{subcategory of WHB}, it is the case that
\[ MT(\mathbf{A})=(\mathcal{B}({\bf{A}}), \cup, \cap, ^{c},\Rightarrow_{R_{{\bf{A}}}},\Leftarrow_{S_{{\bf{A}}}},  \emptyset, X({\bf{A}})), \]
where $U\Rightarrow_{R_{\mathbf{A}}} V = G_{{\bf{A}}}(U^{c} \cup V)$ and $U\Leftarrow_{S_{\mathbf{A}}} V = P_{{\bf{A}}}(U\backslash V)$, for every $U,V\in \mathcal{B}({\bf{A}})$, is the $\mathsf{WHB}^{\ast}$-reflection of $\mathbf{A}$. From this fact it is no hard to see that $M(\mathbf{F}_{\mathcal{WHB^{\ast}}}(X))$ is isomorphic to $\mathbf{F}_{\mathcal{TBA}}(X)$.  
\\

The next result shows that such a diagram is essentially commutative, in the sense of free tense algebras can be described by means of the functors $T$ and $\mathbf{F}_{\mathcal{WHB}}$. 
\\

From now on, we will denote by $\mathcal{L}$ the language of type $\{\wedge,\vee,\rightarrow,\leftarrow,0,1\}$, by $\mathcal{L}'$ the language $\mathcal{L}\cup \{\neg\}$ and by $\mathcal{L}_{t}$ the language of type $\{\wedge, \vee, \neg, G, H , 0, 1\}$.

\begin{theorem}\label{preservacion de libres}
The functors $T\mathbf{F}_{\mathcal{WHB}}$ and $\mathbf{F}_{\mathcal{TBA}}$ are naturally isomorphic.
\end{theorem}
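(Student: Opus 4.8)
The plan is to establish the natural isomorphism $T\mathbf{F}_{\mathcal{WHB}}\cong \mathbf{F}_{\mathcal{TBA}}$ by exhibiting, for each set $X$, a bijection of hom-sets that is natural in the appropriate arguments, using the adjunctions already in hand. First I would unwind the two adjunctions: from Theorem \ref{la extension temporal} we have $T\dashv M$, so for every tense algebra $\mathbf{B}$ there is a bijection $\mathsf{TBA}(T\mathbf{F}_{\mathcal{WHB}}(X),\mathbf{B})\cong \mathsf{WHB}(\mathbf{F}_{\mathcal{WHB}}(X),M(\mathbf{B}))$, natural in $\mathbf{B}$; and since $\mathbf{F}_{\mathcal{WHB}}\dashv U_{\mathcal{WHB}}$ (the forgetful functor to $\mathsf{Set}$) we have $\mathsf{WHB}(\mathbf{F}_{\mathcal{WHB}}(X),M(\mathbf{B}))\cong \mathsf{Set}(X,U_{\mathcal{WHB}}M(\mathbf{B}))$. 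Now $M$ is the forgetful-type functor that merely reinterprets the tense operations as $\rightarrow,\leftarrow$ via $x\rightarrow y:=G(\neg x\vee y)$, $x\leftarrow y:=P(x\wedge\neg y)$; in particular $U_{\mathcal{WHB}}M = U_{\mathcal{TBA}}$ as functors $\mathsf{TBA}\to\mathsf{Set}$, since the underlying set of a tense algebra and of its image under $M$ coincide. Hence the composite bijection reads $\mathsf{TBA}(T\mathbf{F}_{\mathcal{WHB}}(X),\mathbf{B})\cong \mathsf{Set}(X,U_{\mathcal{TBA}}(\mathbf{B}))$, natural in $\mathbf{B}$, which is exactly the universal property characterizing $\mathbf{F}_{\mathcal{TBA}}(X)$. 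By the uniqueness of representing objects (Yoneda), $T\mathbf{F}_{\mathcal{WHB}}(X)\cong \mathbf{F}_{\mathcal{TBA}}(X)$.

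Second, I would make the isomorphism explicit and check naturality in $X$. Both $T\mathbf{F}_{\mathcal{WHB}}$ and $\mathbf{F}_{\mathcal{TBA}}$ are left adjoints of functors $\mathsf{TBA}\to\mathsf{Set}$: the former is left adjoint to $U_{\mathcal{TBA}}$ because $T\mathbf{F}_{\mathcal{WHB}}$ is a composite of left adjoints whose right adjoints compose to $M\circ(\text{nothing})$... more precisely $T\mathbf{F}_{\mathcal{WHB}}$ is left adjoint to $U_{\mathcal{WHB}}\circ M = U_{\mathcal{TBA}}$, and $\mathbf{F}_{\mathcal{TBA}}$ is by definition left adjoint to $U_{\mathcal{TBA}}$. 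Two left adjoints to the same functor are canonically naturally isomorphic, and the comparison is the unique natural transformation compatible with the units; concretely it sends the generator $x\in X$, viewed inside $\mathbf{F}_{\mathcal{TBA}}(X)$, to $\sigma_{\mathbf{F}_{\mathcal{WHB}}(X)}(x)\in T\mathbf{F}_{\mathcal{WHB}}(X)$, where $\sigma$ is the unit of $T\dashv M$ from Theorem \ref{la extension temporal}. Naturality in $X$ then follows because both sides are functorial via $\mathbf{F}(l)$ and the generators are sent to generators on the nose (using $\mathbf{F}_{\mathcal{V}}(l)(\varphi(\vec x))=\varphi(l(\vec x))$ recalled just before the statement).

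A cleaner route, which I would actually present, avoids pointwise verification entirely: it suffices to observe the equality of right adjoints $U_{\mathcal{WHB}}\circ M = U_{\mathcal{TBA}}: \mathsf{TBA}\to\mathsf{Set}$ (this is immediate from the description of $M$, since $M$ does not change underlying sets, only reinterprets operations — a one-line check). Then $T\mathbf{F}_{\mathcal{WHB}}$, being left adjoint to $U_{\mathcal{WHB}}\circ M = U_{\mathcal{TBA}}$, and $\mathbf{F}_{\mathcal{TBA}}$, being left adjoint to $U_{\mathcal{TBA}}$, are both left adjoints to the same functor; by the essential uniqueness of adjoints (e.g.\ \cite{MCL2013}, Theorem IV.1.1) they are naturally isomorphic, with the isomorphism being the canonical one determined by the units.

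**The main obstacle** I anticipate is not conceptual but bookkeeping: one must verify carefully that $U_{\mathcal{WHB}}\circ M = U_{\mathcal{TBA}}$ on the nose (not merely up to isomorphism) — that is, that $M$ genuinely is identity-on-underlying-sets, so that the formulas $x\rightarrow y:=G(\neg x\vee y)$ and $x\leftarrow y:=P(x\wedge\neg y)$ do not secretly relabel elements — and that $M$ is indeed the right adjoint to $T$ with unit $\sigma$, which is exactly the content of Theorem \ref{la extension temporal}. Once that identification of functors is pinned down, the rest is a formal consequence of uniqueness of adjoints, and no computation with tense terms or Priestley spaces is required.
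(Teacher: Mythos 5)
Your argument is correct, and it takes a genuinely different route from the paper's. The paper proceeds concretely: it realizes $\mathbf{F}_{\mathcal{TBA}}(X)$ as $\mathbf{Fm}_{\mathcal{L}'}(X)/\theta_{t}$, uses the universal property of Theorem \ref{la extension temporal} to produce a tense-algebra homomorphism $h^{\ast}\colon T\mathbf{F}_{\mathcal{WHB}}(X)\to\mathbf{F}_{\mathcal{TBA}}(X)$, defines $g$ in the opposite direction as the homomorphism extending $x\mapsto\sigma_{\mathcal{L}}(x)$, verifies that the two composites are identities by chasing a commuting square, and then checks naturality in $X$ by evaluating on formulas $\varphi(\vec{x})$. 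You instead note that $T\mathbf{F}_{\mathcal{WHB}}$ is a composite of left adjoints and hence is left adjoint to $U_{\mathcal{WHB}}\circ M$, that $U_{\mathcal{WHB}}\circ M=U_{\mathcal{TBA}}$ on the nose because $M$ leaves underlying sets and underlying functions untouched and only reinterprets the operations by tense terms, and then invoke essential uniqueness of left adjoints. All three ingredients are already available in the paper ($T\dashv M$ is Theorem \ref{la extension temporal}, $\mathbf{F}_{\mathcal{V}}\dashv U$ is recalled just before the statement, and the description of $M$ is given at the start of Section \ref{S6}), so the argument is sound, and it buys naturality in $X$ for free, which the paper must verify by hand. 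What the paper's computation additionally delivers is the explicit form of the isomorphism $g_{X}$, characterized by $g_{X}(\sigma_{\mathcal{L}}(\psi))=\sigma_{\mathcal{L}}(\psi)$ on the image of $\mathbf{F}_{\mathcal{WHB}}(X)$, which is reused in the lemma that follows (rewriting elements of $MT\mathbf{F}_{\mathcal{WHB}}(X)$ as conjunctions of formulas $\delta\vee\neg\varepsilon$); but since your canonical comparison map is exactly the one sending each generator $x$ to $\sigma_{\mathbf{F}_{\mathcal{WHB}}(X)}(x)$, it coincides with the paper's $g_{X}$ and nothing needed downstream is lost.
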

\begin{proof}
We start by clarifying some notation. For all along this proof, we set $\sigma_{\mathcal{L}}$ and $\sigma_{\mathcal{L}'}$ to denote the units of $\mathbf{F}_{\mathcal{WHB}}(X)$ and $\mathbf{F}_{\mathcal{TBA}}(X)$, respectively. Let $\theta_{t}$ be the congruence of $\mathbf{Fm}_{\mathcal{L}'}(X)$ that corresponds to the equational theory of tense algebras. From general reasons, $\mathbf{Fm}_{\mathcal{L}'}(X)/\theta_{t}$ is isomorphic to $\mathbf{F}_{\mathcal{TBA}}(X)$.  Now let $h:\mathbf{F}_{\mathcal{WHB}}(X)\rightarrow \mathbf{F}_{\mathcal{WHB^{\ast}}}(X)$ be the homomorphism of WHB-algebras that extends the assignment $\alpha(x)=x/\theta_{t}$, for every $x\in X$.  Observe that $M(\mathbf{F}_{\mathcal{WHB^{\ast}}}(X))$ is isomorphic to $\mathbf{F}_{\mathcal{TBA}}(X)$, thus from Theorem \ref{la extension temporal}, there exists a homomorphism of tense algebras $h^{\ast}:T\mathbf{F}_{\mathcal{WHB}}(X)\rightarrow \mathbf{F}_{\mathcal{TBA}}(X)$. It is well known that, $h^{\ast}$ is the composition of the counit of the adjunction of Theorem \ref{la extension temporal} with $T(h)$, so from the commutativity of diagram (\ref{diagrama previo}), we get that $h^{\ast}(\sigma_{\mathcal{L}}(\varphi))=\varphi/\theta_{t}$, for every $\varphi\in \mathbf{F}_{\mathcal{WHB}}(X)$.  
\\


Consider now, the homomorphism of tense algebras $g:\mathbf{F}_{\mathcal{TBA}}(X) \rightarrow T(\mathbf{F}_{\mathcal{WHB}}(X))$ that extends the assignment $\beta(x)=\sigma_{\mathcal{L}}(x)$, for every $x\in X$. 
\begin{equation}\label{diagrama previo}
\xymatrix{
\mathbf{F}_{\mathcal{WHB}}(X) \ar[r]^-{\sigma_{\mathcal{L}}} \ar[d]_-{h} & T(\mathbf{F}_{\mathcal{WHB}}(X)) \ar[d]^-{T(h)}
\\
\mathbf{F}_{\mathcal{TBA}}(X) \ar[r]_-{\sigma_{\mathcal{L}'}} & T(\mathbf{F}_{\mathcal{TBA}}(X))
}
\end{equation}

Since $\sigma_{\mathcal{L}'}$ is bijective, it is clear that the map $f=\sigma_{\mathcal{L}'}^{-1}T(h)$ is a homomorphism of tense algebras. Further, we stress that the commutativity of diagram (\ref{diagrama previo}) implies that $fg=id_{\mathbf{F}_{\mathcal{TBA}}(X)}$ and $f(\sigma_{\mathcal{L}}(\varphi))=h(\varphi/\theta)$, for every $\mathcal{L}$-formula $\varphi$. Therefore, from the latter we conclude that $gf=id_{T(\mathbf{F}_{\mathcal{WHB}}(X))}$. Hence $T(\mathbf{F}_{\mathcal{WHB}}(X))$ and $\mathbf{F}_{\mathcal{TBA}}(X)$ are isomorphic, as desired.
\\

In order to prove the naturality, let $l:X\rightarrow Y$ be a function and consider the following diagram, in where the horizontal arrows denote the isomorphisms we previously obtained between $\mathbf{F}_{\mathcal{TBA}}(X)$ and $T\mathbf{F}_{\mathcal{WHB}}(X)$, and $\mathbf{F}_{\mathcal{TBA}}(Y)$ and $T\mathbf{F}_{\mathcal{WHB}}(Y)$, respectively:
\begin{displaymath}
\xymatrix{
\mathbf{F}_{\mathcal{TBA}}(X) \ar[r]^-{g_{X}} \ar[d]_-{\mathbf{F}_{\mathcal{TBA}}(h)} & T\mathbf{F}_{\mathcal{WHB}}(X) \ar[d]^-{T\mathbf{F}_{\mathcal{WHB}}(h)}
\\
\mathbf{F}_{\mathcal{TBA}}(Y) \ar[r]_-{g_{Y}} & T\mathbf{F}_{\mathcal{WHB}}(Y)
}
\end{displaymath}

Let $\varphi(\vec{x})$ be an $\mathcal{L}'$-formula with variables in $\vec{x}\in X^{n}$. Then, if $\sigma_X$ and $\sigma_Y$ denote the units of $\mathbf{F}_{\mathcal{TBA}}(X)$ and $\mathbf{F}_{\mathcal{TBA}}(Y)$, respectively, by the commutativity of diagram (\ref{diagrama previo}) we get
\[
\begin{array}{ccl}
g_{Y}\mathbf{F}_{\mathcal{TBA}}(l)(\varphi(\vec{x})) & = & g_{Y}(\varphi(l(\vec{x})))
\\
 & = & \sigma_{Y}(\varphi(l(\vec{x})))
\\
 & = & T\mathbf{F}_{\mathcal{WHB}}(l)(\sigma_{X}(\varphi(l(\vec{x}))))
\\
 & = &  T\mathbf{F}_{\mathcal{WHB}}(l)(g_{X}(\varphi(\vec{x}))).
\end{array}
\]
Hence $T\mathbf{F}_{\mathcal{WHB}}$ and $\mathbf{F}_{\mathcal{TBA}}$ are naturally isomorphic, as claimed.
\end{proof}

\begin{corollary}\label{Iso libres WHBast}
The functors $MT\mathbf{F}_{\mathcal{WHB}}$ and $\mathbf{F}_{\mathcal{WHB}^{\ast}}$ are naturally isomorphic.
\end{corollary}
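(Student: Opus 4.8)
The plan is to obtain the statement by transporting the natural isomorphism of Theorem \ref{preservacion de libres} along the functor $M$, and then recognising $M\mathbf{F}_{\mathcal{TBA}}$ as $\mathbf{F}_{\mathcal{WHB}^{\ast}}$.

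First I would record the auxiliary fact that $M\mathbf{F}_{\mathcal{TBA}}$ and $\mathbf{F}_{\mathcal{WHB}^{\ast}}$ are naturally isomorphic. As explained at the beginning of Section \ref{S6}, $M$ restricts to an isomorphism of categories between $\mathsf{TBA}$ and the full subcategory $\mathsf{WHB}^{\ast}$, and both this restriction and its inverse act as the identity on underlying sets (they merely rewrite $\{\to,\leftarrow\}$ as $\{G,H\}$ and back, leaving the lattice reduct untouched). Hence, writing $U$ for the relevant forgetful functors to $\mathsf{Set}$, one has $U\circ M = U$; since $\mathbf{F}_{\mathcal{TBA}}\dashv U$ and $M$ corestricted to $\mathsf{WHB}^{\ast}$ is an equivalence, $M\mathbf{F}_{\mathcal{TBA}}$ is left adjoint to the forgetful functor $\mathsf{WHB}^{\ast}\to\mathsf{Set}$, and so by uniqueness of left adjoints up to natural isomorphism $M\mathbf{F}_{\mathcal{TBA}}\cong\mathbf{F}_{\mathcal{WHB}^{\ast}}$. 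At the level of objects this is precisely the isomorphism $M(\mathbf{F}_{\mathcal{TBA}}(X))\cong\mathbf{F}_{\mathcal{WHB}^{\ast}}(X)$ already invoked inside the proof of Theorem \ref{preservacion de libres}; naturality is automatic from the adjoint-functor characterisation.

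Next I would apply $M$ to the natural isomorphism $T\mathbf{F}_{\mathcal{WHB}}\cong\mathbf{F}_{\mathcal{TBA}}$ supplied by Theorem \ref{preservacion de libres}, obtaining a natural isomorphism $MT\mathbf{F}_{\mathcal{WHB}}\cong M\mathbf{F}_{\mathcal{TBA}}$, and then compose with the isomorphism of the previous paragraph to conclude $MT\mathbf{F}_{\mathcal{WHB}}\cong\mathbf{F}_{\mathcal{WHB}^{\ast}}$, which is the claim.

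I expect the only point genuinely needing care to be the first step, namely justifying that $M$ preserves underlying sets and therefore carries the free tense algebra to the free $\mathsf{WHB}^{\ast}$-algebra; but this is immediate from the explicit formulas for $M$ and $M^{-1}$ recalled at the start of Section \ref{S6}. As a sanity check, and an alternative route that bypasses Theorem \ref{preservacion de libres} altogether, note that by Theorem \ref{la extension temporal} together with Corollary \ref{subcategory of WHB} the reflector $\mathsf{WHB}\to\mathsf{WHB}^{\ast}$ is the corestriction of $MT$; then $MT\mathbf{F}_{\mathcal{WHB}}$ is the composite of the left adjoint $\mathbf{F}_{\mathcal{WHB}}$ with this reflector, hence left adjoint to the forgetful functor $\mathsf{WHB}^{\ast}\to\mathsf{Set}$, hence naturally isomorphic to $\mathbf{F}_{\mathcal{WHB}^{\ast}}$.
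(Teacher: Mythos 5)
Your proposal is correct and follows the same route the paper intends: the corollary is obtained by whiskering the natural isomorphism $T\mathbf{F}_{\mathcal{WHB}}\cong\mathbf{F}_{\mathcal{TBA}}$ of Theorem \ref{preservacion de libres} with $M$ and identifying $M\mathbf{F}_{\mathcal{TBA}}$ with $\mathbf{F}_{\mathcal{WHB}^{\ast}}$ via the isomorphism of categories between $\mathsf{TBA}$ and $\mathsf{WHB}^{\ast}$. Your justification of that identification by uniqueness of left adjoints (using $U\circ M=U$), and the alternative derivation through the reflector of Corollary \ref{subcategory of WHB}, merely make explicit what the paper leaves as immediate.
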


\begin{lemma}
For every $\mathcal{L}'$-formula $\varphi$, there exists an $\mathcal{L}'$-formula $\psi$ which is a finite conjunction of formulas of the form $\delta \vee \neg \varepsilon$, where $\delta$ and $\epsilon$ are $\mathcal{L}$-formulas such that $\varphi \approx \psi$ is valid in every WHB$^{\ast}$-algebra. Moreover, the variables in $\psi$ occur all in $\varphi$.
\end{lemma}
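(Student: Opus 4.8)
The plan is to argue by induction on the structure of $\varphi$, showing that the class $\mathcal{C}$ of $\mathcal{L}'$-formulas that are valid-in-$\mathsf{WHB}^{\ast}$ equivalent to a finite conjunction of formulas of the form $\delta\vee\neg\varepsilon$ (with $\delta,\varepsilon$ $\mathcal{L}$-formulas whose variables occur in $\varphi$) contains every $\mathcal{L}'$-formula. First I would record a normal-form observation: since the lattice reduct of every $\mathsf{WHB}^{\ast}$-algebra is Boolean, distributivity together with the De Morgan laws shows that an $\mathcal{L}'$-formula lies in $\mathcal{C}$ if and only if it is $\mathsf{WHB}^{\ast}$-equivalent to a finite \emph{disjunction} of formulas $\alpha\wedge\neg\beta$ with $\alpha,\beta$ $\mathcal{L}$-formulas; that is, the ``conjunctive'' and ``disjunctive'' descriptions of $\mathcal{C}$ coincide. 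The base cases are immediate, since $x\approx x\vee\neg 1$, $0\approx 0\vee\neg 1$ and $1\approx 1\vee\neg 0$ are valid in $\mathsf{WHB}^{\ast}$.

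For the inductive step, closure of $\mathcal{C}$ under $\wedge$ is trivial (concatenate conjunctions); closure under $\vee$ follows by distributing and merging negated conjuncts via De Morgan, using $(\delta\vee\neg\varepsilon)\vee(\delta'\vee\neg\varepsilon')\approx(\delta\vee\delta')\vee\neg(\varepsilon\wedge\varepsilon')$; and closure under $\neg$ follows from $\neg\bigwedge_i(\delta_i\vee\neg\varepsilon_i)\approx\bigvee_i(\varepsilon_i\wedge\neg\delta_i)$, which is already in disjunctive form. The crucial cases are $\to$ and $\leftarrow$. Here I would use the explicit description of the isomorphism $M\colon\mathsf{TBA}\to\mathsf{WHB}^{\ast}$ and of its inverse recalled in Section \ref{S6}: on every $\mathsf{WHB}^{\ast}$-algebra the identities $x\to y\approx 1\to(\neg x\vee y)$ and $x\leftarrow y\approx(x\wedge\neg y)\leftarrow 0$ hold (they encode $G(z)=1\to z$, $P(z)=z\leftarrow 0$, $x\to y:=G(\neg x\vee y)$, $x\leftarrow y:=P(x\wedge\neg y)$). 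Moreover the operation $z\mapsto 1\to z$ preserves finite meets (conditions 1 and 2 of Definition \ref{def WH algebra} with $a=1$) and $z\mapsto z\leftarrow 0$ preserves finite joins (conditions 1 and 2 of Definition \ref{def WD-algebra} with $c=0$).

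Given that the immediate subformulas $\varphi,\psi$ lie in $\mathcal{C}$, the already-established closure properties give $\neg\varphi\vee\psi\in\mathcal{C}$, hence $\neg\varphi\vee\psi\approx\bigwedge_j(\delta_j\vee\neg\varepsilon_j)$; applying $1\to(-)$ and using that it preserves finite meets,
\[
\varphi\to\psi\approx 1\to(\neg\varphi\vee\psi)\approx\bigwedge_j\bigl(1\to(\neg\varepsilon_j\vee\delta_j)\bigr)\approx\bigwedge_j(\varepsilon_j\to\delta_j),
\]
a finite conjunction of $\mathcal{L}$-formulas, hence an element of $\mathcal{C}$ (each $\gamma$ being written as $\gamma\vee\neg 1$). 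Dually, $\varphi\wedge\neg\psi\in\mathcal{C}$, so by the normal-form observation $\varphi\wedge\neg\psi\approx\bigvee_i(\alpha_i\wedge\neg\beta_i)$; applying $(-)\leftarrow 0$ and using that it preserves finite joins,
\[
\varphi\leftarrow\psi\approx(\varphi\wedge\neg\psi)\leftarrow 0\approx\bigvee_i\bigl((\alpha_i\wedge\neg\beta_i)\leftarrow 0\bigr)\approx\bigvee_i(\alpha_i\leftarrow\beta_i),
\]
a finite disjunction of $\mathcal{L}$-formulas, hence again in $\mathcal{C}$ by the normal-form observation.

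This closes the induction; the required $\psi$ is the conjunctive normal form so produced, and tracking variables at each step (only variables of the subformula under consideration, together with the constants $0,1$, are ever introduced) yields the ``moreover'' clause. The only step that is not purely formal, and the one place where the hypothesis ``$\mathsf{WHB}^{\ast}$'' (Boolean reduct) is genuinely used, is the pair of identities $x\to y\approx 1\to(\neg x\vee y)$ and $x\leftarrow y\approx(x\wedge\neg y)\leftarrow 0$; I expect verifying these by unwinding the definitions of $M$ and $M^{-1}$ (in particular checking $P(z)=\neg H(\neg z)=z\leftarrow 0$ via double negation) to be the main, though routine, obstacle. An alternative, less self-contained route would be to invoke Corollary \ref{Iso libres WHBast} to identify $\mathbf{F}_{\mathcal{WHB}^{\ast}}(X)$ with the free Boolean extension of $\mathbf{F}_{\mathcal{WHB}}(X)$ (whose elements are exactly the finite unions $\bigcup_{i}(\sigma(a_i)\setminus\sigma(b_i))$) and read off the normal form of the image of $\varphi$ there.
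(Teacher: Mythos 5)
Your proof is correct, but it takes a genuinely different route from the paper's. The paper's argument is semantic and essentially a one-liner: by Corollary \ref{Iso libres WHBast} the map $g_X$ identifies $\mathbf{F}_{\mathcal{WHB}^{\ast}}(X)$ with $MT\mathbf{F}_{\mathcal{WHB}}(X)$, whose underlying set is the free Boolean extension of $\mathbf{F}_{\mathcal{WHB}}(X)$; every element there is by construction a finite intersection $\bigcap_{j}\bigl((X\setminus\sigma(\delta_j))\cup\sigma(\varepsilon_j)\bigr)$, so $g_X(\varphi)=g_X\bigl(\bigwedge_j(\neg\delta_j\vee\varepsilon_j)\bigr)$ and the equation holds in the free algebra, hence in the variety. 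This is exactly the ``alternative, less self-contained route'' you mention at the end. What you do instead is a syntactic induction on $\varphi$, with the Boolean normal-form manipulations handling $\wedge,\vee,\neg$ and the two identities $x\to y\approx 1\to(\neg x\vee y)$ and $x\leftarrow y\approx(x\wedge\neg y)\leftarrow 0$, together with meet-preservation of $1\to(-)$ and join-preservation of $(-)\leftarrow 0$, handling the implication cases; your computations there are correct, and the variable-tracking for the ``moreover'' clause is cleaner in your version than in the paper's. The trade-off: your argument is self-contained modulo those two identities, which are precisely the content of the paper's (stated but unproved) claim that $M$ restricts to an isomorphism between $\mathsf{TBA}$ and $\mathsf{WHB}^{\ast}$ --- the same claim the paper's own proof leans on through Corollary \ref{Iso libres WHBast} --- so neither argument is strictly more dependent than the other; but the paper's route gets the normal form for free from the explicit description of the free Boolean extension, at the cost of being non-constructive about which $\delta_j,\varepsilon_j$ appear, whereas yours produces $\psi$ by an explicit recursion on $\varphi$.
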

\begin{proof}
Let $\varphi \in \mathbf{F}_{\mathcal{WHB}^{\ast}}(X)$. Observe that every element of $MT\mathbf{F}_{\mathcal{WHB}}(X)$ can be written as $\bigcap_{j\leq k}((X(\mathbf{F}_{\mathcal{WHB}}(X))\backslash \sigma(\delta_{j})) \cup \sigma(\varepsilon_{j}))$ for some $\delta_{j},\varepsilon_{j} \in \mathbf{F}_{\mathcal{WHB}}(X)$, $1\leq j\leq k$ and $k\in \mathbb{N}$. By Corollary \ref{Iso libres WHBast}, it also can be noticed that $M(g_X)=g_X$ is an isomorphism from $\mathbf{F}_{\mathcal{WHB}^{\ast}}(X)$ to $MT\mathbf{F}_{\mathcal{WHB}}(X)$. Thus, since $g_{X}(\varphi)\in MT\mathbf{F}_{\mathcal{WHB}}(X)$, $g_{X}(\psi)=\sigma_{X}(\psi)$ for every $\psi\in \mathbf{F}_{\mathcal{WHB}}(X)$ and $g_{X}$ is a morphism of tense algebras, we get that 
\[g_{X}(\varphi)=g_{X}(\bigwedge_{j\leq k}(\neg \delta_{j} \vee \varepsilon_{j})),\]
so the equation $\varphi \approx \bigwedge_{j\leq k}(\neg \delta_{j} \vee \varepsilon_{j})$ holds in $\mathbf{F}_{\mathcal{TBA}}(X)$ and consequently it holds in every tense algebra. 
\end{proof}

The proof of the following result is analogue to the one of Corollary 5.16 of \cite{CJ2}.

\begin{corollary}\label{coro simplificar formulas}
Let $\varphi \approx \psi$ be an $\mathcal{L}'$-equation. Then, there is a finite set of  $\mathcal{L}$-equations $\Pi$ of the form $\delta \wedge \epsilon \approx \delta$ and with all their variables in $\varphi \approx \psi$ such that $\mathbf{A}\models \varphi \approx \psi$ if and only if $\mathbf{A}\models \Pi$, for every WHB$^{\ast}$-algebra $\mathbf{A}$.  
\end{corollary}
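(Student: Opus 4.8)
The plan is to reduce the $\mathcal{L}'$-equation $\varphi \approx \psi$ to a single equation of the form $\chi \approx 1$ and then to invoke the preceding lemma to replace $\chi$ by a finite conjunction of $\mathcal{L}$-formulas of the shape $\delta \vee \neg \varepsilon$. First I would observe that, since every WHB$^{\ast}$-algebra has a Boolean lattice reduct, for such an algebra $\mathbf{A}$ one has $\mathbf{A} \models \varphi \approx \psi$ if and only if $\mathbf{A} \models \chi \approx 1$, where $\chi := (\neg \varphi \vee \psi) \wedge (\neg \psi \vee \varphi)$ is an $\mathcal{L}'$-formula built from $\varphi$ and $\psi$ whose variables are exactly those occurring in $\varphi \approx \psi$ (the Boolean biconditional introduces no new variables). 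Indeed, in a Boolean algebra $a = b$ holds precisely when $(\neg a \vee b)\wedge(\neg b \vee a) = 1$.

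Next I would apply the preceding lemma (which in turn rests on Corollary \ref{Iso libres WHBast}) to the formula $\chi$. This produces $\mathcal{L}$-formulas $\delta_{1},\varepsilon_{1},\dots,\delta_{k},\varepsilon_{k}$, all of whose variables occur in $\chi$ and hence in $\varphi \approx \psi$, such that the equation $\chi \approx \bigwedge_{j \leq k}(\delta_{j} \vee \neg \varepsilon_{j})$ is valid in every WHB$^{\ast}$-algebra. Consequently, for every WHB$^{\ast}$-algebra $\mathbf{A}$ one obtains the chain of equivalences: $\mathbf{A} \models \varphi \approx \psi$ iff $\mathbf{A} \models \bigwedge_{j \leq k}(\delta_{j} \vee \neg \varepsilon_{j}) \approx 1$ iff $\mathbf{A} \models \delta_{j} \vee \neg \varepsilon_{j} \approx 1$ for every $j \leq k$, where the last step uses that a finite meet in a bounded lattice equals the top element exactly when each of its factors does.

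Finally I would note that, again because the lattice reduct is Boolean, $\delta_{j} \vee \neg \varepsilon_{j} \approx 1$ is equivalent to $\varepsilon_{j} \leq \delta_{j}$, i.e. to the $\mathcal{L}$-equation $\varepsilon_{j} \wedge \delta_{j} \approx \varepsilon_{j}$, which is of the prescribed form $\delta \wedge \epsilon \approx \delta$. Setting $\Pi := \{\, \varepsilon_{j} \wedge \delta_{j} \approx \varepsilon_{j} : j \leq k \,\}$ then gives a finite set of $\mathcal{L}$-equations with all variables among those of $\varphi \approx \psi$, and $\mathbf{A} \models \varphi \approx \psi$ if and only if $\mathbf{A} \models \Pi$ for every WHB$^{\ast}$-algebra $\mathbf{A}$, as required. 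All the genuine content is carried by the preceding lemma; the rest is routine Boolean bookkeeping, so the only points to watch are that the reduction to $\chi \approx 1$ does not enlarge the variable set and that the passage from the lattice inequality $\varepsilon_{j}\leq\delta_{j}$ to the equational form $\varepsilon_{j}\wedge\delta_{j}\approx\varepsilon_{j}$ is performed verbatim.
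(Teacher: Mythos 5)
Your argument is correct and is precisely the standard reduction the paper has in mind when it omits the proof and defers to the analogous Corollary 5.16 of Celani--Jansana: pass from $\varphi \approx \psi$ to $\chi \approx 1$ using the Boolean reduct, normalize $\chi$ via the preceding lemma, split the finite meet, and convert each $\delta_{j} \vee \neg \varepsilon_{j} \approx 1$ into the $\mathcal{L}$-equation $\varepsilon_{j} \wedge \delta_{j} \approx \varepsilon_{j}$. The variable bookkeeping and the equivalences are all handled correctly, so there is nothing to add.
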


\subsection{Tense companions} \label{S 6.1}

Let $\mathcal{K}$ be a variety of WHB-algebras and $\mathcal{N}$ be a variety of tense algebras. We consider the following classes: $T(\mathcal{K})=\{T(\mathbf{A})\colon \mathbf{A}\in \mathcal{K}\}$ and  $M(\mathcal{N})=\{M(\mathbf{B})\colon \mathbf{B}\in \mathcal{N}\}$.  

\begin{definition}\label{def tense companion}
We say that a variety $\mathcal{N}$ of tense algebras is a tense companion of a variety $\mathcal{K}$ of WHB-algebras if $T(\mathcal{K}) \subseteq \mathcal{N}$ and $M(\mathcal{N})\subseteq \mathcal{K}$.
\end{definition}

The proofs of the following two Lemmas are similar those of Lemmas 5.6 and 5.7 of \cite{CJ2}, so we omit them.

\begin{lemma}
Let $\mathcal{K}$ be a variety of WHB-algebras. Then $\mathcal{K}$ has a tense companion if and only if $M(T(\mathcal{K}))\subseteq \mathcal{K}$.
\end{lemma}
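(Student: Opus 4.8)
The plan is to prove the equivalence $\mathcal{K}$ has a tense companion iff $M(T(\mathcal{K}))\subseteq \mathcal{K}$ by establishing both directions, the forward one being essentially trivial and the reverse one requiring us to produce an explicit tense companion.

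\textbf{The easy direction.} Suppose $\mathcal{K}$ has a tense companion $\mathcal{N}$. Then by Definition \ref{def tense companion} we have $T(\mathcal{K})\subseteq \mathcal{N}$, hence $M(T(\mathcal{K}))\subseteq M(\mathcal{N})\subseteq \mathcal{K}$, where the last inclusion is again part of the definition of tense companion. So nothing needs to be done here beyond unwinding definitions.

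\textbf{The hard direction.} Assume $M(T(\mathcal{K}))\subseteq \mathcal{K}$; we must exhibit a variety $\mathcal{N}$ of tense algebras with $T(\mathcal{K})\subseteq \mathcal{N}$ and $M(\mathcal{N})\subseteq \mathcal{K}$. The natural candidate is $\mathcal{N}:=\mathbb{V}(T(\mathcal{K}))=\mathbb{HSP}(T(\mathcal{K}))$, the variety of tense algebras generated by $\{T(\mathbf{A})\colon \mathbf{A}\in\mathcal{K}\}$. The inclusion $T(\mathcal{K})\subseteq\mathcal{N}$ is immediate. The work is in showing $M(\mathcal{N})\subseteq\mathcal{K}$. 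The key point is that the forgetful-type functor $M:\mathsf{TBA}\to\mathsf{WHB}$ (which is full and faithful by Theorem \ref{la extension temporal}, and in fact an isomorphism onto $\mathsf{WHB}^\ast$) sends the class operators $\mathbb{H},\mathbb{S},\mathbb{P}$ on tense algebras into the corresponding operators on WHB-algebras: since $M$ is, up to isomorphism, just a reduct/term-equivalence onto $\mathsf{WHB}^\ast$, a subalgebra (resp. quotient, product) of a tense algebra $\mathbf{B}$ maps under $M$ to a subalgebra (resp. quotient, product) of $M(\mathbf{B})$ — for products one uses that $M$ preserves products because $G,H$ and the derived $\to,\ot$ are computed coordinatewise; for quotients one uses Corollary \ref{lema EDPC}(2) together with the isomorphism of congruence lattices, and for subalgebras it is direct. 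Consequently $M(\mathbb{HSP}(T(\mathcal{K})))\subseteq \mathbb{HSP}(M(T(\mathcal{K})))$. Since by hypothesis $M(T(\mathcal{K}))\subseteq\mathcal{K}$ and $\mathcal{K}$ is a variety (hence closed under $\mathbb{H},\mathbb{S},\mathbb{P}$), we get $\mathbb{HSP}(M(T(\mathcal{K})))\subseteq\mathcal{K}$, and therefore $M(\mathcal{N})\subseteq\mathcal{K}$, as required.

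\textbf{Main obstacle.} The step that requires care is justifying $M(\mathbb{HSP}(T(\mathcal{K})))\subseteq \mathbb{HSP}(M(T(\mathcal{K})))$, i.e., that $M$ commutes appropriately with the three class operators. The subtlety is that $M$ is not literally the identity — it interprets $\to$ and $\ot$ in terms of $G,H,\neg$ — so one must check that these interpretations are stable under forming subalgebras, homomorphic images and direct products of tense algebras. This is exactly where the term-definability of $\to,\ot$ from the tense operations, recorded in Section \ref{S6}, does the job: any $\mathsf{TBA}$-subalgebra/quotient/product is closed under the term operations defining $M(\mathbf{B})$, so the $M$-image sits inside the corresponding $\mathsf{WHB}$-construction. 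I would cite the analogous argument in \cite[Lemmas 5.6 and 5.7]{CJ2} for the parallel situation with weak Heyting algebras and $S_4$ modal algebras, since the proof there transfers mutatis mutandis, with Corollary \ref{lema EDPC} supplying the congruence-preservation needed for the $\mathbb{H}$ part.
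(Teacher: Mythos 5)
Your proof is correct and follows exactly the intended route: the paper itself omits the argument, deferring to the analogous Lemmas 5.6 and 5.7 of \cite{CJ2}, and your reconstruction (easy direction by unwinding Definition \ref{def tense companion}; hard direction by taking $\mathcal{N}=\mathbb{HSP}(T(\mathcal{K}))$ and using that $M$, being a term-reduct onto $\mathsf{WHB}^{\ast}$, satisfies $M(\mathbb{HSP}(X))\subseteq\mathbb{HSP}(M(X))$) is precisely that standard argument. The only cosmetic remark is that for the $\mathbb{H}$ step the appeal to Corollary \ref{lema EDPC}(2) is more than is needed, since a surjective homomorphism of tense algebras is already a surjective homomorphism of the WHB-reducts by term-definability of $\rightarrow$ and $\leftarrow$.
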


\begin{lemma}\label{lema I(T(K))}
If a variety $\mathcal{K}$ of WHB-algebras has a tense companion, it is $\mathbb{I}(T(\mathcal{K}))$. Moreover, $\mathcal{K}$ has a modal companion if and only if $\mathbb{I}(T(\mathcal{K}))$ is a variety.
\end{lemma}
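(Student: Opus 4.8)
The plan is to prove the three implications contained in the statement in the following order: (a) every tense companion of $\mathcal{K}$ coincides with $\mathbb{I}(T(\mathcal{K}))$; (b) consequently, if a tense companion exists then $\mathbb{I}(T(\mathcal{K}))$ is a variety; (c) conversely, if $\mathbb{I}(T(\mathcal{K}))$ is a variety then $\mathcal{K}$ has a tense companion. Throughout I shall use the adjunction $T\dashv M$ of Theorem~\ref{la extension temporal}, the fact recorded in its proof that the counit $\varepsilon_{\mathbf{B}}\colon TM(\mathbf{B})\to\mathbf{B}$ is an isomorphism for every tense algebra $\mathbf{B}$ (equivalently, $M$ is full and faithful), and the preceding lemma, which asserts that $\mathcal{K}$ has a tense companion if and only if $M(T(\mathcal{K}))\subseteq\mathcal{K}$.

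For (a), let $\mathcal{N}$ be a tense companion of $\mathcal{K}$, so that $T(\mathcal{K})\subseteq\mathcal{N}$ and $M(\mathcal{N})\subseteq\mathcal{K}$. Being a variety, $\mathcal{N}$ is closed under isomorphic copies, so $T(\mathcal{K})\subseteq\mathcal{N}$ yields $\mathbb{I}(T(\mathcal{K}))\subseteq\mathcal{N}$ at once. For the reverse inclusion take $\mathbf{B}\in\mathcal{N}$; then $M(\mathbf{B})\in\mathcal{K}$, hence $TM(\mathbf{B})\in T(\mathcal{K})$, and since $\varepsilon_{\mathbf{B}}$ exhibits an isomorphism $TM(\mathbf{B})\cong\mathbf{B}$ we get $\mathbf{B}\in\mathbb{I}(T(\mathcal{K}))$. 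Thus $\mathcal{N}=\mathbb{I}(T(\mathcal{K}))$; in particular a tense companion, if it exists, is unique. Claim (b) is then immediate: if $\mathcal{K}$ has a tense companion $\mathcal{N}$, by (a) we have $\mathcal{N}=\mathbb{I}(T(\mathcal{K}))$, so this class is a variety.

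For (c), assume $\mathbb{I}(T(\mathcal{K}))$ is a variety. By the preceding lemma it is enough to prove $MT(\mathbf{A})\in\mathcal{K}$ for every $\mathbf{A}\in\mathcal{K}$. A first observation is that $\mathbb{I}(T(\mathcal{K}))$ is automatically closed under homomorphic images: by Theorem~\ref{Con(A) iso Con(T(A))} every tense congruence of $T(\mathbf{A})$ has the form $\Phi(\theta)$ for some $\theta\in\mathsf{Con}_{\mathcal{WHB}}(\mathbf{A})$, and since $T(\mathbf{A})$ and $\mathbf{A}$ share the same dual space one checks (using Corollary~\ref{lema EDPC} and the naturality of $\sigma$) that $T(\mathbf{A})/\Phi(\theta)\cong T(\mathbf{A}/\theta)$; as $\mathcal{K}$ is a variety this gives $\mathbb{H}(T(\mathcal{K}))\subseteq\mathbb{I}(T(\mathcal{K}))$. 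Hence the content of the hypothesis is that $\mathbb{I}(T(\mathcal{K}))$ is closed under $\mathbb{S}$ and $\mathbb{P}$. From here I would argue as in \cite[Lemma~5.7]{CJ2}: using that $T$ preserves monomorphisms (Theorem~\ref{la extension temporal}) and surjections — because $T(h)=X(h)^{-1}$, the Priestley dual $X(h)$ of a surjection is an order-embedding onto a closed subspace, and clopen subsets of a closed subspace of a Priestley space extend to clopen subsets of the ambient space — one transports the closure of $\mathbb{I}(T(\mathcal{K}))$ under $\mathbb{S}$ and $\mathbb{P}$ along the unit $\sigma_{\mathbf{A}}$ and the counit isomorphism to conclude $MT(\mathbf{A})\in\mathcal{K}$.

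Step (c) is where the real work lies: parts (a) and (b) are purely formal consequences of the adjunction and of the counit being an isomorphism, whereas (c) genuinely needs the assumption that $\mathbb{I}(T(\mathcal{K}))$ is closed under subalgebras and products — its closure under homomorphic images coming for free from Theorem~\ref{Con(A) iso Con(T(A))} — together with the exactness properties of $T$, in order to pull the information back across $M$ to the WHB-side. Following the remark preceding this lemma, this part of the argument runs in parallel with \cite[Lemma~5.7]{CJ2}, and the only points requiring separate verification in the present enriched signature are the preservation of surjections by $T$ and the identification of the congruence lattice of $T(\mathbf{A})$, both of which involve the weak difference $\leftarrow$ and the relation $S_{\mathbf{A}}=R_{\mathbf{A}}^{-1}$.
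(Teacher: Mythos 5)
Your parts (a) and (b) are correct and complete: the identification of any tense companion with $\mathbb{I}(T(\mathcal{K}))$ follows exactly as you say from $T(\mathcal{K})\subseteq\mathcal{N}$, $M(\mathcal{N})\subseteq\mathcal{K}$ and the fact that the counit $\varepsilon_{\mathbf{B}}\colon TM(\mathbf{B})\to\mathbf{B}$ is an isomorphism (recorded in the proof of Theorem \ref{la extension temporal}), and the forward half of the ``moreover'' is then immediate. This matches the intended argument (the paper itself defers to Lemma 5.7 of \cite{CJ2}).

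The problem is part (c), which you yourself identify as ``where the real work lies'' but then do not carry out. The entire content of the converse direction is the implication: if $\mathbb{I}(T(\mathcal{K}))$ is closed under $\mathbb{H}$, $\mathbb{S}$ and $\mathbb{P}$, then $MT(\mathbf{A})\in\mathcal{K}$ for every $\mathbf{A}\in\mathcal{K}$. Your final sentence --- ``one transports the closure of $\mathbb{I}(T(\mathcal{K}))$ under $\mathbb{S}$ and $\mathbb{P}$ along the unit $\sigma_{\mathbf{A}}$ and the counit isomorphism to conclude $MT(\mathbf{A})\in\mathcal{K}$'' --- is an assertion, not an argument, and it is not clear that any such transport exists in the form you describe. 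Note the direction of the difficulty: knowing that every tense subalgebra of $T(\mathbf{A})$ (or product of such algebras) is isomorphic to some $T(\mathbf{A}')$ with $\mathbf{A}'\in\mathcal{K}$ does not by itself force the $\mathcal{L}$-equations of $\mathcal{K}$ to hold at arbitrary elements of $MT(\mathbf{A})$, since $MT(\mathbf{A})$ is \emph{not} generated by $\sigma_{\mathbf{A}}[A]$ in the signature $\{\wedge,\vee,\rightarrow,\leftarrow,0,1\}$ (complementation is not an $\mathcal{L}$-term even on WHB$^{\ast}$-algebras). If you try to run the obvious reduction --- take an equation failing in $MT(\mathbf{A})$ at $U_1,\dots,U_n$, pass to the tense subalgebra they generate, and invoke $\mathbb{S}$-closure --- you land back at an algebra of the form $MT(\mathbf{A}')$ with the same equation failing, which is circular, not contradictory. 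The failure of this implication for concrete varieties is exactly what the subsequent proposition on $\mathcal{BWHB}$ exploits, so the step cannot be dismissed as formal bookkeeping. Your preliminary observation that $\mathbb{H}$-closure is automatic (via $T(\mathbf{A})/\Phi(\theta)\cong T(\mathbf{A}/\theta)$) is plausible, though it should be justified by Theorem \ref{Con(A) iso Con(T(A))} and Lemma \ref{free tense extension gives the same space} rather than by Corollary \ref{lema EDPC}; in any case it only reformulates the hypothesis and does not advance the proof. As it stands, the converse direction of the ``moreover'' is not established.
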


\begin{proposition}\label{tense companions}
The variety of WHB-algebras has a tense companion. 
\end{proposition}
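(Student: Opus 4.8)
The plan is to show that the whole variety $\mathcal{TBA}$ of tense algebras is itself a tense companion of $\mathcal{WHB}$; by Definition \ref{def tense companion} this requires only the two inclusions $T(\mathcal{WHB})\subseteq\mathcal{TBA}$ and $M(\mathcal{TBA})\subseteq\mathcal{WHB}$. The first inclusion is immediate from Lemma \ref{B(A) es tense algebra}: for every WHB-algebra $\mathbf{A}$ the structure $T(\mathbf{A})$ is by construction a tense algebra, so $T(\mathbf{A})\in\mathcal{TBA}$. For the second inclusion I would invoke the discussion opening Section \ref{S6}, where it is observed that every tense algebra $\mathbf{B}$, equipped with the derived operations $x\to y:=G(\neg x\vee y)$ and $x\ot y:=P(x\wedge\neg y)$, is a WHB-algebra, and that this correspondence is exactly the functor $M$. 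Hence $M(\mathbf{B})\in\mathcal{WHB}$ for every $\mathbf{B}\in\mathcal{TBA}$, so $M(\mathcal{TBA})\subseteq\mathcal{WHB}$, and Definition \ref{def tense companion} applies.

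Alternatively one can route the argument through Lemma \ref{lema I(T(K))}. Since the counit $\varepsilon_{\mathbf{B}}\colon TM(\mathbf{B})\to\mathbf{B}$ of the adjunction in Theorem \ref{la extension temporal} is an isomorphism for every tense algebra $\mathbf{B}$, and $M(\mathbf{B})$ is a WHB-algebra, every tense algebra lies in $\mathbb{I}(T(\mathcal{WHB}))$; together with $T(\mathcal{WHB})\subseteq\mathcal{TBA}$ this yields $\mathbb{I}(T(\mathcal{WHB}))=\mathcal{TBA}$, which is a variety, so by Lemma \ref{lema I(T(K))} the variety $\mathcal{WHB}$ has a tense companion (and it is $\mathcal{TBA}$). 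One may equally cite the Lemma immediately preceding Lemma \ref{lema I(T(K))}: it suffices that $M(T(\mathcal{WHB}))\subseteq\mathcal{WHB}$, which holds because $MT(\mathbf{A})$ lies in $\mathsf{WHB}^{\ast}$ --- its lattice reduct is the free Boolean extension of the reduct of $\mathbf{A}$ --- and $\mathsf{WHB}^{\ast}$ is a subcategory of $\mathsf{WHB}$ by Corollary \ref{subcategory of WHB}.

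I expect no genuine obstacle: every ingredient has already been established. The only point meriting a line of justification is that the connectives derived from $G$, $H$ on a tense algebra satisfy the full list of WHB-axioms --- in particular E1 and E2 --- which is precisely what makes the functor $M\colon\mathsf{TBA}\to\mathsf{WHB}$ well defined, as asserted at the start of Section \ref{S6}. So the proof reduces to assembling these observations and appealing to Definition \ref{def tense companion}.
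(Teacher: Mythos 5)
Your proposal is correct, and your second route (establishing $\mathbb{I}(T(\mathcal{WHB}))=\mathcal{TBA}$ via Lemma \ref{B(A) es tense algebra} and the counit isomorphism, then invoking Lemma \ref{lema I(T(K))}) is exactly the paper's proof, which cites precisely those three facts. Your primary route --- verifying the two inclusions of Definition \ref{def tense companion} directly, with $M(\mathcal{TBA})\subseteq\mathcal{WHB}$ coming from the observation at the start of Section \ref{S6} that the derived operations on a tense algebra satisfy the WHB-axioms --- is an equivalent and equally valid unwinding of the same content.
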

\begin{proof}
It follows from Lemma \ref{B(A) es tense algebra}, the fact that  $\mathbb{I}(T(\mathcal{WHB}))=\mathcal{TBA}$ and Lemma \ref{lema I(T(K))}.
\end{proof}

We say that a WHB-algebra $\mathbf{A}$ is a \emph{Basic WHB-algebra} if the equations (B) and (B$^{\ast}$) hold in $\mathbf{A}$. It follows from Corollary \ref{subvariedades de WHB} it follows that $\bf{A}$ is a basic WHB-algebra if some of the conditions $\rm{B}$ or $\mathrm{B}^{*}$ is valid on $\bf{A}$. It is clear that the class of Basic WHB-algebras form a variety which will be denoted by $\mathcal{BWHB}$ and it is clear that $\mathcal{BWHB} \subseteq \mathcal{WHB}$. 

\begin{proposition}
Neither the variety $\mathcal{BWHB}$ nor the variety $\mathcal{WHB}$ have a tense companion. Moreover, the classes $\mathbb{I}(T(\mathcal{BWHB}))$ and $\mathbb{I}(T(\mathcal{WHB}))$ are not varieties.
\end{proposition}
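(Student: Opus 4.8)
The plan is to reduce both assertions, for each of the two varieties, to the two lemmas stated immediately before Proposition~\ref{tense companions}. Write $\mathcal{K}$ for $\mathcal{BWHB}$ or for $\mathcal{WHB}$. The first of those lemmas gives that $\mathcal{K}$ has a tense companion if and only if $M(T(\mathcal{K}))\subseteq\mathcal{K}$, and Lemma~\ref{lema I(T(K))} adds that in this case $\mathbb{I}(T(\mathcal{K}))$ is a variety (the companion itself); equivalently, if $\mathbb{I}(T(\mathcal{K}))$ is not a variety then $\mathcal{K}$ has no tense companion. Consequently it suffices, in each case, to exhibit a single WHB-algebra $\mathbf{A}\in\mathcal{K}$ with $M(T(\mathbf{A}))\notin\mathcal{K}$: one such $\mathbf{A}$ refutes the inclusion $M(T(\mathcal{K}))\subseteq\mathcal{K}$ and hence, by the two lemmas, settles at once that $\mathcal{K}$ has no tense companion and that $\mathbb{I}(T(\mathcal{K}))$ is not a variety.

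The mechanism producing such an $\mathbf{A}$ is the collapse of the lattice reduct to a Boolean algebra when one passes to the free tense extension. By Corollary~\ref{subcategory of WHB}, $M(T(\mathbf{A}))$ is the $\mathsf{WHB}^{\ast}$-reflection of $\mathbf{A}$: its lattice reduct is the free Boolean extension $\mathcal{B}(\mathbf{A})$ of the lattice reduct of $\mathbf{A}$, and its operations are $U\To_{R_{\mathbf{A}}}V=G_{\mathbf{A}}(U^{c}\cup V)$ and $U\oT_{S_{\mathbf{A}}}V=P_{\mathbf{A}}(U\setminus V)$, where $G_{\mathbf{A}}(U)=X(\mathbf{A})\To_{R_{\mathbf{A}}}U=\{P\in X(\mathbf{A}):R_{\mathbf{A}}(P)\subseteq U\}$ and $P_{\mathbf{A}}(U)=S_{\mathbf{A}}^{-1}(U)$. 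Thus a defining condition of $\mathcal{K}$, evaluated in $M(T(\mathbf{A}))$, turns into a requirement on the relation $R_{\mathbf{A}}$ of the Priestley space $X(\mathbf{A})$ that must now hold against \emph{all} clopen sets, not merely the clopen up-sets; and since a non-discrete Priestley order always carries clopen sets which are not up-sets, such a requirement typically fails even when the corresponding condition is valid in $\mathbf{A}$.

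Concretely, for $\mathcal{K}=\mathcal{BWHB}$ I would take $\mathbf{A}$ to be a finite non-Boolean Heyting--Brouwer algebra, e.g.\ the three element chain $0<c<1$ with its Heyting implication and co-Heyting co-implication (which is a Heyting--Brouwer algebra, being a finite chain). Since HB-algebras validate $\mathrm{B}$ and $\mathrm{B}^{\ast}$, we have $\mathbf{A}\in\mathcal{BWHB}$; moreover $R_{\mathbf{A}}$ is the inclusion order (as $\mathbf{A}$ is Heyting), so $R_{\mathbf{A}}(P)=[P)$, the dual space $X(\mathbf{A})$ is the two element chain $P_{0}\subsetneq P_{1}$, and every subset of $X(\mathbf{A})$ is clopen. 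Taking the clopen set $U=\{P_{0}\}$, which is not an up-set, the strict implication of $M(T(\mathbf{A}))$ evaluates to $X(\mathbf{A})\To_{R_{\mathbf{A}}}U=G_{\mathbf{A}}(U)=\{P:[P)\subseteq\{P_{0}\}\}=\emptyset$, because $[P_{0})=\{P_{0},P_{1}\}\not\subseteq\{P_{0}\}$ and $[P_{1})=\{P_{1}\}\not\subseteq\{P_{0}\}$; hence the equation $\mathrm{B}$, i.e.\ $a\leq 1\to a$, fails at $a=U$ in $M(T(\mathbf{A}))$, so $M(T(\mathbf{A}))\notin\mathcal{BWHB}$. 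The case $\mathcal{K}=\mathcal{WHB}$ is handled by the same scheme with a suitable non-Boolean witness. I expect the genuine work to be this last verification — describing $R_{\mathbf{A}}$ and $S_{\mathbf{A}}$ explicitly on the dual space of the witness and evaluating $\To_{R_{\mathbf{A}}}$ and $\oT_{S_{\mathbf{A}}}$ on clopens that are not up-sets — rather than the reduction, which is immediate; one must also be sure that the witness really lies in the variety while its $\mathsf{WHB}^{\ast}$-reflection leaves it, which for $\mathcal{BWHB}$ is transparent because on an HB-algebra $G_{\mathbf{A}}$ is deflationary (Remark~\ref{B(A) no es temporal S4}) yet is plainly not the identity on $\mathcal{B}(\mathbf{A})$ whenever $\mathbf{A}$ is not Boolean.
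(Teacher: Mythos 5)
Your reduction through the two lemmas preceding Proposition \ref{tense companions} and your treatment of $\mathcal{BWHB}$ coincide with the paper's own proof: the paper uses exactly the three-element chain, identifies its dual space as the two-point chain of prime filters, notes that $R_{\mathbf{A}}$ is the inclusion order and $S_{\mathbf{A}}=R_{\mathbf{A}}^{-1}$, and checks that the singleton of the bottom prime filter violates $\mathrm{B}$ (and also $\mathrm{B}^{\ast}$, which you omit but do not need) in the tense extension. That half of your argument is correct and essentially identical to the paper's.

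The gap is the case $\mathcal{K}=\mathcal{WHB}$, which you dismiss with ``the same scheme with a suitable non-Boolean witness.'' No such witness exists: for every WHB-algebra $\mathbf{A}$ the algebra $MT(\mathbf{A})$ is again a WHB-algebra (Lemma \ref{B(A) es tense algebra} together with the fact that $M$ takes values in $\mathsf{WHB}$), so the inclusion $M(T(\mathcal{WHB}))\subseteq\mathcal{WHB}$ holds automatically and $\mathcal{WHB}$ \emph{does} have a tense companion --- this is precisely Proposition \ref{tense companions}, with which the statement as printed is in open contradiction. The second variety named in the statement is evidently a misprint for $\mathcal{HB}$ (or another subvariety whose defining equations are destroyed by the Boolean reflection); for $\mathcal{HB}$ your single witness already suffices, since the three-element chain carries a Heyting--Brouwer structure and its reflection fails $\mathrm{B}$, an equation valid in every HB-algebra. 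Your proposal should either have flagged this inconsistency or observed that the one example covers both intended varieties; as written, the promised second witness is a step that cannot be carried out.
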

\begin{proof}
Let us consider the Basic WH-algebra $\mathbf{A}$ defined by the three element chain $0<a<1$. Notice that $X(\mathbf{A})=\{P_1,P_2\}$, where $P_1=\{1\}$ and $P_2=\{1,a\}$. Further is true, $R_{A}=\{(P_1,P_1),(P_1,P_2),(P_2,P_2)\}$ and $S_{A}=R_{A}^{-1}$ by Lemma \ref{ecuaciones y relaciones}. Hence, $\{P_1\}\nsubseteq X(\mathbf{A})\Rightarrow_{R_{A}}\{P_1\}$ and $\{P_1\}\Leftarrow_{S_{A}}\emptyset\nsubseteq \{P_1\}$. Therefore, we have shown that although the equations (B) and (B$^{\ast}$) hold in $\mathbf{A}$, they do not hold in its tense extension. The moreover part follows from Lemma \ref{lema I(T(K))}.
\end{proof}

Recall that a variety has the \emph{finite model property} if every equation which is not valid in some member of the variety is not valid in a finite member, or, equivalently, if it is generated by its finite members. 

\begin{theorem}\label{finite model property}
Let $\mathcal{K}$ be a variety of WHB-algebras with a tense companion. Then, $\mathcal{K}$ has the finite model property iff its tense companion has the finite model property.
\end{theorem}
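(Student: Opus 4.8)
Throughout, write $\mathcal{N}$ for a tense companion of $\mathcal{K}$, so that $T(\mathcal{K})\subseteq \mathcal{N}$ and $M(\mathcal{N})\subseteq \mathcal{K}$ (Definition \ref{def tense companion}); by the interdefinability of the connectives we may present the members of $\mathcal{N}$ in the language $\mathcal{L}'=\mathcal{L}\cup\{\neg\}$ and read an equation of $\mathcal{N}$ as an $\mathcal{L}'$-equation. The plan is to transfer the finite model property back and forth along the adjunction $T\dashv M$ of Theorem \ref{la extension temporal}, using the reduction of $\mathcal{L}'$-equations to $\mathcal{L}$-equations given by Corollary \ref{coro simplificar formulas}. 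Two preliminary observations are needed. \emph{(i)} $\mathbf{A}$ is finite if and only if $T(\mathbf{A})$ is finite: the carrier of $T(\mathbf{A})$ is $\mathcal{B}(\mathbf{A})$, the free Boolean extension of the lattice reduct of $\mathbf{A}$, which equals $\mathcal{P}(X(\mathbf{A}))$ when $\mathbf{A}$ is finite, and conversely the unit $\sigma_{\mathbf{A}}$ embeds $A$ into $\mathcal{B}(\mathbf{A})$. \emph{(ii)} $\sigma_{\mathbf{A}}\colon \mathbf{A}\to MT(\mathbf{A})$ is an injective homomorphism of WHB-algebras (it is the Stone map and $\sigma_{\mathbf{A}}(a\to b)=\sigma_{\mathbf{A}}(a)\To_{R_{\mathbf{A}}}\sigma_{\mathbf{A}}(b)$, $\sigma_{\mathbf{A}}(a\ot b)=\sigma_{\mathbf{A}}(a)\oT_{S_{\mathbf{A}}}\sigma_{\mathbf{A}}(b)$ by Theorem \ref{representacion WHB}) and $MT(\mathbf{A})=M(T(\mathbf{A}))\in\mathcal{K}$, since $T(\mathbf{A})\in\mathcal{N}$ by Lemma \ref{B(A) es tense algebra} and $M(\mathcal{N})\subseteq\mathcal{K}$; hence any $\mathcal{L}$-equation that fails in $\mathbf{A}$ also fails in the WHB$^{\ast}$-algebra $MT(\mathbf{A})$, and for an $\mathcal{L}$-equation $\pi$ one has $MT(\mathbf{A})\models\pi$ if and only if $T(\mathbf{A})$ satisfies the tense equation $\pi^{t}$ obtained by the substitution $\to\mapsto G(\neg\,\cdot\,\vee\,\cdot\,)$, $\ot\mapsto P(\,\cdot\,\wedge\neg\,\cdot\,)$.

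Assume first that $\mathcal{K}$ has the finite model property, and let $\varepsilon$ be an $\mathcal{L}'$-equation not valid in $\mathcal{N}$; pick $\mathbf{B}\in\mathcal{N}$ with $\mathbf{B}\not\models\varepsilon$. Then $M(\mathbf{B})\in\mathcal{K}$ is a WHB$^{\ast}$-algebra on which $\varepsilon$ fails, so by Corollary \ref{coro simplificar formulas} there is a finite set $\Pi$ of $\mathcal{L}$-equations, equivalent to $\varepsilon$ over every WHB$^{\ast}$-algebra, and some $\pi\in\Pi$ fails in $M(\mathbf{B})$, hence in $\mathcal{K}$. By the finite model property of $\mathcal{K}$ there is a finite $\mathbf{A}_{0}\in\mathcal{K}$ with $\mathbf{A}_{0}\not\models\pi$. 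By observation \emph{(ii)} then $MT(\mathbf{A}_{0})\not\models\pi$, so $MT(\mathbf{A}_{0})$ fails some member of $\Pi$ and therefore $MT(\mathbf{A}_{0})\not\models\varepsilon$, that is, $T(\mathbf{A}_{0})\not\models\varepsilon$. Since $T(\mathbf{A}_{0})\in\mathcal{N}$ and is finite by observation \emph{(i)}, the equation $\varepsilon$ fails in a finite member of $\mathcal{N}$; thus $\mathcal{N}$ has the finite model property.

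Conversely, assume $\mathcal{N}$ has the finite model property, and let $\pi$ be an $\mathcal{L}$-equation not valid in $\mathcal{K}$; pick $\mathbf{A}\in\mathcal{K}$ with $\mathbf{A}\not\models\pi$. By observation \emph{(ii)}, $MT(\mathbf{A})\not\models\pi$, equivalently $T(\mathbf{A})\not\models\pi^{t}$; since $T(\mathbf{A})\in\mathcal{N}$, the finite model property of $\mathcal{N}$ yields a finite $\mathbf{B}_{0}\in\mathcal{N}$ with $\mathbf{B}_{0}\not\models\pi^{t}$. Then $M(\mathbf{B}_{0})\in\mathcal{K}$ is finite and $M(\mathbf{B}_{0})\not\models\pi$, so $\pi$ fails in a finite member of $\mathcal{K}$; thus $\mathcal{K}$ has the finite model property. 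Together the two implications give the equivalence.

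The only genuinely non-routine point is the asymmetry of the two translations. Passing from a WHB-equation to a tense one is the trivial substitution of the definitions of $\to$ and $\ot$, but passing from a tense equation back to a family of WHB-equations requires Corollary \ref{coro simplificar formulas}, which applies only to WHB$^{\ast}$-algebras. This forces the first implication to be run through the WHB$^{\ast}$-algebra $MT(\mathbf{A})$ rather than through $\mathbf{A}$ itself --- using both that $MT(\mathbf{A})\in\mathcal{K}$ and that $\sigma_{\mathbf{A}}$ is a WHB-embedding reflecting the failure of equations --- and to re-apply Corollary \ref{coro simplificar formulas} on the finite algebra $MT(\mathbf{A}_{0})$ produced by the finite model property of $\mathcal{K}$; checking that these applications interlock correctly is the main thing to verify.
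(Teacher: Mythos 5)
Your proof is correct and follows the paper's strategy: transfer along the adjunction $T\dashv M$ using the embedding $\sigma_{\mathbf{A}}$, the translation between $\mathcal{L}$- and $\mathcal{L}_{t}$-equations, Corollary \ref{coro simplificar formulas}, and the preservation of finiteness by $T$ and $M$. The only organisational difference is that you invoke Corollary \ref{coro simplificar formulas} in the direction from the finite model property of $\mathcal{K}$ to that of $\mathcal{N}$ --- which is where the reduction of an arbitrary $\mathcal{L}_{t}$-equation to a finite set of $\mathcal{L}$-equations is genuinely indispensable --- whereas the paper deploys it in the converse direction; your bookkeeping of which equation fails in which (finite) algebra is, if anything, the more careful of the two.
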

\begin{proof}
Let $\mathcal{N}$ be the tense companion of $\mathcal{K}$. One the one hand, if $\mathcal{N}$ has the finite model property, then there exists a finite $\mathbf{B}\in \mathcal{N}$ and an $\mathcal{L}_{t}$-equation $\varphi\approx \psi$ such that $\mathbf{B}\nvDash \varphi\approx \psi$. Thus, there exists an $\mathcal{L}'$-equation $\varphi'\approx \psi'$ such that $M(\mathbf{B})\nvDash \varphi'\approx \psi'$. It is worth noticing that such an equation is just the $\mathcal{L}'$-equation that comes up after applying the interpretations of $G$, $H$, in terms of $\leftarrow$, $\rightarrow$ and $\neg$, to the equation $\varphi\approx \psi$. From Corollary \ref{coro simplificar formulas}, there exists a finite set $\Pi$ of $\mathcal{L}$-formulas such that $M(\mathbf{B})\nvDash \Pi$. So, there exists an $\mathcal{L}$-equation $\epsilon\approx \delta \in \Pi$ such that $M(\mathbf{B})\nvDash \epsilon\approx \delta$. Since $M(\mathbf{B})\in \mathcal{K}$ by assumption, and it is finite, then we conclude that $\mathcal{K}$ has the finite model property. On the other hand, if $\mathcal{K}$ has the finite model property, there exists a finite $\mathbf{A}\in \mathcal{K}$ and an $\mathcal{L}$-equation $\epsilon\approx \delta$ such that $\mathbf{A}\nvDash \epsilon\approx \delta$. Observe that since $\sigma_{\mathbf{A}}$ is an embedding, it is the case that $MT(\mathbf{A})\nvDash \epsilon\approx \delta$. Let $\varphi\approx \psi$ be the $\mathcal{L}'$-equation that arises after applying the interpretations of $\leftarrow$ and $\rightarrow$, in terms of $G$, $H$ and $\neg$. Then it follows that $T(\mathbf{A})\nvDash \varphi\approx \psi$. Since $T(\mathbf{A})$ is a finite member of $\mathcal{N}$, from the latter, we get that $\mathcal{N}$ has the finite model property. This concludes the proof.
\end{proof}

It is well known, that tense algebras has the finite model property (\cite{K1998}). Thus straightforward from Theorem \ref{finite model property}, the following holds.

\begin{corollary}
The variety $\mathcal{WHB}$ has the finite model property. 
\end{corollary}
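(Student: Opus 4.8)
The plan is to read the statement off as an immediate instance of Theorem~\ref{finite model property}. Concretely, I would take $\mathcal{K}=\mathcal{WHB}$ and verify that the hypothesis of that theorem is met, namely that $\mathcal{WHB}$ possesses a tense companion. This is precisely the content of Proposition~\ref{tense companions}: there it is established that $\mathbb{I}(T(\mathcal{WHB}))=\mathcal{TBA}$, and by Lemma~\ref{lema I(T(K))} this class is the tense companion of $\mathcal{WHB}$.

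Next I would invoke the classical fact, recorded just before the Corollary and due to \cite{K1998}, that the variety $\mathcal{TBA}$ of tense Boolean algebras has the finite model property: any $\mathcal{L}_{t}$-equation failing in some tense algebra already fails in a finite one. With this in hand, the ``if'' direction of Theorem~\ref{finite model property}, applied to $\mathcal{K}=\mathcal{WHB}$ together with its tense companion $\mathcal{N}=\mathcal{TBA}$, yields at once that $\mathcal{WHB}$ has the finite model property.

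I do not expect a genuine obstacle here, since all the substantive work has already been carried out in the proof of Theorem~\ref{finite model property}: the passage from an $\mathcal{L}_{t}$-equation to the associated $\mathcal{L}'$-equation via the interpretations of $G$ and $H$ in terms of $\rightarrow$, $\leftarrow$ and $\neg$; its reduction, through Corollary~\ref{coro simplificar formulas}, to a finite set of $\mathcal{L}$-equations; and the observation that $T(\mathbf{A})$ is finite whenever $\mathbf{A}$ is, because the underlying free Boolean extension of a finite distributive lattice is again finite. The only thing worth being explicit about is that the hypotheses of Theorem~\ref{finite model property} are literally satisfied, i.e. that $\mathcal{WHB}$ does have a tense companion and that this companion is $\mathcal{TBA}$; both are already in place in the text, so the proof is a one-line appeal to those results.
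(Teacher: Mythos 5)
Your proposal is correct and follows exactly the paper's own route: the paper derives the corollary by combining Proposition~\ref{tense companions} (that $\mathcal{WHB}$ has $\mathcal{TBA}$ as its tense companion), the known finite model property of tense algebras from \cite{K1998}, and the ``if'' direction of Theorem~\ref{finite model property}. No gaps; the argument is the same one-line appeal the paper makes.
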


A variety $\mathcal{V}$ has \emph{definable principal congruences} (DPC) if there exists a formula $\zeta(x, y, u, v)$ in the first-order language of $\mathcal{V}$ such that for every $\mathbf{A}\in \mathcal{V}$ and all $a, b, c, d, \in A$ we have:
\begin{displaymath}
\begin{array}{ccc}
(c,d)\in \mathsf{Cg}^{\mathbf{A}}(a,b) & \Longleftrightarrow & \mathbf{A} \models \zeta[a, b, c, d].
\end{array}
\end{displaymath}
If $\zeta(x, y, u, v)$ happens to be a finite conjunction of equations, then $\mathcal{V}$ is said to have \emph{equationally definable principal congruences} (EDPC).

\begin{proposition}\label{EDPC tense companion}
Let $\mathcal{K}$ be a variety of WHB-algebras with a tense companion. Then, $\mathcal{K}$ has EDPC iff its tense companion has EDPC.
\end{proposition}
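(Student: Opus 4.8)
The plan is to transfer EDPC along the adjunction $T\dashv M$ of Theorem \ref{la extension temporal}, using the congruence identifications already available together with the translations between the signatures $\mathcal{L}$, $\mathcal{L}'$ and $\mathcal{L}_{t}$. Write $\mathcal{N}$ for the tense companion of $\mathcal{K}$; by Lemma \ref{lema I(T(K))}, $\mathcal{N}=\mathbb{I}(T(\mathcal{K}))$ (so it is indeed unique), $T(\mathbf{A})\in\mathcal{N}$ whenever $\mathbf{A}\in\mathcal{K}$, and $M(\mathbf{B})\in\mathcal{K}$ whenever $\mathbf{B}\in\mathcal{N}$ by Definition \ref{def tense companion}. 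I will use repeatedly that, since $M$ restricts to an isomorphism between $\mathsf{WHB}^{\ast}$ and $\mathsf{TBA}$, the operations $\to,\leftarrow$ and $G,H,\neg$ are mutually term-definable (via the fixed equations recalled at the beginning of Section \ref{S6}); hence every finite conjunction of $\mathcal{L}$-equations can be rewritten as a finite conjunction of $\mathcal{L}_{t}$-equations — and conversely, passing through $\mathcal{L}'$ — satisfied by exactly the same tuples in the corresponding $\mathsf{WHB}^{\ast}$/tense algebra, and that the reductions supplied by the lemma preceding Corollary \ref{coro simplificar formulas} and by Corollary \ref{coro simplificar formulas} itself are term-level, so they hold under every assignment in every $\mathsf{WHB}^{\ast}$-algebra, not only as equivalences of validity.

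Suppose first that $\mathcal{N}$ has EDPC, witnessed by a finite conjunction $\xi(x,y,u,v)$ of $\mathcal{L}_{t}$-equations. Fix $\mathbf{A}\in\mathcal{K}$ and $a,b,c,d\in A$. Since $\Phi$ of Theorem \ref{Con(A) iso Con(T(A))} is an order isomorphism, and by Corollary \ref{lema EDPC}(1),
\begin{align*}
(c,d)\in\mathsf{Cg}^{\mathbf{A}}(a,b)
&\iff \mathsf{Cg}^{\mathbf{A}}(c,d)\subseteq\mathsf{Cg}^{\mathbf{A}}(a,b)\\
&\iff \Phi(\mathsf{Cg}^{\mathbf{A}}(c,d))\subseteq\Phi(\mathsf{Cg}^{\mathbf{A}}(a,b))\\
&\iff (\sigma_{\mathbf{A}}(c),\sigma_{\mathbf{A}}(d))\in\mathsf{Cg}^{T(\mathbf{A})}(\sigma_{\mathbf{A}}(a),\sigma_{\mathbf{A}}(b)).
\end{align*}
As $T(\mathbf{A})\in\mathcal{N}$, the last line is equivalent to $T(\mathbf{A})\models\xi[\sigma_{\mathbf{A}}(a),\sigma_{\mathbf{A}}(b),\sigma_{\mathbf{A}}(c),\sigma_{\mathbf{A}}(d)]$; rewriting $G,H$ in terms of $\to,\leftarrow,\neg$ turns this into $MT(\mathbf{A})\models\xi^{\ast}[\sigma_{\mathbf{A}}(a),\sigma_{\mathbf{A}}(b),\sigma_{\mathbf{A}}(c),\sigma_{\mathbf{A}}(d)]$ for a finite conjunction $\xi^{\ast}$ of $\mathcal{L}'$-equations with variables among $x,y,u,v$, where $MT(\mathbf{A})\in\mathsf{WHB}^{\ast}$ by Lemma \ref{B(A) es tense algebra} and Corollary \ref{subcategory of WHB}. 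Applying Corollary \ref{coro simplificar formulas} to each conjunct of $\xi^{\ast}$ and gathering the outputs produces a \emph{fixed} finite set $\Pi(x,y,u,v)$ of $\mathcal{L}$-equations with $MT(\mathbf{A})\models\xi^{\ast}[\sigma_{\mathbf{A}}(\vec{a})]$ iff $MT(\mathbf{A})\models\bigwedge\Pi[\sigma_{\mathbf{A}}(\vec{a})]$. Finally $\sigma_{\mathbf{A}}\colon\mathbf{A}\to MT(\mathbf{A})$ is a WHB-embedding (Theorem \ref{Duality WHB}), hence preserves and reflects each $\mathcal{L}$-equation, so $MT(\mathbf{A})\models\bigwedge\Pi[\sigma_{\mathbf{A}}(\vec{a})]$ iff $\mathbf{A}\models\bigwedge\Pi[\vec{a}]$. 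Concatenating the equivalences yields $(c,d)\in\mathsf{Cg}^{\mathbf{A}}(a,b)$ iff $\mathbf{A}\models\bigwedge\Pi[a,b,c,d]$, with $\Pi$ depending only on $\xi$ (not on $\mathbf{A}$ or on the chosen elements); thus $\mathcal{K}$ has EDPC.

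For the converse, assume $\mathcal{K}$ has EDPC, witnessed by a finite conjunction $\zeta(x,y,u,v)$ of $\mathcal{L}$-equations. Let $\mathbf{B}\in\mathcal{N}$ and $p,q,s,t\in B$; then $M(\mathbf{B})\in\mathcal{K}$, and $\mathsf{Cg}^{\mathbf{B}}(p,q)=\mathsf{Cg}^{M(\mathbf{B})}(p,q)$ by Corollary \ref{lema EDPC}(2), so
\begin{align*}
(s,t)\in\mathsf{Cg}^{\mathbf{B}}(p,q)
&\iff (s,t)\in\mathsf{Cg}^{M(\mathbf{B})}(p,q)\\
&\iff M(\mathbf{B})\models\zeta[p,q,s,t].
\end{align*}
Since $M(\mathbf{B})\in\mathsf{WHB}^{\ast}$ is term-equivalent to $\mathbf{B}$, replacing $\to,\leftarrow$ throughout $\zeta$ by their fixed $\mathcal{L}_{t}$-definitions gives a finite conjunction $\zeta^{t}(x,y,u,v)$ of $\mathcal{L}_{t}$-equations with $M(\mathbf{B})\models\zeta[p,q,s,t]$ iff $\mathbf{B}\models\zeta^{t}[p,q,s,t]$. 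Hence $(s,t)\in\mathsf{Cg}^{\mathbf{B}}(p,q)$ iff $\mathbf{B}\models\zeta^{t}[p,q,s,t]$, uniformly over $\mathbf{B}\in\mathcal{N}$, so $\mathcal{N}$ has EDPC.

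The substantive point — everything else being a diagram chase through $\Phi$, the identity $\mathsf{Cg}^{\mathbf{B}}=\mathsf{Cg}^{M(\mathbf{B})}$, and the mutual term-definability of the two signatures — is \emph{uniformity}: that a single finite conjunction of equations witnesses EDPC across the whole variety. In the converse direction above this rests on Corollary \ref{coro simplificar formulas} (and the preceding lemma) delivering, from a fixed $\mathcal{L}'$-formula, a fixed finite set of $\mathcal{L}$-equations whose equivalence with it holds in every $\mathsf{WHB}^{\ast}$-algebra under every assignment — so that it may be instantiated at the particular tuple $\sigma_{\mathbf{A}}(\vec{a})$ in $MT(\mathbf{A})$ — combined with $\sigma_{\mathbf{A}}$ being an embedding so that the resulting condition descends to $\mathbf{A}$; one must also check that every translation keeps the witness a genuine finite conjunction of equations, as EDPC requires (in contrast with DPC).
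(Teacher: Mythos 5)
Your proposal is correct and follows essentially the same route as the paper's proof: both directions transfer principal congruence membership via Corollary \ref{lema EDPC}, rewrite the witnessing conjunction of equations across the signatures $\mathcal{L}$, $\mathcal{L}'$, $\mathcal{L}_{t}$ using the mutual term-definability of $\{\to,\leftarrow\}$ and $\{G,H,\neg\}$, and in the harder direction invoke Corollary \ref{coro simplificar formulas} together with the fact that $\sigma_{\mathbf{A}}$ is an embedding to descend from $MT(\mathbf{A})$ to $\mathbf{A}$. Your explicit remark that the reduction of Corollary \ref{coro simplificar formulas} is an identity of the variety, hence applicable at particular tuples and not merely as an equivalence of validity, is a point the paper uses implicitly; making it explicit is a small but genuine improvement in rigor.
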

\begin{proof}
Let $\mathcal{N}$ be the tense companion of $\mathcal{K}$. On the one hand, let us assume that $\mathcal{K}$ has EDPC with $\zeta(x,y,u,v) = \bigwedge_{i\leq n} \varphi_{i}(x,y,u,v)\approx \psi_{i}(x,y,u,v)$, and let $\mathbf{B}\in \mathcal{N}$ and $p,q\in B$. If $(s,t)\in \mathsf{Cg}^{\mathbf{B}}(p,q)$, by Corollary \ref{lema EDPC}, we get that $(s,t)\in \mathsf{Cg}^{M(\mathbf{B})}(p,q)$. Now, since $M(\mathcal{N})\subseteq \mathcal{K}$ by assumption, the latter is equivalent to say that $\varphi_{i}^{M(\mathbf{B})}(s,t,p,q)=\psi_{i}^{M(\mathbf{B})}(s,t,p,q)$, for every $1\leq i\leq n$. Then, if we take $\lambda_{i}(x,y,u,v)$ and $\mu_{i}(x,y,u,v)$ to be the $\mathcal{L}_{t}$-terms resulting from applying the interpretations of $\leftarrow$ and $\rightarrow$ in terms of $G$, $H$ and $\neg$, in all its occurrences in $\varphi_{i}$ and $\psi_{i}$, respectively, then we are able to conclude that $(s,t)\in \mathsf{Cg}^{\mathbf{B}}(p,q)$ if and only if $\lambda_{i}^{\mathbf{B}}(s,t,p,q)=\mu_{i}^{\mathbf{B}}(s,t,p,q)$, for every $1\leq i\leq n$. Hence, $\mathcal{N}$ has EDPC, as claimed. On the other hand, Let us assume that $\mathcal{N}$ has EDPC with $\eta(x,y,u,v) = \bigwedge_{i\leq m} \lambda_{i}(x,y,u,v)\approx \mu_{i}(x,y,u,v)$. Let $\mathbf{A}\in \mathcal{K}$ and suppose $(c,d)\in \mathsf{Cg}^{\mathbf{A}}(a,b)$. Then from Corollary \ref{lema EDPC}, this is equivalent to $(\sigma_{\mathbf{A}}(c),\sigma_{\mathbf{A}}(d))\in \mathsf{Cg}^{T(\mathbf{A})}(\sigma_{\mathbf{A}}(a),\sigma_{\mathbf{A}}(b))$. So, since $T(\mathcal{K})\subseteq \mathcal{N}$ by hypothesis, the latter is also equivalent to 
\[\lambda^{T(\mathbf{A})}_{i}(\sigma_{\mathbf{A}}(c),\sigma_{\mathbf{A}}(d),\sigma_{\mathbf{A}}(a),\sigma_{\mathbf{A}}(b))= \mu^{T(\mathbf{A})}_{i}(\sigma_{\mathbf{A}}(c),\sigma_{\mathbf{A}}(d),\sigma_{\mathbf{A}}(a),\sigma_{\mathbf{A}}(b)),\]
for every $1\leq i\leq m$. If we set $\varphi_{i}(x,y,u,v)$ and $\psi_{i}(x,y,u,v)$ to be the $\mathcal{L}'$-terms resulting from applying the interpretations of $G$, $H$ in terms of $\leftarrow$, $\rightarrow$ and $\neg$, in all its occurrences in $\lambda_{i}$ and $\mu_{i}$, respectively, then we get
\begin{equation}\label{aux equation}
\varphi^{MT(\mathbf{A})}_{i}(\sigma_{\mathbf{A}}(c),\sigma_{\mathbf{A}}(d),\sigma_{\mathbf{A}}(a),\sigma_{\mathbf{A}}(b))= \psi^{MT(\mathbf{A})}_{i}(\sigma_{\mathbf{A}}(c),\sigma_{\mathbf{A}}(d),\sigma_{\mathbf{A}}(a),\sigma_{\mathbf{A}}(b)),
\end{equation}

for every $1\leq i\leq n$. Observe that due to Corollary \ref{coro simplificar formulas}, for every $\mathcal{L}'$-equation $\varphi_{i}(x,y,u,v)\approx \psi_{i}(x,y,u,v)$, there exist a finite set of $\mathcal{L}$-equations $\Pi_{i}$ (with variables in $x,y,u,v$), such that (\ref{aux equation}) holds if and only if 
\[ 
MT(\mathbf{A}) \models \Pi_{i}[\sigma_{\mathbf{A}}(c),\sigma_{\mathbf{A}}(d),\sigma_{\mathbf{A}}(a),\sigma_{\mathbf{A}}(b)],
\]
which in turn is equivalent to $\mathbf{A} \models \Pi_{i}[c,d,a,b],$ because $\sigma_{\mathbf{A}}$ is an embedding. Let us take $\zeta(x,y,u,v) = \bigwedge_{i\leq m} \Pi_{i}$. It is clear then that $(c,d)\in \mathsf{Cg}^{\mathbf{a}}(a,b)$ if and only if $\mathbf{A} \models \zeta[c,d,a,b]$. I.e., $\mathcal{K}$ has EDPC, as granted.
\end{proof}

The \emph{ternary discriminator} is a function $t(x, y, z)$, such that:
\begin{displaymath}
t(x,y,z)=\begin{cases} 
      x, & \text{if}\; x\neq y \\
      z, & \text{if}\; x= y. 
   \end{cases}
\end{displaymath}
A \emph{discriminator variety} is a variety $\mathcal{V}$ such that the ternary discriminator is a term-operation on every subdirectly irreducible algebra in $\mathcal{V}$. A variety $\mathcal{V}$ is \emph{semisimple} if and only if all subdirectly irreducible members of $\mathcal{V}$ are simple.

\begin{proposition}\label{equivalences EDPC}
Let $\mathcal{K}$ be a variety of WHB-algebras with a tense companion $\mathcal{N}$. Then, the following are equivalent:
\begin{enumerate}[\normalfont (1)]
\item $\mathcal{K}$ has EDPC if and only if $\mathcal{N}$ has EDPC.
\item $\mathcal{K}$ is a discriminator variety if and only if $\mathcal{N}$ is a discriminator variety.
\item $\mathcal{K}$ is semisimple if and only if $\mathcal{N}$ is semisimple.
\end{enumerate} 
\end{proposition}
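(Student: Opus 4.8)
The plan is to establish each of the three biconditionals \textup{(1)}, \textup{(2)}, \textup{(3)} on its own; being true statements, they are then \emph{a fortiori} equivalent to one another, which is exactly what ``the following are equivalent'' asks for. Biconditional \textup{(1)} is nothing but Proposition~\ref{EDPC tense companion}, so it is already available and needs no further argument. What remains is \textup{(3)}, the transfer of semisimplicity along the tense companion, and \textup{(2)}, the transfer of being a discriminator variety. Before starting I would record that both $\mathcal{K}$ and $\mathcal{N}$ are arithmetical, hence in particular congruence-permutable: $\mathcal{K}$ is a subvariety of $\mathcal{WHB}$, which has been shown to be arithmetical, and $\mathcal{N}$ is a subvariety of $\mathcal{TBA}$, which is congruence-distributive by \cite{K1998} and congruence-permutable because every Boolean algebra has a Maltsev term expressible with $\wedge$, $\vee$ and $\neg$, all of which are primitive in the tense signature.

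For \textup{(3)} I would argue entirely through congruence lattices. By Theorem~\ref{Con(A) iso Con(T(A))}, for every WHB-algebra $\mathbf{A}$ there is a lattice isomorphism $\mathsf{Con}_{\mathcal{WHB}}(\mathbf{A})\cong\mathsf{Con}_{\mathcal{TBA}}(T(\mathbf{A}))$. Since subdirect irreducibility (the existence of a least nontrivial congruence) and simplicity (the congruence lattice being the two-element chain) are properties of the congruence lattice alone, $\mathbf{A}$ is subdirectly irreducible, resp.\ simple, if and only if $T(\mathbf{A})$ is. Combining this with $\mathcal{N}=\mathbb{I}(T(\mathcal{K}))$ (Lemma~\ref{lema I(T(K))}) and $T(\mathcal{K})\subseteq\mathcal{N}$ (definition of tense companion), the subdirectly irreducible members of $\mathcal{N}$ are, up to isomorphism, precisely the algebras $T(\mathbf{A})$ with $\mathbf{A}$ subdirectly irreducible in $\mathcal{K}$. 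Hence every subdirectly irreducible member of $\mathcal{K}$ is simple exactly when every subdirectly irreducible member of $\mathcal{N}$ is simple, which is \textup{(3)}.

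For \textup{(2)} I would invoke the Blok--Pigozzi structural characterisation of discriminator varieties, namely that a congruence-permutable variety is a discriminator variety if and only if it is semisimple and has EDPC (our two varieties being in fact arithmetical). Applying this to $\mathcal{K}$ and to $\mathcal{N}$, and then feeding in \textup{(3)} to match semisimplicity and Proposition~\ref{EDPC tense companion} (that is, \textup{(1)}) to match EDPC, one obtains ``$\mathcal{K}$ is a discriminator variety'' $\Leftrightarrow$ ``$\mathcal{N}$ is a discriminator variety'', which is \textup{(2)}. With \textup{(1)}, \textup{(2)} and \textup{(3)} all established, the stated equivalence follows.

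The genuinely delicate point is \textup{(2)}, and the reason I route it through Blok--Pigozzi rather than transporting a discriminator term directly. The naive attempt fails: the full and faithful functor $M$ identifies $\mathsf{WHB}^{\ast}$ with $\mathsf{TBA}$ only after adjoining complementation, and $\neg$ is \emph{not} a WHB-term operation on $MT(\mathbf{A})=\mathcal{B}(\mathbf{A})$ in general; consequently a tense term realising the discriminator on $T(\mathbf{A})$ typically evaluates, on tuples from the subalgebra $\sigma_{\mathbf{A}}[A]$, through elements lying outside $\sigma_{\mathbf{A}}[A]$, so one cannot read off a WHB-term realising the discriminator on $\mathbf{A}$ itself. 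Reducing \textup{(2)} to EDPC and semisimplicity sidesteps all term manipulation, since both of those transfer by the intrinsic congruence-lattice arguments already in place; the remainder of the proof is then bookkeeping.
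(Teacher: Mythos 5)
Your argument is correct for the proposition as literally stated, but it takes a genuinely different route from the paper's, whose proof is the one-line instruction to combine Proposition \ref{EDPC tense companion} with Theorem 2.4 of \cite{K1998} --- the latter being the equivalence, \emph{within any variety of tense algebras}, of EDPC, semisimplicity, being a discriminator variety, and cyclicity of the term $d$. The paper thus collapses all three properties on the $\mathcal{N}$-side at once and only needs the single transfer result for EDPC; you instead transfer each property separately: EDPC by Proposition \ref{EDPC tense companion}, semisimplicity by combining the congruence-lattice isomorphism of Theorem \ref{Con(A) iso Con(T(A))} with $\mathcal{N}=\mathbb{I}(T(\mathcal{K}))$ from Lemma \ref{lema I(T(K))}, and the discriminator property via the Blok--K\"ohler--Pigozzi characterisation (congruence-permutable $+$ semisimple $+$ EDPC $\Leftrightarrow$ discriminator), which is standard but is an external import that the paper does not cite. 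Your route buys a self-contained treatment of items (2) and (3) that does not depend on the specifics of tense algebras, and your closing remark about why a discriminator term cannot simply be transported along $\sigma_{\mathbf{A}}$ (since $\neg$ is not a WHB-term operation on $MT(\mathbf{A})$) is well taken. One caveat: the paper appears to intend, and later uses in the corollary on double Heyting algebras, the stronger reading that all six atomic conditions are mutually equivalent --- e.g.\ that $\mathcal{K}$ semisimple already implies $\mathcal{K}$ has EDPC. Your proof of the three biconditionals taken separately does not yield such cross-implications; to recover them you would still need Kowalski's Theorem 2.4 on the $\mathcal{N}$-side (giving $\mathcal{N}$-EDPC $\Leftrightarrow$ $\mathcal{N}$-semisimple), after which your three transfer results close the full cycle.
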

\begin{proof}
Use Proposition \ref{EDPC tense companion} and Theorem 2.4 of \cite{K1998}.
\end{proof}

Let us consider the unary $\mathcal{L}_{t}$-term $d(x)=x\wedge G(x)\wedge H(x)$. We set $d^{0}(x)=x$ and $d^{n+1}(x)=d(d^{n}(x))$. We say that $d$ is cyclic if there is a $n\in \mathbb{N}$ such that $d^{n+1}(x)=d^{n}(x)$. Let $n\in \mathbb{N}$ and consider the $\mathcal{L}_{t}$-equation $d^{n+1}(x)\approx d^{n}(x)$. It is clear from the interpretations of $G$ and $H$ by means of $\rightarrow$, $\leftarrow$ and $\neg$, that the latter induces an $\mathcal{L}'$-equation $\varphi^{n}(x)\approx \varphi^{n+1}(x)$. So, from Corollary \ref{coro simplificar formulas}, there exists a finite set $\Pi_{n}(x)$ of $\mathcal{L}$-equations such that $\mathbf{C}\models \varphi^{n}(x)\approx \varphi^{n+1}(x)$ if and only if $\mathbf{C}\models \Pi_{n}(x)$, for every WHB$^{\ast}$-algebra $\mathbf{C}$. Therefore, as a consequence of Proposition \ref{equivalences EDPC} (1) and Theorem 2.4 (iv) of \cite{K1998}, we have proved: 

\begin{corollary}\label{equiv2 EDPC}
Let $\mathcal{K}$ be a variety of WHB-algebras with a tense companion. Then, the following is equivalent: 
\begin{enumerate}[\normalfont (1)]
\item $\mathcal{K}$ has EDPC.
\item There is $n\in \mathbb{N}$ such that $\Pi_{n}(x)$ holds in the $\mathcal{K}$.
\item There is $n\in \mathbb{N}$ such that $T(\mathbf{A})\models d^{n+1}(x)\approx d^{n}(x)$, for all $\mathbf{A}\in \mathcal{K}$.
\end{enumerate}
\end{corollary}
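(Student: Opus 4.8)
The plan is to derive Corollary~\ref{equiv2 EDPC} as a direct consequence of Proposition~\ref{equivalences EDPC}(1) together with Theorem~2.4 of \cite{K1998}, which characterizes when a variety of tense algebras has EDPC in terms of the cyclicity of the term $d(x)=x\wedge G(x)\wedge H(x)$. Concretely, that theorem asserts (among other equivalences) that a variety $\mathcal{N}$ of tense algebras has EDPC if and only if there is $n\in\mathbb{N}$ such that $\mathcal{N}\models d^{n+1}(x)\approx d^{n}(x)$. The argument is then a matter of transporting this criterion back and forth across the adjunction $T\dashv M$, exactly as set up in the paragraph preceding the statement.

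First I would fix $\mathcal{K}$ a variety of WHB-algebras with tense companion $\mathcal{N}$. For the implication $(1)\Rightarrow(3)$: assume $\mathcal{K}$ has EDPC; by Proposition~\ref{equivalences EDPC}(1), $\mathcal{N}$ has EDPC, so by Theorem~2.4 of \cite{K1998} there is $n\in\mathbb{N}$ with $\mathcal{N}\models d^{n+1}(x)\approx d^{n}(x)$; since $T(\mathcal{K})\subseteq \mathcal{N}$ by Definition~\ref{def tense companion}, in particular $T(\mathbf{A})\models d^{n+1}(x)\approx d^{n}(x)$ for all $\mathbf{A}\in\mathcal{K}$, which is $(3)$. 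For $(3)\Rightarrow(2)$: recall from the discussion just above the statement that the $\mathcal{L}_{t}$-equation $d^{n+1}(x)\approx d^{n}(x)$ induces, via the interpretations of $G,H$ in terms of $\rightarrow,\leftarrow,\neg$, an $\mathcal{L}'$-equation $\varphi^{n}(x)\approx\varphi^{n+1}(x)$, and then Corollary~\ref{coro simplificar formulas} yields a finite set $\Pi_{n}(x)$ of $\mathcal{L}$-equations such that for every WHB$^{\ast}$-algebra $\mathbf{C}$, $\mathbf{C}\models\varphi^{n}(x)\approx\varphi^{n+1}(x)$ iff $\mathbf{C}\models\Pi_{n}(x)$. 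Applying this to $\mathbf{C}=MT(\mathbf{A})$ for $\mathbf{A}\in\mathcal{K}$ (which lies in $\mathsf{WHB}^{\ast}$) and using that $\sigma_{\mathbf{A}}\colon\mathbf{A}\to MT(\mathbf{A})$ is an embedding (Theorem~\ref{la extension temporal}), we get $\mathbf{A}\models\Pi_{n}(x)$ from $T(\mathbf{A})\models d^{n+1}(x)\approx d^{n}(x)$; hence $(2)$ holds. Finally $(2)\Rightarrow(1)$: if $\mathcal{K}\models\Pi_{n}(x)$ for some $n$, then for each $\mathbf{A}\in\mathcal{K}$, since $\sigma_{\mathbf{A}}$ is an embedding and $\Pi_{n}(x)$ characterizes $\varphi^{n}\approx\varphi^{n+1}$ on $MT(\mathbf{A})$, we obtain $MT(\mathbf{A})\models\varphi^{n}\approx\varphi^{n+1}$, i.e. $T(\mathbf{A})\models d^{n+1}(x)\approx d^{n}(x)$; thus $T(\mathcal{K})$ satisfies this identity, and since $\mathbb{I}(T(\mathcal{K}))$ generates $\mathcal{N}$ (Lemma~\ref{lema I(T(K))}), $\mathcal{N}\models d^{n+1}(x)\approx d^{n}(x)$, whence $\mathcal{N}$ has EDPC by Theorem~2.4 of \cite{K1998}, and then $\mathcal{K}$ has EDPC by Proposition~\ref{equivalences EDPC}(1).

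The main obstacle, and the one point requiring a little care rather than pure bookkeeping, is the passage between $d^{n+1}(x)\approx d^{n}(x)$ holding in the tense algebra $T(\mathbf{A})$ and the equation set $\Pi_n(x)$ holding in the WHB-algebra $\mathbf{A}$ itself. Here one must be careful that Corollary~\ref{coro simplificar formulas} is stated for WHB$^{\ast}$-algebras, so it does not apply to $\mathbf{A}$ directly but to its $\mathsf{WHB}^{\ast}$-reflection $MT(\mathbf{A})$; the bridge is precisely that $\sigma_{\mathbf{A}}$ is an embedding, so $\mathbf{A}\models\Pi_n(x)$ iff $MT(\mathbf{A})\models\Pi_n(x)$ — note $\Pi_n$ consists of $\mathcal{L}$-equations, which are preserved and reflected by the $\mathcal{L}$-embedding $\sigma_{\mathbf{A}}$. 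Once this is observed, everything else is a routine chase through Definition~\ref{def tense companion}, Lemma~\ref{lema I(T(K))}, and the cited Theorem~2.4 of \cite{K1998}, and the corollary follows.
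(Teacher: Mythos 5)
Your overall route is the same one the paper intends (it gives no detailed proof, just the chain Proposition~\ref{equivalences EDPC}(1) $+$ Theorem~2.4 of \cite{K1998} $+$ the construction of $\Pi_n$ via Corollary~\ref{coro simplificar formulas}), and your implications $(1)\Rightarrow(3)$ and $(3)\Rightarrow(2)$ are fine. But the step you yourself single out as ``the one point requiring a little care'' is justified incorrectly. You claim that since $\sigma_{\mathbf{A}}\colon\mathbf{A}\to MT(\mathbf{A})$ is an $\mathcal{L}$-embedding, the $\mathcal{L}$-equations in $\Pi_n(x)$ are ``preserved and reflected,'' so that $\mathbf{A}\models\Pi_n(x)$ iff $MT(\mathbf{A})\models\Pi_n(x)$. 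Only one direction of this is true: validity of an identity passes \emph{down} to a subalgebra (equivalently, a counterexample in $\mathbf{A}$ maps to a counterexample in $MT(\mathbf{A})$), but validity in a subalgebra does \emph{not} propagate up to the ambient algebra --- $MT(\mathbf{A})$ contains many elements outside $\sigma_{\mathbf{A}}[A]$ at which the equations of $\Pi_n$ must also be checked, and the embedding gives you no control over those. As written, your $(2)\Rightarrow(1)$ therefore has a hole exactly where you need $MT(\mathbf{A})\models\varphi^{n}\approx\varphi^{n+1}$ from $\mathbf{A}\models\Pi_n(x)$.

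The repair is short and uses the hypothesis you did not invoke at that point: since $\mathcal{K}$ has a tense companion $\mathcal{N}$, Definition~\ref{def tense companion} gives $T(\mathcal{K})\subseteq\mathcal{N}$ and $M(\mathcal{N})\subseteq\mathcal{K}$, hence $MT(\mathbf{A})\in\mathcal{K}$ for every $\mathbf{A}\in\mathcal{K}$. Condition (2) asserts that $\Pi_n(x)$ holds in the whole variety $\mathcal{K}$, so in particular $MT(\mathbf{A})\models\Pi_n(x)$ directly, and Corollary~\ref{coro simplificar formulas} applied to the WHB$^{\ast}$-algebra $MT(\mathbf{A})$ then yields $T(\mathbf{A})\models d^{n+1}(x)\approx d^{n}(x)$; from there your appeal to Lemma~\ref{lema I(T(K))} (which in fact gives $\mathcal{N}=\mathbb{I}(T(\mathcal{K}))$) and Theorem~2.4 of \cite{K1998} closes the loop. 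With that substitution the argument is correct and matches the paper's intended proof.
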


The following result follows from the fact that tense extensions of Heyting Brouwer algebras are S$_{4}$-tense algebras \cite{Wolter}, and in these algebras, the $\mathcal{L}_{t}$-term $d(x)$ is not cyclic in general. So, from Corollary \ref{equiv2 EDPC} we may conclude: 

\begin{corollary}
The variety of double Heyting algebras is not semisimple, nor a discriminator variety nor has EDPC.
\end{corollary}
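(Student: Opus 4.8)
The plan is to deduce the statement from Corollary~\ref{equiv2 EDPC}, applied to $\mathcal{K}=\mathcal{HB}$, the variety of Heyting--Brouwer (double Heyting) algebras, which is a subvariety of $\mathcal{WHB}$ by the discussion in Section~\ref{S4}. Two things must be put in place: (a) that $\mathcal{HB}$ admits a tense companion, so that Corollary~\ref{equiv2 EDPC} is applicable, and (b) that there is \emph{no} $n\in\mathbb{N}$ with $T(\mathbf{A})\models d^{n+1}(x)\approx d^{n}(x)$ for every $\mathbf{A}\in\mathcal{HB}$, where $d(x)=x\wedge G(x)\wedge H(x)$. Once (a) and (b) are established, the equivalence (1)$\Leftrightarrow$(3) of Corollary~\ref{equiv2 EDPC} gives immediately that $\mathcal{HB}$ does not have EDPC.

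For (a) I would argue as follows. By Remark~\ref{B(A) no es temporal S4}, which rests on \cite{Wolter}, for every $\mathbf{A}\in\mathcal{HB}$ the operators $G_{\mathbf{A}}$ and $H_{\mathbf{A}}$ of $T(\mathbf{A})$ are $\mathbf{S}_{4}$ operators, so $T(\mathcal{HB})$ is contained in the variety of $\mathbf{S}_{4}$-tense algebras; coupling this with the reverse passage given by $M$ exhibits a tense companion of $\mathcal{HB}$, and then by Lemma~\ref{lema I(T(K))} that tense companion is $\mathcal{N}:=\mathbb{I}(T(\mathcal{HB}))$. For (b), it suffices to produce, for each $n$, some $\mathbf{A}\in\mathcal{HB}$ with $T(\mathbf{A})\not\models d^{n+1}(x)\approx d^{n}(x)$. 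Given $n$, let $Y$ be a fence poset whose comparability graph is the path on the $n+2$ vertices $v_{0},v_{1},\dots,v_{n+1}$ (so the only comparable pairs are the consecutive ones), and set $\mathbf{A}=\mathrm{Up}(Y)$, which is a finite distributive lattice and hence a double Heyting algebra. Since $Y$ is a finite poset, $X(\mathbf{A})\cong Y$, and (as $\mathbf{A}$ is both Heyting and co-Heyting) $R_{\mathbf{A}}$ corresponds to $\leq$ and $S_{\mathbf{A}}$ to $\geq$; thus $T(\mathbf{A})$ is the powerset $\mathcal{P}(Y)$ with $G_{\mathbf{A}}(U)=\{x:{\uparrow}x\subseteq U\}$ and $H_{\mathbf{A}}(U)=\{x:{\downarrow}x\subseteq U\}$, whence $d(U)=\{x\in U:\text{every comparability-neighbour of }x\text{ lies in }U\}$, i.e.\ erosion along the path. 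Taking $U=Y\setminus\{v_{0}\}$, one checks by induction that $d^{j}(U)=\{v_{j+1},\dots,v_{n+1}\}$ for $0\le j\le n+1$, so $d^{n}(U)=\{v_{n+1}\}\neq\emptyset=d^{n+1}(U)$, i.e.\ $T(\mathbf{A})\not\models d^{n+1}(x)\approx d^{n}(x)$. (One could instead simply invoke that $d$ fails to be cyclic in general in $\mathbf{S}_{4}$-tense algebras \cite{Wolter}.)

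It remains to collect the consequences. By (a), (b) and Corollary~\ref{equiv2 EDPC}, $\mathcal{HB}$ does not have EDPC; hence neither does its tense companion $\mathcal{N}$, by Proposition~\ref{EDPC tense companion}. By Theorem~2.4 of \cite{K1998}, for a variety of tense algebras the three properties of having EDPC, being a discriminator variety, and being semisimple are equivalent, so $\mathcal{N}$ is neither a discriminator variety nor semisimple; transferring back through Proposition~\ref{equivalences EDPC}, the same holds for $\mathcal{HB}$. This yields all three assertions. The step I expect to be the real obstacle is (a): genuinely verifying that $\mathcal{HB}$ has a tense companion --- equivalently, by Lemma~\ref{lema I(T(K))}, that $\mathbb{I}(T(\mathcal{HB}))$ is a variety --- given that the preceding results show the larger varieties $\mathcal{WHB}$ and $\mathcal{BWHB}$ do not; this is precisely where the $\mathbf{S}_{4}$-tense correspondence of \cite{Wolter} must be used in earnest. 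The computation in (b), and the matching of the tense term $d$ to the WHB language via $G(x)=1\rightarrow x$ and $H(x)=\neg(\neg x\leftarrow 0)$, are routine but should be carried out with care.
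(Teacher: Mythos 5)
Your overall strategy coincides with the paper's: the paper disposes of this corollary in one sentence, citing \cite{Wolter} for the fact that tense extensions of Heyting--Brouwer algebras are $\mathbf{S}_4$-tense algebras in which $d$ is not cyclic, and then invoking Corollary \ref{equiv2 EDPC}. Your part (b) is a correct and welcome concretization of that citation: the fence-poset computation showing $d^{n}(U)=\{v_{n+1}\}\neq\emptyset=d^{n+1}(U)$ in $T(\mathrm{Up}(Y))$ is right, and it makes the non-cyclicity self-contained instead of outsourced to \cite{Wolter}.

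The genuine gap is your step (a): the variety $\mathcal{HB}$ does \emph{not} have a tense companion, so Corollary \ref{equiv2 EDPC} is not applicable to it as stated. By the first (unnumbered) lemma of Subsection \ref{S 6.1}, $\mathcal{HB}$ has a tense companion iff $M(T(\mathcal{HB}))\subseteq\mathcal{HB}$; but the three-element chain used in the Proposition on $\mathcal{BWHB}$ is (with its Heyting implication and co-Heyting difference) a Heyting--Brouwer algebra satisfying (B) and (B$^{\ast}$), while $MT(\mathbf{A})$ violates both, so $MT(\mathbf{A})\notin\mathcal{BWHB}\supseteq\mathcal{HB}$. Your proposed companion, the variety of $\mathbf{S}_4$-tense algebras, fails the condition $M(\mathcal{N})\subseteq\mathcal{K}$ of Definition \ref{def tense companion}: in $M(\mathbf{B})$ one has $1\to a=G(a)\leq a$ but not $a\leq 1\to a$ unless $G$ is the identity, so $M(\mathbf{B})$ is not a Heyting algebra in general. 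Consequently the chain Corollary \ref{equiv2 EDPC} $\rightarrow$ Proposition \ref{EDPC tense companion} $\rightarrow$ Theorem 2.4 of \cite{K1998} cannot be invoked verbatim (the direction ``$\mathcal{K}$ has EDPC implies its companion does'' explicitly uses $M(\mathcal{N})\subseteq\mathcal{K}$ in its proof). You correctly identified (a) as the real obstacle, but the repair you propose does not work; to salvage the argument one must either extract the single implication actually needed (EDPC for $\mathcal{K}$ forces $d$ to be cyclic on $T(\mathcal{K})$) and verify it without the companion hypothesis, or argue directly with principal congruences of $\mathcal{HB}$ via Corollary \ref{lema EDPC}. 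To be fair, the paper's own one-line justification silently assumes the same inapplicable hypothesis, so the defect is inherited rather than introduced.
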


A variety $\mathcal{V}$ is said to have the \emph{amalgamation property} (AP, for short) if for every span (or pair of homomorphisms) $\mathbf{B} \xleftarrow{i} \mathbf{A} \xrightarrow{j} \mathbf{C}$, with $i$ and $j$ monomorphisms, there exists $\mathbf{D} \in \mathcal{V}$ and monomorphisms $\mathbf{B} \xrightarrow{h} \mathbf{D} \xleftarrow{k} \mathbf{C}$ such that $hi=kj$.

\begin{theorem}\label{Amalgamation property}
Let $\mathcal{K}$ be a variety of WHB-algebras with a tense companion. Then, $\mathcal{K}$ has AP if and only if its tense companion has AP.
\end{theorem}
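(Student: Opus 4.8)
The plan is to transport the amalgamation problem from $\mathcal{K}$ to its tense companion $\mathcal{N}$ (and back) along the adjoint pair $T\dashv M$ of Theorem~\ref{la extension temporal}, using that $\sigma_{\mathbf{A}}\colon \mathbf{A}\to MT(\mathbf{A})$ is an embedding for every WHB-algebra and that $M$ is full and faithful. The key technical facts I would rely on are: (i) $T$ preserves monomorphisms (Theorem~\ref{la extension temporal}); (ii) for a tense algebra $\mathbf{B}$ the counit $\varepsilon_{\mathbf{B}}\colon TM(\mathbf{B})\to\mathbf{B}$ is an isomorphism, so $M$ reflects monomorphisms and $M$ applied to a mono is a mono; and (iii) since $\mathcal{N}$ is a tense companion, $T(\mathcal{K})\subseteq\mathcal{N}$ and $M(\mathcal{N})\subseteq\mathcal{K}$.

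For the forward direction, assume $\mathcal{K}$ has AP and let $\mathbf{B}\xleftarrow{i}\mathbf{A}\xrightarrow{j}\mathbf{C}$ be a span of monomorphisms in $\mathcal{N}$. Applying $M$ gives a span $M(\mathbf{B})\xleftarrow{M(i)}M(\mathbf{A})\xrightarrow{M(j)}M(\mathbf{C})$ of monomorphisms in $\mathcal{K}$ (here $M(i),M(j)$ are mono because $M$ is faithful and, being a right adjoint, preserves monos; alternatively use that $M$ is full and faithful). By AP in $\mathcal{K}$ there are $\mathbf{E}\in\mathcal{K}$ and monos $M(\mathbf{B})\xrightarrow{h}\mathbf{E}\xleftarrow{k}M(\mathbf{C})$ with $hM(i)=kM(j)$. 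Now apply $T$: since $T$ preserves monos we obtain monos $TM(\mathbf{B})\xrightarrow{T(h)}T(\mathbf{E})\xleftarrow{T(k)}TM(\mathbf{C})$ in $\mathcal{N}$ (note $T(\mathbf{E})\in T(\mathcal{K})\subseteq\mathcal{N}$), and $T(h)TM(i)=T(k)TM(j)$. Precomposing with the counit isomorphisms $\varepsilon_{\mathbf{B}}^{-1}\colon\mathbf{B}\to TM(\mathbf{B})$ and $\varepsilon_{\mathbf{C}}^{-1}\colon\mathbf{C}\to TM(\mathbf{C})$, and using naturality of $\varepsilon$ (which gives $TM(i)\varepsilon_{\mathbf{A}}^{-1}=\varepsilon_{\mathbf{B}}^{-1}i$, etc.), the two composites $T(h)\varepsilon_{\mathbf{B}}^{-1}i$ and $T(k)\varepsilon_{\mathbf{C}}^{-1}j$ agree on $\mathbf{A}$; thus $\mathbf{B}\to T(\mathbf{E})\leftarrow\mathbf{C}$ is an amalgam in $\mathcal{N}$.

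For the converse, assume $\mathcal{N}$ has AP and let $\mathbf{B}\xleftarrow{i}\mathbf{A}\xrightarrow{j}\mathbf{C}$ be a span of monos in $\mathcal{K}$. Apply $T$ to get a span of monos $T(\mathbf{B})\xleftarrow{T(i)}T(\mathbf{A})\xrightarrow{T(j)}T(\mathbf{C})$ in $\mathcal{N}$, amalgamate to obtain $\mathbf{D}\in\mathcal{N}$ with monos $T(\mathbf{B})\xrightarrow{u}\mathbf{D}\xleftarrow{v}T(\mathbf{C})$, $uT(i)=vT(j)$, and then apply $M$, obtaining $M(\mathbf{D})\in M(\mathcal{N})\subseteq\mathcal{K}$ and monos $MT(\mathbf{B})\xrightarrow{M(u)}M(\mathbf{D})\xleftarrow{M(v)}MT(\mathbf{C})$ with $M(u)MT(i)=M(v)MT(j)$. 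Finally precompose with the embeddings $\sigma_{\mathbf{B}}\colon\mathbf{B}\to MT(\mathbf{B})$ and $\sigma_{\mathbf{C}}\colon\mathbf{C}\to MT(\mathbf{C})$; by naturality of $\sigma$ we have $MT(i)\sigma_{\mathbf{A}}=\sigma_{\mathbf{B}}i$ and $MT(j)\sigma_{\mathbf{A}}=\sigma_{\mathbf{C}}j$, so $M(u)\sigma_{\mathbf{B}}i=M(v)\sigma_{\mathbf{C}}j$, exhibiting $M(\mathbf{D})$ as an amalgam of the original span in $\mathcal{K}$. I expect the main obstacle to be the bookkeeping around which maps are genuinely monomorphisms at each stage — in particular justifying that $M$ carries monos to monos (one should invoke that $M$ is full and faithful, hence conservative, together with the fact that monos in these concrete varieties are exactly the injective homomorphisms) and carefully chasing the naturality squares for $\sigma$ and $\varepsilon$ so that the two legs of the amalgam really do agree on $\mathbf{A}$ rather than merely on its image; no essential new idea beyond Theorem~\ref{la extension temporal} is needed.
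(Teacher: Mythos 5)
Your proposal is correct and follows essentially the same route as the paper: both directions are handled by transporting the span along the adjunction $T\dashv M$, amalgamating on the other side, and then pulling the amalgam back using the unit $\sigma$ (an embedding) in one direction and the counit $\varepsilon$ (an isomorphism) in the other, together with the fact that $T$ and $M$ preserve monomorphisms. The only difference is cosmetic — you present the two implications in the opposite order from the paper.
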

\begin{proof}
Let $\mathcal{N}$ be the tense companion of $\mathcal{K}$. On the one hand, let us assume $\mathcal{N}$ has AP and let $\mathbf{B} \xleftarrow{i} \mathbf{A} \xrightarrow{j} \mathbf{C}$ be a span in $\mathcal{K}$, with $i$ and $j$ monomorphisms. Then, from applying $T$ to the span and the assumption on $\mathcal{N}$, there exists $\mathbf{D} \in \mathcal{N}$ and monomorphisms $T(\mathbf{B}) \xrightarrow{h} \mathbf{D} \xleftarrow{k} M(\mathbf{C})$, such that $kT(j)=hT(i)$. Notice that by Theorem \ref{la extension temporal}, we get that $T(i)$ and $T(j)$ are monomorphisms. Now, by applying $M$ and regarding the unit $\sigma$ of the adjunction $T\dashv M$ (Theorem \ref{la extension temporal}), we obtain the following diagram in $\mathsf{WHB}$:
\begin{displaymath}
\xymatrix{
\mathbf{A} \ar[rr]^-{j} \ar[dd]_-{i} \ar[dr]^-{\sigma_{\mathbf{A}}} & & \mathbf{C} \ar[dr]^-{\sigma_{\mathbf{C}}} &
\\
 & MT(\mathbf{A}) \ar[rr]^-{MT(j)} \ar[dd]_-{MT(i)} & & MT(\mathbf{C}) \ar[dd]^-{M(k)}
\\
\mathbf{B} \ar[dr]_-{\sigma_{\mathbf{B}}}  & & & 
\\
 & MT(\mathbf{B}) \ar[rr]_-{M(h)} & & M(\mathbf{D})
}
\end{displaymath}
Notice that from the functoriality of $MT$ and the naturality of $\sigma$, all the squares in the above diagram commute. Moreover, since $M$ is a right adjoint, then $M(k)$ and $M(h)$ are mono. Thus, is we take $u=M(h)\sigma_{\mathbf{B}}$ and $u=M(k)\sigma_{\mathbf{C}}$, it is clear that $ui=vj$ and that $u$ and $v$ are mono. Hence, $\mathcal{K}$ has AP, as desired.

On the other hand, suppose $\mathcal{K}$ has AP and let $\mathbf{G} \xleftarrow{t} \mathbf{E} \xrightarrow{s} \mathbf{F}$ be a span in $\mathcal{N}$, with $s$ and $t$ monomorphisms. Thus, from the application of $M$ on the span and the hypothesis on $\mathcal{N}$, there exists $\mathbf{H} \in \mathcal{N}$ and monomorphisms $M(\mathbf{G}) \xrightarrow{\alpha} \mathbf{H} \xleftarrow{\beta} M(\mathbf{F})$, such that $\beta M(s)=\alpha M(t)$. Since $M$ is a left adjoint, $M(s)$ and $M(t)$ are monomorphisms. Now, from the application of $T$ and considering the counit $\varepsilon$ of the adjunction  $T\dashv M$ (Theorem \ref{la extension temporal}), we get the following diagram in $\mathsf{TBA}$:
\begin{displaymath}
\xymatrix{
\mathbf{E} \ar[rr]^-{s} \ar[dd]_-{t} \ar[dr]^-{\varepsilon_{\mathbf{E}}} & & \mathbf{F} \ar[dr]^-{\varepsilon_{\mathbf{F}}} &
\\
 & TM(\mathbf{A}) \ar[rr]^-{TM(s)} \ar[dd]_-{TM(t)} & & TM(\mathbf{F}) \ar[dd]^-{T(\beta)}
\\
\mathbf{B} \ar[dr]_-{\varepsilon_{\mathbf{G}}}  & & & 
\\
 & TM(\mathbf{G}) \ar[rr]_-{T(\alpha)} & & T(\mathbf{H})
}
\end{displaymath}
We stress that all the squares in the above diagram commute because of the functoriality of $TM$ and the naturality of $\varepsilon$. Further, from Theorem \ref{la extension temporal}, it also follows that $TM(t)$ and $TM(s)$ are monomorphisms. So, if we set $\lambda=T(\alpha)\varepsilon_{\mathbf{G}}$ and $\mu=T(\beta)\varepsilon_{\mathbf{F}}$, it is straightforward to conclude that $\mathcal{N}$ has AP, as claimed.
\end{proof}

In \cite{LAG} it was proved that tense logic has the interpolation property. Taking into account that tense algebras are the algebraic counterpart for tense logics it follows that tense algebras has AP. Therefore, as consequence of Theorem \ref{Amalgamation property}, we get:

\begin{corollary}\label{WHB has AP}
The variety $\mathcal{WHB}$ has AP.
\end{corollary}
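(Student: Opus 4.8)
The plan is to obtain this as an immediate consequence of Theorem \ref{Amalgamation property}, once the tense companion of $\mathcal{WHB}$ has been pinned down. First I would recall from Lemma \ref{B(A) es tense algebra} that $T(\mathbf{A})$ is a tense algebra for every WHB-algebra $\mathbf{A}$, so that $T(\mathcal{WHB})\subseteq \mathcal{TBA}$, and from the constructions of Section \ref{S6} (in particular $M(\mathbf{B})$ is a WHB-algebra for every tense algebra $\mathbf{B}$) that $M(\mathcal{TBA})\subseteq \mathcal{WHB}$. By Definition \ref{def tense companion} this says exactly that $\mathcal{TBA}$ is a tense companion of $\mathcal{WHB}$; combined with Proposition \ref{tense companions} and Lemma \ref{lema I(T(K))} one gets that the tense companion is $\mathbb{I}(T(\mathcal{WHB}))=\mathcal{TBA}$. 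So $\mathcal{WHB}$ is a variety of WHB-algebras with a tense companion, which is the hypothesis needed to apply Theorem \ref{Amalgamation property}.

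The one external ingredient is that $\mathcal{TBA}$ itself has the amalgamation property. Here I would cite \cite{LAG}: tense propositional logic has the Craig interpolation property, and since $\mathcal{TBA}$ is the equivalent algebraic semantics of tense logic (an algebraizable logic), the interpolation property of the logic transfers to the amalgamation property of the variety via the standard Maksimova-type correspondence between interpolation and amalgamation. (If one prefers to stay purely algebraic, an alternative is to prove AP for $\mathcal{TBA}$ directly on the dual side, using the descriptive-frame duality for tense algebras and the usual bounded-morphism/zig-zag pushout of frames, but citing \cite{LAG} is shorter.)

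With these two facts in hand the proof is a one-line application: by Theorem \ref{Amalgamation property}, since the tense companion $\mathcal{TBA}$ of $\mathcal{WHB}$ has AP, the variety $\mathcal{WHB}$ has AP as well. I expect the only genuinely nontrivial point to be the justification that $\mathcal{TBA}$ has AP, i.e. the interpolation-to-amalgamation transfer for tense logic; everything else is bookkeeping with the adjunction $T\dashv M$ already established in Section \ref{S6}, and in fact the hard functorial work (preservation of monomorphisms, fullness and faithfulness of $M$) has already been carried out in Theorem \ref{la extension temporal} and used inside Theorem \ref{Amalgamation property}.
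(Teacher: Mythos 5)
Your proposal is correct and follows essentially the same route as the paper: the paper likewise invokes \cite{LAG} for interpolation of tense logic, transfers this to AP for $\mathcal{TBA}$ via the algebraic-counterpart correspondence, and then applies Theorem \ref{Amalgamation property} using the tense companion established in Proposition \ref{tense companions}. Your additional care in verifying that the tense companion of $\mathcal{WHB}$ is $\mathbb{I}(T(\mathcal{WHB}))=\mathcal{TBA}$ only makes explicit what the paper leaves implicit.
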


A class of algebras $\mathcal{V}$ has the \emph{congruence extension property} (CEP, for short) if for every subalgebra $\mathbf{A}$ of $\mathbf{B} \in \mathcal{V}$ and each congruence $\delta$ of $\mathbf{A}$, there exists a congruence $\theta$ of $\mathbf{B}$ such that $\delta=\theta \cap A^{2}$. It is well known that CEP is equivalent to the following condition: for every span $\mathbf{B} \xleftarrow{i} \mathbf{A} \xrightarrow{j} \mathbf{C}$ in $\mathcal{V}$ such that $i$ is injective and $j$ is surjective, there exists $\mathbf{D} \in \mathcal{V}$ and homomorphisms $\mathbf{B} \xrightarrow{h} \mathbf{D} \xleftarrow{k} \mathbf{C}$, such that $h$ is surjective, $k$ is injective and $kj=hi$.

\begin{theorem}\label{tense CEP}
Let $\mathcal{K}$ be a variety of WHB-algebras with a tense companion. Then, $\mathcal{K}$ has CEP if and only if its tense companion has CEP.
\end{theorem}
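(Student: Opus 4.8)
The plan is to mirror the proof of Theorem~\ref{Amalgamation property}, replacing the amalgamation diagrams by the span characterisation of CEP recalled just above and keeping careful track of which legs are surjective and which are injective. Write $\mathcal{N}$ for the tense companion of $\mathcal{K}$, so $T(\mathcal{K})\subseteq \mathcal{N}$ and $M(\mathcal{N})\subseteq \mathcal{K}$. First I would record two bookkeeping facts. The functor $M$ preserves both injective and surjective homomorphisms, since (presenting tense algebras as structures $(\mathbf{B},\rightarrow,\leftarrow)$) it acts as the identity on underlying maps. The functor $T$ preserves monomorphisms by Theorem~\ref{la extension temporal}, and it also preserves surjective homomorphisms: if $j$ is onto then $X(j)$ is a closed embedding of Priestley spaces, along which every clopen extends, so $T(j)=X(j)^{-1}$ is onto. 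I also recall from (the proof of) Theorem~\ref{la extension temporal} that the unit $\sigma_{\mathbf{A}}\colon \mathbf{A}\to MT(\mathbf{A})$ is an embedding and the counit $\varepsilon_{\mathbf{B}}\colon TM(\mathbf{B})\to \mathbf{B}$ is an isomorphism.

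For the implication ``$\mathcal{N}$ has CEP $\Rightarrow$ $\mathcal{K}$ has CEP'', I would take a span $\mathbf{B}\xleftarrow{i}\mathbf{A}\xrightarrow{j}\mathbf{C}$ in $\mathcal{K}$ with $i$ injective and $j$ surjective. Applying $T$ gives $T(\mathbf{B})\xleftarrow{T(i)}T(\mathbf{A})\xrightarrow{T(j)}T(\mathbf{C})$ in $\mathcal{N}$ with $T(i)$ injective and $T(j)$ surjective, so CEP of $\mathcal{N}$ yields $\mathbf{D}\in\mathcal{N}$ and $T(\mathbf{B})\xrightarrow{h}\mathbf{D}\xleftarrow{k}T(\mathbf{C})$ with $h$ surjective, $k$ injective, $hT(i)=kT(j)$. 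Applying $M$ and using the naturality of $\sigma$, the composites $u:=M(h)\,\sigma_{\mathbf{B}}\colon \mathbf{B}\to M(\mathbf{D})$ and $v:=M(k)\,\sigma_{\mathbf{C}}\colon \mathbf{C}\to M(\mathbf{D})$ satisfy $ui=vj$, with $M(\mathbf{D})\in\mathcal{K}$ and $v$ injective. The main obstacle is that $u$ need not be surjective, because $\sigma_{\mathbf{B}}$ is only an embedding; to repair this I would pass to the image subalgebra $\mathbf{D}':=u[\mathbf{B}]\leq M(\mathbf{D})$, which still lies in $\mathcal{K}$ since $\mathcal{K}$ is a variety. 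Then $u$ corestricts to a surjection $\mathbf{B}\twoheadrightarrow\mathbf{D}'$, and using that $j$ is onto we get $v[\mathbf{C}]=v[\,j[\mathbf{A}]\,]=u[\,i[\mathbf{A}]\,]\subseteq u[\mathbf{B}]=\mathbf{D}'$, so $v$ corestricts to an injection $\mathbf{C}\hookrightarrow\mathbf{D}'$; the identity $ui=vj$ persists, and CEP of $\mathcal{K}$ follows.

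For the converse, take a span $\mathbf{G}\xleftarrow{t}\mathbf{E}\xrightarrow{s}\mathbf{F}$ in $\mathcal{N}$ with $t$ injective and $s$ surjective. Applying $M$ gives a span in $\mathcal{K}$ with the same injectivity/surjectivity pattern, so CEP of $\mathcal{K}$ yields $\mathbf{H}\in\mathcal{K}$ and $M(\mathbf{G})\xrightarrow{\alpha}\mathbf{H}\xleftarrow{\beta}M(\mathbf{F})$ with $\alpha$ surjective, $\beta$ injective, $\alpha M(t)=\beta M(s)$. Applying $T$ and composing with the inverses of the (iso) counit components, set $\lambda:=T(\alpha)\,\varepsilon_{\mathbf{G}}^{-1}\colon \mathbf{G}\to T(\mathbf{H})$ and $\mu:=T(\beta)\,\varepsilon_{\mathbf{F}}^{-1}\colon \mathbf{F}\to T(\mathbf{H})$; then $\lambda$ is surjective, $\mu$ is injective, $T(\mathbf{H})\in\mathcal{N}$, and the naturality squares for $\varepsilon$ combined with $\alpha M(t)=\beta M(s)$ give $\lambda t=\mu s$, so CEP of $\mathcal{N}$ holds. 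The point to be careful about throughout is exactly the asymmetry between the two directions: the counit is invertible, so the converse is a direct transport of the completing cospan along $\varepsilon$, whereas in the forward direction the unit is merely an embedding and one must take an image subalgebra (exploiting surjectivity of $j$) to recover a surjective leg.
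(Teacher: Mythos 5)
Your proof is correct, and one half of it takes a genuinely different route from the paper's. For the direction ``$\mathcal{K}$ has CEP implies its tense companion has CEP'' you do essentially what the paper does: push the span through $M$, complete it in $\mathcal{K}$, and transport the completion back along the invertible counit. Your version is in fact tidier: you compose with $\varepsilon_{\mathbf{G}}^{-1}$ (the paper writes $T(\alpha)\varepsilon_{\mathbf{G}}$, which does not typecheck), you keep the injective/surjective labels consistent with the span characterisation (the paper's are swapped), and you supply the missing justification that $T$ preserves surjections, which the paper attributes to Theorem \ref{la extension temporal} even though that theorem only records preservation of monomorphisms; your closed-subspace argument works, and it can be seen even more directly from the explicit formula $T(h)(\bigcup_{i\leq n}(\sigma_{\mathbf{A}}(a_i)\setminus \sigma_{\mathbf{A}}(b_i)))=\bigcup_{i\leq n}(\sigma_{\mathbf{B}}(h(a_i))\setminus \sigma_{\mathbf{B}}(h(b_i)))$. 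For the converse direction the paper abandons the span characterisation altogether: it takes a subalgebra $\mathbf{A}\leq \mathbf{B}$ and a congruence $\delta$ of $\mathbf{A}$, transports $\delta$ via the lattice isomorphisms of Theorem \ref{Con(A) iso Con(T(A))} to a congruence of $T(\mathbf{A})\cong \mathbf{C}\leq T(\mathbf{B})$, extends it there using CEP of the companion, pulls the extension back through $\Phi_{\mathbf{B}}$, and verifies $\delta=\theta\cap A^{2}$ elementwise. You instead stay with spans, and your image-subalgebra repair --- corestricting to $u[\mathbf{B}]$ and using surjectivity of $j$ to get $v[\mathbf{C}]=u[i[\mathbf{A}]]\subseteq u[\mathbf{B}]$ --- is exactly the right fix for the unit $\sigma_{\mathbf{B}}$ being only an embedding. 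Both arguments are sound; the paper's yields an explicit description of the extending congruence via $\Phi$, while yours is purely diagrammatic, avoids Theorem \ref{Con(A) iso Con(T(A))} entirely, and makes the two halves of the proof symmetric with the proof of Theorem \ref{Amalgamation property}.
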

\begin{proof}
Let $\mathcal{N}$ be the tense extension of $\mathcal{K}$. On the one hand, suppose $\mathcal{K}$ has CEP, and let $\mathbf{G} \xleftarrow{t} \mathbf{E} \xrightarrow{s} \mathbf{F}$ be a span in $\mathcal{N}$, with $t$ injective and $s$ surjective. Then from applying $M$ to the span and the assumption on $\mathcal{K}$, there exists $\mathbf{H} \in \mathcal{N}$ and homomorphisms $M(\mathbf{G}) \xrightarrow{\alpha} \mathbf{H} \xleftarrow{\beta} M(\mathbf{F})$, with $\beta$ surjective and $\alpha$ injective such that $\beta M(s)=\alpha M(t)$. Notice that, from the application of $T$ and considering the counit $\varepsilon$ of the adjunction  $T\dashv M$ (Theorem \ref{la extension temporal}), we get a diagram in $\mathsf{TBA}$ analogue to the second diagram of the proof of Theorem \ref{Amalgamation property}. Observe that $T(\alpha)$ is injective and $T(\beta)$ is surjective due to Theorem \ref{la extension temporal}. Therefore, since $\varepsilon_{\mathbf{F}}$ and $\varepsilon_{\mathbf{G}}$ are isomorphisms, we may conclude that $\lambda=T(\alpha)\varepsilon_{\mathbf{G}}$ is injective, $\mu=T(\beta)\varepsilon_{\mathbf{F}}$ is surjective and $\mu s=\lambda t$. So, $\mathcal{N}$ has CEP, as desired. 

On the other hand, suppose $\mathcal{N}$ has CEP and let $\mathbf{A}, \mathbf{B}\in \mathcal{K}$ be such that $\mathbf{A}$ is subalgebra of $\mathbf{B}$. If we write $i$ for the inclusion of $A$ in $B$, then from Theorem \ref{la extension temporal}, $T(i):T(\mathbf{A})\rightarrow T(\mathbf{B})$ is an embedding, so $T(\mathbf{A})$ is isomorphic to a subalgebra $\mathbf{C}$ of $T(\mathbf{B})$. We stress that  $\mathbf{C}\in \mathcal{N}$, since $T(\mathbf{B})\in \mathcal{N}$. Further, notice that it is also true that the map defined as $g(\sigma_{\mathbf{A}}(a),\sigma_{\mathbf{A}}(b))=(T(i)(\sigma_{\mathbf{A}}(a)),T(i)(\sigma_{\mathbf{A}}(b)))$, induces an isomorphism between $\mathsf{Con}_{\mathcal{TBA}}(T(\mathbf{A}))$ and $\mathsf{Con}_{\mathcal{TBA}}(\mathbf{C})$. Let us write $\Phi_{\mathbf{A}}$ and $\Phi_{\mathbf{B}}$ be the isomorphisms of Theorem \ref{Con(A) iso Con(T(A))} between $\mathsf{Con}_{\mathcal{WHB}}(\mathbf{A})$ and $\mathsf{Con}_{\mathcal{TBA}}(T(\mathbf{A}))$, and $\mathsf{Con}_{\mathcal{WHB}}(\mathbf{B})$ and $\mathsf{Con}_{\mathcal{TBA}}(T(\mathbf{B}))$, respectively. 
If we take $\delta \in \mathsf{Con}_{\mathcal{WHB}}(\mathbf{A})$, then $g(\Phi_{\mathbf{A}}(\delta))\in \mathsf{Con}_{\mathcal{TBA}}(\mathbf{C})$, so from hypothesis on $\mathcal{N}$ and Theorem \ref{la extension temporal} there is a $\theta \in \mathsf{Con}_{\mathcal{WHB}}(\mathbf{B})$, such that $g(\Phi_{\mathbf{A}}(\delta))=\Phi_{\mathbf{B}}(\theta)\cap C^{2}$. We will proof that $\delta=\theta\cap A^{2}$. To do so, notice first that from the naturality of the unit $\sigma$ of the adjunction $T\dashv M$ (Theorem \ref{la extension temporal}), it is the case that for every $a\in A$:
\begin{equation}\label{equation other}
\sigma_{\mathbf{B}}(a)=\sigma_{\mathbf{B}}(i(a))=T(i)(\sigma_{\mathbf{A}}(a)).
\end{equation}
Hence by Theorem \ref{Con(A) iso Con(T(A))} and (\ref{equation other}) it follows:
\begin{displaymath}
\begin{array}{ccl}
(a,b)\in \delta & \Leftrightarrow & (\sigma_{\mathbf{A}}(a),\sigma_{\mathbf{A}}(b)) \in \Phi_{\mathbf{A}}(\delta), 
\\
 & \Leftrightarrow & (\sigma_{\mathbf{B}}(a),\sigma_{\mathbf{B}}(b)) \in g(\Phi_{\mathbf{A}}(\delta)),
 \\
 & \Leftrightarrow & (\sigma_{\mathbf{B}}(a),\sigma_{\mathbf{B}}(b)) \in \Phi_{\mathbf{B}}(\theta) \; \text{and}\; (\sigma_{\mathbf{B}}(a),\sigma_{\mathbf{B}}(b))\in C^{2}, 
 \\
  & \Leftrightarrow & (a,b) \in \theta \; \text{and}\; (\sigma_{\mathbf{A}}(a),\sigma_{\mathbf{A}}(b))\in T(\mathbf{A})^{2}, 
 \\
 & \Leftrightarrow & (a,b) \in \theta  \cap A^{2}. 
\end{array}
\end{displaymath}
Therefore, $\mathcal{K}$ has CEP, as granted.
\end{proof}

It is well known (Fact 1.11 of \cite{K1998}) that the variety of tense algebras has CEP. Thus, from Theorem \ref{tense CEP}, the following holds.

\begin{corollary}\label{WHB has CEP}
The variety $\mathcal{WHB}$,  so any of its subvarieties, has CEP. 
\end{corollary}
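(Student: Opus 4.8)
The plan is to combine the transfer theorem for the congruence extension property (Theorem~\ref{tense CEP}) with the known fact that the variety of tense algebras has CEP. First I would invoke Proposition~\ref{tense companions}, which guarantees that $\mathcal{WHB}$ has a tense companion; by Lemma~\ref{lema I(T(K))} this companion is precisely $\mathbb{I}(T(\mathcal{WHB}))$, and as recorded in the proof of Proposition~\ref{tense companions} one has $\mathbb{I}(T(\mathcal{WHB}))=\mathcal{TBA}$. Hence the tense companion of $\mathcal{WHB}$ is the whole variety $\mathcal{TBA}$.

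Next I would quote Fact~1.11 of \cite{K1998}, which states that $\mathcal{TBA}$ has the congruence extension property. With this in hand, Theorem~\ref{tense CEP} applies directly: since $\mathcal{WHB}$ has a tense companion and that companion has CEP, the variety $\mathcal{WHB}$ has CEP as well.

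Finally, for the ``so any of its subvarieties'' clause I would simply observe that CEP is an absolute property of a class of algebras: it refers only to subalgebras of members and to congruences of those subalgebras, and neither notion depends on the ambient variety. Concretely, if $\mathcal{V}\subseteq \mathcal{WHB}$ is a subvariety, $\mathbf{A}$ is a subalgebra of some $\mathbf{B}\in\mathcal{V}$ and $\delta$ is a congruence of $\mathbf{A}$, then $\mathbf{A},\mathbf{B}\in\mathcal{WHB}$ as well, so by the case already proved there is a congruence $\theta$ of $\mathbf{B}$ with $\delta=\theta\cap A^{2}$; this $\theta$ witnesses CEP for $\mathbf{A}\le\mathbf{B}$ inside $\mathcal{V}$ too.

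I do not anticipate a genuine obstacle here: all the substantive work --- the construction of the free tense extension $T$, its adjunction with $M$, and the back-and-forth argument transferring CEP across that adjunction --- has already been carried out in Theorem~\ref{tense CEP}. The only minor points to check are that the tense companion really is all of $\mathcal{TBA}$ (so that Fact~1.11 of \cite{K1998} genuinely applies) and that the passage to subvarieties is legitimate, both of which are immediate.
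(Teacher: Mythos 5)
Your proposal is correct and follows essentially the same route as the paper: the paper derives the corollary directly from Fact~1.11 of \cite{K1998} (CEP for $\mathcal{TBA}$) together with Theorem~\ref{tense CEP}, exactly as you do. Your additional remarks --- identifying the tense companion of $\mathcal{WHB}$ as $\mathcal{TBA}$ via Proposition~\ref{tense companions} and Lemma~\ref{lema I(T(K))}, and the observation that CEP passes to subvarieties because it only concerns subalgebras and their congruences --- are sound and merely make explicit what the paper leaves implicit.
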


We conclude this paper with by recalling that a variety $\mathcal{V}$ has the \emph{Maehara interpolation property} (MIP, for short) if for any set $Y$, whenever
\begin{itemize}
\item[(i)] $\Sigma \cup \Gamma \cup \{\varepsilon\} \subseteq Eq(Y)$ and $Var(\Sigma)\cap Var(\Gamma \cup \{\varepsilon\})\neq \emptyset$;
\item[(ii)] $\Sigma \cup \Gamma\models_{\mathcal{V}} \varepsilon$,
\end{itemize}
there exists $\Delta \subseteq Eq(Y)$ such that:
\begin{itemize}
\item[(iii)] $\Sigma \models_{\mathcal{V}} \Delta$,
\item[(vi)] $\Delta \cup \Gamma \models_{\mathcal{V}} \varepsilon$,
\item[(v)] $Var(\Delta)\subseteq Var(\Sigma) \cap Var(\Gamma \cup \{\varepsilon\})$.
\end{itemize}

In Theorem 29 of \cite{MMT2014} it was proved that a varitey $\mathcal{V}$ has MIP if and only if $\mathcal{V}$ has AP and CEP. Therefore, as consequence of Corollaries \ref{WHB has AP} and \ref{WHB has CEP} we get

\begin{corollary}\label{WHB has CEP}
The variety $\mathcal{WHB}$ has MIP. 
\end{corollary}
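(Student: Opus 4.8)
The plan is to derive the final statement, that $\mathcal{WHB}$ has the Maehara interpolation property (MIP), as a direct consequence of results already established in the paper together with the cited Theorem 29 of \cite{MMT2014}. Concretely, the strategy is to invoke the characterization of MIP: a variety $\mathcal{V}$ has MIP if and only if $\mathcal{V}$ has the amalgamation property (AP) and the congruence extension property (CEP). Thus the proof reduces to verifying these two hypotheses for $\mathcal{WHB}$, both of which have in fact already been recorded in the paper.

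First I would recall that $\mathcal{WHB}$ has AP. This is Corollary \ref{WHB has AP}, which follows from Theorem \ref{Amalgamation property} applied to the tense companion of $\mathcal{WHB}$ — namely $\mathcal{TBA}$, which has a tense companion by Proposition \ref{tense companions} — together with the fact that $\mathcal{TBA}$ has AP (since tense logic has interpolation, by \cite{LAG}). Second, I would recall that $\mathcal{WHB}$ has CEP. This is Corollary \ref{WHB has CEP} (the one preceding the final statement), which follows from Theorem \ref{tense CEP} and the fact that $\mathcal{TBA}$ has CEP (Fact 1.11 of \cite{K1998}). With both AP and CEP in hand, the theorem of \cite{MMT2014} applies and yields MIP for $\mathcal{WHB}$ immediately.

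Since all the substantive work has already been carried out, the proof itself is essentially a one-line citation chain: combine Corollary \ref{WHB has AP} and Corollary \ref{WHB has CEP} with Theorem 29 of \cite{MMT2014}. There is no real obstacle remaining; the only thing to be careful about is that the hypotheses of the \cite{MMT2014} theorem are met exactly as stated (a variety having both AP and CEP), which they are. If one wished to make the argument self-contained one would instead have to reprove AP and CEP, but given the earlier corollaries that is unnecessary.

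\begin{proof}
By Corollary \ref{WHB has AP}, the variety $\mathcal{WHB}$ has the amalgamation property, and by Corollary \ref{WHB has CEP}, it has the congruence extension property. By Theorem 29 of \cite{MMT2014}, a variety has the Maehara interpolation property if and only if it has both the amalgamation property and the congruence extension property. Hence $\mathcal{WHB}$ has MIP.
\end{proof}
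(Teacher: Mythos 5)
Your proof is correct and follows exactly the paper's own argument: both cite Corollary \ref{WHB has AP} (AP) and the preceding corollary (CEP) and then apply Theorem 29 of \cite{MMT2014}, which characterizes MIP as the conjunction of AP and CEP. Nothing further is needed.
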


-------------------------------------------------------------------------------------------------------
\\
Sergio Arturo Celani, \\
Departamento de Matem\'atica,\\
Facultad de Ciencias Exactas (UNCPBA),\\
Pinto 399, Tandil (7000),\\
and CONICET, Argentina,\\
scelani@exa.unicen.edu.ar

-----------------------------------------------------------------------------------
\\
Agust\'in Leonel Nagy \\
Departamento de Matem\'atica,\\
Facultad de Ciencias Exactas (UNCPBA),
and CONICET.\\
Pinto 399, Tandil (7000),\\
agustin.nagy@gmail.com

-----------------------------------------------------------------------------------------
\\
William Javier Zuluaga Botero,\\
Facultad de Ciencias Exactas (UNCPBA),\\
Pinto 399, Tandil (7000),\\
and CONICET, Argentina,\\
wizubo@exa.unicen.edu.ar

\end{document}